\newtheorem{lemma}{Lemma} 
\newtheorem{corollary}{Corollary}
\newtheorem{theorem}{Theorem}
\newtheorem*{theorem*}{Theorem}
\newcommand{\R}{\mathbb R}	
\newcommand{\C}{\mathbb C}		
\newcommand{\Z}{\mathbb Z}		
\newcommand{\N}{\mathbb N}		
\newcommand{\Q}{\mathbb Q}
\DeclareMathOperator{\sinc}{Sinc}
\DeclareMathOperator{\supp}{supp}
\DeclareMathOperator{\proj}{Proj}
\DeclareMathOperator{\PW}{PW}
\DeclareMathOperator{\Hardy}{H}
\title{On pointwise convergence of the non-linear Fourier transform}
\author{Lukas Mauth}
\address{Mathematisches Institut, Endenicher Allee 60, 53115 Bonn}
\email{s6lumaut@uni-bonn.de}
\date{}
\begin{document}
	\maketitle

		\begin{abstract}
			 In  \cite{Poltoratski} A. Poltotaski proved an analog of Carleson's Theorem on almost everywhere convergence of Fourier series for a version of the non-linear Fourier transform. We aim to present his proof in full detail and elaborate on the ideas behind each step.		\end{abstract}

	\tableofcontents
	
	\section{Introduction}	
	We aim to present a proof for an analog of Carleson's Theorem for the non-linear Fourier transform. This result has recently been established by Alexei Poltoratski \cite{Poltoratski} and we will present his proof in full detail. We are going to follow his proof closely by not modifying his argument, but changing the order of steps and adding more intermediate results, making it easier to understand the proof. In particular, we aim to present the proof in such a way that anyone with a basic background in complex analysis can understand it. In this section, we are going to introduce the non-linear Fourier transform and draw the analogy to Carleson's Theorem on the linear Fourier transform. At the end of this section, we will describe the strategy of Poltoratski's proof.

	The next section aims to introduce the basic concepts needed from the theory of bounded analytic functions in the upper half-plane and Krein-de Branges theory. Besides these few preliminary results there is not much theory involved in the proof. One could go even as far to say that this proof is elementary in a sense that it almost purely relies upon well-known results.

	In the third section, we present Poltoratski's proof with all details included. Poltoratski's paper \cite{Poltoratski} is organized into $12$ sections. In our treatment of his proof, we are following his structure closely but shifting some sections around or dividing them into other sections. The biggest change is probably moving section $8$ from \cite{Poltoratski}, which establishes joint approximations of the Hermite-Biehler functions to the very end of our discussion to make the key elements of his argument stand out. In particular, every Lemma, Theorem, and Corollary in Poltoratski's paper has a corresponding result with the same statement in our version. All of our other results do not appear in Poltoratski's paper explicitly and we added them to simplify the understanding of his ideas. In the following table, there is a dictionary that allows going back and forth between Poltoratski's and our paper.

	In the fourth and final chapter we give a brief outlook on the open question whether $\arg a(t,s)$ and $\arg b(t,s)$ converge as well. In the present version of this proof we only showed that $|a(t,s)|$ and $|b(t,s)|$ converge for almost all $s \in \R.$ We will prove in some sense that with probability $1$ this question can be answered affirmatively and we manage to prove it completely for the special case $f \in L^1(\R).$ Another interesting quesiton is whether we can extract bounds on the maximal operator. This question was studied by G. Mnatsakanyan in \cite{Gevorg}.
	
\begin{center}
	\begin{tabular}{ |p{4cm}| |p{4cm}| } 
		\hline
		\multicolumn{2}{|c|}{Dictionary} \\
		\hline
		Our Version & Poltoratski's Version \cite{Poltoratski} \\
		\hline
		Theorem \ref{non-linear carleson}  & Theorem $1$ \\
		\hline
		Lemma \ref{Lemma_1} & Lemma $1$ \\
		Lemma \ref{Lemma_2} & Lemma $2$ \\
		Corollary \ref{Lemma_3} & Lemma $3$ \\
		Lemma \ref{Lemma_4} & Lemma $4$ \\
		Corollary \ref{corollary_1} & Lemma $5$ \\
		Lemma \ref{Lemma_6} & Lemma $6$ \\
		Lemma \ref{Lemma_7} & Lemma $7$ \\
		Lemma \ref{Lemma_8} & Lemma $8$ \\
		Lemma \ref{Lemma_9} & Lemma $9$ \\
		Theorem \ref{Lemma_10} & Lemma $10$ \\
		Lemma \ref{Lemma_11} & Lemma $11$ \\
		\hline
		Corollary \ref{corollary_2} & Corollary $1$ \\
		Lemma \ref{formulas_mif} & Corollary $2$ \\
		Corollary \ref{corollary_3} & Corollary $3$ \\
		Corollary \ref{corollary_4} & Corollary $4$ \\
		Corollary \ref{corollary_5} & Corollary $5$ \\
		\hline
		Corollary \ref{claim_1} & Claim $1$ \\
		Lemma \ref{claim_2} & Claim $2$ \\
		Corollary \ref{claim_3} & Claim $3$ \\
		\hline
	\end{tabular}
\end{center}

\textbf{Acknowldegements} I wish to thank Christoph Thiele for introducing me to non-linear Fourier Analysis, giving me this extensive project to work on and being a hospital and forecoming supervisor. Furthermore, I wish to thank Gevorg Mnatsakanyan with whom I spent countless hours understanding and discussing Poltoratski's proof. I wish to thank Alexei Poltoratski for his hospitality and patience when explaining us the remaining parts of the proof we did not understand. Moreover, I wish to thank the Number Theory group of Bonn, especially Valentin Blomer, Edgar Assing, Radu Toma and Gilles Felber for the opportunity to learn more about, and discussing topics in Number Theory, when working on this project was a little difficult from time to time. 
	
	\subsection{Carleson's Theorem}
	Let $f:[0,1] \rightarrow \C$ be a smooth function. We define for $n \in \Z$ the Fourier coefficients of $f$ by 
	
	\begin{equation}\label{fourier_coefficients}
	\hat{f}(n)=\int_{0}^{1} f(x)e^{-2\pi i n x} dx.
	\end{equation}
	
	\noindent
	We can use the Fourier coefficients $\hat{f}$ to expand our initial function $f$  by into a Fourier series
	
	\begin{equation}\label{fourier_series}
	f(x)=\sum_{n \in \Z} \hat{f}(n) e^{2 \pi i n x}.
	\end{equation}
	
	\noindent
	Indeed, since we assumed $f$ to be smooth it is easy to verify that this series converges uniformly to $f.$ Looking at \eqref{fourier_coefficients} we see that the Fourier coefficients are well-defined for the much larger class of $L^p-$functions with $1\leq p \leq \infty.$ It was desirable that $(\ref{fourier_series})$ remained true for $f\in L^p[0,1]$, i.e. that the Fourier series still converges to $f$ in some sense. Indeed one shows easily that for $f \in L^p[0,1]$ with $1<p<\infty$
	
	$$S_N f(x) := \sum_{|n| \leq N}  \hat{f}(n) e^{2 \pi i n x} \overset{L^p}{\longrightarrow} f(x),$$
	
	\noindent
	where the convergence is understood in the $L^p[0,1]-$norm. It is a much deeper question to ask if $S_N f$ converges to $f$ pointwise almost everywhere for $f\in L^p[0,1].$ This problem remained open for a long time. The case $p=1$ was settled first in 1923, when Kolmogorov \cite{Kolmogorov} constructed an integrable function $f$, whose Fourier series diverges almost everywhere.
	The first positive answer came from L. Carleson in 1966 \cite{Carleson} and is since celebrated under the name Carleson Theorem.
	
	\begin{theorem}[Carleson Theorem]
		Let $f$ be in $L^2[0,1].$ Then $S_N f$ converges pointwise to $f$ almost everywhere.
	\end{theorem}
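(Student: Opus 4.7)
My plan is to deduce pointwise almost-everywhere convergence from a weak-type maximal inequality together with a density argument. Introduce the Carleson maximal operator
\begin{equation*}
C^* f(x) = \sup_{N \geq 0} |S_N f(x)|,
\end{equation*}
and take as the core step the estimate
\begin{equation*}
|\{x \in [0,1] : C^* f(x) > \lambda\}| \leq A\,\frac{\|f\|_2^2}{\lambda^2}
\end{equation*}
for some absolute constant $A$. Granted this, a.e.\ convergence follows by a standard density argument: trigonometric polynomials are dense in $L^2[0,1]$ and $S_N p = p$ eventually for every polynomial $p$, so decomposing $f = p + g$ with $\|g\|_2 < \varepsilon$ and applying the weak-type bound to $C^* g$ together with Chebyshev's inequality to $|g|$ shows that the set on which $\limsup_N |S_N f - f| > 2\lambda$ has measure bounded by a constant times $\varepsilon^2/\lambda^2$, which is arbitrarily small.

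The main effort therefore goes into proving the weak-type bound for $C^*$. I would follow the classical route of Carleson and Hunt in its streamlined Lacey--Thiele form. First, linearize the supremum by choosing a measurable frequency $N(x)$ with $|S_{N(x)}f(x)| \geq \tfrac12 C^* f(x)$, and use the identity expressing the Dirichlet projection as a difference of two modulated Riesz projections to reduce matters to bounding an operator of the form $f \mapsto M_{-N(x)} H (M_{N(x)} f)(x)$, where $M_a$ denotes modulation by frequency $a$ and $H$ is closely related to the Hilbert transform. To handle the measurable modulation I would then decompose this operator into phase-plane atoms (tiles $I \times \omega$ of area one), organize those tiles whose frequency interval contains $N(x)$ for $x \in I$ into \emph{trees} with a common top, and prove a single-tree estimate that controls the contribution of each tree by a BMO-type density of its top.

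The main obstacle, and the reason the theorem is genuinely deep, is the combinatorial step of summing over trees. Unlike in Hardy--Littlewood-type arguments, no pointwise bound trivially produces square-summability; one must instead exploit the hidden orthogonality of the selected tiles coming from the geometry of the phase plane. The Lacey--Thiele approach sorts tiles by a \emph{size} density and a \emph{mass} density and iteratively peels off maximal trees, turning the sum into a convergent geometric series, while a selection lemma bounds the exceptional set on which the mass is large. Combining the tree estimate with this selection argument on the good set closes the weak-type inequality and, through it, Carleson's theorem.
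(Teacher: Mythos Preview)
The paper does not prove this theorem. Carleson's Theorem is stated in the introduction purely as historical and motivational background for the non-linear analogue that is the paper's actual subject; no argument is given, only a citation to Carleson's original 1966 paper. There is therefore no ``paper's own proof'' to compare your proposal against.

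That said, your outline is a faithful high-level sketch of the Lacey--Thiele approach (which the paper also cites as reference \cite{Thiele}): reduce to a weak-type $(2,2)$ bound for the maximal operator, linearize the supremum, decompose into tiles, organize into trees, and run the size/mass selection argument. As an outline it is accurate, though of course the substance lies entirely in the tree lemma and the combinatorial selection step, which you correctly identify as the deep part and which your proposal does not attempt to carry out. If you intend this as more than a pointer to the literature, those two steps would each need several pages; as a summary of the strategy it is fine.
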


	By inclusion of $L^p$-spaces, the above result generalizes immediately to $2\leq p \leq \infty.$ One year later R. Hunt \cite{Hunt} gave an affirmative answer for the remaining cases $1<p<2.$ In conclusion, the Fourier series of $f$ converges pointwise almost everywhere for any $f$ in $L^p[0,1]$ with $1<p\leq \infty.$

	\subsection{The non-linear Fourier transform}
	We are going to study scattering data of certain systems of differential equations. The associated scattering functions behave similarly to the classical Fourier transform, see below. In particular, there is a non-linear version of Carleson's Theorem which remained open for a long time and which we are going to prove.
	
	The scattering data we are going to consider is constructed from real Dirac systems on $\R_+.$
	Let $f\in L^2(\R_+).$ We define matrices 
	
	$$\Omega=\begin{pmatrix}
	0 && 1 \\
	-1 && 0
	\end{pmatrix}\text{, and }
	Q(t)=\begin{pmatrix}
	0 && f(t) \\
	f(t) && 0
	\end{pmatrix}.
	$$
	
	For a spectral parameter $z \in \C$ we consider the real Dirac system on $\R_+$
	
	\begin{equation}\label{dirac_system}
		\Omega \frac{d}{dt} X(t,z) = zX(t,z)-Q(t)X(t,z).
	\end{equation}
	
	A solution to the Dirac system \eqref{dirac_system} is a vector-valued function 
	
	$$X(t,z)=\begin{pmatrix}
	u(t,z) \\
	v(t,z)
	\end{pmatrix}.
	$$
	
	We will consider two solutions of the Dirac system satisfying the following initial conditions respectively
	
	$$X_N(t,z)=\begin{pmatrix}
	A(t,z) \\ B(t,z)
	\end{pmatrix} ,\quad
	X_N(0,z)=\begin{pmatrix}
	1 \\ 0
	\end{pmatrix} \quad \text{(Neumann boundary condition)}
	$$ 

	$$X_D(t,z)=\begin{pmatrix}
	C(t,z) \\ D(t,z)
	\end{pmatrix} ,\quad
	X_D(0,z)=\begin{pmatrix}
	0 \\ 1
	\end{pmatrix} \quad \text{(Dirichlet boundary condition)}
	$$

	From the solutions to Dirichlet and Neumann conditions we construct two functions 
	
	$$E(t,z)=A(t,z)-iC(t,z) \text{ and } \tilde{E}(t,z)=B(t,z)-iD(t,z).$$
	
	Closely related to these functions are the scattering functions corresponding to the Dirac system
	
	$$\mathcal{E}(t,z)=e^{itz}E(t,z) \text{ and } \tilde{\mathcal{E}}(t,z)=e^{itz}\tilde{E}(t,z).$$
	
	Furthermore, define for each $t>0$
	
	$$a(t,z)=\frac{\mathcal{E}(t,z)+ i \tilde{\mathcal{E}}(t,z)}{2}=\frac{e^{itz}}{2}(E(t,z)+i\tilde{E}(t,z)),$$
	
	$$b(t,z)=\frac{\mathcal{E}(t,z)-i\tilde{\mathcal{E}}(t,z)}{2}=\frac{e^{itz}}{2}(E(t,z)-i\tilde{E}(t,z)).$$
	
	We are interested in the quotient $b(t,z)/a(t,z).$ We define the non-linear Fourier transform of $f$ by the pointwise limit
	
	$$f^{\dagger}=\lim_{t\rightarrow \infty}\frac{b(t,z)}{a(t,z)},$$
	
	if it exists at $z.$ It is our main goal to show (see Theorem \ref{non-linear carleson}) that this limit exists pointwise almost everywhere. Analogous to Carleson's Theorem where we investigated the limiting behavior of the partial sums, we consider a partial non-linear Fourier transform. More precisely given $f\in L^2(\R_+)$ we look for any $T>0$ at the restriction of $f$ to the interval $(0,T)$ given by  $f_T:=f\chi_{(0,T)}.$ Then, non-linear Fourier transform is  
	
	$$f_{T}^{\dagger}(z)=\frac{b(T,z)}{a(T,z)}.$$
	
	It is a natural question to ask whether the classical results for the linear Fourier-transform hold as well in the non-linear case. For instance there is an analogue of Parseval's identity. For all $0<t<\infty$ the functions $a(t,\cdot)$ satisfy the non-linear Parseval identity
		
		\begin{equation}\label{non_linear_parseval}
		||\log|a(t,\cdot)| ||_{L^1(\R)}=||f||_{L^2(0,t)}^2.
		\end{equation}
	
	\noindent	
	The non-linear Parseval identity in terms of $f^{\dagger}$ takes the form
	
	$$\frac{1}{2} ||\log(1-|f^{\dagger})|||_{L^1(\R)}=||f||_{L^2(\R)}^2.$$

	\noindent
	This identity together with the fact that the linear term in the series expansion of $f^{\dagger}$ is the Fourier transform $\hat{f}$ are mainly responsible for calling $f^{\dagger}$ the non-linear Fourier transform of $f.$ For more information regarding the non-linear Fourier transform we refer to the lecture notes by T. Tao and C. Thiele \cite{Tao}. We want to prove the following non-linear version of Carleson's Theorem.
	
	\begin{theorem}[Non-linear Carleson theorem]\label{non-linear carleson}
		Let $f$ be in $L^2(\R_+).$ Then, for almost all $s\in \R$ 
		
		$$\lim_{T\rightarrow \infty} f_{T}^{\dagger}(s) = f^{\dagger}(s).$$  
	\end{theorem}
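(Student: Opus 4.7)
The plan is to combine the non-linear Parseval identity \eqref{non_linear_parseval} with the analytic structure of $a(t,\cdot)$ and $b(t,\cdot)$ in the upper half-plane to extract convergence of the complex-valued ratio $b/a$, despite the individual arguments $\arg a$ and $\arg b$ not being known to converge.

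First I would record the basic algebraic identities forced by the Dirac system: on $\R$ one has $|a(t,s)|^2 - |b(t,s)|^2 = 1$, so $|a(t,s)|\geq 1$ and $|f_T^{\dagger}(s)| < 1$, and $a(t,\cdot)$ extends to an outer function in the upper half-plane whose zero-free, bounded-below behaviour is controlled by Hermite-Biehler theory. The identity \eqref{non_linear_parseval} then supplies the uniform $L^1$-bound $\int_{\R}\log|a(t,s)|\,ds \le \|f\|_{L^2(\R_+)}^2$ that is independent of $t$.

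The next step is pointwise convergence of the moduli $|a(t,s)|$ and $|b(t,s)|$. The natural route is to prove monotonicity of $\log|a(t,s)|$ in $t$ for almost every $s$, which should fall out of the multiplicative transfer-matrix structure of the Dirac evolution together with the inequality $|a|\geq 1$. Once monotonicity is in hand, the uniform $L^1$-bound plus monotone convergence produce an a.e.\ limit $\log|a^{\ast}(s)|$, and then the relation $|b|^2 = |a|^2 - 1$ yields a.e.\ convergence of $|b(t,s)|$ to some $|b^{\ast}(s)|$.

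The hardest step is to upgrade these modulus statements to convergence of the complex ratio. Because $a(t,\cdot)$ is outer, it is pinned down up to a unimodular constant by $|a(t,\cdot)|$ on $\R$, so the outer function with boundary modulus $|a^{\ast}|$ is the natural candidate limit $a^{\ast}$. I would then argue that the family $\{f_T^{\dagger}\}_{T>0}$ is a normal family of Schur-type functions in the upper half-plane, extract subsequential limits, and use the scattering relation to show that every such limit must agree on $\R$ with $b^{\ast}/a^{\ast}$, forcing convergence of the whole family almost everywhere. The principal obstacle is precisely this rigidity: identifying the phase of the limiting ratio without controlling $\arg a(t,\cdot)$ and $\arg b(t,\cdot)$ individually. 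This is where the Krein--de Branges material from Section~2 — and in particular the joint approximations of Hermite-Biehler functions that the author defers to the end of the paper — must do the heavy lifting.
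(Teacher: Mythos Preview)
Your proposal has two genuine gaps that make the strategy unworkable as stated.

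\textbf{Monotonicity fails.} The claim that $\log|a(t,s)|$ is monotone in $t$ for almost every $s$ is false, and nothing in the transfer-matrix structure rescues it. The paper computes (see the section ``Further results'')
\[
\frac{d}{dt}|a(t,s)| \;=\; \frac{2f(t)\bigl(|E(t,s)|^{2}\cos[2\arg E(t,s)]+|\tilde E(t,s)|^{2}\cos[2\arg \tilde E(t,s)]\bigr)}{|a(t,s)|},
\]
whose sign oscillates with $f(t)$ and the arguments of $E,\tilde E$. What the non-linear Parseval identity \eqref{non_linear_parseval} gives you is monotonicity of the \emph{integral} $\int_{\R}\log|a(t,s)|\,ds=\|f\|_{L^{2}(0,t)}^{2}$, not pointwise monotonicity. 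Without the latter, the monotone-convergence step collapses and you have no a.e.\ limit for $|a(t,s)|$ to work with. (In the $\mathrm{SU}(1,1)$ product $a_{12}=a_{1}a_{2}+b_{1}\overline{b_{2}}$ one cannot bound $|a_{12}|$ below by $|a_{1}|$ in general.)

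\textbf{Normal families do not give boundary convergence.} Even granting convergence of $|a(t,\cdot)|$ and $|b(t,\cdot)|$, the passage from normal convergence of the Schur family $\{f_{T}^{\dagger}\}$ in $\C_{+}$ to a.e.\ boundary convergence is exactly the hard direction: normal limits in the interior say nothing about a.e.\ limits on $\R$ (think of $e^{inz}$). Moreover $b(t,\cdot)$ is not outer, so knowing $|b^{\ast}|$ does not pin down $b^{\ast}$, and the ``rigidity'' you invoke is not available. Your last paragraph concedes this and defers to the Krein--de~Branges machinery, but the paper does not use that machinery to run a normal-family argument at all.

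The paper's route is entirely different: it shows that the reproducing kernels $K(t,\lambda,z)$ are close to $\sinc$ kernels (Lemma~\ref{Lemma_4}), deduces trigonometric approximations for $E(t,z)$ inside shrinking boxes $Q(s,C/t)$ (Corollary~\ref{corollary_4}), and then spends most of the work proving that zeros of $E(t,\cdot)$ eventually stay out of these boxes (Sections~3.3--3.7, culminating in Theorem~\ref{measure4}). Only after this zero-dynamics analysis does one obtain the exponential approximation $E(t,z)\approx -i\alpha(s,t)e^{itz}/\sqrt{w(s)}$ from which convergence of $b/a$ is read off directly. None of this is a soft Hardy-space/outer-function argument; the content is the control of resonances near the real line.
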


	\subsection{Idea of the proof for non-linear Carleson}
	
	The solutions $E(t,z)$ and $\tilde E(t,z)$ of \eqref{dirac_system} are Hermite-Biehler functions, i.e. for all $z \in \C_+$ they satisfy the inequality
	
	$$|E(t,z)|>|E(t,\overline{z})|.$$
	
	It turns out that to each Hermite-Biehler function $E(t,z)$ one can associate a Hilbert space of entire functions $B(E(t,z)).$ As sets, we have that the spaces $B(E(t,z))$ are the same as the Paley-Wiener spaces $\PW_t,$ but with different norms. One key property that they share with the Paley-Wiener spaces $\PW_t$ is that they are reproducing kernel Hilbert spaces, which means that for all $\lambda \in \C$ there exists a function $K(t,\lambda,\cdot) \in B(E(t,z))$ such that for for all $f\in B(E(t,z))$ we can recover $f(\lambda)$ by the formula
	
	$$f(\lambda)=\langle f, K(t, \lambda, \cdot) \rangle.$$
	
	The reproducing kernel for the Paley-Wiener space $\PW_t$ is by Fourier inversion the function
	
	$$\sinc(t,\lambda,z)=\frac{1}{\pi} \frac{\sin[t(z-\overline{\lambda})]}{z-\overline{\lambda}}$$
	
	\noindent
	and the reproducing kernel of the space $B(E(t,z))$ is given by the expression
	
	$$K(t,\lambda,z)=\frac{1}{2\pi i} \frac{E(z)E^{\sharp}(\overline{\lambda})-E^{\sharp}(\overline{z})E(\overline{\lambda})}{\overline{\lambda}-z}=\frac{1}{\pi} \frac{A(z)C(\overline{\lambda})-C(z)A(\overline{\lambda})}{\overline{\lambda}-z},$$
	
	\noindent
	where $E^{\sharp}(t,z)=\overline{E(t,\overline{z})}$ denotes the Schwarz transform. From these two formulas, one could guess that if we showed that for large times $t$ the two reproducing kernels are close, then $E(t,z)$ should be approximated by linear combinations of trigonometric functions. This is exactly what we are going to prove in sections $3.1$ and $3.2.$ Let $s\in \R$ and $C>0.$ We define for $t>0$ the box 
	
	$$Q(s,C/t):=\{z \in \C \mid |\Re(s-z)|\leq C/t, |\Im z|\leq C/t \}.$$
	
	We will show in Corollary \ref{corollary_1} that for almost all $s \in \R$ and all $C>0$ we have 
	
	$$\sup_{\lambda, z \in Q(s,C/t)} \bigg|[A(t,z)C(t,\overline{\lambda})-C(t,z)A(t,\overline{\lambda})]-\frac{1}{w(s)}\sin[t(\overline{\lambda}-z)]\bigg|=o(1)$$
	
	\noindent
	as $t\rightarrow \infty.$ Based on this equation we find that there are real functions $x_1(t),x_2(t),c_1(t),c_2(t)$ such that for all $z \in Q(s,C/t)$ we have uniformly
	
	$$E(t,z)=-\frac{1}{c_1(t)}\cos[t(z-x_1(t))]-i\frac{1}{c_2(t)}\sin[t(z-x_2(t))]+o(1).$$
	
	If we now had that $c_1(t)=c_2(t)+o(1)$ and $t(x_2(t)-x_1(t))=o(1)$ mod $2\pi,$ then by Euler's formula 
	
	$$\sup_{z \in Q(s,C/t)} |E(t,z)-D\alpha(t)e^{itz}|=o(1)$$
	
	\noindent
	for some complex constant $D$ and a complex function $\alpha(s,t)$ satisfying $|\alpha(s,t)|=1$ for all $t>0.$ If we had this approximation unconditionally we could derive the pointwise convergence of the non-linear Fourier transform easily, see section 3.8. 
	
	Unfortunately we only have $c_1(t)=c_2(t)+o(1)$ and $t(x_2(t)-x_1(t))=o(1)$ mod $2\pi$, when there is no zero of $E(t,z)$ inside the box $Q(s,C/t)$ for all large enough $t.$ If there is a zero $z(t)$ of $E(t,z)$ inside $Q(s,C/t)$ the approximation is pertubated to
	
	$$\sup_{z \in Q(s,C/t)} |E(t,z)-\alpha(t)\gamma(t,z(t))\sin[t(z-z(t))]|=o(1),$$
	
	\noindent
	compare to Lemma \ref{Lemma_6} and Corollary \ref{corollary_4} for precise statements. This is the first big milestone in the proof. We have shown that there is a good approximation to $E(t,z)$ when there is no zero in the box $Q(s,C/t)$ for all large enough $t$ and that the approximation to $E(t,z)$ is bad when there is a zero inside $Q(s,C/t)$ for some time $t.$ Thus, for the remainder of the proof we will show that the set 
	
	$$T_0(s,C)=\{t \in \R_+ \mid Q(s,C/t) \text{ contains a zero of } E(t,z) \}$$
	
	\noindent
	is bounded for almost all $s \in \R$ and any $C>0.$ By the previous step we can then immediately conclude Theorem \ref{non-linear carleson}. First, we will show that $T_0(s,C)$ can not cover a half-line, i.e. the zeros of $E(t,z)$ can not stay in the box $Q(s,C/t)$ forever. This is shown in section $3.3.$ We will employ the fact the $|E(t,x)|$ converges in measure on $\R$ by Lemma \ref{measure}. If we now had that the zeros of $E(t,z)$ remained in the box forever, then $|E(t,x)|\approx |\sin tx|$ for points $x$ near $s.$ Since $|\sin tx|$ does not converge in measure, neither should $|E(t,x)|,$ but this contradicts Lemma \ref{measure}. With the same argument, we could prove that zeros can not travel infinitely often through the box $Q(s,C/t)$ for any fixed time scale. However, there seems to be no minimal time scale, depending on $f,$ in which the zeros travel through the box. Thus, with this approach we can not immediately conclude Theorem \ref{non-linear carleson}. The next natural step is to study further the dynamics of the zeros. This is done in section $3.4.$ From the study of the dynamics we get stronger approximations for $E(t,z).$ Since the proof of the stronger approximation (see Theorem \ref{Lemma_10}) is quite technical and complicated we postpone it to section $3.7.$
	
	To show that $T_0(s,C)$ is bounded, we consider the boxes $Q(s,3C/t)$ and $Q(s,C/t).$ Whenever a zero goes inside $Q(s,C/t)$ it has to travel through the larger box $Q(s,3C/t).$ We have just seen that the zeros of $E(t,z)$ can not stay inside $Q(s,C/t)$ forever. Thus, they would have to travel through $Q(s,3C/t)$ infinitely often if $T_0(s,C)$ was unbounded. We will show that for each time the zero travels through the box $Q(s,3C/t),$ this costs a chunk of $||f||_{L^2(\R)}.$ Since we assumed $f\in L^2(\R)$ we immediately obtain a contradiction if $T_0(s,C)$ was unbounded.

	To prove such an estimate it is natural to relate the scattering data from $E(t,z)$ to $f.$ In section 3.7 we will define scattering functions $a_{t_1\rightarrow t_2}$ for any interval, where there is a zero of $E(t,z)$ inside the box $Q(s,3C/t).$ Together with \eqref{non_linear_parseval} we obtain the non-linear Parseval identity 
	
	$$||\log|a_{t_1\rightarrow t_2}(s)|||_{L^1(\R)}=||f||_{L^2((t_1,t_2))}.$$
	
	When trying to estimate the quantity on the left hand side by its series expansion we get a bound of the type 
	
	$$||\log |a_{t_1\rightarrow t_2} (s)|||_{L^1(\R)}\gg ||\arg a_{t_1\rightarrow t_2}||_{L^{1,\infty}(\R)} \geq D|\varepsilon_1| + O(\varepsilon_1^2+\varepsilon_2^2),$$
	
	\noindent
	where $\varepsilon_1$ measures the horizontal increment in the movement of the zero in the time interval $(t_1,t_2)$ and $\varepsilon_2$ measures the vertical component and $D>0$ is some constant depending only on $s$ and the size of the box $C$. Since this estimate is linear in $\varepsilon_1,$ but quadratic in $\varepsilon_2$ we will have to consider intervals where the movement of the zero has a horizontal component comparable to the vertical one and intervals where the zero moves almost verticaly separately. These two cases will be covered in sections 3.6 and  3.7 respectively. Both arguments use a version of Parseval's identity. For the horizontal intervals we will use the non-linear Parseval identity stated above and for vertical intervals, we are going to use the linear Parseval identity. After both cases have been covered Theorem \ref{non-linear carleson} finally follows.
	
	\section{Preliminaries}
	In this section we are going to introduce all the relevant concepts neeeded for the proof of Theorem \ref{non-linear carleson}. We start with a short discussion on Hardy spaces, which provide tools for the systematic study of bounded analytic functions in the upper half-plane. We restrict ourselves to refering for all relevant proofs to the literature and prove only the more special applications of the theory here. We will most often refer to the book by J.Garnett \cite{Garnett}. The reason for the rather short discussion on this interesting subject is, that they are merely used as a tool in proving the approximations of the reproducing kernels in section $3.1.$ After section $3.1,$ Hardy spaces are never used again and the remainder of the proof is elementary but very technical complex analysis. 
	
	The more delicate input are the de Branges spaces, which are discussed in section $3.2,$ which are reproducing kernel Hilbert spaces of entire functions we can associate to the Hermite-Biehler functions $E(t,z).$ We will benefit a lot from the existence of a  Poisson finite spectral measure $\mu=w(s)ds+d\mu_s$ which provides for all $t\geq0$ an isometric embedding 
	
	$$B(E(t,z)) \hookrightarrow L^2(\mu).$$
	
	The key ingredient which tells us that $w(s)$ is reasonably well behaved is the fact that 
	
	$$\frac{\log w(t)}{1+t^2} \in L^1(\R).$$
	
	\noindent
	This fact is one of the key corner stones of the proof and is so important, since on the one hand we see that $w(s)\neq 0$ for almost all $s \in \R$ and on the other hand it enables us to use the theory of Hardy spaces, see the discussion on outer functions below or in \cite{Garnett}.
	
	Finally, we are going to establish basic properties of the functions $E(t,z)$ that we will need frequently throughout the proof. The most useful identity which will be used throughout later when approximating $E(t,z)$ is Corollary \ref{determinant_2} which states that
	
	$$\det\begin{pmatrix}
	E(t,z) && \tilde{E}(t,z) \\
	E^{\sharp}(t,z) && \tilde{E}^{\sharp}(t,z)
	\end{pmatrix}=2i.$$
	
	\subsection{Bounded analytic functions}
	The upper half-plane will be denoted by
	$$\C_+:=\{z=x+iy \in \C \mid \Im(z)=y>0\}.$$
	
	We define the Hardy space on $\C_+$ as a certain subspace of the analytic functions on the upper half-plane as follows. Let $1<p<\infty$ and set 
	
	$$\Hardy^p(\C_+):=\bigg\{f: \C_+ \rightarrow \C \mid f \text{ is analytic and } \sup_{y>0} \int_{-\infty}^{\infty} |f(x+iy)|^p dx < \infty\bigg\}.$$
	
	\noindent
	We turn $\Hardy^p(\C_+)$ into a normed space by defining the $\Hardy^p$ norm as
	
	$$||f||_{\Hardy^p}:=\sup_{y>0} \bigg( \int_{-\infty}^{\infty} |f(x+iy)|^p dx \bigg)^{\frac{1}{p}}.$$

	\begin{lemma}\label{hardy_space_2}
		Let $1<p<\infty$ and let $f$ be a function in $\Hardy^p(\C_+).$ Then, for almost all $x\in \R$ the non-tangential limit
		
		$$f(x):=\lim_{y\searrow 0} f(x+iy)$$
		
		\noindent
		exists and it satisfies the following properties
		
		$$||f||_p^p=||f||_{\Hardy^p}^p,$$
		
		$$\lim_{y\searrow 0} ||f(x+iy)-f(x)||_p = 0.$$
	\end{lemma}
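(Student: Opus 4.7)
The plan is to follow the classical Poisson-integral approach to boundary values of Hardy space functions on the upper half-plane (cf.\ \cite{Garnett}). The first step would be to establish the \emph{Poisson representation}: for every $y_0 > 0$, the function $z \mapsto f(z+iy_0)$ equals the Poisson integral on $\C_+$ of its boundary trace $x \mapsto f(x+iy_0) \in L^p(\R)$. This follows by applying the Cauchy integral formula over a tall rectangle $[-R,R]\times [y_0, Y]$ and letting $Y, R \to \infty$; the contribution from the top side vanishes because $\|f(\cdot + iY)\|_p$ stays bounded, and the contribution from the vertical sides vanishes along a subsequence $R_n \to \infty$ that one extracts after averaging against the uniform bound $\sup_y \int |f(x+iy)|^p dx \leq \|f\|_{\Hardy^p}^p$.

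The second step is to pass to the boundary. Since $1 < p < \infty$, the family $\{f(\cdot + iy) : y > 0\}$ is bounded in the reflexive space $L^p(\R)$, so we may extract a sequence $y_n \searrow 0$ along which $f(\cdot + iy_n) \rightharpoonup F$ weakly in $L^p$ for some $F$ with $\|F\|_p \leq \|f\|_{\Hardy^p}$. Passing to the weak limit in the Poisson representation from Step 1 (noting that for each fixed $z \in \C_+$ the Poisson kernel lies in $L^{p'}(\R)$) shows that $f$ itself is the Poisson integral of $F$ on all of $\C_+$.

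The third step is to read off the three claimed conclusions. Existence of the non-tangential limit $f(x+iy) \to F(x)$ for a.e.\ $x$ is Fatou's classical theorem on non-tangential limits of Poisson integrals of $L^p$ functions, so setting $f(x) := F(x)$ gives the required pointwise statement. The norm convergence $\|f(\cdot + iy) - f\|_p \to 0$ follows from the Poisson representation combined with the standard fact that the Poisson kernel is an approximate identity on $L^p(\R)$ for $1 \leq p < \infty$. The identity $\|f\|_p = \|f\|_{\Hardy^p}$ comes from two inequalities: lower semicontinuity of the $L^p$-norm under weak convergence gives $\|F\|_p \leq \|f\|_{\Hardy^p}$, while the contractivity of Poisson convolution on $L^p$ gives $\|f(\cdot + iy)\|_p = \|P_y \ast F\|_p \leq \|F\|_p$, so the sup defining $\|f\|_{\Hardy^p}$ is dominated by $\|F\|_p$.

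The main obstacle is Step 1, where one has to produce an honest Poisson (equivalently Cauchy) representation from nothing more than quantitative $L^p$-control of horizontal slices. The averaging trick of picking a good sequence of radii $R_n$ along which the circular pieces of the contour integral decay is the standard way to overcome this; once the representation is in hand the rest of the argument is soft functional-analytic machinery and one can safely defer to \cite{Garnett} for the detailed execution.
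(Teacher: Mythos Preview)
The paper does not give its own proof of this lemma; it simply cites \cite{Garnett}, p.~55. Your outline is correct and is precisely the classical argument one finds in Garnett: Poisson representation via a contour argument, weak-$*$ compactness in the reflexive space $L^p$ to produce the boundary function, then Fatou's theorem and the approximate-identity property of the Poisson kernel to conclude. There is nothing to compare beyond noting that you have sketched exactly the proof the paper is deferring to.
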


	\begin{proof}
		\cite{Garnett} p.55 
	\end{proof}

	\begin{lemma}\label{hardy_space_3}
		Let $1<p<\infty$ and let $f$ be a function in $\Hardy^p(\C_+).$ Then, for all $z \in \C_+$
		
		$$|f(x+iy)|\leq \bigg(\frac{2}{\pi y}\bigg)^{\frac{1}{p}} ||f||_{\Hardy^p}.$$
	\end{lemma}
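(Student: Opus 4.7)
The plan is to exploit the subharmonicity of $|f|^p$ together with a disk-averaging inequality, using a disk whose closure lies in $\C_+$. Because $f$ is holomorphic on $\C_+$ and $p>0$, the function $|f|^p$ is subharmonic there, so for any disk $D(z_0,r)\subset \C_+$ the sub-mean-value inequality gives
$$|f(z_0)|^p \;\le\; \frac{1}{\pi r^2}\int_{D(z_0,r)} |f(\zeta)|^p\, dA(\zeta).$$
I would take $z_0 = x+iy$ and $r = y$, which is the largest radius that keeps the closed disk inside $\C_+$; this is the choice that should produce the prefactor $2/(\pi y)$.

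Next I would estimate the area integral by enclosing the disk $D(x+iy,y)$ in the horizontal strip $\R\times(0,2y)$ and applying Fubini:
$$\int_{D(x+iy,y)}|f(\zeta)|^p\,dA(\zeta)\;\le\;\int_0^{2y}\!\!\int_{-\infty}^{\infty}|f(s+it)|^p\,ds\,dt.$$
The inner integral is bounded by $\|f\|_{\Hardy^p}^p$ for every $t>0$ by the very definition of the $\Hardy^p$ norm, so the double integral is at most $2y\,\|f\|_{\Hardy^p}^p$. Combining this with the previous display and dividing by $\pi y^2$ gives
$$|f(x+iy)|^p\;\le\;\frac{2y\,\|f\|_{\Hardy^p}^p}{\pi y^2}\;=\;\frac{2}{\pi y}\,\|f\|_{\Hardy^p}^p,$$
and taking $p$-th roots yields the claimed pointwise bound.

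The only non-routine point is invoking the sub-mean-value inequality for $|f|^p$; for that I would just note that $\log|f|$ is subharmonic on $\C_+$ whenever $f$ is holomorphic (with the usual convention at zeros of $f$), so $|f|^p = \exp(p\log|f|)$ is subharmonic as a non-decreasing convex composition of a subharmonic function. Once this is accepted, the rest of the argument is a direct calculation; the main care needed is in choosing $r=y$ (larger $r$ leaves $\C_+$, smaller $r$ gives a worse constant) and in using Fubini together with the uniform-in-$t$ bound $\int_\R|f(s+it)|^p\,ds\le \|f\|_{\Hardy^p}^p$.
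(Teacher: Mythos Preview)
Your proof is correct and is precisely the standard argument; the paper itself does not give a proof but simply cites \cite{Garnett}, pp.~17--18, where exactly this subharmonicity and area-mean-value computation appears. One tiny technicality: with $r=y$ the \emph{closed} disk $\overline{D(x+iy,y)}$ touches the real axis and is not contained in $\C_+$, so strictly speaking you should apply the sub-mean-value inequality with $r'<y$ and let $r'\to y^-$ (the strip bound $2y\,\|f\|_{\Hardy^p}^p$ is independent of $r'$, so the limit is immediate); this does not affect the conclusion or the constant.
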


	\begin{proof}
		\cite{Garnett} pp. 17-18.
	\end{proof}

	For any $z=x+iy \in \C_+$ we define the Poisson kernel in the upper half-plane by
	
	$$P_z(t)=\frac{1}{\pi} \frac{y}{(x-t)^2+y^2}.$$ 
	
	Now let $d\mu(t)$ be a positive measure such that 
	
	$$\int_{-\infty}^{\infty} \frac{1}{1+t^2}d\mu(t) < \infty.$$
	
	\noindent
	We will call such measures Poisson-finite and define the Poissson extension to the upper half-plane by
	
	$$P\mu(x+iy):=\frac{1}{\pi}\int_{-\infty}^{\infty} \frac{y}{(x-t)^2+y^2} d\mu(t).$$
	
	\begin{lemma}\label{harmonic_extension}
		Let $d\mu(t)$ be a positive Poisson-finite measure and let $P\mu(x+iy)$ be its Poisson extension to the upper half plane. Then, $P\mu(x+iy)$ is a harmonic function in $\C_+.$
	\end{lemma}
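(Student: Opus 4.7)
The plan is to reduce harmonicity of $P\mu$ to harmonicity of the Poisson kernel itself, and then exchange integration with the Laplacian. For each fixed $t \in \R$ the kernel
$$P_z(t) = \frac{1}{\pi}\frac{y}{(x-t)^2+y^2}$$
is harmonic in $z = x+iy \in \C_+$: indeed, on $\C_+$ one can write $P_z(t) = \frac{1}{\pi}\Im\bigl(1/(t-z)\bigr)$, and since $1/(t-z)$ is holomorphic in $z$ for $z \in \C_+$, its imaginary part is harmonic. A direct computation of $(\partial_x^2 + \partial_y^2)P_z(t)$ confirms this. Once this is established, the goal becomes justifying
$$\Delta_z P\mu(z) = \frac{1}{\pi}\int_{-\infty}^{\infty} \Delta_z P_z(t)\, d\mu(t) = 0.$$

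The key step is the exchange of $\Delta_z$ with the integral, and this is where the Poisson-finiteness of $\mu$ enters. I would fix a point $z_0 = x_0 + iy_0 \in \C_+$ and a small closed box $K \subset \C_+$ around $z_0$, contained in a vertical strip $|x|\leq R$ and bounded below by some $y_{\min} > 0$. On $K$, I would show that both $P_z(t)$ and its partial derivatives in $x$ and $y$ up to second order are dominated, uniformly in $z \in K$, by a constant multiple of $1/(1+t^2)$. This is a routine two-regime estimate: for $|t|\leq 2R$ the kernel and its derivatives are bounded using $y \geq y_{\min}$, while for $|t|>2R$ one has $(x-t)^2 + y^2 \gtrsim t^2$, yielding the desired decay. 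Since $\int (1+t^2)^{-1}\, d\mu(t) < \infty$ by assumption, the standard differentiation-under-the-integral theorem applies and gives $\Delta P\mu \equiv 0$ on $K$. As $z_0$ was arbitrary, $P\mu$ is harmonic on all of $\C_+$.

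The only real obstacle is organizing the uniform domination cleanly; there is nothing deep to overcome. One might be tempted to shortcut the argument by writing $P\mu$ as the imaginary part of a Cauchy-type transform $\frac{1}{\pi}\int (t-z)^{-1} d\mu(t)$ and reading off harmonicity from analyticity, but this integral need not converge absolutely under mere Poisson-finiteness (for instance $d\mu = dt$ is Poisson-finite yet $1/(t-z)$ is not $\mu$-integrable). Differentiating under the integral sidesteps this issue and uses the hypothesis optimally.
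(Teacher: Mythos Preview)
Your argument is correct and is essentially the standard proof: harmonicity of the Poisson kernel plus differentiation under the integral sign, with Poisson-finiteness supplying the integrable majorant $C/(1+t^2)$. The paper itself does not give a proof at all; it simply cites \cite{Garnett}, pp.~11--12, where precisely this argument (or an equivalent one) appears. Your remark about why the Cauchy-transform shortcut fails under mere Poisson-finiteness is a nice observation that goes slightly beyond what the paper records.
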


	\begin{proof}
		\cite{Garnett} pp.11-12.
	\end{proof}
	
	A family of functions $\{\varphi_y(t)\}_{y>0}$ which are integrable on $\R$ is called an approximate identity, if the following conditions are fulfilled
	
	\begin{itemize}
		\item [(i)] $\int_{-\infty}^{\infty} \phi_y(t) dt =1,$
		\item [(ii)] $||\phi_y||_1 \leq M $ for some positive constant $M>0.$
		\item [(iii)] For all $\delta>0$ we have $\lim_{y\searrow 0} \sup_{|t|>\delta} |\phi_y(t)|=0,$
		\item [(iv)] For all $\delta>0$ we have $\lim_{y \searrow 0} \int_{|t|>\delta} |\phi_y(t)| dt=0.$
	\end{itemize}
	
	\noindent
	The Poisson kernel $P_z(t)$ is an example of an approximate identity.
	
	\begin{lemma}\label{approximate_identity}
		Let $f(x) \in L^p(\R)$ for any $1\leq p \leq \infty$ and assume that $f$ is continous at some point $x_0 \in \R.$ Furthermore, let $\{\varphi\}_{y>0}$ an approximate identity and set $u(z)=P_y * f(x)$ for $z=x+iy$ in $\C_+.$ Then,
		
		$$\lim_{(x,y) \rightarrow x_0} u(x+iy) = f(x_0).$$
	\end{lemma}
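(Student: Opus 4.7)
The plan is to reduce the assertion to a single integral estimate and then apply the standard local/tail decomposition for approximate identities. Setting $u(x+iy) = (\varphi_y * f)(x)$ and using property (i) to write $f(x_0)$ as the integral of $\varphi_y$ against the constant $f(x_0)$, we obtain
\begin{equation*}
u(x+iy) - f(x_0) = \int_{-\infty}^{\infty} \varphi_y(x-t)\bigl[f(t) - f(x_0)\bigr]\,dt,
\end{equation*}
so it suffices to show that this integral tends to zero as $(x, y) \to (x_0, 0^+)$.

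Fix $\varepsilon > 0$. Continuity of $f$ at $x_0$ furnishes $\delta > 0$ such that $|f(t) - f(x_0)| < \varepsilon$ whenever $|t - x_0| < \delta$, and I would restrict attention to $x$ with $|x - x_0| < \delta/2$, so that $|t - x_0| \geq \delta$ forces $|x - t| \geq \delta/2$. On the local region $|t - x_0| < \delta$, the pointwise bound combined with property (ii) yields
\begin{equation*}
\left|\int_{|t - x_0| < \delta} \varphi_y(x-t)\bigl[f(t) - f(x_0)\bigr]\,dt\right| \leq \varepsilon \|\varphi_y\|_{L^1} \leq \varepsilon M,
\end{equation*}
which is already as small as one wishes.

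For the tail $|t - x_0| \geq \delta$, the contribution of the constant $f(x_0)$ is bounded by $|f(x_0)| \int_{|s| \geq \delta/2} |\varphi_y(s)|\,ds$, which tends to zero by property (iv). The remaining piece $\int_{|t - x_0| \geq \delta} \varphi_y(x-t) f(t)\,dt$ I would estimate via Hölder's inequality by $\|f\|_{L^p} \cdot \|\varphi_y \chi_{\{|s| \geq \delta/2\}}\|_{L^{p'}}$; the second factor vanishes in each regime by invoking (iii) when $p = 1$, (iv) when $p = \infty$, and the interpolation bound $|\varphi_y|^{p'} \leq \bigl(\sup_{|s| \geq \delta/2}|\varphi_y|\bigr)^{p'-1} |\varphi_y|$ combined with (ii) and (iii) for $1 < p < \infty$. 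The main obstacle is precisely this last piece of $L^p$ bookkeeping: properties (iii) and (iv) are naturally adapted to the $L^\infty$ and $L^1$ endpoints respectively, so handling the intermediate exponents uniformly requires the Hölder splitting just described, after which letting first $y \searrow 0$ and then $\varepsilon \searrow 0$ closes the argument.
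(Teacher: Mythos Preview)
Your proof is correct and follows the standard local/tail decomposition that one finds in textbooks such as Garnett; the paper itself does not supply a proof but merely cites \cite{Garnett}, p.~16 together with the remark on p.~19. Your argument is precisely the kind of argument one would find there, so nothing further needs to be said.
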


	\begin{proof}
		\cite{Garnett} p.16 together with the remark on p.19.
	\end{proof}

	For $x\in \R$ define the cone 
	
	$$\Gamma(x):=\{z \in \C_+ \mid |\Re(z-x)| < \Im z\}.$$
	
	\noindent
	For any function $u:\C_+\rightarrow \C$ we introduce the nontangential maximal function of $u$ on $\R$ as
	
	$$Mu(x):=\sup_{z\in \Gamma(x)} |u(z)|.$$
	
	Furthermore we can associate the maximal function to any bounded cone, i.e. for $z \in \Gamma(x)$ with $\Im z<C$ for some $C>0.$ We will use the same notation $Mu(x).$ It will unambigious in the text, whether we use the bounded non-tangential maximal operator or the unbounded one. More precisely, Lemma \ref{Lemma_2} uses the bounded, Lemma \ref{Lemma_4} the unbounded operator.
	
	\begin{lemma}\label{hardy_space_5}
		Let $P\mu$ denote the Poisson extension of a Poisson finite measure on $\R.$ Then, $MP\mu$ is weak-$L^1,$ where $MP\mu$ is the maximal function with respect to a bounded cone.
	\end{lemma}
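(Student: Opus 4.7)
The plan is to dominate $MP\mu$ pointwise by the Hardy--Littlewood maximal function of a finite restriction of $\mu$ plus a bounded tail, so that the classical weak-$(1,1)$ inequality applies. Since $\mu$ is only Poisson-finite, I would interpret the weak-$L^{1}$ assertion locally: it suffices to prove $|\{x\in I:MP\mu(x)>\lambda\}|\le c_{I}/\lambda$ for every bounded interval $I\subset\R$, with a constant allowed to depend on $I$, on the truncation height $C>0$ of the cone, and on $\mu$.

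Fix such an $I$ and $C$ and set $I^{*}:=\{t\in\R:\mathrm{dist}(t,I)\le 2C\}$. Poisson-finiteness forces $\mu$ to be a locally finite Radon measure (because $(1+t^{2})^{-1}$ is bounded below on the compact set $I^{*}$), so $\mu_{1}:=\mu|_{I^{*}}$ is a \emph{finite} positive measure, while $\mu_{2}:=\mu-\mu_{1}$ is supported at distance at least $2C$ from $I$. By linearity $MP\mu\le MP\mu_{1}+MP\mu_{2}$, and I would treat the two pieces separately.

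For $\mu_{1}$, I would establish the standard pointwise majorization $MP\mu_{1}(x)\le c_{\mathrm{cone}}\,\mathcal{M}\mu_{1}(x)$, where $\mathcal{M}$ denotes the centered Hardy--Littlewood maximal operator. For $z=u+iv\in\Gamma(x)$ the triangle inequality $|u-t|+v\ge|t-x|$ yields $(u-t)^{2}+v^{2}\ge\tfrac{1}{2}|t-x|^{2}$, which together with the trivial bound $(u-t)^{2}+v^{2}\ge v^{2}$ and a dyadic splitting of $\R$ into $\{|t-x|\le v\}$ and the annuli $\{2^{k}v<|t-x|\le 2^{k+1}v\}$ gives $P_{z}*\mu_{1}(x)\le c_{\mathrm{cone}}\,\mathcal{M}\mu_{1}(x)$ after summing a geometric series. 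The classical weak-$(1,1)$ inequality for $\mathcal{M}$ applied to the finite measure $\mu_{1}$ then gives $|\{x\in\R:MP\mu_{1}(x)>\lambda\}|\le c\,\mu_{1}(\R)/\lambda$, the desired bound.

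For $\mu_{2}$, the construction forces $|t-x|\ge 2C\ge 2v$ for $t\in\supp\mu_{2}$, $x\in I$ and $z\in\Gamma(x)$ of height $v\le C$; hence $|u-t|\ge\tfrac{1}{2}|t-x|$ and
\[
P_{z}(t)\;\le\;\frac{2v}{\pi(t-x)^{2}}\;\le\;\frac{K_{I,C}}{1+t^{2}},
\]
uniformly in $x\in I$ and such $z$. Integrating against $\mu_{2}$ and invoking Poisson-finiteness of $\mu$ shows $MP\mu_{2}$ is uniformly bounded on $I$, hence trivially weak-$L^{1}$ there. Adding the two estimates gives the lemma. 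The main technical step is the pointwise domination of $MP\mu_{1}$ by $\mathcal{M}\mu_{1}$; once this is in hand the rest is routine bookkeeping, and the only conceptual point is the necessity of reading weak-$L^{1}$ locally because $\mu$ itself need not be finite.
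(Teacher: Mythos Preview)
Your argument is correct, and it takes a genuinely different route from the paper. The paper disposes of the lemma in one sentence by transporting the problem to the unit disk via the Cayley transform: Poisson-finiteness of $\mu$ on $\R$ is exactly what makes the transferred measure on $\mathbb{T}$ finite (the Jacobian is comparable to $(1+t^{2})^{-1}$), and the weak-$(1,1)$ bound for the nontangential maximal function of a Poisson integral on the disk is then quoted from Garnett. Your approach instead stays in the half-plane, splits $\mu$ into a near (finite) piece and a far piece, dominates $MP\mu_{1}$ pointwise by the Hardy--Littlewood maximal function via the standard dyadic-annulus computation, and handles the tail $MP\mu_{2}$ by a uniform bound coming directly from Poisson-finiteness.

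Both arguments yield the same conclusion, and your local reading of ``weak-$L^{1}$'' is the right one: even the Cayley-transform route only gives a weak-type bound with respect to the pulled-back arclength $(1+t^{2})^{-1}\,dt$, which is equivalent to a local weak-$L^{1}$ bound in Lebesgue measure --- and this is precisely what the subsequent corollary ($\sqrt{MP\mu}\in L^{1}_{\mathrm{loc}}$) requires. The paper's route is shorter and conceptually clean once one trusts the conformal bookkeeping (in particular that bounded cones in $\C_{+}$ map to regions comparable to Stolz angles away from the boundary point $1$); your route is more self-contained and makes explicit why the truncation height $C$ and the localization to $I$ enter the constant.
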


	\begin{proof}
		This follows immediately from the correspong result in the disk, see \cite{Garnett} pp.55-56, and the fact that Poisson-integrable functions on the real line transfer into integrable functions on the disk under the Cayley transform.
	\end{proof}
	
	We obtain immediately
	
	\begin{corollary}\label{hardy_space_6}
		Let $P\mu$ be the Poisson extension of a Poisson-finite measure on $\R.$ Then, $\sqrt{MP\mu} \in L_{loc}^1(\R).$
	\end{corollary}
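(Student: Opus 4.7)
The plan is to deduce the local integrability of $\sqrt{MP\mu}$ directly from the weak-$L^1$ bound of Lemma \ref{hardy_space_5} by a layer-cake computation. Write $g:=MP\mu$; the hypothesis gives us a constant $C>0$ such that $|\{x\in\R : g(x)>\lambda\}|\leq C/\lambda$ for every $\lambda>0$ (at least when the left-hand side is intersected with a fixed bounded strip, which is exactly the setting in which Lemma \ref{hardy_space_5} applies).

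Fix a compact set $K\subset\R$. Using the layer-cake formula applied to $\sqrt{g}$ one has
\begin{equation*}
\int_{K}\sqrt{g(x)}\,dx=\int_{0}^{\infty}\bigl|\{x\in K : \sqrt{g(x)}>t\}\bigr|\,dt=\int_{0}^{\infty}\bigl|\{x\in K : g(x)>t^{2}\}\bigr|\,dt.
\end{equation*}
Split the outer integral at $t=1$. For $0\leq t\leq 1$ the trivial bound $|\{x\in K : g(x)>t^{2}\}|\leq |K|$ gives a contribution of at most $|K|$. For $t>1$ the weak-$L^{1}$ estimate yields $|\{x\in K : g(x)>t^{2}\}|\leq C/t^{2}$, and the remaining integral is $\int_{1}^{\infty}C/t^{2}\,dt=C$. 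Adding the two contributions,
\begin{equation*}
\int_{K}\sqrt{g(x)}\,dx\leq |K|+C<\infty,
\end{equation*}
which is exactly the assertion that $\sqrt{MP\mu}\in L^{1}_{\mathrm{loc}}(\R)$.

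The argument has no real obstacle; the only point that requires a little attention is ensuring the weak-$L^1$ bound from Lemma \ref{hardy_space_5} is applied in a shape consistent with its statement (the maximal function is taken over a \emph{bounded} cone, so the weak-$L^1$ inequality already comes with an implicit localization, which is harmless because we only integrate over a compact $K$). The key mechanism is simply that the function $\lambda\mapsto 1/\lambda$ fails to be integrable at infinity, but $\lambda\mapsto 1/\lambda^{2}$ is integrable at infinity, which is precisely the gain obtained by taking the square root of a weak-$L^{1}$ function.
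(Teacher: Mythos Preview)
Your argument is correct and is precisely the standard layer-cake computation that the paper leaves implicit (the paper states the corollary as an immediate consequence of Lemma~\ref{hardy_space_5} without writing out a proof). Your treatment of the localization issue---using the bounded-cone version of the maximal function and intersecting with the compact set $K$---is exactly what is needed to make the weak-$L^1$ bound applicable.
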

	
	\begin{lemma}\label{hardy_space_8}
		Let $f\in \Hardy^p(\C_+)$ for $1< p < \infty.$ Then, the non-tangential maximal function $Mf(x)$ is in $L^p(\R)$ and there is an absolute constant $A_p>0$ such that 
		
		$$||Mf||_{L^p(\R)}\leq  A_p ||f||_{\Hardy^p(\R)}.$$
	\end{lemma}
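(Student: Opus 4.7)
The plan is to dominate $Mf$ pointwise by the Hardy--Littlewood maximal function of the boundary values $f^{*}$ and then invoke the standard $L^{p}$-boundedness of the Hardy--Littlewood maximal operator. By Lemma \ref{hardy_space_2} the non-tangential boundary limit $f^{*}(x)=\lim_{y\searrow 0}f(x+iy)$ exists almost everywhere and lies in $L^{p}(\R)$ with $\|f^{*}\|_{p}=\|f\|_{\Hardy^{p}}$, so it makes sense to compare $f$ to $f^{*}$ on the line.

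The first step is the Poisson representation
$$f(x+iy)=\int_{\R}P_{x+iy}(t)\,f^{*}(t)\,dt,$$
which gives immediately $|f(x+iy)|\leq (P_{y}*|f^{*}|)(x)$. For $f\in\Hardy^{p}$ with $p>1$ this is classical: $|f|$ is subharmonic on $\C_{+}$, the horizontal slices $f(\,\cdot\,+iy)$ have uniformly bounded $L^{p}$ norm and converge to $f^{*}$ in $L^{p}$ as $y\searrow 0$ by Lemma \ref{hardy_space_2}, and Lemma \ref{hardy_space_3} precludes pointwise blow-up inside $\C_{+}$; integrating against the Poisson kernel on a shifted line and sending that line to the real axis recovers $f$.

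The second step is the pointwise domination
$$M(P\,|f^{*}|)(x)\leq C\,M_{HL}|f^{*}|(x),$$
where $M_{HL}$ is the usual Hardy--Littlewood maximal operator on $\R$. This is a routine dyadic estimate: any $z=t+iy\in\Gamma(x)$ satisfies $|t-x|<y$, so the Poisson kernel is bounded at $u$ by $Cy/(y^{2}+(u-x)^{2})$; splitting $\int P_{z}(u)|f^{*}(u)|\,du$ over the annuli $\{|u-x|\asymp 2^{k}y\}$ for $k\geq 0$ and bounding each piece by the corresponding average of $|f^{*}|$ over an interval of length $\sim 2^{k}y$ centered at $x$ yields the claim after summing a geometric series in $2^{-k}$.

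Combining the two bounds gives $Mf(x)\leq C\,M_{HL}|f^{*}|(x)$. The operator $M_{HL}$ is of weak type $(1,1)$ (a real-variable fact morally equivalent to Lemma \ref{hardy_space_5}, via the observation that $M_{HL}g(x)$ and $M(Pg)(x)$ are comparable) and trivially of strong type $(\infty,\infty)$, so Marcinkiewicz interpolation supplies a constant $A_{p}$ depending only on $p$ with $\|M_{HL}g\|_{p}\leq A_{p}\|g\|_{p}$ for every $1<p<\infty$. Applying this with $g=|f^{*}|$ finishes the proof:
$$\|Mf\|_{p}\leq C A_{p}\|f^{*}\|_{p}=C A_{p}\|f\|_{\Hardy^{p}}.$$
The only genuine obstacle is the Poisson representation $f=P*f^{*}$, since the hypothesis $f\in\Hardy^{p}$ provides merely uniform $L^{p}$ control on horizontal slices; everything downstream of that identity is standard real-variable harmonic analysis.
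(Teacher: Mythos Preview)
Your argument is correct and is essentially the standard proof found in Garnett's book, which is precisely what the paper does: its ``proof'' of this lemma is nothing more than the citation \cite{Garnett} p.~27. The route you outline---Poisson representation of an $\Hardy^p$ function by its boundary values, pointwise domination of the non-tangential maximal function of the Poisson integral by the Hardy--Littlewood maximal function via a dyadic decomposition, and then Marcinkiewicz interpolation---is exactly the argument Garnett gives, so there is no meaningful difference to compare.
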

	
	\begin{proof}
		\cite{Garnett} p.27
	\end{proof}

	\begin{lemma}\label{singular_measure}
		Let $\{\varphi_y(t)\}_{y>0}$ be an approximate identity and let $\nu$ be a finite singular measure on $\R.$ Then, $(\varphi_y*\nu)(x)$ converges non-tangentially to zero almost everywhere.
	\end{lemma}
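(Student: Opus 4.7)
The plan is to reduce non-tangential convergence to a differentiation-of-measures statement for $\nu$ and then split the convolution into a \emph{near} piece and a \emph{far} piece, along the lines of the Poisson-kernel argument in \cite{Garnett}. Since $\nu$ is finite and singular with respect to Lebesgue measure, the Besicovitch/Lebesgue differentiation theorem yields, for Lebesgue-almost every $x_0 \in \R$,
$$\lim_{r\to 0^+}\frac{\nu((x_0-r,x_0+r))}{2r}=0.$$
Call such an $x_0$ a \emph{good} point. Fix a cone aperture $C>0$; it suffices to prove that at every good $x_0$ one has $(\varphi_y * \nu)(x)\to 0$ whenever $x+iy\to x_0$ with $|x-x_0|\le Cy$.

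Given $\epsilon>0$, use the density bound at $x_0$ to pick $\delta=\delta(\epsilon)>0$ so small that $\nu((x_0-r,x_0+r))<\epsilon r$ for all $0<r\le\delta$, and split
$$(\varphi_y*\nu)(x)=\int_{|t-x_0|<\delta}\varphi_y(x-t)\,d\nu(t)+\int_{|t-x_0|\ge\delta}\varphi_y(x-t)\,d\nu(t).$$
For the far piece, once $y$ is small enough that $(C+1)y<\delta/2$, every $t$ with $|t-x_0|\ge\delta$ satisfies $|x-t|\ge\delta/2$, so the piece is bounded in modulus by $\|\nu\|\cdot\sup_{|s|\ge\delta/2}|\varphi_y(s)|$, which tends to $0$ by the approximate-identity condition (iii).

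The near piece is the main obstacle, since conditions (i)--(iv) alone do not convert the density estimate on $\nu$ into a pointwise bound on the convolution. The standard remedy (and the one relevant to the applications later, notably the Poisson kernel itself) is to dominate $|\varphi_y(s)|$ by $y^{-1}\psi(s/y)$ for some fixed radially decreasing $\psi\in L^1(\R)$. A layer-cake identity then rewrites the near integral as a weighted average of $r\mapsto\nu((x_0-r,x_0+r))/r$ against $-\psi'$; the non-tangential constraint $|x-x_0|\le Cy$ is used precisely here, to absorb the shift of the convolution centre from $x$ to $x_0$ into the kernel scale $y$ (one uses the inclusion $(x-r,x+r)\subset (x_0-(C+1)r,x_0+(C+1)r)$). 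The density bound from the first step upgrades the average to $O(\epsilon)$. Sending $\epsilon\to 0$ forces the near piece to vanish as well in the non-tangential limit, completing the argument at every good $x_0$, and hence almost everywhere.
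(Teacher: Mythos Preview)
The paper gives no proof of its own here; it merely cites \cite{Garnett}, p.~30, and your outline is precisely the argument found there: zero symmetric derivative of a singular measure at Lebesgue-a.e.\ point, a near/far split, condition~(iii) for the far piece, and the layer-cake estimate against a radial majorant for the near piece.

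Your observation that hypotheses (i)--(iv) as listed in the paper do not by themselves control the near piece is correct and worth recording: Garnett's theorem assumes in addition that $|\varphi_y(s)|\le y^{-1}\psi(|s|/y)$ for some fixed decreasing $\psi\in L^1$, and the kernels actually used later (the Poisson kernel and the normalised squared sinc $s_t$ in the proof of Lemma~\ref{Lemma_2}) do satisfy this. One small imprecision in your sketch: the inclusion $(x-r,x+r)\subset(x_0-(C{+}1)r,\,x_0+(C{+}1)r)$ would require $|x-x_0|\le Cr$, whereas the cone condition gives only $|x-x_0|\le Cy$, which fails when $r<y$. The correct bound is $\nu(B(x,r))\le\nu(B(x_0,r+Cy))$, and after splitting the layer-cake integral near $r/y\sim 1$ one still obtains the $O(\epsilon)$ estimate. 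This is a routine adjustment, not a gap.
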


	\begin{proof}
		\cite{Garnett} p.30
	\end{proof}
	
	Let $f\in L_{loc}^1(\R^n).$ We say that $x\in \R^n$ is a Lebesgue point of $f,$ if 
	
	$$\lim_{r\searrow 0} \frac{1}{|B_r(x)|} \int_{B_r(x)} f(y)dy=f(x).$$
	
	\noindent
	It is well known, that almost all $x\in \R^n$ are Lebesgue points.
	
	Let $f: \C_+\rightarrow \C$ be an analytic function on the upper half plane. We say that $f$ is an inner function on $\C_+,$ if the following two conditions are fulfilled
	
	\begin{itemize}
		\item [(i)] $f$ is bounded,
		\item [(ii)] the non-tangential limit exists almost everywhere and is unimodular, i.e. for almost all $x\in \R$ we have
		
		$$\lim_{y\searrow 0} |f(x+iy)|=1.$$
	\end{itemize}

	We say that $f$ is an outer function in $\C_+$ if it is of the form 
	
	$$f(z)=\alpha e^{u(z)+iv(z)},$$
	
	\noindent
	where $\alpha$ is a unimodular constant, i.e. $|\alpha|=1,$ $u(z)=(P_y*\log w)(x)$ for a non-negative real function $w(t)$ satisfying 
	
	$$\int_{-\infty}^{\infty} \frac{\log w(t)}{1+t^2}dt < \infty$$
	
	\noindent
	and $v(z)$ is the harmonic conjugate of $u(z).$ We say that $w(t)$ is the boundary function of $f.$
	
	\begin{lemma}{\label{outer_function_boundary_values}}
		Let $f:\C_+\rightarrow \C$ be an outer function withboundary function $w(t).$ Then, for almost all $x\in \R$ the non-tangential limit exists and 
		
		$$\lim_{ z \in \Gamma(x)} |f(z)|=w(x).$$
	\end{lemma}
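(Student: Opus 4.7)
The plan is to reduce the claim to a statement about Poisson integrals of the single real boundary datum $\log w$. Since $f(z)=\alpha e^{u(z)+iv(z)}$ with $|\alpha|=1$, with $v$ the real-valued harmonic conjugate of $u$, and $u(z)=(P_y*\log w)(x)$ at $z=x+iy$, we have $|f(z)|=e^{u(z)}$. By continuity of $\exp$, it therefore suffices to prove that
$$\lim_{z\in\Gamma(x)}u(z)=\log w(x)\quad\text{for a.e. }x\in\R.$$

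First I would upgrade the integrability hypothesis. The condition $\int(1+t^2)^{-1}\log w(t)\,dt\in\R$ combined with $w\ge 0$ means that the contributions of $\log^+w$ and $\log^-w$ against $(1+t^2)^{-1}\,dt$ are each finite (otherwise the integral would diverge to $\pm\infty$); hence $|\log w|$ is Poisson-finite and in particular $\log w\in L^1_{\mathrm{loc}}(\R)$. Write $d\mu_\pm(t)=\log^\pm w(t)\,dt$; these are positive Poisson-finite measures and $u=P\mu_+-P\mu_-$.

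Next I would carry out the standard non-tangential convergence argument at Lebesgue points. Fix $x_0\in\R$ that is a Lebesgue point of $\log w$ (a.e.\ point is such). For any $R>0$, split $\log w=\phi_1+\phi_2$ with $\phi_1=(\log w)\chi_{[x_0-R,x_0+R]}\in L^1(\R)$ and $\phi_2$ supported outside $[x_0-R,x_0+R]$. The Poisson integral $P_y*\phi_2$ is harmonic in a neighborhood of $x_0$ because the Poisson kernel $P_z(t)$ is jointly continuous for $t$ bounded away from $\Re z$; in particular it is continuous at $x_0$, and its value at $x_0$ can be made arbitrarily small relative to $\phi_2$'s tail by choosing $R$ large, after subtracting off the contribution we are trying to recover. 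For $\phi_1\in L^1(\R)$, I would approximate $\phi_1$ in $L^1$ by a continuous compactly supported function $g$ and split
$$P_y*\phi_1-\phi_1(x_0)=(P_y*g-g(x_0))+(g(x_0)-\phi_1(x_0))+P_y*(\phi_1-g).$$
Lemma \ref{approximate_identity} controls the first term (continuity of $g$), the second is small by the Lebesgue point property and $L^1$-density, and the last term is handled by Lemma \ref{hardy_space_5}: the non-tangential maximal function of $P(\phi_1-g)$ is weak-$L^1$ with norm controlled by $\|\phi_1-g\|_{L^1}$, which allows one to conclude via the standard density/maximal-function argument that the exceptional set on which non-tangential convergence fails has measure zero.

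The main obstacle is precisely this last step: the excerpt only states Lemma \ref{approximate_identity} at continuity points, so one must bridge to Lebesgue points through the weak-type estimate of Lemma \ref{hardy_space_5} applied to the positive Poisson-finite measures $|\phi_1-g|\,dt$. Once this bridge is installed, non-tangential convergence of $u$ to $\log w$ holds a.e., and exponentiating yields $|f(z)|\to w(x)$ non-tangentially a.e., as required.
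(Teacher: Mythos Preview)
Your argument is correct and is the standard route to this fact: reduce to non-tangential convergence of the Poisson extension $u=P_y*\log w$ to $\log w$ at Lebesgue points, split into a near part in $L^1$ and a far part that is harmonic across $x_0$, and run the density/maximal-function argument using the weak-$(1,1)$ bound for $MP\mu$. The only cosmetic issue is your description of the far piece $\phi_2$: rather than ``making its value small by choosing $R$ large,'' the clean statement is simply that $P_y*\phi_2$ extends continuously across $x_0$ with boundary value $\phi_2(x_0)=0$, since the Poisson kernel concentrates at $x_0$ while $\phi_2$ is supported at distance $\ge R$ and is Poisson-integrable there.

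As for comparison with the paper: the paper does not give a proof at all---it merely cites \cite{Garnett}, p.~64. Your sketch is essentially the argument one finds in that reference (Fatou-type non-tangential convergence of Poisson integrals via the Hardy--Littlewood maximal inequality), so there is no genuine methodological difference to discuss; you have simply unpacked what the citation points to.
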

	\begin{proof}
		\cite{Garnett} p.64
	\end{proof}

	\begin{lemma}\label{hardy_space}
		Let $f\in L^2(\R).$ Then, $f$ is the non-tangential limit of an $\Hardy^2(\C_+)$ function if $\hat{f}(s)=0$ for almost all $s<0.$
	\end{lemma}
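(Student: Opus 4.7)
My plan is to prove the stated direction by exhibiting an explicit $\Hardy^2(\C_+)$ function whose boundary values recover $f$, using the Fourier transform as the bridge. Assuming $\hat f(s)=0$ for a.e.\ $s<0$, I would define
$$F(z) := \int_0^\infty \hat f(s)\,e^{2\pi i s z}\,ds,\qquad z\in\C_+,$$
and show that $F\in \Hardy^2(\C_+)$ with non-tangential boundary values equal to $f$ almost everywhere.

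First, I would verify that the integrand is absolutely integrable for each fixed $z=x+iy$ with $y>0$: since $|e^{2\pi i s z}|=e^{-2\pi s y}$ for $s>0$ and $\hat f\in L^2$, Cauchy--Schwarz bounds $\int_0^\infty |\hat f(s)| e^{-2\pi s y}\,ds$ by $\|f\|_2/\sqrt{4\pi y}$. Dominated convergence then justifies differentiation under the integral, so $F$ is holomorphic in $\C_+$. Next, for each fixed $y>0$, $F(\,\cdot\,+iy)$ is the inverse Fourier transform of the $L^2$ function $s\mapsto \hat f(s)\,e^{-2\pi s y}\chi_{[0,\infty)}(s)$, so Plancherel gives
$$\int_{\R}|F(x+iy)|^2\,dx \;=\; \int_0^\infty |\hat f(s)|^2 e^{-4\pi s y}\,ds \;\leq\; \|\hat f\|_2^2 \;=\; \|f\|_2^2.$$
Taking $\sup_{y>0}$ yields $F\in\Hardy^2(\C_+)$ with $\|F\|_{\Hardy^2}\leq\|f\|_2$.

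For the boundary behavior, I would note that as $y\searrow 0$ the Fourier transform $\hat f(s)\,e^{-2\pi s y}\chi_{[0,\infty)}(s)$ converges to $\hat f(s)$ in $L^2(\R)$ by dominated convergence, hence by Plancherel applied in the other direction, $F(\,\cdot\,+iy)\to f$ in $L^2(\R)$ as $y\searrow 0$. By Lemma \ref{hardy_space_2}, the function $F\in\Hardy^2(\C_+)$ admits non-tangential boundary values $\tilde f$ in $L^2(\R)$ with $F(\,\cdot\,+iy)\to \tilde f$ in $L^2$ as well. Uniqueness of the $L^2$-limit then forces $\tilde f=f$ almost everywhere, which is the desired conclusion.

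The main technical point is the identification of the non-tangential limit with $f$; this is not done by pointwise estimates but rather by matching two $L^2$-limits and invoking the existence of non-tangential boundary values from Lemma \ref{hardy_space_2}. If one also wanted the converse direction (which is the standard Paley--Wiener statement), the key ingredient would be that for $F\in\Hardy^2(\C_+)$, analyticity combined with Cauchy's theorem on horizontal strips yields $\widehat{F(\,\cdot\,+iy_1)}(s)=e^{-2\pi s(y_1-y_2)}\widehat{F(\,\cdot\,+iy_2)}(s)$; the uniform $\Hardy^2$ bound then forces the Fourier transform to vanish a.e.\ on $s<0$, and taking $y\searrow 0$ transfers this to $\hat f$.
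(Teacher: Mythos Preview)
Your proof is correct and self-contained; the paper does not give its own argument here but simply cites Garnett, \emph{Bounded Analytic Functions}, p.~84. Your construction via $F(z)=\int_0^\infty \hat f(s)\,e^{2\pi i s z}\,ds$ together with Plancherel and the identification of $L^2$-limits using Lemma~\ref{hardy_space_2} is exactly the standard Paley--Wiener route one finds in such references, so there is no meaningful methodological difference to discuss.
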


	\begin{proof}
		\cite{Garnett} p.84
	\end{proof}

	\begin{lemma}\label{hardy_space_7}
		Let $g\in \Hardy^2(\C_+)$ and $g'\in \Hardy^2(\C_-).$ Suppose $H:\C \rightarrow \C$ is an entire function equal to $e^{-iz}g$ in $\C_+$ and equal to $e^{iz}g'$ in $\C_-,$ then $H\in \PW_1.$
	\end{lemma}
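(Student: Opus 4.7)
The plan is to verify the standard Paley--Wiener characterization: $\PW_1$ consists of entire functions of exponential type at most $1$ whose restriction to $\R$ lies in $L^2(\R)$. Since $H$ is entire by hypothesis, it suffices to establish the two growth/integrability conditions.

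For the exponential type, I would apply Lemma \ref{hardy_space_3} with $p=2$ separately in the two half-planes. For $z=x+iy$ with $y>0$, we get $|g(z)|\leq \sqrt{2/(\pi y)}\,\|g\|_{\Hardy^2}$, and combined with $|e^{-iz}|=e^{y}$ this gives
$$|H(z)| \leq C\,\frac{e^{y}}{\sqrt{y}}, \qquad y>0.$$
The symmetric argument in $\C_-$ applied to $g'$, together with $|e^{iz}|=e^{-y}$, yields $|H(z)|\leq C e^{-y}/\sqrt{|y|}$ for $y<0$. Combining both estimates,
$$|H(z)| \leq C\,\frac{e^{|\Im z|}}{\sqrt{|\Im z|}}$$
off the real axis. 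The square-root factor is harmless for determining exponential type, so $H$ has exponential type at most $1$.

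For the $L^2$-restriction, I would invoke Lemma \ref{hardy_space_2}: $g$ admits non-tangential boundary values $g \in L^2(\R)$ with $\|g\|_{L^2(\R)}=\|g\|_{\Hardy^2(\C_+)}$. Since $H$ is entire and equals $e^{-iz}g(z)$ in $\C_+$, passing to the boundary $y\searrow 0$ gives $H(x)=e^{-ix}g(x)$ for almost every $x\in\R$; in particular $|H(x)|=|g(x)|$, so $H|_\R\in L^2(\R)$ with $\|H\|_{L^2(\R)}=\|g\|_{\Hardy^2(\C_+)}<\infty$. (As a consistency check, the analogous limit from below gives $H(x)=e^{ix}g'(x)$ a.e., which is compatible with $H$ being entire and agreeing with the two pieces on the respective half-planes.)

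Since $H$ is entire, of exponential type at most $1$, and square-integrable on $\R$, the Paley--Wiener theorem yields $H\in \PW_1$. The only mild subtlety I anticipate is making the ``type at most $1$'' step precise, but this is routine: for any $\varepsilon>0$ the bound $C e^{|\Im z|}/\sqrt{|\Im z|}$ is dominated by $e^{(1+\varepsilon)|z|}$ once $|z|$ is large enough, so the singular factor $1/\sqrt{|\Im z|}$ does not disturb the exponential type. No deeper technical obstacle arises.
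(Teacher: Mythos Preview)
Your approach via the exponential-type characterization of $\PW_1$ is different from the paper's: the paper invokes Lemma~\ref{hardy_space} to conclude that $\widehat g$ is supported in $[0,\infty)$ and $\widehat{g'}$ in $(-\infty,0]$, then shifts by the factors $e^{\pm iz}$ to obtain $\supp\widehat H\subset[-1,1]$ directly. Your route is legitimate in principle and your $L^2$-restriction step is fine, but the exponential-type step contains a genuine gap.

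The claim that $Ce^{|\Im z|}/\sqrt{|\Im z|}\le e^{(1+\varepsilon)|z|}$ for all sufficiently large $|z|$ is false: take $z=R+ie^{-R^2}$, so $|z|\approx R$ but the left side is of order $e^{R^2/2}$, which dominates $e^{(1+\varepsilon)R}$. The pointwise Hardy bound from Lemma~\ref{hardy_space_3} degenerates as $\Im z\to 0$ and therefore says nothing about $M(r)=\max_{|z|=r}|H(z)|$ near the real axis; an entire function bounded only on $\{|\Im z|\ge 1\}$ can still grow arbitrarily fast in the strip. To repair this you need an extra ingredient. One clean fix: from the definition of the $\Hardy^2$ norm you have $\|H(\cdot+iy)\|_{L^2(\R)}=e^{|y|}\|g(\cdot+iy)\|_{L^2}\le e^{|y|}\|g\|_{\Hardy^2}$ for $y>0$ (and analogously for $y<0$ via $g'$); then subharmonicity of $|H|^2$ over unit discs gives a uniform pointwise bound on the strip $|\Im z|\le 1$, which combined with your estimate for $|\Im z|\ge 1$ yields $|H(z)|\le C'e^{|\Im z|}$ everywhere and hence type at most $1$. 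Alternatively, the paper's Fourier-support argument sidesteps the issue entirely.
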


	\begin{proof}
		This is an immediate consequence of the previous Lemma, which asserts that
		
		 $$\supp \widehat{g} \subset [0,\infty), \quad \supp \widehat{g'} \subset (-\infty, 0].$$
		 
		 \noindent
		 However, multipliying by $e^{-iz}$ and $e^{iz}$ respectively, shifts the support of the Fourier transform such that 
		 
		 $$\supp \widehat{e^{-iz}g} \subset [-1,\infty), \quad \supp \widehat{e^{iz}g'} \subset (-\infty, 1].$$
		 
		 \noindent
		 By the assumption on $H$ we have that 
		 	$$\supp \widehat{H} \subset (-\infty,1] \cap [1,\infty) = [-1,1].$$ 
		 	
		 \noindent
		 Therefore, $H$ is an entire function whose Fourier transform is supported on $[-1,1]$ and thus $H \in \PW_1.$
	\end{proof}
	
	We say that a sequence of function $f_n:\C \rightarrow \C$ converges normally to $f,$ if $f_n$ converges uniformly on compact subsets of $\C.$
	
	\begin{lemma}\label{hardy_space_4}
		Let $g_n \in \Hardy^2(\C_+)$ be a sequence converging weakly to some function $g\in \Hardy(\C_+) .$ Then, $g_n \rightarrow g$ normally in $\C_+.$
	\end{lemma}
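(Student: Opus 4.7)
The plan is to reduce normal convergence to two ingredients already available: the pointwise bound on $H^2$-functions given by Lemma \ref{hardy_space_3}, and Montel's theorem on normal families.

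First I would observe that, by Lemma \ref{hardy_space_3} with $p=2$, for every $z=x+iy\in\C_+$ the point evaluation $f\mapsto f(z)$ is a bounded linear functional on $\Hardy^2(\C_+)$, with bound $\sqrt{2/(\pi y)}$. Consequently the weak convergence $g_n\rightharpoonup g$ immediately gives pointwise convergence $g_n(z)\to g(z)$ for every $z\in\C_+$. (Equivalently, one can invoke the fact that $\Hardy^2(\C_+)$ is a reproducing kernel Hilbert space with the Cauchy kernel; either description suffices.)

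Second, I would use Banach--Steinhaus: any weakly convergent sequence in a Hilbert space is norm-bounded, so $M:=\sup_n \|g_n\|_{\Hardy^2}<\infty$. Combining this uniform bound with Lemma \ref{hardy_space_3} gives, for every compact set $K\subset\C_+$ with $y_0:=\inf_{z\in K}\Im z>0$,
$$\sup_{n}\,\sup_{z\in K}|g_n(z)|\ \le\ \sqrt{\frac{2}{\pi y_0}}\,M.$$
Thus $\{g_n\}$ is locally uniformly bounded on $\C_+$ and therefore a normal family by Montel's theorem.

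Finally I would pass from subsequential normal convergence to normal convergence of the full sequence. Any subsequence $\{g_{n_k}\}$ has a further subsequence $\{g_{n_{k_j}}\}$ converging normally in $\C_+$ to some analytic function $h$. By the pointwise convergence established in the first step, $h(z)=g(z)$ for every $z\in\C_+$. Since every subsequence of $\{g_n\}$ has a further subsequence tending normally to the same limit $g$, the whole sequence converges normally to $g$. No step is really the bottleneck here; the only thing to get right is invoking Lemma \ref{hardy_space_3} to turn weak $\Hardy^2$-convergence into the uniform local boundedness that Montel requires.
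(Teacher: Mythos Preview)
Your proof is correct. The paper does not actually prove this lemma but merely cites \cite{Garnett}, so there is no detailed argument to compare against; your self-contained argument via Lemma~\ref{hardy_space_3}, Banach--Steinhaus, Montel, and the subsequence principle is exactly the standard route and uses only tools already introduced in the preliminaries.
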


	\begin{proof}
		\cite{Garnett}
	\end{proof}

	\subsection{Hilbert spaces of entire functions}
	The results of this section can be found in \cite{DeBranges}. We say that an entire function $E(z)$ is a Hermite-Biehler function if it satisfies for all $z \in \C_+$ the inequality
	
	$$|E(z)|>|E(\overline{z})|.$$
	
	To each Hermite-Biehler function $E(z)$ we can associate a de Branges space
	
	$$B(E):=\{f: \C \rightarrow \C \mid f \text{ is entire},\quad f/E \in H^2(\C_+), \quad f^{\sharp}/E \in H^2(\C_+)\}.$$
	
	\noindent
	We can turn $B(E)$ into a Hilbert space with inner product
	
	$$\langle f,g \rangle_{B(E)}=\int_{-\infty}^{\infty} \frac{f(x)g(x)}{|E(x)|^2} dx.$$

	By Krein's Theorem all functions in $B(E)$ have exponential type at most that of $E.$ Furthermore, for each $\lambda \in \C$ the pointwise evaluation functional 
	
	$$F_\lambda : B(E) \rightarrow \C $$
		$$\quad \quad \quad f \mapsto f(\lambda) $$
		
	\noindent	
	is bounded. Hence, by the Riesz representation Theorem there is a function $K(\lambda, \cdot) \in B(E)$ such that for all $f\in B(E)$ we can recover its value at $\lambda$ by integrating against $K(\lambda, \cdot),$ i.e.
	
	$$f(\lambda)= \langle f, K(\lambda, \cdot) \rangle_{B(E)}.$$
	
	\noindent
	\begin{lemma}\label{misc_1}
	Let $K(\lambda, \cdot)$ be the reproducing kernel from above. Then,
	
	$$||K(\lambda, \cdot)||_{B(E)}=||K(\lambda,\cdot)/E||_{H^2(\C_+)}=\sqrt{K(\lambda,\lambda)}=\sup_{f \in B(E), ||f||_{B(E)}\leq 1} |f(\lambda)|.$$
	\end{lemma}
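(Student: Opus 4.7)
The plan is to establish the four equalities in the order they are stated, using (a) the definition of the $B(E)$ inner product as a weighted $L^2$ integral, (b) the reproducing property applied to $K(\lambda,\cdot)$ itself, and (c) a standard Cauchy--Schwarz argument. Everything reduces to basic reproducing kernel Hilbert space manipulations, so I do not anticipate any serious obstruction.

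First I would argue the leftmost equality $\|K(\lambda,\cdot)\|_{B(E)}=\|K(\lambda,\cdot)/E\|_{H^2(\C_+)}$. Since $K(\lambda,\cdot)\in B(E)$, by the defining conditions of $B(E)$ the quotient $K(\lambda,\cdot)/E$ lies in $H^2(\C_+)$, and by Lemma \ref{hardy_space_2} its $H^2$-norm coincides with the $L^2$-norm of its non-tangential boundary values on $\R$. Thus
\begin{equation*}
\|K(\lambda,\cdot)/E\|_{H^2(\C_+)}^2=\int_{-\infty}^{\infty}\frac{|K(\lambda,x)|^2}{|E(x)|^2}\,dx=\|K(\lambda,\cdot)\|_{B(E)}^2,
\end{equation*}
which is the desired identity.

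Next I would prove $\|K(\lambda,\cdot)\|_{B(E)}=\sqrt{K(\lambda,\lambda)}$. Applying the reproducing property to the function $f=K(\lambda,\cdot)\in B(E)$ evaluated at the point $\lambda$ gives
\begin{equation*}
K(\lambda,\lambda)=\langle K(\lambda,\cdot),K(\lambda,\cdot)\rangle_{B(E)}=\|K(\lambda,\cdot)\|_{B(E)}^2,
\end{equation*}
so in particular $K(\lambda,\lambda)\geq 0$ and the square root makes sense.

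Finally, to identify $\sqrt{K(\lambda,\lambda)}$ with the supremum, I would apply Cauchy--Schwarz in $B(E)$: for any $f\in B(E)$,
\begin{equation*}
|f(\lambda)|=|\langle f,K(\lambda,\cdot)\rangle_{B(E)}|\leq \|f\|_{B(E)}\,\|K(\lambda,\cdot)\|_{B(E)}=\|f\|_{B(E)}\,\sqrt{K(\lambda,\lambda)}.
\end{equation*}
Restricting to $\|f\|_{B(E)}\leq 1$ gives one inequality. For the reverse, in the case $K(\lambda,\cdot)\not\equiv 0$ one tests with the unit vector $f=K(\lambda,\cdot)/\|K(\lambda,\cdot)\|_{B(E)}$, for which $|f(\lambda)|=K(\lambda,\lambda)/\sqrt{K(\lambda,\lambda)}=\sqrt{K(\lambda,\lambda)}$; in the degenerate case $K(\lambda,\cdot)\equiv 0$ the reproducing property forces $f(\lambda)=0$ for all $f\in B(E)$ and both sides vanish. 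The only minor subtlety worth double-checking is that the $H^2$-norm genuinely agrees with the boundary $L^2$-integral, but this is exactly the content of Lemma \ref{hardy_space_2}, so nothing new is needed.
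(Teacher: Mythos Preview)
Your proof is correct and follows essentially the same approach as the paper. The only cosmetic difference is that the paper phrases the supremum identity via the Riesz isometry $B(E)\to B(E)^*$ sending $K(\lambda,\cdot)$ to the evaluation functional $F_\lambda$, whereas you spell out the underlying Cauchy--Schwarz argument and explicit test function; these are the same idea, and your treatment is in fact slightly more detailed (handling the degenerate case and explicitly invoking Lemma~\ref{hardy_space_2} for the $H^2$ equality that the paper leaves implicit).
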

	
	\begin{proof}
		By Riesz representation Theorem there is an antilinear isometry
		
		$$ \varphi: B(E) \rightarrow B(E)^*$$
		$$ \quad \quad \quad f \mapsto \langle f, \cdot \rangle.$$
		
		Because $K(\lambda, \cdot)$ is a reproducing kernel we have $\varphi(K(\lambda,\cdot))=F_\lambda.$ Since $\varphi$ is an isometry $$||K(\lambda,\cdot)||_{B(E)}=||F_\lambda||_{B(E)^*}=\sup_{f \in B(E), ||f||_{B(E)}\leq 1} |f(\lambda)|.$$
		
		The remaining claims follow from the equation $K(\lambda,\lambda)=\langle K(\lambda,\cdot), K(\lambda,\cdot)\rangle_{B(E)}.$
	\end{proof}

	\subsection{Properties of the non-linear Fourier transform}
	Recall that our analysis of the non-linear Fourier transform is based on the solutions $E(t,z)$ and $\tilde{E}(t,z)$ of the real Dirac system \eqref{dirac_system} with Neumann and Dirichlet initial conditions respectively. Most of the results of this section can be stated with arbitrary solutions of the real Dirac system regardless of the initial condition. To avoid confusion we will denote a solution of \eqref{dirac_system} with arbitrary initial condition by $S(t,z).$
	
	\begin{lemma}
		For each fixed $t>0$ the function $S(t,z)$ is a Hermite-Biehler function, i.e. for all $z \in \C_+,$
		
		$$|S(t,z)|>|S(t,\overline{z})|.$$
	\end{lemma}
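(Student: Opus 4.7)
My plan is to prove the inequality $|S(t,z)|>|S(t,\bar z)|$ by showing that the difference of squares
$$\Phi(t,z) := |S(t,z)|^2-|S(t,\bar z)|^2$$
is strictly positive for $z\in\C_+$ and $t>0$, via a direct ODE computation in which $\Phi$ vanishes at $t=0$ and whose time derivative has a definite sign. Writing $S=(u,v)^T$ and interpreting the Hermite--Biehler function attached to $S$ as the scalar $s(t,z):=u(t,z)-iv(t,z)$, the first step is to exploit the conjugate symmetry of the Dirac system: since $Q$ is real and the initial data are real, the Taylor coefficients in $z$ of $u(t,\cdot)$ and $v(t,\cdot)$ are real, so $u(t,\bar z)=\overline{u(t,z)}$ and $v(t,\bar z)=\overline{v(t,z)}$. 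Expanding $|u-iv|^2=|u|^2+|v|^2-2\Im(u\bar v)$ and its analogue at $\bar z$ (which by the conjugate symmetry contributes $+2\Im(u\bar v)$) yields
$$\Phi(t,z)=-4\,\Im\bigl(u(t,z)\,\overline{v(t,z)}\bigr),$$
so the whole problem reduces to controlling the sign of the real-valued quantity $\Im(u\bar v)$.

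Next I would substitute the explicit form of the ODE: rewriting $\Omega X'=zX-QX$ componentwise gives $u' = fu-zv$ and $v'=zu-fv$. A direct differentiation, in which the $f$-terms cancel because $f$ is real, shows
$$(u\bar v)'=\bar z\,|u|^2-z\,|v|^2,\qquad (\bar u v)'=z\,|u|^2-\bar z\,|v|^2,$$
and therefore
$$\frac{d}{dt}\bigl(u\bar v-\bar u v\bigr)=(\bar z-z)(|u|^2+|v|^2)=-2i\,\Im(z)\,(|u|^2+|v|^2),$$
which in terms of the imaginary part reads $\frac{d}{dt}\Im(u\bar v)=-\Im(z)(|u|^2+|v|^2)$. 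The initial data are real, so $\Im(u\bar v)(0,z)=0$, and integration yields
$$\Im\bigl(u(t,z)\,\overline{v(t,z)}\bigr)=-\Im(z)\int_0^t(|u|^2+|v|^2)\,ds.$$
For $z\in\C_+$ the outer factor $\Im(z)$ is positive, and linear uniqueness for the Dirac ODE forbids the vector $(u,v)$ from vanishing anywhere (since its initial condition is nonzero), so the integrand is strictly positive on $(0,t]$. Hence $\Im(u\bar v)(t,z)<0$ and $\Phi(t,z)>0$, which is the Hermite--Biehler inequality.

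The main obstacle I anticipate is really a bookkeeping one: correctly identifying the scalar Hermite--Biehler function attached to the vector-valued solution $S$ (so that $|S(t,z)|$ is to be read as $|u-iv|$) and keeping track consistently of the signs produced when conjugate symmetry swaps $z$ and $\bar z$. Once these conventions are fixed, the argument uses nothing beyond the fundamental theorem of calculus applied to an explicit ODE identity, and in particular none of the Hardy-space or de~Branges-space machinery from the preceding preliminaries is needed for this specific lemma.
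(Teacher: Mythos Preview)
Your argument is correct and complete. The computation $\frac{d}{dt}\Im(u\bar v)=-\Im(z)(|u|^2+|v|^2)$ is exactly the Lagrange--Green identity for this Dirac system, and together with the real (nonzero) initial data and uniqueness it gives the strict inequality immediately. Your identification of the scalar Hermite--Biehler function as $u-iv$ and the sign bookkeeping are both right.

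The paper, however, does not prove this lemma at all: it simply cites Romanov's lecture notes \cite{Romanov} and moves on. So there is nothing to compare at the level of strategy; you have supplied a self-contained elementary proof where the paper defers to the literature. One minor remark: the paper speaks of ``arbitrary initial condition'' for $S$, but your argument (and indeed the Hermite--Biehler statement itself, via the Schwarz reflection $S^\sharp$) tacitly requires the initial vector to be real and nonzero---which is of course the case for the Neumann and Dirichlet data actually used. You might make that hypothesis explicit.
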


	\begin{proof}
		A proof can be found in \cite{Romanov}.
	\end{proof}

	Hence, we can consider the de-Branges spaces $B(E(t,z))$ and $B(\tilde{E}(t,z)).$ By Krein's Theorem the function $E(t,z)$ has exponential type $t.$ We thus, have an inclusion $B(E(t,z)) \subset \PW_t$ as sets. It is natural to ask whether the reverse inclusion holds as well, i.e. if $B(E(t,z))=\PW_t$ as sets. Indeed, this is true if the potential function of the Dirac system \eqref{dirac_system} satisfies $f\in L_{loc}^1(\R_+).$
	
	 To prove this result, we will effectively have to show that the norms on $B(E(t,z))$ and $\PW_t$ are equivalent. More, precisely the norm on $\PW_t$ is the ordinary $L^2$-norm, and $B(E(t,z))$ is equipped with the $L^2$-norm, weighted by the function $|E(t,z)|.$ Hence, it suffices to show that $|E(t,z)|$ is bounded from above and from below. This will follow from a couple observations on the solutions of the Dirac system \eqref{dirac_system}.
	 
	 First, let us more conveniently keep track of our two solutions $E(t,z)$ and $\tilde{E}(t,z)$ simultaneously by defining the matrix 
	 
	 \begin{equation}\label{matrix}
	 M(t,z)=\begin{pmatrix}
	 A(t,z) && B(t,z) \\ C(t,z) && D(t,z)
	 \end{pmatrix}.
	 \end{equation}
	 
	 \noindent
	 Hence, $M(t,z)$ satisfies the differential equation
	 
	 \begin{equation}\label{dirac_system_matrix}
	 \begin{pmatrix}
	 0 && 1 \\ -1 && 0
	 \end{pmatrix} \frac{d}{dt} M(t,z) = zM(t,z)-\begin{pmatrix}
	 0 && f(t) \\ f(t) && 0
	 \end{pmatrix}M(t,z)
	 \end{equation}
	 
	 \noindent
	 The following result shows, that $E(t,z)$ and $\tilde{E}(t,z)$ are related.
	 
	 \begin{lemma}\label{determinant_1}
	 	The matrix $M(t,z)$ satisfies $\det M(t,z)=1$ for all $t\geq 0$ and $z\in \C.$
	 \end{lemma}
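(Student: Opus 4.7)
The plan is to treat $\det M(t,z)$ as a Wronskian-type object and show it is constant in $t$, then pin down its value using the initial condition. Because the Neumann and Dirichlet initial data are, by construction, $(1,0)^{T}$ and $(0,1)^{T}$, the matrix $M(0,z)$ equals the identity and hence $\det M(0,z)=1$. It therefore suffices to show that $\frac{d}{dt}\det M(t,z)\equiv 0$.

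To do this I would first put \eqref{dirac_system_matrix} in standard first-order form. Since $\Omega^{2}=-I$ one has $\Omega^{-1}=-\Omega$, so multiplying \eqref{dirac_system_matrix} on the left by $-\Omega$ yields
\begin{equation*}
\frac{d}{dt}M(t,z)=\bigl(-z\Omega+\Omega Q(t)\bigr)M(t,z).
\end{equation*}
A direct $2\times 2$ computation gives
\begin{equation*}
-z\Omega+\Omega Q(t)=\begin{pmatrix} f(t) & -z \\ z & -f(t)\end{pmatrix},
\end{equation*}
whose trace vanishes identically. By Abel's/Liouville's formula applied to the linear matrix ODE $M'=AM$ with $\operatorname{tr}A\equiv 0$, one obtains $\frac{d}{dt}\det M(t,z)=0$.

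If one prefers to avoid invoking the Abel--Liouville identity, the same conclusion follows by a transparent hand computation. Writing out the coefficients gives $A'=fA-zC$, $B'=fB-zD$, $C'=zA-fC$, $D'=zB-fD$, and then
\begin{equation*}
\frac{d}{dt}(AD-BC)=A'D+AD'-B'C-BC'
\end{equation*}
collapses to $0$ after cancellation of the $fAD$, $fBC$, $zAB$, and $zCD$ terms. Combined with $\det M(0,z)=1$, this proves $\det M(t,z)=1$ for all $t\geq 0$ and $z\in\mathbb{C}$.

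There is essentially no substantive obstacle here; the only point demanding care is the correct sign when inverting $\Omega$ to convert \eqref{dirac_system_matrix} into a standard linear ODE, after which the vanishing of the trace of the coefficient matrix (equivalently, the cancellation in the direct calculation) is what makes the argument work. Holomorphicity of $M(t,z)$ in $z$ ensures that the conclusion, established pointwise in $z$, holds on all of $\mathbb{C}$.
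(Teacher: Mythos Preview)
Your proof is correct and essentially the same as the paper's: both differentiate $\det M(t,z)$ directly, use the differential equation to see the derivative vanishes, and then invoke the initial condition $M(0,z)=I$. Your framing via Abel--Liouville (trace of the coefficient matrix vanishes) is a clean conceptual shortcut, but the underlying computation is identical.
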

 
	\begin{proof}
		We calculate 
		
		$$\frac{d}{dt} \det M(t,z)= \frac{d}{dt} A(t,z)D(t,z) + \frac{d}{dt}D(t,z)A(t,z)-\frac{d}{dt} B(t,z)C(t,z)-\frac{d}{dt}C(t,z)B(t,z)$$
		
		From \eqref{dirac_system_matrix} we find 
		
		$$\begin{pmatrix}
			\frac{d}{dt} C(t,z) && \frac{d}{dt} D(t,z) \\ -\frac{d}{dt} A(t,z) && -\frac{d}{dt} B(t,z)
		\end{pmatrix}
		= \begin{pmatrix}
		zA(t,z)-f(t)C(t,z) && zB(t,z)-f(t)D(t,z) \\ zC(t,z)-f(t)A(t,z) && zD(t,z)-f(t)B(t,z)
		\end{pmatrix}.
		$$
		
		\noindent
		Hence, we arrive at 
		
		$$\frac{d}{dt} \det M(t,z)= (-zC(t,z)+f(t)A(t,z))D(t,z)+(zB(t,z)-f(t)D(t,z))A(t,z)$$
		$$+(zD(t,z)-f(t)B(t,z))C(t,z)+(-zA(t,z)+f(t)C(t,z))B(t,z)=0.$$
		
		\noindent
		This implies, that $\det M(t,z)$ is constant. The initial conditions of $E(t,z)$ and $\tilde{E}(t,z)$ give
		
		$$\det M(0,z)=\det \begin{pmatrix}
		1 && 0 \\ 0 && 1
		\end{pmatrix} = 1.$$
	\end{proof}		

	This relation can be rewritten in terms of $E(t,z)$ and $\tilde{E}(t,z)$ and will play a central role in the proof of Theorem \ref{non-linear carleson}.
	
	\begin{corollary}\label{determinant_2}
	$$\det\begin{pmatrix}
		E(t,z) && \tilde{E}(t,z) \\
		E^{\sharp}(t,z) && \tilde{E}^{\sharp}(t,z)
	\end{pmatrix}=2i.$$
	\end{corollary}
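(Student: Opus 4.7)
The plan is to reduce this identity to Lemma \ref{determinant_1} by expanding $E^{\sharp}$ and $\tilde E^{\sharp}$ in terms of the scalar entries $A,B,C,D$ of the matrix $M(t,z)$. The crucial preliminary observation is that the entries $A,B,C,D$ are \emph{real on the real axis}. This follows from the fact that the matrices $\Omega$ and $Q(t)$ are real and the Neumann/Dirichlet initial vectors are real: if $X(t,z)$ solves \eqref{dirac_system} with a real initial condition, then $t\mapsto\overline{X(t,\bar z)}$ solves the same Cauchy problem, and uniqueness forces $\overline{X(t,\bar z)}=X(t,z)$. In terms of the Schwarz transform this says $A^{\sharp}=A$, $B^{\sharp}=B$, $C^{\sharp}=C$, $D^{\sharp}=D$.

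Using this together with the definitions $E=A-iC$ and $\tilde E=B-iD$, I would compute
$$E^{\sharp}(t,z)=\overline{A(t,\bar z)-iC(t,\bar z)}=A(t,z)+iC(t,z),$$
and analogously $\tilde E^{\sharp}(t,z)=B(t,z)+iD(t,z)$. Substituting these into the $2\times2$ determinant gives
$$E\tilde E^{\sharp}-\tilde E E^{\sharp}=(A-iC)(B+iD)-(B-iD)(A+iC).$$

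Expanding both products, the terms $AB$, $CD$ cancel against $BA$, $DC$, while the mixed imaginary terms combine as $iAD-iCB+iDA-iBC=2i(AD-BC)$. Hence the determinant equals $2i\det M(t,z)$, and Lemma \ref{determinant_1} yields the value $2i$ at once. There is no real obstacle here; the only step that requires a remark rather than a calculation is the real-on-real property of $A,B,C,D$, which I would either invoke from the previous section or justify in one line via the uniqueness argument above.
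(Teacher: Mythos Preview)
Your proof is correct and is essentially the same as the paper's: the paper encodes your computation as the matrix identity $M=\tfrac{1}{2}\begin{pmatrix}1&1\\ i&-i\end{pmatrix}\begin{pmatrix}E&\tilde E\\ E^{\sharp}&\tilde E^{\sharp}\end{pmatrix}$ and takes determinants, which is just your direct expansion in matrix form. Both arguments rest on $A^{\sharp}=A$, $C^{\sharp}=C$, etc., and on Lemma~\ref{determinant_1}.
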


	\begin{proof}
		This follows immediately from the equation
		
		$$ \begin{pmatrix}
		A(t,z) && B(t,z) \\ C(t,z) && D(t,z)
		\end{pmatrix} = \frac{1}{2}\begin{pmatrix}
		 1 && 1 \\ i && -i
		\end{pmatrix}
		\begin{pmatrix}
		E(t,z) && \tilde{E}(t,z) \\
		E^{\sharp}(t,z) && \tilde{E}^{\sharp}(t,z)
		\end{pmatrix}.$$
	\end{proof}

	In the following we will talk for simplicity only about $E(t,z).$ All results remain true as well for $\tilde{E}(t,z).$ The next step towards showing $B(E(t,z))=\PW_t$ as sets, is the following 
	
	\begin{lemma}\label{differential_equation}
		The Hermite-Biehler function $S(t,z)$ solves the differential equation
		
		$$\frac{d}{dt} S(t,z) =-izS(t,z)+f(t)S^{\sharp}(t,z).$$
	\end{lemma}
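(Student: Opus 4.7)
The claim reduces to a short coordinate computation in the Dirac system, followed by an identification of $u+iv$ with $S^{\sharp}$. Write $S(t,z) = u(t,z) - iv(t,z)$ where $(u(t,z),v(t,z))^{T}$ is the vector solution of \eqref{dirac_system}. The first step is to invert $\Omega$. Since $\Omega^{-1} = -\Omega$, left-multiplying \eqref{dirac_system} by $-\Omega$ yields the component-wise system
$$\dot u = fu - zv, \qquad \dot v = zu - fv.$$
Subtracting $i$ times the second equation from the first gives
$$\dot S = \dot u - i\dot v = (fu - zv) - i(zu - fv) = f(u+iv) - z(v+iu).$$

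The second step is a purely algebraic rearrangement. Using $i(u - iv) = iu + v = v + iu$, we obtain
$$-z(v + iu) = -iz(u - iv) = -izS,$$
so that
$$\dot S = -izS + f(u + iv).$$

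The final step is to identify $u+iv$ with $S^{\sharp}$. Because $\Omega$, $Q$ and the initial data have real entries and $f$ is real, the components $u(t,\cdot)$ and $v(t,\cdot)$ are entire functions of $z$ that are real on $\R$; hence for real $x$, $u(t,x)+iv(t,x) = \overline{u(t,x)-iv(t,x)} = \overline{S(t,x)} = S^{\sharp}(t,x)$. Both $u+iv$ and $S^{\sharp}$ are entire in $z$, and they agree on $\R$, so by analytic continuation (Schwarz reflection) the identity $u+iv = S^{\sharp}$ holds on all of $\C$. Substituting back gives $\dot S = -izS + fS^{\sharp}$, as claimed.

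The computation itself is entirely mechanical; the only place demanding any care is the passage $u+iv = S^{\sharp}$, which uses the real structure of \eqref{dirac_system}. Once this is in place, the proof is two lines of algebra.
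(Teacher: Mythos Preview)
Your proof is correct and follows essentially the same route as the paper: both invert $\Omega$ to obtain the componentwise system $\dot u=fu-zv$, $\dot v=zu-fv$, form $\dot S=\dot u-i\dot v$, and then identify $u+iv$ with $S^{\sharp}$ using that the components are real on $\R$. The only difference is cosmetic: the paper simply cites $A^{\sharp}=A$, $C^{\sharp}=C$ for the last step, while you spell out the analytic continuation argument explicitly.
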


	\begin{proof}
		From \eqref{dirac_system_matrix} we get 
		
		$$\begin{pmatrix}
		\frac{d}{dt} A(t,z) \\ \frac{d}{dt} C(t,z) 
		\end{pmatrix}
		= \begin{pmatrix}
		-zC(t,z)+f(t)A(t,z) \\ zA(t,z)-f(t)C(t,z)
		\end{pmatrix}.
		$$
		
		We compute
		
		$$\frac{d}{dt} E(t,z)= \frac{d}{dt}A(t,z)-iC(t,z)= -zC(t,z)+f(t)A(t,z)-i(zA(t,z)-f(t)C(t,z))$$
		
		$$=-z(C(t,z)+iA(t,z))+f(t)(A(t,z)+iC(t,z))=-izE(t,z)+f(t)E^{\sharp}(t,z),$$
		
		\noindent
		where we used in the last step that $A^{\sharp}=A$ and $C^{\sharp}=C.$
	\end{proof}

	To any Hermite-Biehler function $S(t,z)$ we can associate a scattering function by
	
	$$\mathfrak{S}(t,z)=e^{itz}S(t,z).$$

	\begin{lemma}\label{differential_equation_2}
		The scattering function $\mathfrak{S}(t,z)$ satisfies the differential equation
		
		$$\frac{d}{dt} \mathfrak{S}(t,z)=iz\mathfrak{S}(t,z)+e^{itz}\frac{d}{dt} S(t,z)=f(t)e^{2itz}\mathfrak{S}^{\sharp}(t,z).$$
	\end{lemma}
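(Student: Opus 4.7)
The plan is to verify both equalities directly: the first by the ordinary product rule, and the second by invoking Lemma \ref{differential_equation} and the definition of the Schwarz transform.

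First I would differentiate $\mathfrak{S}(t,z) = e^{itz}S(t,z)$ in $t$. Since $z$ is fixed and only $e^{itz}$ and $S(t,z)$ depend on $t$, the product rule immediately yields
$$\frac{d}{dt}\mathfrak{S}(t,z) = iz\, e^{itz} S(t,z) + e^{itz}\frac{d}{dt}S(t,z) = iz\,\mathfrak{S}(t,z) + e^{itz}\frac{d}{dt}S(t,z),$$
which is the first equality in the statement.

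Next I would substitute the expression from Lemma \ref{differential_equation}, namely $\frac{d}{dt}S(t,z) = -izS(t,z) + f(t) S^{\sharp}(t,z)$, into the right-hand side above. The $iz\,\mathfrak{S}(t,z)$ contribution from the product rule exactly cancels the $-iz\, e^{itz}S(t,z)$ contribution coming from $\frac{d}{dt}S(t,z)$, leaving
$$\frac{d}{dt}\mathfrak{S}(t,z) = f(t)\, e^{itz}\, S^{\sharp}(t,z).$$

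Finally I need to rewrite $S^{\sharp}(t,z)$ in terms of $\mathfrak{S}^{\sharp}(t,z)$. By definition of the Schwarz transform and since $t \in \R$,
$$\mathfrak{S}^{\sharp}(t,z) = \overline{\mathfrak{S}(t,\overline z)} = \overline{e^{it\overline z} S(t,\overline z)} = e^{-itz}\, S^{\sharp}(t,z),$$
so $S^{\sharp}(t,z) = e^{itz}\mathfrak{S}^{\sharp}(t,z)$. Substituting this back gives $\frac{d}{dt}\mathfrak{S}(t,z) = f(t)\, e^{2itz}\,\mathfrak{S}^{\sharp}(t,z)$, as required. There is no real obstacle here; the only point one must be careful about is the correct handling of the Schwarz transform under multiplication by $e^{itz}$, i.e.\ that it conjugates the exponential factor to $e^{-itz}$, which is what produces the doubled phase $e^{2itz}$ in the final identity.
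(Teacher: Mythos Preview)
Your proof is correct and follows essentially the same approach as the paper: apply the product rule, substitute the differential equation from Lemma \ref{differential_equation}, and simplify. You are slightly more explicit than the paper in verifying the identity $S^{\sharp}(t,z) = e^{itz}\mathfrak{S}^{\sharp}(t,z)$, which the paper leaves implicit in its final step.
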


	\begin{proof}
		A short calculation involving Lemma \ref{differential_equation} gives
		
		$$\frac{d}{dt} \mathfrak{S}(t,z)=ize^{itz}S(t,z)+e^{itz}\frac{d}{dt}S(t,z)=iz\mathfrak{S}(t,z)+e^{itz}\frac{d}{dt}S(t,z)$$
		$$=iz\mathfrak{S}(t,z)+e^{itz}\big(-izS(t,z)+f(t)S^{\sharp}(t,z)\big)=f(t)e^{2itz}\mathfrak{S}^{\sharp}(t,z).$$

	\end{proof}

	We denote by $\arg E$ the continuous branch of the argument of $E$ in the closed upper half-plane satisfying $\arg E(t,0)=0.$ For $\tilde{E}$ we choose the argument such that $\arg \tilde{E}(t,0)=-\frac{\pi}{2}.$ The last Lemma gives rise to a differential equation for $|E(t,x)|$ where $x\in \R.$ 
	
	\begin{lemma}\label{derivative_formula}
		For all $x \in \R$ the function $|S(t,x)|$ satisfies
		
		$$\frac{d}{dt} |S(t,x)|= f(t)|S(t,x)|\cos[2\arg S(t,x)].$$
	\end{lemma}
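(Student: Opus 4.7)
The plan is to pass through $|S(t,x)|^2$, where $x\in\R$, rather than trying to differentiate $|S(t,x)|$ directly, so that the Schwarz reflection identity $S^{\sharp}(t,x)=\overline{S(t,x)}$ on the real line can be exploited. I would apply Lemma \ref{differential_equation} to obtain the ODE
$$\frac{d}{dt} S(t,x) = -ix\, S(t,x) + f(t)\, \overline{S(t,x)},$$
which is the starting point of the computation.

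Next, I would write $|S(t,x)|^2 = S(t,x)\overline{S(t,x)}$ and differentiate in $t$ using the product rule. The purely imaginary term $-ix|S(t,x)|^2$ cancels against its conjugate, and only the contribution of the potential $f(t)$ survives, giving
$$\frac{d}{dt} |S(t,x)|^2 = f(t)\bigl(S(t,x)^2 + \overline{S(t,x)}^{\,2}\bigr) = 2f(t)\,\Re\bigl(S(t,x)^2\bigr).$$
Writing $S(t,x)=|S(t,x)|e^{i\arg S(t,x)}$ immediately yields $\Re(S(t,x)^2)=|S(t,x)|^2\cos[2\arg S(t,x)]$, so that
$$\frac{d}{dt}|S(t,x)|^2 = 2f(t)|S(t,x)|^2\cos[2\arg S(t,x)].$$

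Finally, using $\frac{d}{dt}|S(t,x)|^2 = 2|S(t,x)|\,\frac{d}{dt}|S(t,x)|$ at points where $|S(t,x)|\neq 0$, I divide through by $2|S(t,x)|$ and obtain the claimed identity. The identity extends trivially to points where $S(t,x)$ vanishes since both sides become zero there (in fact zeros of $E(t,\cdot)$ do not occur on $\R$ by the Hermite--Biehler property, so this is not even an issue for the principal solutions). The only mild subtlety is that the continuous branch of $\arg S(t,x)$ is well-defined along the real axis precisely because $S$ has no real zeros, which justifies working with $\cos[2\arg S(t,x)]$ pointwise; I do not anticipate any genuine difficulty in the argument, the whole proof being a direct computation built on the first-order ODE of Lemma \ref{differential_equation} together with $S^{\sharp}=\bar S$ on $\R$.
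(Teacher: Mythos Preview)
Your proof is correct and follows essentially the same route as the paper: both compute $\frac{d}{dt}|S(t,x)|$ via $\Re\bigl(S\,\overline{\partial_t S}\bigr)/|S|$ (equivalently, by differentiating $|S|^2$), plug in the ODE of Lemma~\ref{differential_equation} together with $S^{\sharp}(t,x)=\overline{S(t,x)}$ on $\R$, and identify the surviving term as $f(t)\Re(S^2)=f(t)|S|^2\cos[2\arg S]$. The only cosmetic difference is that the paper writes out the computation in terms of the real components $A,C$ rather than $S,\overline S$; also, the non-vanishing of $S(t,x)$ on $\R$ follows from ODE uniqueness (a nontrivial initial vector cannot evolve to zero), not from the Hermite--Biehler inequality per se, but this does not affect the argument.
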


	\begin{proof}
		Recall that for all $z\in\C$ $A(t,\overline{z})=\overline{A(t,z)}$ and $C(t,\overline{z})=\overline{C(t,z)}.$ Hence, $A(t,x)$ and $C(t,x)$ are both real on the real line. Thus,
			
			\begin{equation}\label{derivative}
			\frac{d}{dt} |E(t,x)|=\frac{\Re(E(t,x)\overline{E'(t,x)})}{|E(t,x)|}.
			\end{equation}

		\noindent
		Note that this expression is well-defined by Lemma \ref{determinant_1}. By Lemma \ref{differential_equation} we find
		
		$$\Re[E(t,x)\overline{E'(t,x)}]=\Re[(A(t,x)-iC(t,x))(ix(A(t,x)+iC(t,x))+f(t)(A(t,x)-iC(t,x)))]$$
		
		$$=f(t)(A(t,x)^2-C(t,x)^2)=f(t)\Re[(A(t,x)-iC(t,x))^2]=f(t)\Re[E(t,x)^2]$$
		$$=f(t)|E(t,x)|^2\cos[2\arg E(t,x)].$$
		
		We conclude by plugging this back into \eqref{derivative}.
	\end{proof}

	Again, the same result holds for $\tilde{E}(t,z).$ We are finally ready to prove the main result of this chapter.
	
	\begin{lemma}\label{pw}
		The de Branges spaces $B(E(t,z))$ and $B(\tilde{E}(t,z))$ are equal to the Paley-Wiener space $\PW_t$ as sets
	\end{lemma}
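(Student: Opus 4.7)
The plan is to reduce the set equality to the two-sided bound $c_t \leq |E(t,x)| \leq C_t$ uniformly for $x \in \R$; once this is in hand, the weighted $L^2$-norm defining $B(E(t,z))$ is equivalent to the unweighted $L^2(\R)$-norm defining $\PW_t$, and combined with the inclusion $B(E(t,z)) \subset \PW_t$ already provided by Krein's theorem, equality of the two sets will follow.

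The core is a direct integration of Lemma \ref{derivative_formula}. Corollary \ref{determinant_2} guarantees $|E(t,x)| > 0$ on $\R$, since otherwise the determinantal identity $E\tilde E^{\sharp} - \tilde E E^{\sharp} = 2i$ would fail at such a point. We may therefore divide that lemma by $|E(t,x)|$ to obtain the scalar ODE
$$\frac{d}{dt}\log|E(t,x)| = f(t)\cos[2\arg E(t,x)].$$
The Neumann initial conditions $A(0,z)=1$, $C(0,z)=0$ give $E(0,z)=1$ and hence $\log|E(0,x)|=0$, so integrating from $0$ to $t$ produces
$$\log|E(t,x)| = \int_{0}^{t} f(s)\cos[2\arg E(s,x)]\,ds.$$
Using $|\cos|\leq 1$ and Cauchy--Schwarz on the bounded interval $(0,t)$ we obtain the uniform estimate
$$\bigl|\log|E(t,x)|\bigr| \leq \int_{0}^{t} |f(s)|\,ds \leq \sqrt{t}\,\|f\|_{L^{2}(0,t)},$$
and exponentiation gives the desired two-sided bound. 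The identical argument works for $\tilde E(t,z)$, since the Dirichlet initial condition yields $\tilde E(0,z) = -i$, still unimodular, so $\log|\tilde E(0,x)| = 0$.

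The only remaining piece -- passing from norm equivalence to set equality -- is essentially bookkeeping. The lower bound on $|E(t,\cdot)|$ rules out real zeros of $E$, and the Hermite--Biehler inequality rules out zeros in $\C_+$, so $f/E$ is analytic in $\overline{\C_+}$ for every $f\in\PW_t$, with $L^2$ boundary values provided by the lower bound on $|E|$; a standard Paley--Wiener / Phragmén--Lindelöf comparison of exponential types then places the quotient in $\Hardy^2(\C_+)$, and the same reasoning handles $f^{\sharp}/E$. This is classical de Branges theory and may be invoked from \cite{DeBranges} without reproduction. The one genuine obstacle in the lemma is therefore the two-sided bound itself, and Lemma \ref{derivative_formula} delivers it in closed form precisely because the normalized initial data force $|E(0,x)|=|\tilde E(0,x)|=1$.
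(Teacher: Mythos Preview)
Your proof is correct and follows the same strategy as the paper: integrate the logarithmic ODE from Lemma \ref{derivative_formula} with the unimodular initial data $|E(0,x)|=|\tilde E(0,x)|=1$ to obtain a uniform two-sided bound on $|E(t,x)|$, and then pass from norm equivalence to set equality. The one difference worth noting is in the lower bound: you extract it directly from the same integral, since $\bigl|\log|E(t,x)|\bigr|\le\int_0^t|f|$ gives both inequalities at once, whereas the paper uses the integration only for the upper bound and obtains the lower bound indirectly via the determinant relation of Lemma \ref{determinant_1} (if $|E(t,x)|$ were not bounded below, $|\tilde E(t,x)|$ would be unbounded, contradicting the upper bound already proved for $\tilde E$). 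Your route is a touch more economical; the paper's has the mild advantage of making the interplay between $E$ and $\tilde E$ explicit.
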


	\begin{proof}
		We will show that the norms on $B(E(t,z))$ and $\PW_t$ are equivalent. Recall that the norm on the Paley-Wiener space $\PW_t$ is just the ordinary $L^2$-norm on the real line. Hence, it suffices to show that $E(t,z)$ is bounded from above and below.
		
		Let us first show that $E(t,z)$ is bounded from above. By Lemma \ref{determinant_1}, $|E(t,x)|\neq 0$ for all $x\in \R.$ From Lemma \ref{derivative_formula} we get
		
		$$\frac{\frac{d}{dt}{|E(t,x)|}}{|E(t,x)|}=f(t)\cos[2\arg E(t,x)].$$
		
		\noindent
		Integrating this equation and taking exponentials yields
		
		$$|E(t,x)|=|E(0,x)|\int_{0}^{t} f(s) \cos[2\arg E(s,x)] ds.$$
		
		\noindent
		Since $E(0,z)=1$ for all $z \in \C,$ the last equation simplifies to
		
		$$|E(t,x)|\leq \int_{0}^{t} |f(s)| ds,$$
		
		\noindent
		which concludes the proof of the upper bound. The same bound holds for $\tilde{E}(t,x).$	Let us now turn towards the lower bound. Lemma \ref{determinant_1} tells us that for all $z \in \C$
		
		$$\det \begin{pmatrix}
		 A(t,x) && B(t,x) \\ C(t,x) && D(t,x)
		\end{pmatrix}=1.$$
		
		If $E(t,x)=A(t,x)-iC(t,x)$ was not bounded away from zero, then $\tilde{E}(t,x)=B(t,x)-iD(t,x)$ had to be arbtitrarily large, contradicting the upper bound of $\tilde{E}(t,x).$ Switching the roles of $E(t,x)$ and $\tilde{E}(t,x)$ in the previous argument gives the lower bound for $\tilde{E}(t,x).$
	\end{proof}

	Since we have shown that $B(E(t,z))=\PW_t$ as sets, we see that if $t<t',$ then $B(E(t,z)) \subset B(E(t',z)).$ Hence one could guess that there is a measure $\mu$ on $\R$ such that we have an isometric embedding $B(E(t,z)) \hookrightarrow L^2(\mu)$ for all $t\geq0.$ This is indeed true. We will call $\mu$ the spectral measure of $E(t,z).$ Moreover by Lebesgue's decomposition Theorem we can decompose 
	
	$$\mu=w(s)ds + d\mu_s$$ 
	
	\noindent
	into an absolute continuous and singular part. Again the same construction for $\tilde{E}(t,z)$ gives rise to a spectral measure $\tilde{\mu}.$ The following non-trivial Theorem is crucial for the proof of our main result and used to be a difficult problem in its own right. Essentially this boils down to show that the one-dimensional Schrödinger operator with $L^2$-potential has absolutely continuous spectrum which covers the real line.
	
	\begin{theorem}\label{spectral_measure}
		Let $w(s)$ and $\tilde{w}(s)$ be the densities of the absolute continuous parts of the spectral measures $\mu$ and $\tilde{\mu}$ of $E(t,z)$ and $\tilde{E}(t,z),$ respectively. Then,
		
		$$\frac{\log w(t)}{1+t^2} \in L^1(\R), \quad \frac{\log \tilde{w}(t)}{1+t^2} \in L^1(\R).$$
	\end{theorem}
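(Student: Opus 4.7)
The plan is to split the integral according to the sign of $\log w$ and bound each side separately. Writing
$$\int_\R \frac{\log w(s)}{1+s^2}\, ds = \int_\R \frac{\log^+ w(s)}{1+s^2}\, ds - \int_\R \frac{\log^- w(s)}{1+s^2}\, ds,$$
the positive part is immediate from Poisson-finiteness of $\mu$: the elementary inequality $\log^+ x \leq x$ yields
$$\int_\R \frac{\log^+ w(s)}{1+s^2}\, ds \leq \int_\R \frac{w(s)}{1+s^2}\, ds \leq \int_\R \frac{d\mu(s)}{1+s^2} < \infty.$$

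For the negative part, which is the heart of the matter, my plan is to identify $w(s)$ with a scattering-theoretic quantity built from $|a(t,s)|$ in the limit $t\to\infty$, and then feed this into the non-linear Parseval identity \eqref{non_linear_parseval}. Specifically, I would establish that $w(s) = c\,|a_\infty(s)|^{-2}$ for a universal constant $c>0$, where $|a_\infty(s)| := \lim_{t\to\infty}|a(t,s)|$ exists in measure (hence along a subsequence almost everywhere) by Lemma \ref{measure}. This identification would rest on the isometric embedding $B(E(t,z))\hookrightarrow L^2(\mu)$, the fact that the $B(E(t,z))$ inner product has density $1/|E(t,x)|^2$ on $\R$, the comparison $|E(t,s)|^2 \asymp |a(t,s)|^2$ coming from Corollary \ref{determinant_2} and the relation $|a(t,s)|^2-|b(t,s)|^2=1$, and a limiting argument along the nested de Branges chain $\{B(E(t,z))\}_{t\geq 0}$.

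With the identification in hand, non-linear Parseval yields the uniform-in-$t$ bound
$$\int_\R \log|a(t,s)|\, ds = ||f||^2_{L^2(0,t)} \leq ||f||^2_{L^2(\R_+)} < \infty.$$
Because $|a(t,s)|\geq 1$ the integrand is non-negative, so Fatou's lemma applied to an almost-everywhere convergent subsequence gives $\int_\R \log|a_\infty(s)|\, ds < \infty$. Translating this through $w(s)=c|a_\infty(s)|^{-2}$ shows $\log^- w \in L^1(\R)$ \emph{unweighted}, which is much stronger than the weighted bound required. The same argument applied to $\tilde E(t,z)$ with its companion scattering quantities produces the parallel statement for $\tilde w$.

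The main obstacle is the identification $w(s) \asymp |a_\infty(s)|^{-2}$. This is precisely the spectral-theoretic content that the Dirac system \eqref{dirac_system} with $L^2$ potential $f$ has absolutely continuous spectrum covering $\R$, with density expressible through the transmission coefficient $1/a_\infty$. Its proof is classical but far from easy: it typically proceeds either through Deift--Killip type sum rules adapted to canonical systems, or through direct transfer-matrix estimates for $L^2$ potentials. This is the deep external input that the present exposition, following Poltoratski, imports as a black box from the Krein--de Branges machinery, which is why the theorem is stated here without a self-contained proof.
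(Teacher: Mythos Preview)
The paper does not give a proof of this theorem at all: it simply cites \cite{Deift} and \cite{Denisov} and moves on. So there is no ``paper's own proof'' to compare against; your proposal is being measured against a black-box citation.

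Your outline is a reasonable heuristic for why the Szeg\H{o} condition should hold, and your final paragraph correctly diagnoses that the identification $w(s)\asymp|a_\infty(s)|^{-2}$ is exactly the deep content being imported. However, as a proof \emph{within the logical structure of this paper}, the argument is circular in two places. First, you invoke Lemma~\ref{measure} to get almost-everywhere subsequential convergence of $|a(t,s)|$, but in the paper Lemma~\ref{measure} is derived \emph{as a corollary of} Theorem~\ref{spectral_measure}: the step ``since $w(s)\neq 0$ almost everywhere'' in its proof is precisely the consequence of the Szeg\H{o} condition you are trying to establish. Second, the non-linear Parseval identity~\eqref{non_linear_parseval} is itself stated without proof in this paper, and in the literature its derivation (via trace formulae or Krein-system sum rules) lives in the same circle of ideas as the Deift--Killip result; invoking it here does not sidestep the hard analysis.

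So your reduction is not wrong as a roadmap, but it does not constitute an independent proof: it relocates the difficulty to two inputs (Lemma~\ref{measure} and~\eqref{non_linear_parseval}) that, in this paper's dependency graph, sit downstream of or parallel to the theorem itself. The honest answer---which you essentially arrive at---is that this is a genuinely hard spectral-theoretic fact, and the paper is right to outsource it.
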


	\begin{proof}
		A proof can be found in \cite{Deift} and \cite{Denisov}.
	\end{proof}

	This result implies in particular that $w(s),\tilde{w}(s)\neq 0$ almost everywhere on $\R.$ As a corollary we obtain the following 
	
	\begin{lemma}\label{measure}
		$|E(t,x)|$ converges in measure to $\frac{1}{\sqrt{w(s)}}$ on $\R$
	\end{lemma}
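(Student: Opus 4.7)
The plan is to use the isometric embedding $B(E(t,z)) \hookrightarrow L^2(\mu)$ to first establish weak-$*$ convergence of the measures $|E(t,\cdot)|^{-2}\,dx$ to $\mu$, and then upgrade this to convergence in measure of the densities. For any $f \in \PW_{T_0}$ and any $t \geq T_0$, Lemma \ref{pw} gives $f \in B(E(t,z))$, and the isometry yields
\[
\int_\R \frac{|f(x)|^2}{|E(t,x)|^2}\,dx \;=\; \int_\R |f(x)|^2\,d\mu(x) \;=\; \int_\R |f|^2\,w\,dx + \int_\R |f|^2\,d\mu_s,
\]
whose right-hand side is independent of $t$. Using a Fejer-Riesz factorization together with standard density arguments, every non-negative compactly supported continuous test function is an $L^1$-limit of functions $|f|^2$ with $f \in \bigcup_{T_0} \PW_{T_0}$, which gives the weak-$*$ convergence $|E(t,\cdot)|^{-2}\,dx \to d\mu$ as $t\to \infty$.

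To upgrade this to convergence in measure, I would argue pointwise at a generic $x_0$. For almost every $x_0 \in \R$, the symmetric derivative of $\mu_s$ vanishes (by the Lebesgue differentiation theorem), $x_0$ is a Lebesgue point of $w$, and $w(x_0) > 0$ by Theorem \ref{spectral_measure}. Thus $(2\epsilon)^{-1}\mu((x_0-\epsilon,x_0+\epsilon)) \to w(x_0)$ as $\epsilon \to 0$, and combining with the weak-$*$ convergence above, a diagonal argument gives that short-interval averages of $|E(t,x)|^{-2}$ about $x_0$ converge to $w(x_0)$ along some $t=t(\epsilon) \to \infty$. Since $E(t,z)$ has exponential type $t$ (Krein), a Bernstein-type estimate controls the variation of $|E(t,x)|^{-2}$ over intervals of length $\lesssim 1/t$, so such averages coincide with the pointwise value up to a small error. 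Taking reciprocals and square roots, and using positivity of $w$, then yields $|E(t,x)| \to 1/\sqrt{w(x)}$ in measure on every bounded interval, hence on $\R$.

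The main obstacle is the passage from weak-$*$ convergence of measures to convergence in measure of their densities, which fails in general (the densities of $g_t(x)=1+\sin(tx)$ converge weakly to the constant $1$ but not in measure). Overcoming this requires exploiting the specific analytic origin of $|E(t,x)|$ as the modulus of a solution of the Dirac system with $L^2$ potential: the Bernstein-type control coming from exponential type $t$, together with the time dynamics of Lemma \ref{derivative_formula} and the non-linear Parseval identity $\|\log|a(t,\cdot)|\|_{L^1(\R)}=\|f\|_{L^2(0,t)}^2$ (which remains bounded as $t\to\infty$), supplies exactly the additional control needed to rule out pathological oscillations.
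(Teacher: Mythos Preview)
Your starting point---the isometric embedding $\int |f|^2 |E(t,\cdot)|^{-2}\,dx = \int |f|^2\,d\mu$ for $f\in\PW_t$---is exactly what the paper uses, and the paper's own proof is in fact terser than yours: it simply asserts that this identity forces $|E(t,\cdot)|^{-2}\to w$ in measure, implicitly deferring to known results from Krein-system spectral theory (cf.\ Denisov \cite{Denisov}). So you are right that the weak-$*$-to-measure passage is the real content here, and you deserve credit for flagging it explicitly with the $1+\sin(tx)$ counterexample.

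That said, the mechanism you propose to close the gap does not work as written. First, Bernstein's inequality applies to $E(t,\cdot)$, not to $|E(t,\cdot)|^{-2}$; to control variation of the latter on scale $1/t$ you would need a \emph{lower} bound on $|E(t,x)|$ that is uniform in $t$, and the bound coming from Lemma~\ref{pw} via $\det M=1$ is only $|E(t,x)|\gtrsim(\int_0^t|f|)^{-1}$, which degenerates as $t\to\infty$ for $f\in L^2\setminus L^1$. Second, your diagonal argument yields convergence only along a particular sequence $t(\epsilon)\to\infty$, not for all $t\to\infty$; and since the exceptional $x_0$-set on which the argument fails depends a priori on the chosen sequence, one cannot simply pass from subsequential to full convergence. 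Third, the invocation of the non-linear Parseval identity and Lemma~\ref{derivative_formula} at the end is too vague to constitute an argument---you have not explained what quantity they bound or how that bound rules out oscillation of $|E(t,x)|$.

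The result is genuinely true and is a continuous analogue of the Szeg\H{o}-theory statement that $|\varphi_n^*|^2 w\to 1$ in $L^1$ for OPUC with $\ell^2$ Verblunsky coefficients; a complete argument uses the Szeg\H{o} condition of Theorem~\ref{spectral_measure} in an essential way (not merely that $w>0$ a.e.) and proceeds through the outer function with boundary values $\sqrt{w}$ rather than through Bernstein-type local control. Since the paper treats this lemma as background and cites \cite{Denisov}, the cleanest fix for your write-up is to do the same.
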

	
	\begin{proof}
		Since we have the isometric embedding for all $t>0$
		
		$$\int_{\R} \frac{|f(x)|^2}{|E(t,x)|^2} dx = \int_{\R}  |f(x)|^2 d\mu,$$
		
		one sees that as $t\rightarrow \infty$
		
		$$\frac{1}{|E(t,s)|^2}\rightarrow d\mu(s)=w(s)$$ 
		
		in measure on $\R.$ Since $w(s)\neq 0$ almost everywhere, we obtain the desired result.
	\end{proof}
	
	Thus, combining Lemma \ref{misc_1}, Lemma \ref{pw} and the isometric embeddings $B(E(t,z)) \hookrightarrow L^2(\mu)$ and $B(\tilde{E}(t,z)) \hookrightarrow L^2(\mu)$ gives
	
	\begin{equation}\label{misc_2}
		||K(t,\lambda, z)||_{B(E(t,z)}=\sup_{f \in \PW_t, ||f||_{\mu}\leq 1} |f(\lambda)|, \quad ||\tilde{K}(t,\lambda, z)||_{B(\tilde{E}(t,z)}=\sup_{f \in \PW_t, ||f||_{\tilde{\mu}}\leq 1} |f(\lambda)|,
	\end{equation} 
	
	\noindent
	where $K(t,\lambda, \dot)$ and $\tilde{K}(t,\lambda,\dot)$ are the reproducing kernels of $B(E(t,z))$ and $B(\tilde{E}(t,z)).$
	
	Furthermore, we can even give an explicit expression for the reproducing kernels.
	
	\begin{lemma}\label{reproducing_kernel}
		For any $t>0$ and $\lambda, z \in \C$ we have
		
		$$K(t,\lambda,z)=\frac{1}{2\pi i} \frac{E(z)E^{\sharp}(\overline{\lambda})-E^{\sharp}(\overline{z})E(\overline{\lambda})}{\overline{\lambda}-z}=\frac{1}{\pi} \frac{A(z)C(\overline{\lambda})-C(z)A(\overline{\lambda})}{\overline{\lambda}-z},$$
		
		$$\tilde{K}(t,\lambda,z)=\frac{1}{2\pi i} \frac{\tilde{E}(z)\tilde{E}^{\sharp}(\overline{\lambda})-\tilde{E}^{\sharp}(\overline{z})\tilde{E}(\overline{\lambda})}{\overline{\lambda}-z}=\frac{1}{\pi} \frac{B(z)D(\overline{\lambda})-D(z)B(\overline{\lambda})}{\overline{\lambda}-z}.$$
	\end{lemma}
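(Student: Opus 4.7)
My plan is to verify directly that the right-hand side is a reproducing kernel for $B(E(t,z))$ and then deduce the second equality by an algebraic identity. I suppress the parameter $t$ throughout and write $E=E(t,\cdot)$, $A=A(t,\cdot)$, $C=C(t,\cdot)$.

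The equivalence of the two formulas is a direct algebraic computation. Since $A$ and $C$ are entire functions that are real on $\R$, one has $A^{\sharp}=A$ and $C^{\sharp}=C$, and therefore $E^{\sharp}=A+iC$. Expanding
\begin{equation*}
E(z)E^{\sharp}(\overline{\lambda})-E^{\sharp}(z)E(\overline{\lambda})=(A(z)-iC(z))(A(\overline\lambda)+iC(\overline\lambda))-(A(z)+iC(z))(A(\overline\lambda)-iC(\overline\lambda))
\end{equation*}
the real parts cancel and the imaginary parts collect to $2i[A(z)C(\overline\lambda)-C(z)A(\overline\lambda)]$, which gives the second equality after dividing by $2\pi i(\overline\lambda-z)$.

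For the first equality I will verify that the explicit formula $K(\lambda,z)$ lies in $B(E)$ and satisfies the reproducing property. Entireness in $z$ is automatic because the numerator vanishes at $z=\overline\lambda$, removing the apparent singularity. To see $K(\lambda,\cdot)/E\in H^{2}(\C_+)$, write
\begin{equation*}
\frac{K(\lambda,z)}{E(z)}=\frac{1}{2\pi i(\overline\lambda-z)}\Bigl[E^{\sharp}(\overline\lambda)-\frac{E^{\sharp}(z)}{E(z)}E(\overline\lambda)\Bigr];
\end{equation*}
the bracket is bounded in $\overline{\C_+}$ by the Hermite-Biehler inequality $|E^{\sharp}/E|\le 1$, and the factor $1/(\overline\lambda-z)$ gives uniform $L^{2}$-control on horizontal lines in $\C_+$. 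The analogous computation for $K(\lambda,\cdot)^{\sharp}/E$ proceeds identically, using that the apparent pole at $z=\lambda\in\C_+$ is removable.

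For the reproducing identity I assume $\lambda\in\C_+$ (the case $\lambda\in\C_-$ follows by Schwarz reflection and the real case by continuity). On $\R$ we have $E^{\sharp}(x)=\overline{E(x)}$ and $|E(x)|^{2}=E(x)E^{\sharp}(x)$, so taking the complex conjugate of the numerator of $K(\lambda,x)$ and simplifying yields
\begin{equation*}
\frac{\overline{K(\lambda,x)}}{|E(x)|^{2}}=\frac{1}{2\pi i(\lambda-x)}\Bigl[\frac{E^{\sharp}(\lambda)}{E^{\sharp}(x)}-\frac{E(\lambda)}{E(x)}\Bigr].
\end{equation*}
Multiplying by $f(x)$ and integrating splits the inner product into
\begin{equation*}
\langle f,K(\lambda,\cdot)\rangle_{B(E)}=\frac{E^{\sharp}(\lambda)}{2\pi i}\int_{\R}\frac{f(x)/E^{\sharp}(x)}{\lambda-x}dx-\frac{E(\lambda)}{2\pi i}\int_{\R}\frac{f(x)/E(x)}{\lambda-x}dx.
\end{equation*}
Since $f/E\in H^{2}(\C_+)$, Cauchy's integral formula evaluates the second integral as $-2\pi i\,f(\lambda)/E(\lambda)$, contributing exactly $f(\lambda)$. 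Since $f^{\sharp}/E\in H^{2}(\C_+)$, its pointwise conjugate on $\R$ is the boundary value of a function in $H^{2}(\C_-)$, and that function is precisely $f/E^{\sharp}$; closing the contour in $\C_-$ shows the first integral vanishes because $\lambda\in\C_+$.

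The main obstacle is keeping the $H^{2}$ bookkeeping clean in the reproducing step, in particular justifying the Cauchy integral manipulations on $\R$. This requires invoking the non-tangential boundary behavior from Lemma \ref{hardy_space_2} and the identification of $f/E^{\sharp}$ with the boundary of an $H^{2}(\C_-)$ function; absolute integrability of the split integrals is guaranteed because $f\in B(E)=\PW_{t}$ by Lemma \ref{pw} and $|E|,|E^{\sharp}|$ are bounded below on $\R$ by Lemma \ref{determinant_1}. The proof for $\tilde K$ is identical with $E$ replaced by $\tilde E$.
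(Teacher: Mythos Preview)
Your proof is correct and is essentially the standard direct verification found in de Branges' book; the paper itself does not give a proof but simply refers to \cite{DeBranges}. One small remark: the statement as printed has $E^{\sharp}(\overline z)$ in the numerator, which is a typo for $E^{\sharp}(z)$ (otherwise the expression is not holomorphic in $z$); you silently corrected this in your computation, which is fine, but it is worth flagging. Your justification of the Cauchy integrals is sound: for $\lambda\in\C_+$ the kernel $1/(\lambda-x)$ is in $L^{2}(\R)$ and pairs against the $L^{2}$ boundary values of $f/E$ and $f/E^{\sharp}$, so absolute convergence holds without appeal to Lemma~\ref{pw} or Lemma~\ref{determinant_1}; those lemmas are specific to the Dirac setting, whereas the reproducing-kernel formula holds for any de Branges space.
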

	
	\begin{proof}
		A proof can be found in \cite{DeBranges}.
	\end{proof}
	
	We will now solve the Dirac System \eqref{dirac_system_matrix} for $f=0.$ This is referred to as the free case. In this case the system of differential equations we have to solve simplifies to 
	
	$$\begin{pmatrix}
		0 && 1 \\ -1 && 0
	\end{pmatrix} \frac{d}{dt} M(t,z) = zM(t,z).$$
	
		One can check quite fast that the solutions are given by $E(t,z)=e^{-itz}=\cos(tz)-i\sin(tz)$ and $\tilde{E}(t,z)=-ie^{itz}=-\sin(tz)-i\cos(tz).$ Since $|E(t,z)|=|\tilde{E}(t,z)|=1$ for all $z \in \C$ we find that $B(E(t,z))=B(\tilde{E}(t,z))=\PW_t$ as normed vector spaces. Recall that in general we only have equality as sets.

	\section{Proof of non-linear Carleson}
	\subsection{Proximity of reproducing kernels}
	\noindent
	The first key insight used to get an approximation for $E(t,z)$ is that the reproducing kernel $K(t,\lambda,z)$ of the associated de Branges space $B(E(t,z))$ allows reconstructing the function $E(t,z)$ since by Lemma \ref{reproducing_kernel}
	
	$$K(t,\lambda,z)=\frac{1}{2\pi i} \frac{E(z)E^{\sharp}(\overline{\lambda})-E^{\sharp}(\overline{z})E(\overline{\lambda})}{\overline{\lambda}-z}=\frac{1}{\pi} \frac{A(z)C(\overline{\lambda})-C(z)A(\overline{\lambda})}{\overline{\lambda}-z}.$$
	
	Since the potential function $f$ of the Dirac-system \eqref{dirac_system_matrix} is in $L^2(\R)$ we have at least 
	
	$$\liminf_{t \rightarrow \infty} f(t) =0.$$
	
	For most well-behaved functions, we even have 
	
	$$\lim_{t \rightarrow \infty} f(t)=0.$$
	
	Carrying this information toward the Dirac system 
	
	$$\begin{pmatrix}
		0 && 1 \\ -1 && 0
	\end{pmatrix} \frac{d}{dt} M(t,z) = zM(t,z)-\begin{pmatrix}
		0 && f(t) \\ f(t) && 0
	\end{pmatrix}M(t,z),$$
	
	\noindent
	suggests that the last term should play a less significant role for large $t>0$ and hence gives rise to an approximation of the free system where $f=0.$ Following this logic, we would expect that the solution of the Dirac system will be approximated by the solution of the free system. This is precisely what we will show under the additional condition that there are no zeros of $E(t,z)$ too close to the real line for large times $t.$ We will show this estimate by proving first that the reproducing kernels of the associated de Brange spaces are close, since we can then easily pass to the functions $E(t,z)$ themselves as hinted above. 
	
	For $f=0$ and $t>0$ the associated de Brange space is the Paley-Wiener space $\PW_t$ (as explained in the previous section) and its reproducing kernel is the $\sinc$ function 
	
	$$\sinc(t,\lambda,z)=\frac{1}{\pi} \frac{\sin(t(z-\overline{\lambda}))}{z-\overline{\lambda}}.$$

	Hence, the big goal of this section will be to show that approximately 
	
	$$K(t,\lambda,z)=\frac{1}{\pi} \frac{A(z)C(\overline{\lambda})-C(z)A(\overline{\lambda})}{\overline{\lambda}-z} \approx \frac{1}{\pi} \frac{\sin(t(z-\overline{\lambda}))}{z-\overline{\lambda}}=\sinc(t,\lambda,z).$$
	
	The strategy will be to first show that the norms of the reproducing kernels are close and in the next step to pass from the norms to the functions themselves. During this procedure Theorem \ref{spectral_measure} shows its importance as it allows us to construct an outer function $G(s)$ with boundary function $\sqrt{w(s)}.$
	
	\begin{lemma}\label{Lemma_2}
		For almost all $s \in \R$ and all fixed $C>0$ we have 
		
		$$\sup_{z \in Q(s,C/t)}\bigg(\frac{w(s)||K(t,z,\cdot)||_\mu^2}{||\sinc(t,z,\cdot)||_2^2}-1\bigg)=o(1)$$
		
		as $t\rightarrow \infty.$
	\end{lemma}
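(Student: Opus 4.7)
The plan is to exploit the variational characterization (\ref{misc_2}), which says
$$\|K(t,z,\cdot)\|_\mu = \sup_{f\in \PW_t,\,\|f\|_\mu\le 1}|f(z)|,\qquad \|\sinc(t,z,\cdot)\|_2 = \sup_{f\in \PW_t,\,\|f\|_2\le 1}|f(z)|.$$
Because the statement of the lemma is one-sided (the supremum of the ratio minus $1$ is $o(1)$), it suffices to prove the pointwise inequality $w(s)|f(z)|^2 \le (1+o(1))\|\sinc(t,z,\cdot)\|_2^2\,\|f\|_\mu^2$ for every $f\in\PW_t$ and every $z\in Q(s,C/t)$. Dropping the singular part and using $\|f\|_\mu^2 \ge \int_{\R}|f|^2w\,dx$ is therefore legal; the singular measure $\mu_s$ only helps and its non-tangential irrelevance from Lemma \ref{singular_measure} will not even be needed here.

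Next I would bring in the outer function. By Theorem \ref{spectral_measure}, $\log w/(1+x^2)\in L^1(\R)$, so there exists an outer $G\in \Hardy^2(\C_+)$ whose boundary modulus equals $\sqrt{w}$ a.e. Given $f\in\PW_t$, consider the product $fG$ on $\R$: since $\widehat{f}\subset[-t,t]$ and $\widehat{G}\subset[0,\infty)$ (Lemma \ref{hardy_space}), we have $\widehat{fG}\subset[-t,\infty)$, hence $e^{itx}f(x)G(x)$ is the boundary trace of an $\Hardy^2(\C_+)$ function with
$$\|e^{itz}fG\|_{\Hardy^2}^2 \;=\;\int_{\R}|f|^2 w\,dx\;\le\;\|f\|_\mu^2.$$
Thus the map $f\mapsto e^{itz}fG$ transfers the problem to a Hardy-space problem in which the Lebesgue norm is naturally controlled.

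Third, I would estimate $|f(z)|$ at an interior point $z\in Q(s,C/t)$ by writing $f(z)=e^{-itz}[e^{itz}fG](z)/G(z)$ and applying an $\Hardy^2$ pointwise bound. Crucially, rather than the crude Cauchy-kernel estimate of Lemma \ref{hardy_space_3} (which loses a multiplicative constant depending on $C$), I would test the extremal $\sinc(t,z,\cdot)$ of the Lebesgue variational problem against $fG$ to obtain the exact constant: since $\sinc(t,z,\cdot)$ is the reproducing kernel for the Lebesgue $L^2(\R)$ problem restricted to functions with Fourier support in $[0,\infty)$ shifted by $t$, we get $|(e^{itz}fG)(z)|^2 \le \|\sinc(t,z,\cdot)\|_2^2 \cdot \|fG\|_{L^2}^2$ up to lower-order corrections, which after dividing by $|G(z)|^2 e^{-2t\Im z}$ and noting that $e^{-2t\Im z}\|\sinc\|_2^2$ combines with the shift to reproduce the $\sinc$-bound, yields $|f(z)|^2\le (1+o(1))\|\sinc(t,z,\cdot)\|_2^2 \|f\|_\mu^2/|G(z)|^2$. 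Finally, by Lemma \ref{outer_function_boundary_values} and the fact that a.e. $s$ is a Lebesgue point of $\log w$, $|G(z)|^2\to w(s)$ non-tangentially as $z\to s$, and one verifies uniform convergence for $z\in Q(s,C/t)$ because the Poisson average of $\log w$ stabilizes on balls of radius $C/t$; this replaces $1/|G(z)|^2$ by $1/w(s)\cdot(1+o(1))$.

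The main obstacle is exactly this matching of constants: combining the Hardy-space bound with the type factor $e^{t\Im z}$ tends to produce a loss of $e^{2C}/(2\sinh 2C)$, so the naive estimate is off. The resolution is to treat $e^{itz}fG$ as living in the Hardy subspace whose Fourier transform is supported in $[0,\infty)$ and to use the correct reproducing kernel of this subspace, which is the Cauchy kernel composed with the shift, giving precisely $\|\sinc(t,z,\cdot)\|_2^2$ as the evaluation-functional norm to leading order. Uniformity of $|G(z)|^2/w(s)\to 1$ on $Q(s,C/t)$ at Lebesgue points of $\log w$, and the smoothness of $\sinc(t,z,\cdot)$ in $z$ on the scale $1/t$, then upgrade pointwise convergence to uniform convergence, delivering the claimed $o(1)$ bound.
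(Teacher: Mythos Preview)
Your approach has a genuine gap at the sharp-constant step. Multiplying $f\in\PW_t$ by the outer function $G$ destroys the upper Fourier cut-off: $\widehat{fG}$ is supported only in $[-t,\infty)$, not in $[-t,t]$. The reproducing kernel for the space with Fourier support in $[-t,\infty)$ (equivalently for $e^{itz}\Hardy^2(\C_+)$) is the shifted Cauchy kernel, \emph{not} the sinc function. Its evaluation norm at $z=s+ic/t$ is $e^{2c}/(4\pi\Im z)$, whereas $\|\sinc(t,z,\cdot)\|_2^2=\sinh(2c)/(2\pi\Im z)$. Since $e^{2c}/2>\sinh(2c)$ for every $c>0$, your direct Hardy-space bound overshoots the target by the fixed factor $(1-e^{-4c})^{-1}$, which is bounded away from $1$ and cannot be absorbed into $o(1)$. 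Your ``resolution'' paragraph simply asserts the false identity between these two kernels. (A related symptom: for $z\in\R$ the Hardy-space pointwise bound diverges while $\|\sinc\|_2^2=t/\pi$, so the scheme does not even apply on the real segment of $Q(s,C/t)$.) A Cauchy--Schwarz variant, writing $f(z)=\int (f\sqrt{w})(\overline{\sinc}/\sqrt{w})$, fails for the dual reason: it would require $\int|\sinc(t,z,x)|^2/w(x)\,dx\le(1+o(1))\|\sinc\|_2^2/w(s)$, an approximate-identity property for $1/w$ that is not available from Theorem~\ref{spectral_measure}.

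The paper circumvents this by an entirely different, soft argument. It rescales $z\mapsto s+(z-s)/t$ so the problem lives on a \emph{fixed} box $Q(s,C)$ and the weight becomes $|G_t|^2$ with $G_t\to\sqrt{w(s)}$ normally in $\C_+$. Assuming a sequence $f_n\in\PW_1$ violating the bound, one passes to a weak $\Hardy^2$-limit of $e^{iz}f_nG_{k_n}$; normal convergence of $G_{k_n}$ gives $f_n\to H$ normally in $\C_\pm$. The delicate point---controlling $1/G_{k_n}$ near $\R$ to show $H$ is entire---is handled via Lebesgue points of $\sqrt{MP\log\sqrt{w}}$: one finds vertical lines where $|G_{k_n}|$ is uniformly bounded below, then uses Cauchy's formula plus dominated convergence on those contours. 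This also takes care of the tangential-approach issue that your last paragraph glosses over: $|G(z)|^2\to w(s)$ is only known non-tangentially, but $Q(s,C/t)$ contains points arbitrarily close to $\R$ with $\Im z\ll|\Re z-s|$, so uniform convergence of $|G(z)|^2/w(s)$ on the whole box is not immediate. Finally, the statement is not literally one-sided: $\sup_z(\text{ratio}-1)=o(1)$ also needs $\sup\ge -o(1)$, which follows from the easy direction (test $\sinc$ in the $\mu$-variational problem)---you should at least note this.
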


	\begin{proof}
		Let us first prove the convergence pointwise. Choose for all $t>0$ a point $z_t \in Q(s,C/t).$ We will show that 
		
		$$\frac{w(s)||K(t,z_t,\cdot)||_\mu^2}{||\sinc(t,z_t,\cdot)||_2^2}-1=o(1).$$
		
		The first step will be to rewrite this equation in the more convenient form
		
		$$w(s)||K(t,z_t,\cdot)||_\mu^2=(1+o(1))||\sinc(t,z_t,\cdot)||_2^2.$$
		
		\noindent
		This equation is true if and only if and only if
		
		$$\sqrt{w(s)}||K(t,z_t,\cdot)||_\mu=(1+o(1))||\sinc(t,z_t,\cdot)||_2.$$
		
		\noindent
		The advantage of this formulation is that we can now plug in the relations 
		
		\begin{equation}\label{misc3}
		||K(t,z_t,\cdot)||_{\mu}= \sup_{f \in \PW_t, ||f||_\mu \leq 1} |f(z_t)|, \quad ||\sinc(t,z_t,\cdot)||_2=\sup_{f \in \PW_t, ||f||_2 \leq 1} |f(z_t)|,
		\end{equation}
		
		\noindent
		to arrive at the equation we are going to show
		
		\begin{equation}\label{lemma_2}
		\sqrt{w(s)}\sup_{f \in \PW_t, ||f||_\mu \leq 1} |f(z_t)|=(1+o(1))\sup_{f \in \PW_t, ||f||_2 \leq 1} |f(z_t)|.
		\end{equation}
		
		We will show both inequalities separately. Let us first show that the left-hand side is at most as large as the right-hand side. We define for all $t>0$ and $x\in \R$ the function
		
		$$s_t(x)= \frac{|\sinc(t,z_t,x)|^2}{||\sinc(t,z_t,x)||_2^2}.$$
		
		Note that $s_t(x)$ is an approximate identity at the point $s$ and it follows from Lemma \ref{approximate_identity} and Lemma \ref{singular_measure}  that for almost all $s\in \R$
		
		\begin{equation}\label{misc48}
		\int_{-\infty}^{\infty} s_t(x)d\mu =w(s) +o(1).
		\end{equation}
		
		\noindent
		Indeed, we can write
		$$\int_{-\infty}^{\infty} s_t(x) d\mu = \int_{-\infty}^{\infty} s_t(x) dw(x)+ \int_{-\infty}^{\infty} s_t(x) d\mu_s.$$
		
		\noindent
		The first integral converges to $w(s)$ by Lemma \ref{approximate_identity} since $s_t(x)$ is an approximate identity at $s.$ Since $\mu_s$ is Poisson-finite it follows by Dominated Convergence Theorem that the second integral over the complement of any finite interval is converging to zero. Restricting $\mu_s$ to the finite interval, gives a finite measure, so we can conclude by Lemma $\ref{singular_measure}$ that the second integral tends to zero.
		
		Recall that $\sinc(t,z_t,\cdot) \in \PW_t$ and it is thus rescaled by its $||\cdot||_\mu$ norm a valid test function for the operator norm of $K(t,z_t,\cdot).$ Moreover, since $\sinc(t,z_t,\cdot)$ is the reproducing kernel of $\PW_t,$ we have by Lemma \ref{misc_1} that $||\sinc(t,z_t,\cdot)||_2=\sqrt{\sinc(t,z_t,z_t)}.$ Hence,
		
		$$ ||K(t,z_t,\cdot)||_{\mu}=\sup_{f \in \PW_t, ||f||_\mu \leq 1} ||f(z)| \geq \frac{\sinc(t,z_t,z_t)}{||\sinc(t,z_t,\cdot)||_\mu}=\frac{\sinc(t,z_t,z_t)}{\sqrt{||\sinc(t,z_t,\cdot)||_\mu^2}}$$
		
		$$=\frac{\sinc(t,z_t,z_t)}{\sqrt{||\sinc(t,z_t,\cdot)||_2^2\int_{-\infty}^{\infty}s_t(x) d\mu}}=\frac{\sinc(t,z_t,z_t)}{\sqrt{||\sinc(t,z_t,\cdot)||_2^2(w(s)+o(1))}}=\frac{||\sinc(t,z_t,\cdot)||_2}{\sqrt{w(s)+o(1)}}$$
		
		Now, using \eqref{misc3} and rearranging the terms gives

		$$\sqrt{w(s)}\sup_{f \in \PW_t, ||f||_\mu \leq 1} ||f(z_t)|\leq(1+o(1))\sup_{f \in \PW_t, ||f||_2 \leq 1} ||f(z_t)|.$$
		
		Since $||f||_\mu \geq ||f||_w$ we can instead prove
		
		\begin{equation}\label{misc4}
		\sqrt{w(s)}\sup_{f \in \PW_t, ||f||_w \leq 1} ||f(z_t)|\leq(1+o(1))\sup_{f \in \PW_t, ||f||_2 \leq 1} ||f(z_t)|.
		\end{equation}
		
		 We will now show the reverse inequality, proving \eqref{lemma_2}. The first step is to rescale the problem in the variable $t,$ so that we can work on the fixed domain $Q(s,C).$ The hard part will be to control the function $w(s)$ during this process. It is precisely for this reason, that we will use the theory of Hardy spaces and Theorem \ref{spectral_measure}, which told us that 
		
		$$\frac{\log(w(t))}{1+t^2} \in L^1(\R).$$
		
		\noindent
		In particular, the last statement is still true if we replace $w(t)$ by $\sqrt{w(t)}.$ Hence, we can pick an outer function $G:\C_+\rightarrow \C$ with boundary function $\sqrt{w(t)}.$ By Lemma \ref{outer_function_boundary_values} $|G(x)|=\sqrt{w(x)}$ for almost all $x \in \R.$ We can assume without loss of generality that $|G(s)|=\sqrt{w(s)}.$ By changing the unimodular constant in the definition of $G,$ we can assume that $G(s)=\sqrt{w(s)}.$ Without loss of generality we can assume $w(s)=1,$ since $w(s)\neq 0$ for almost all $s\in \R.$
		
		Let us now turn towards rescaling the problem. For any $t>0$ the map 
		
		$$\varphi_t : Q(s,C) \rightarrow Q(s,C/t)$$
		$$ \quad \quad \quad \quad z \mapsto s+\frac{z-s}{t} $$
		
		\noindent
		is bijective. Hence, we can lift our chosen points $z_t$ on $Q(s,C/t)$ to $Q(s,C)$ by setting $\tilde{z_t}=\varphi^{-1}(z_t).$ By abuse of notation we will just write $z_t$ instead of $\tilde{z_t}$ for the rest of the proof.

		 We will now have to rescale the norms in \eqref{misc4}. For each $f \in \PW_t$ we set 
		 
		 $$\tilde{f}(z_t)=\frac{1}{\sqrt{t}}f\bigg(s+\frac{z_t-s}{t}\bigg). \text{ and } G_t(z)=G\bigg(s+\frac{z_t-s}{t}\bigg).$$
		 
		 \noindent
		 Then, $f\in \PW_1,$ because $\supp\hat{\tilde{f}} \subset \frac{1}{t}\supp\hat{f} \subset\frac{1}{t} [-t,t]=[-1,1].$ Now recall that on $\R$ we have $|G(t)|^2=w(t)$ and thus,
		 
		 $$||\tilde{f}||_w=\frac{1}{\sqrt{t}}\bigg(\int_{-\infty}^{\infty} \bigg|f\bigg(s+\frac{x-s}{t}\bigg)G(x)\bigg|^2 dx\bigg)^{\frac{1}{2}}=\frac{1}{\sqrt{t}}\bigg(\int_{-\infty}^{\infty}\sqrt{t}\bigg|f(x)G_t(x)\bigg|^2 dx\bigg)^{\frac{1}{2}}=||fG_t||_2.$$ 
		 
		 \noindent
		 Hence, we have the rescaled equation
		 
		 $$\sup_{f \in \PW_t, ||f||_w \leq 1} \bigg|f\bigg(s+\frac{z_t-s}{t}\bigg)\bigg|=\sqrt{t} \sup_{f \in \PW_1, ||fG_t||_2 \leq 1} |f(z_t)|.$$
		 
		 \noindent
		 and similarly, we prove
		 
		 $$\sup_{f \in \PW_t, ||f||_2 \leq 1} \bigg|f\bigg(s+\frac{z_t-s}{t}\bigg)\bigg|=\sqrt{t} \sup_{f \in \PW_1, ||f||_2 \leq 1} |f(z_t)|$$
		 
		 Thus, the inequality we will need to prove becomes
		 
		 $$\sup_{f \in \PW_1, ||fG_t||_2 \leq 1} ||f(z_t)|\leq(1+o(1))D_t,$$
		 
		 \noindent
		 where we put to simplify notation
		 
		 $$D_t=\sup_{f \in \PW_1, ||f||_2 \leq 1} |f(z_t)|.$$

		 We will prove this inequality by contradiction. Suppose that for some $\varepsilon>0$ there is a sequence of functions $f_n \in \PW_1$ such that for some subsequence $k_n \rightarrow \infty$ and we have 
		 
		 $$||f_nG_{k_n}||_2 \leq 1, \text{ and } |f_n(z_{k_n})| > D_{k_n} + \varepsilon.$$
		 
		\noindent 
		By construction all $z_t \in Q(s,C)$ and hence we can without loss of generality assume that $z_{k_n} \rightarrow z_0 \in Q(s,C),$ for the box is compact. We set 
		
		$$D=\sup_{f \in \PW_1, ||f||_2 \leq 1} |f(z_0)|.$$
		 
		\noindent 
		Notice that since $\sinc(t,\lambda,z) \in \PW_t$ is the reproducing kernel, we have by continuity
		 
		 $$D_t=\sqrt{\sinc(1,z_t,z_t)}\rightarrow \sqrt{\sinc(1,z_0,z_0)}=D.$$
		 
		Our strategy is to construct an admissible test function $H,$ i.e. $H\in \PW_1$ and $||H||_2\leq 1$ satisfying $|H(z_0)|\geq D+ \varepsilon.$ To construct the function $H,$ we will use the theory of Hardy spaces. By Lemma \ref{hardy_space} the function $e^{iz}f_n\in \Hardy^2(\C_+)$.	Indeed, we have that $\supp \hat{f_n} \subset [-1,1].$ Now $\supp \widehat{e^{iz}f_n}\subset 
		[0,\infty)$ and thus $e^{iz}f_n \in \Hardy^2(\C_+)$ by Lemma \ref{hardy_space}.

		 Since $G_{k_n}$ is outer and $||f_nG_{k_n}||_2\leq 1$ we find $g_n \in \Hardy^2(\C_+)$ Moreover, by Lemma \ref{hardy_space_2} $||g_n||_{H^2(\C_+)}=||g_n||_2\leq 1,$ since $||f_nG_{k_n}||_2 \leq 1.$
		 
		Hence, we can without loss of generality assume, passing to a subsequence if necessary, that $g_n\rightarrow g$ for some $g \in \Hardy^2(\C_+).$ By Lemma \ref{hardy_space_4} $g_n\rightarrow g$ normally in $\C_+.$ Moreover, since $g_n \rightarrow g$ weakly, we have by Lemma \ref{hardy_space_2}
		
		$$||g||_2=||g||_{H^2(\C_+)}\leq \liminf_{n \rightarrow \infty} ||g_n||_{H^2(\C_+)}\leq 1.$$

		Since by Lemma \ref{outer_function_boundary_values}
		
		$$\lim_{z \in \Gamma(x)} |G(z)|=1,$$
		 
		\noindent
		we find $G_t \overset{t\rightarrow \infty}{\longrightarrow} 1$ normally in $\C_+.$ We will conclude that $f_n$ converges normally to some function in $\C_+.$ Let $K \subset \C_+$ be compact and denote by $||\cdot||_{\infty,K}$ the supremum norm on $K.$ We will drop $K$ from the notation. Recall that by definition outer functions vanish nowhere in $\C_+.$
		
		$$||e^{iz}f_n-g||_{\infty}\leq\bigg|\bigg|e^{iz}f_n-\frac{g}{G_{k_n}}\bigg|\bigg|_{\infty}+\bigg|\bigg|\frac{g}{G_{k_n}}-g\bigg|\bigg|_{\infty}.$$
		
		\noindent
		However,
		
		$$\bigg|\bigg|e^{iz}f_n-\frac{g}{G_{k_n}}\bigg|\bigg|_{\infty} \leq ||G_{k_n}||_{\infty} ||e^{iz}f_nG_{k_n}-g||_{\infty} \rightarrow 0,$$
		
		$$\bigg|\bigg|\frac{g}{G_{k_n}}-g\bigg|\bigg|_{\infty} \leq ||g||_{\infty}\bigg|\bigg|\frac{1}{G_{k_n}}\bigg|\bigg|_{\infty}||G_{k_n}-1||_{\infty} \rightarrow 0.$$
		
		\noindent
		Hence, $e^{iz}f_n \rightarrow g$ normally in $\C_+.$ From this, we obtain quickly that $f_n \rightarrow e^{-iz}g$ normally in $\C_+.$ Indeed,
		
		$$||f_n-e^{-iz}g||_{\infty} \leq ||e^{iz}||_{\infty}||e^{iz}f_n-g||_{\infty}\rightarrow 0.$$
		
		By repeating the same argument in the lower half-plane $\C_-$ we obtain a function $g'$ such that $f\rightarrow e^{iz}g'$ normally in $\C_-.$ Since a normal limit  of holomorphic functions is holomorphic, we have a candidate function, which is analytic in the upper and lower half-planes. However, we still do not know yet what happens around the real line. To remedy this, we will try to find an integrable function that dominates $f_n$ close to the real line and conclude with dominated convergence. By Lemma \ref{hardy_space_3} applied to $g_n=e^{iz}f_nG_{k_n}$ we have for all $z\in \C$ the bound 
		
		$$|f_nG_{k_n}(x+iy)|\leq \bigg(\frac{2}{\pi |y|}\bigg)^{\frac{1}{2}}||g_n||_{H^2(\C_+)}\leq \frac{1}{\sqrt{|y|}}.$$
		
		Hence, we now have to work on bounding the outer function $G_{k_n}$ away from zero. From the definition of the outer function $G$ we see that
		
		\begin{equation}\label{misc5}
		|G(z)|=|e^{P\log\sqrt{w}(z)}|\geq e^{-MP\log\sqrt{w}(\Re z)},
		\end{equation}
		
		\noindent
		where $MP\log\sqrt{w}$ is the non-tangential maximal function of $P\log\sqrt{w}$ with respect to a large bounded cone $\Gamma(x).$ By Corollary \ref{hardy_space_6} the function $\sqrt{MP\log\sqrt{w}}$ is locally integrable. Hence, we can assume that $s$ is a Lebesgue point of $\sqrt{MP\log \sqrt{w}}.$ 
		
		Fix a constant $L>0.$ For large enough $n$ we can choose $c_n$ with $L<c_n<2L$ such that $P\log \sqrt{w}$ is uniformly bounded on the union of the lines $\Re(z-s)=\pm \frac{c_n}{k_n}.$ Indeed, suppose that for all $ x \in B_{\frac{2L}{k_n}}(s)\setminus B_{\frac{L}{k_n}}(s)$ we had $\sqrt{MP\log\sqrt{w(x)}}>3\sqrt{MP\log\sqrt{w(s)}}.$ Then, 
		
		$$\int_{B_{\frac{2L}{k_n}}(s)\setminus B_{\frac{L}{k_n}}(s)} \sqrt{MP\log\sqrt{w(t)}} dt > 3 \sqrt{MP\log \sqrt{w(s)}} \big|B_{\frac{L}{k_n}}(s)\big|,$$
		
		\noindent
		But this is a contradiction to the fact that $s$ is a Lebesgue point of $MP\log\sqrt{w},$ since
		
		$$ \frac{1}{\big|B_{\frac{L}{k_n}}(s)\big|} \int_{B_{\frac{2L}{k_n}}(s)\setminus B_{\frac{L}{k_n}}(s)} \sqrt{MP\log\sqrt{w(t)}} dt
			\leq 2\frac{1}{\big|B_{\frac{2L}{k_n}}(s)\big|} \int_{B_{\frac{2L}{k_n}}(s)} \sqrt{MP\log\sqrt{w(t)}} dt \rightarrow 2f(s).$$
		
		\noindent
		Hence, for each large enough $n,$ without loss of generality for all $n\geq 1,$ there has to be a point $c_n \in 	B_{\frac{2L}{k_n}}(s)\setminus B_{\frac{L}{k_n}}(s)$ such that $\sqrt{MP\log\sqrt{w(\frac{c_n}{k_n})}}\leq 3\sqrt{MP\log\sqrt{w(s)}},$ which shows as claimed, that 
		$P\log \sqrt{w}$ is uniformly bounded on the union of the lines $\Re(z-s)=\pm \frac{c_n}{k_n}.$
		
		Now consider for each $n\geq 1$ a sequence of squares $R_n,$ centered at $s$ with side length $2c_n.$ Hence, on the vertical sides of $R_n$ we have by construction and \eqref{misc5}. 
		
		$$|G_{k_n}(s\pm c_n +iy)|=\bigg|G\bigg(s+\frac{s\pm cn +iy -s}{k_n}\bigg)\bigg|\geq e^{MP \log \sqrt{w}(s\pm \frac{c_n}{k_n})}=\delta>0.$$
		
		\noindent
		Note that since $\frac{c_n}{k_n}\leq 2$ it is really sufficient to consider only a bounded sector $\Gamma(x)$ to estimate $MP\log \sqrt w$ on the square $R_n.$
		
		Thus, on the vertical sides of $R_n$ we have
		
		$$|f_n(s\pm c_n +iy)| \leq \frac{1}{\delta \sqrt{|y|}}.$$ 
		
		We can now finally conclude, that $f_n$ converges to some entire function in all of $\C.$ Denote by $R$ the space inscribed by the square centered as $s$ with side length $L.$ Then, by using the Cauchy integral formula with the paths traced out by $R_n,$ and $R,$ together with the dominated convergence Theorem we find that for all $a\in R$ that 
		
		$$ f_n(a)= \frac{1}{2\pi} \int_{R} \frac{f_n(z)}{z-a} dz  \rightarrow \frac{1}{2\pi} \int_{R} \frac{f(z)}{z-a} dz=:H(z).$$
		
		The function $H$ is thus entire and by earlier calculations equal to $e^{-iz}g$ in $\C_+$ and equal to $e^{iz}g'$ in $\C_-.$ Since $g\in \Hardy^2(\C_+)$ and $g'\in \Hardy^2(\C_-)$ we have by Lemma \ref{hardy_space_7} that $H\in \PW_1.$ Furthermore, by construction $||H||_2=||g||_2\leq 1.$ Thus, $H$ is an admissible test function. Finally, because $f_n$ converges normally to $H$ we have 
		
		$$|H(z_0)|=\lim_{n \rightarrow \infty}|f_n(z_{k_n})|\leq \lim_{n \rightarrow \infty} D_{k_n}+\varepsilon=D+\varepsilon,$$
		
		\noindent
		achieving our desired contradiction. Hence, we proved for our chosen points $z_t\in Q(s,C/t)$ that
		
		$$\sqrt{w(s)}\sup_{f \in \PW_t, ||f||_\mu \leq 1} ||f(z)|\geq(1+o(1))\sup_{f \in \PW_t, ||f||_2 \leq 1} ||f(z)|.$$
		
		\noindent
		and thus we have all together shown that
		
		$$\frac{w(s)||K(t,z_t,\cdot)||_\mu^2}{||\sinc(t,z_t,\cdot)||_2^2}-1=o(1).$$
		
		To upgrade this convergence to 
		
		$$\sup_{z \in Q(s,C/t)}\bigg(\frac{w(s)||K(t,z,\cdot)||_\mu^2}{||\sinc(t,z,\cdot)||_2^2}-1\bigg)=o(1)$$
		
		\noindent
		we first note, that since $||K(t,z,\cdot)||_\mu^2=K(t,z,z)\in \PW_t$ and $||\sinc(t,z,\cdot)||_2^2=\sinc(t,z,z)\in \PW_t$ both expressions are continuous in $z.$ Hence, for each $t>0$ we choose by compactness of $Q(s,C/t)$ the point $z_t^*$ which maximizes
		
		$$\bigg|\frac{w(s)||K(t,z_t^*,\cdot)||_\mu^2}{||\sinc(t,z_t^*,\cdot)||_2^2}-1\bigg|.$$
		
		\noindent
		Since we have proven the convergence for all choices $z_t$ and $z_t^*$ is a legitimate choice, we achieve an error term $o(1)$ uniform over all choices $z_t,$ which finishes the proof.
	\end{proof}
	
	\begin{corollary}\label{Lemma_3}
		For almost all $s \in \R$ and for all $C>0$
		
		$$\sup_{z\in Q(s,C/t)}\bigg|\bigg| K(t,z,\cdot)-\frac{1}{w(s)} \sinc(t,z,\cdot)\bigg|\bigg|_\mu^2=o(t),$$
		
		\noindent
		as $t \rightarrow \infty.$ 
	\end{corollary}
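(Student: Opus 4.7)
The plan is to expand the squared $\mu$-norm and exploit a cancellation among the three resulting terms. Writing $c = 1/w(s)$ for brevity,
$$\|K(t,z,\cdot) - c\sinc(t,z,\cdot)\|_\mu^2 = \|K(t,z,\cdot)\|_\mu^2 - 2c\,\Re\langle \sinc(t,z,\cdot), K(t,z,\cdot)\rangle_\mu + c^2\|\sinc(t,z,\cdot)\|_\mu^2.$$
The cross term simplifies immediately: since $\sinc(t,z,\cdot) \in \PW_t = B(E(t,z))$ by Lemma \ref{pw}, the reproducing-kernel identity for $K(t,z,\cdot)$ in $B(E(t,z))$ gives $\langle \sinc(t,z,\cdot), K(t,z,\cdot)\rangle_\mu = \sinc(t,z,z)$, and a direct computation shows $\sinc(t,z,z) = \sinh(2t\Im z)/(2\pi \Im z)$ is real and positive. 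Thus the middle contribution equals $-2c\,\sinc(t,z,z)$.

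Next I would rewrite each of the remaining norms as $\sinc(t,z,z)$ times a scalar close to $w(s)^{-1}$. For $\|K(t,z,\cdot)\|_\mu^2$, Lemma \ref{Lemma_2} directly yields, uniformly in $z \in Q(s,C/t)$,
$$\|K(t,z,\cdot)\|_\mu^2 = \frac{\|\sinc(t,z,\cdot)\|_2^2}{w(s)}(1+o(1)) = \frac{\sinc(t,z,z)}{w(s)}(1+o(1)),$$
using that $\sinc$ is the reproducing kernel of $\PW_t$ so that $\|\sinc(t,z,\cdot)\|_2^2 = \sinc(t,z,z)$. For $\|\sinc(t,z,\cdot)\|_\mu^2$ I would reuse the approximate-identity argument already appearing in the proof of Lemma \ref{Lemma_2}: for almost every $s \in \R$ the function $s_t(x) = |\sinc(t,z,x)|^2/\sinc(t,z,z)$ is an approximate identity at $s$, and integrating against the Poisson-finite measure $\mu = w(s)\,ds + d\mu_s$ (handling the singular part via Lemma \ref{singular_measure}) gives $\|\sinc(t,z,\cdot)\|_\mu^2 = \sinc(t,z,z)(w(s) + o(1))$. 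Assembling the three contributions, the coefficient in front of $\sinc(t,z,z)/w(s)$ collapses to $(1+o(1)) - 2 + (1+o(1)) = o(1)$, so the total squared norm equals $\sinc(t,z,z)\,o(1)/w(s)$.

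It remains to bound $\sinc(t,z,z)$ uniformly on the box. Since $|\Im z|\leq C/t$ on $Q(s,C/t)$ gives $|2t\Im z|\leq 2C$, the elementary bound $\sinh(r)/r \leq \sinh(2C)/(2C)$ for $|r|\leq 2C$ yields $\sinc(t,z,z) = O(t)$ uniformly, so the full expression is $o(t)$ as required. The main obstacle I anticipate is securing uniformity in $z$: both the application of Lemma \ref{Lemma_2} and the approximate-identity estimate for $\|\sinc\|_\mu^2$ are, as stated above, pointwise in the chosen point $z_t \in Q(s,C/t)$. I would handle this exactly as in the concluding paragraph of the proof of Lemma \ref{Lemma_2}: for each $t$ select the point $z_t^\ast \in Q(s,C/t)$ which maximises the quantity under the supremum; because the relevant expressions are continuous in $z$ and $Q(s,C/t)$ is compact, applying the pointwise estimate to $z_t^\ast$ produces an error term $o(1)$ uniform over the whole box, which after multiplication by $\sinc(t,z,z) = O(t)$ gives the claimed $o(t)$.
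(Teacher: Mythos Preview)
Your proof is correct and follows essentially the same approach as the paper: expand the squared $\mu$-norm, evaluate the cross term via the reproducing property, handle $\|\sinc\|_\mu^2$ through the approximate-identity estimate $\int s_t\,d\mu = w(s)+o(1)$ from the proof of Lemma~\ref{Lemma_2}, apply Lemma~\ref{Lemma_2} itself to $\|K\|_\mu^2$, and finish with $\sinc(t,z,z)\asymp t$. The only cosmetic differences are that the paper leaves the uniformity-in-$z$ step implicit (it is inherited from Lemma~\ref{Lemma_2}) and does not write out the explicit $\sinh$ formula for $\sinc(t,z,z)$.
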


  Note that since $w(s)\neq 0$ for almost all $s \in \R,$ this expression is well-defined almost everywhere.
	
	\begin{proof}
		Let $z_t \in Q(s,C/t)$ for all $t>0$ and let $s_t$ be the function defined in the proof of Lemma \ref{Lemma_2}. Recall that 
		
		$$\int_{-\infty}^{\infty} s_t(x) d\mu = w(s) +o(1).$$
		
		Using that $L^2(\mu)$ is a Hilbert space with inner product $\langle \cdot, \cdot \rangle_\mu$ and the fact that $K(t,z_t,\cdot)$ is a reproducing kernel of the space $\PW_t$ equipped with the $||\cdot||_\mu$ norm, we find
		
		$$\bigg|\bigg| K(t,z_t,\cdot)-\frac{1}{w(s)} \sinc(t,z_t,\cdot)\bigg|\bigg|_\mu^2=\bigg\langle K(t,z_t,\cdot)-\frac{1}{w(s)} \sinc(t,z_t,\cdot),K(t,z_t,\cdot)-\frac{1}{w(s)}\sinc(t,z_t,\cdot)\bigg\rangle_\mu$$
		
		$$=||K(t,z_t,\cdot)||_\mu^2-\frac{2}{w(s)} \langle \sinc(t,z_t,\cdot), K(t,z_t,\cdot) \rangle_\mu+\frac{1}{w^2(s)}||\sinc(t,z_t,\cdot)||_\mu^2$$
		
		$$=||K(t,z_t,\cdot)||_\mu^2-\frac{2}{w(s)} \sinc(t,z_t,z_t) +\frac{||\sinc(t,z_t,\cdot)||_2^2}{w^2(s)} \int_{-\infty}^{\infty} s_t(x) d\mu$$
		
		$$=||K(t,z_t,\cdot)||_\mu^2-\frac{2}{w(s)} ||\sinc(t,z_t,\cdot)||_2^2 +\frac{||\sinc(t,z_t,\cdot)||_2^2}{w^2(s)} (w(s)+o(1))$$
		
		$$=||K(t,z_t,\cdot)||_\mu^2-\frac{1}{w(s)} ||\sinc(t,z_t,\cdot)||_2^2 +o(||\sinc(t,z_t,\cdot)||_2^2)=o(||\sinc(t,z_t,\cdot)||_2^2),$$
		
		\noindent
		where we used \ref{Lemma_2} in the last step. Now the proof is finished by noting that 
		
		$$\sinc(t,z_t,z_t)=||\sinc(t,z_t,z_t)||_2^2 \asymp t.$$
	\end{proof}

	We are now ready to prove our main result of this section.
	
	\begin{lemma}\label{Lemma_4}
		For almost all $s \in \R$ and for all $C>0$
		
		$$\sup_{\lambda, z \in Q(s,C/t)} \bigg|K(t,\lambda,z)-\frac{1}{w(s)} \sinc(t,\lambda,z)\bigg|=o(t),$$
		
		\noindent
		as $t\rightarrow \infty.$
	\end{lemma}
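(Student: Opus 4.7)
The strategy is to upgrade the $L^2(\mu)$ estimate from Corollary \ref{Lemma_3} to a pointwise estimate by using the reproducing kernel property together with the Cauchy--Schwarz inequality. For fixed $\lambda \in Q(s,C/t)$ define the entire function
$$g_\lambda(z) := K(t,\lambda,z) - \frac{1}{w(s)}\sinc(t,\lambda,z).$$
Both $K(t,\lambda,\cdot)$ and $\sinc(t,\lambda,\cdot)$ lie in $\PW_t$, and by Lemma \ref{pw} we have $\PW_t = B(E(t,z))$ as sets, so $g_\lambda \in B(E(t,z))$. Because $K(t,z,\cdot)$ is the reproducing kernel for $B(E(t,z))$ with respect to the norm $\|\cdot\|_\mu$ (the spectral embedding is isometric), the reproducing property gives $g_\lambda(z) = \langle g_\lambda, K(t,z,\cdot)\rangle_\mu$, and Cauchy--Schwarz yields
$$|g_\lambda(z)| \le \|g_\lambda\|_\mu \cdot \|K(t,z,\cdot)\|_\mu.$$

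It then remains to bound both factors uniformly for $\lambda, z \in Q(s,C/t)$. Corollary \ref{Lemma_3}, whose conclusion is already stated uniformly in $\lambda$ over the box, supplies $\sup_{\lambda \in Q(s,C/t)} \|g_\lambda\|_\mu = o(\sqrt{t})$. For the second factor, Lemma \ref{misc_1} identifies $\|K(t,z,\cdot)\|_\mu^2 = K(t,z,z)$, and Lemma \ref{Lemma_2} gives
$$\sup_{z \in Q(s,C/t)} K(t,z,z) = \frac{1}{w(s)}\bigl(1+o(1)\bigr)\sinc(t,z,z).$$
A short direct computation (writing $\sinc(t,z,z) = \sinh(2t\,\Im z)/(2\pi\,\Im z)$ off the real line and extending continuously to $t/\pi$ on $\R$) shows that $\sinc(t,z,z) \asymp t$ uniformly for $z$ in a box of radius $O(1/t)$, so $\|K(t,z,\cdot)\|_\mu = O(\sqrt{t})$ uniformly on $Q(s,C/t)$. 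Multiplying the two bounds yields
$$\sup_{\lambda,z \in Q(s,C/t)} |g_\lambda(z)| = o(\sqrt{t}) \cdot O(\sqrt{t}) = o(t),$$
which is exactly the claim.

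There is no substantial obstacle: once one observes that $g_\lambda \in \PW_t$ so that the reproducing kernel $K(t,z,\cdot)$ may be tested against it, the statement reduces to an application of Cauchy--Schwarz combined with the two quantitative inputs already at hand. The only point worth verifying is that the $o(\sqrt{t})$ bound on $\|g_\lambda\|_\mu$ is genuinely uniform in $\lambda \in Q(s,C/t)$; this is immediate from the placement of the supremum inside the $o(t)$ in Corollary \ref{Lemma_3}.
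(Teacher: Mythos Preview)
Your proof is correct and takes a genuinely different, much more elementary route than the paper's argument. The paper's proof of Lemma~\ref{Lemma_4} proceeds by writing $\Delta(t,z)=e^{-itz}G(z)^{-1}F(t,z)$ with $G$ an outer function having boundary values $\sqrt{w}$ and $F\in H^2(\C_+)$; it then bounds $\|F\|_{H^2}$ via Corollary~\ref{Lemma_3}, passes to the nontangential maximal function $MF$, controls $G$ on carefully chosen vertical lines using a Lebesgue-point argument for $\sqrt{MP\log\sqrt{w}}$, and finally invokes the maximum principle on a rhombus containing the box. By contrast, you observe that $g_\lambda\in B(E(t,z))$ and apply the reproducing property $g_\lambda(z)=\langle g_\lambda,K(t,z,\cdot)\rangle_\mu$ (which the paper itself uses in the proof of Corollary~\ref{Lemma_3}) together with Cauchy--Schwarz and the already-established bounds $\|g_\lambda\|_\mu=o(\sqrt{t})$ and $\|K(t,z,\cdot)\|_\mu=O(\sqrt{t})$. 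Your argument is a few lines long, uses nothing beyond Lemma~\ref{Lemma_2}, Corollary~\ref{Lemma_3}, and the isometric embedding $B(E(t,z))\hookrightarrow L^2(\mu)$, and avoids the Hardy-space machinery entirely. The paper's route follows Poltoratski's original presentation; your route shows that once Corollary~\ref{Lemma_3} is in hand, the pointwise upgrade is essentially automatic via the reproducing kernel structure.
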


	\begin{proof}
		Let $z_t \in Q(s,C/t)$ for all $t>0$ and set 
		
		$$\Delta(t,z)=K(t,z_t,z)-\frac{1}{w(s)}\sinc(t,z_t,z).$$
		
		By an argument similar to the one in the proof of Lemma \ref{Lemma_2} we find that the function $F(t,z)=e^{itz}G(z)\Delta(t,z)\in \Hardy^2(\C_+),$ where 
		$G$ is again an outer function with boundary function $\sqrt{w}.$ Since neither $G,$ nor $e^{itz}$ has zeros in $\C_+,$ we can write
		
		$$\Delta(t,z)=F(t,z)e^{itz}G(z).$$
		
		We will make use of Lemma \ref{Lemma_3} to get an upper bound on $F,$ while we can treat the other terms with almost elementary methods. Let us first estimate $F.$ We calculate
		
		$$||F(t,\cdot)||_{H^2(\C_+)}^2=||\Delta(t,x)G(x)||_2^2=\int_{-\infty}^{\infty} |\Delta(t,x)G(x)|^2 dx$$
		
		$$=\int_{-\infty}^{\infty} |\Delta(t,x)|^2w(x) dx \leq \int_{-\infty}^{\infty} |\Delta(t,x)|^2 d\mu(x)=o(t),$$
		
		\noindent
		where we used Lemma \ref{Lemma_3} in the last step. By Lemma \ref{hardy_space_8}
		
		$$ ||MF(t,\cdot)||_2^2=o(t).$$
		
		From this estimate of the norm of the maximal function, we will derive an estimate for $MF.$ We set $I_t=Q(s,C/t)\cap \R$ and we define for any positive scalar $\lambda >0$ the set $\lambda I_t$ as the interval, where each point in $I_t$ is rescaled by $\lambda.$ Consider the set $5I_t \setminus 3I_t$ which is a union of two intervals $J_t^1$ and $J_t^2.$ We will prove that on two-thirds of each of $J_t^1$ and $J_t^2$ we have 
		
		\begin{equation}\label{misc6}
		(MF(x))^2\leq \frac{3||MF||_2^2 t}{C}=o(t^2).
		\end{equation}
	
		\noindent
		Indeed, let $A$ be the subset of $J_t^1$ such that \eqref{misc6} fails. Suppose that $|A|>\frac{1}{3}\frac{2C}{t}.$ Then,
		
		$$\int_{J_t^1} (MF(x))^2 dx \geq \int_{A} (MF(x))^2 dx  \geq \int_{A} \frac{3||MF||_2^2 t}{C} dx = |A|\frac{3||MF||_2^2 t}{C} > 2||MF||_2^2,$$
		
		\noindent
		which gives a contradiction. The proof for the interval $J_t^2$ is completely analogous. We denote by $S_t^1$ and $S_t^2$ the subsets of $J_t^1$ and $J_t^2,$ respectively, where \eqref{misc6} holds.
		
		The next step will be to estimate the outer function $G(z).$ We will do it similarly to the way we did it in the proof of Lemma \ref{Lemma_2}. We recall that the function $R=MP\log \sqrt{w}$ is locally integrable by Corollary \ref{hardy_space_6} and assume that $s$ is a Lebesgue point for this function. Then a calculation like in the proof of Lemma \ref{Lemma_2} shows that there are points $x_1 \in S_t^1$ and $x_2 \in S_t^2$ such that $R(x_1),R(x_2)<R(s).$ Thus, we have for $k=1,2$
		
		\begin{equation}\label{misc7}
		\sup_{z \in \Gamma(x_k), \Im z \leq \frac{5C}{t}} |\Delta(t,z)|=\sup_{z \in \Gamma(x_k), \Im z \leq \frac{5C}{t}} |e^{itz}G(z)F(t,z)|\leq e^{5C}e^{2R(s)}MF(x_k)=o(t),
		\end{equation}
				
		\noindent
		where the error term $o(t)$ is uniform over all choices $z_t \in Q(s,C/t).$ The last inequality holds in particular on the boundary of the rhombus
		
		$$ \C \setminus \bigcup_{x \notin (x_1,x_2)} \big(\Gamma(x)\cup \overline{\Gamma(x)}\big)$$
		
		\noindent
		which lies in the upper half-plane. Repeating the same construction with an outer function $\tilde{G}$ on the lower half-plane, gives the same estimate \eqref{misc7} in $\C_-.$ Hence, the inequality \eqref{misc7} holds on the complete boundary of the rhombus, after possibly adjusting $x_1$ and $x_2,$ and hence it holds by the maximum principle in all of the interior. Our result follows since the rhombus contains the box $Q(s,C/t).$
	\end{proof}
	\subsection{Approximation of the Hermite-Bieher functions}

	In this section, we are going to use the results from last section to get an approximation for the Hermite-Biehler functions $E(t,z)$ and $\tilde{E}(t,s).$ First of all we notice that the functions 
	
	$$K(t,\lambda,z)=\frac{1}{\pi} \frac{A(z)C(\overline{\lambda}-C(z)A(\overline{\lambda})}{\overline{\lambda}-z}, \quad \sinc(t,\lambda,z) = \frac{1}{\pi} \frac{\sin(t(\overline{\lambda}-z))}{\overline{\lambda}-z}$$
	
	\noindent
	have the same demoninators. Together with Lemma \ref{Lemma_4} we find that the numerators are close. More precisely,

	\begin{corollary}\label{corollary_1}
		For almost all $s \in \R$ and all $C>0$ we have 
		
		$$\sup_{\lambda, z \in Q(s,C/t)} \bigg|[A(t,z)C(t,\overline{\lambda})-C(t,z)A(t,\overline{\lambda})]-\frac{1}{w(s)}\sin(t(\overline{\lambda}-z))\bigg|=o(1).$$
	\end{corollary}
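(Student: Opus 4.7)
The plan is to read off the corollary directly from Lemma \ref{Lemma_4} by clearing the common denominator $\pi(\overline{\lambda}-z)$. Recall from Lemma \ref{reproducing_kernel} that
$$K(t,\lambda,z)=\frac{1}{\pi}\frac{A(t,z)C(t,\overline{\lambda})-C(t,z)A(t,\overline{\lambda})}{\overline{\lambda}-z},$$
and by definition
$$\sinc(t,\lambda,z)=\frac{1}{\pi}\frac{\sin[t(\overline{\lambda}-z)]}{\overline{\lambda}-z}.$$
Hence for $\overline{\lambda}\neq z$,
$$\pi(\overline{\lambda}-z)\left[K(t,\lambda,z)-\frac{1}{w(s)}\sinc(t,\lambda,z)\right]=\bigl[A(t,z)C(t,\overline{\lambda})-C(t,z)A(t,\overline{\lambda})\bigr]-\frac{1}{w(s)}\sin[t(\overline{\lambda}-z)].$$
Both sides are entire in each of the two variables separately (the apparent singularity at $\overline{\lambda}=z$ is removable, as the numerators on each side vanish there), so the identity extends by continuity to all of $\C\times\C$.

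Next I would estimate $|\overline{\lambda}-z|$ on the box. For $\lambda,z\in Q(s,C/t)$ we have $|\Re(\lambda-z)|\leq 2C/t$ and $|\Im\lambda|,|\Im z|\leq C/t$, so $|\Im(\overline{\lambda}-z)|=|\Im\lambda+\Im z|\leq 2C/t$. Thus
$$\sup_{\lambda,z\in Q(s,C/t)}|\overline{\lambda}-z|\leq 2\sqrt{2}\,C/t=O(1/t).$$

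Combining this with Lemma \ref{Lemma_4}, which provides, for almost every $s\in\R$ and every $C>0$,
$$\sup_{\lambda,z\in Q(s,C/t)}\bigl|K(t,\lambda,z)-\tfrac{1}{w(s)}\sinc(t,\lambda,z)\bigr|=o(t),$$
we obtain
$$\sup_{\lambda,z\in Q(s,C/t)}\bigl|[A(t,z)C(t,\overline{\lambda})-C(t,z)A(t,\overline{\lambda})]-\tfrac{1}{w(s)}\sin[t(\overline{\lambda}-z)]\bigr|\leq\pi\cdot O(1/t)\cdot o(t)=o(1),$$
as $t\to\infty$, which is the claim.

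There is essentially no obstacle here; the corollary is a bookkeeping consequence of Lemma \ref{Lemma_4}, the only mild care being that the bound must be applied on the full box (including the diagonal $\overline{\lambda}=z$), which is justified by the removability of the singularity and the fact that the $o(t)$ estimate is already a uniform sup-norm bound over $Q(s,C/t)\times Q(s,C/t)$. The gain of one power of $t$ from multiplying by the diameter of the box is exactly what turns the $o(t)$ bound on the kernels into the $o(1)$ bound on the numerators.
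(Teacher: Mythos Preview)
Your proof is correct and follows essentially the same route as the paper: clear the common denominator $\pi(\overline{\lambda}-z)$, bound $|\overline{\lambda}-z|$ by $O(1/t)$ on the box, and multiply by the $o(t)$ estimate from Lemma~\ref{Lemma_4}. Your treatment is in fact slightly more careful than the paper's in two respects: you explicitly address the removable singularity at $\overline{\lambda}=z$, and your diameter bound $2\sqrt{2}C/t$ is the correct one (the paper writes $\sqrt{2}C/t$, which is off by a factor of $2$, though this does not affect the conclusion).
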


	\begin{proof}
		By Lemma \ref{Lemma_4}
		$$\sup_{\lambda, z \in Q(s,C/t)} \bigg|[A(t,z)C(t,\overline{\lambda})-C(t,z)A(t,\overline{\lambda})]-\frac{1}{w(s)}\sin(t(\overline{\lambda}-z))\bigg|$$
		$$=\sup_{\lambda, z \in Q(s,C/t)} |\pi (\overline{\lambda}-z)|\bigg|K(t,\lambda,z)-\frac{1}{w(s)} \sinc(t,\lambda,z)\bigg|$$
		
		$$=\frac{\pi\sqrt{2}C}{t} \sup_{\lambda, z \in Q(s,C/t)} \bigg|K(t,\lambda,z)-\frac{1}{w(s)} \sinc(t,\lambda,z)\bigg|=\frac{\pi\sqrt{2}C}{t}o(t)=o(1).$$
	\end{proof}

     Since the last statement holds for all $C>0,$ we can formulate it equivalently by considering a increasing function $C(t)>0,$ $C(t) \rightarrow \infty$ as $t \rightarrow \infty,$ instead of a constant $C>0.$
	
	\begin{corollary}\label{corollary_2}
		 For almost all $s \in \R$ there exists a increasing function $C(t)>0,$ $C(t) \rightarrow \infty$ as $t \rightarrow \infty,$ such that
		 
		 $$\sup_{\lambda, z \in Q(s,C(t)/t)} \bigg|[A(t,z)C(t,\overline{\lambda})-C(t,z)A(t,\overline{\lambda})]-\frac{1}{w(s)}\sin[t(\overline{\lambda}-z)]\bigg|=o(1).$$
	\end{corollary}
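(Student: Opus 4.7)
The plan is to upgrade the statement in Corollary \ref{corollary_1} from a fixed constant $C$ to a slowly growing function $C(t)$ by a standard diagonal argument. Fix $s$ in the full-measure subset of $\R$ on which Corollary \ref{corollary_1} holds for every constant $C>0$. For brevity, denote
$$\Phi(t,C) := \sup_{\lambda, z \in Q(s,C/t)} \bigg|[A(t,z)C(t,\overline{\lambda})-C(t,z)A(t,\overline{\lambda})]-\frac{1}{w(s)}\sin[t(\overline{\lambda}-z)]\bigg|.$$
By Corollary \ref{corollary_1}, for every positive integer $n$ we have $\Phi(t,n) \to 0$ as $t \to \infty$.

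First, choose inductively a strictly increasing sequence $T_1 < T_2 < \dots$ with $T_n \to \infty$ such that $\Phi(t,n) < 1/n$ for all $t \geq T_n$; this is possible because for each fixed $n$, $\Phi(t,n) = o(1)$. Then define
$$C(t) := n \quad \text{for } t \in [T_n, T_{n+1}), \qquad C(t) := 1 \quad \text{for } t \in (0, T_1).$$
This function is nondecreasing and $C(t) \to \infty$ as $t \to \infty$. If strict monotonicity is desired, one simply adds a small continuous perturbation such as $\varepsilon \arctan(t)$ with $\varepsilon$ small enough not to affect the estimate, or interpolates linearly on short overlapping intervals; this is cosmetic and preserves the size bound below.

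For $t \in [T_n, T_{n+1})$ we have $C(t) = n$, and the monotonicity of $\Phi(t,\cdot)$ in the second variable (larger $C$ means a larger box, hence a larger supremum) is not needed here since we have chosen $C(t)=n$ exactly; the bound $\Phi(t,C(t)) = \Phi(t,n) < 1/n$ follows directly, so $\Phi(t, C(t)) \to 0$ as $t \to \infty$, which is the desired conclusion.

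There is no serious obstacle: the only thing to watch is that the construction of $C(t)$ depends on $s$ (through the thresholds $T_n$), but the statement of Corollary \ref{corollary_2} allows exactly this, since it asserts the existence of $C(t)$ separately for each admissible $s$. The argument is a textbook diagonal extraction once Corollary \ref{corollary_1} has been established.
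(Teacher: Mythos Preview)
Your proof is correct and essentially identical to the paper's: both construct a step function $C(t)$ by choosing thresholds $T_n\to\infty$ at which the error for the box of size $n$ (respectively $C+n-1$) drops below $1/n$ (respectively $2^{-n}$), and then set $C(t)$ to the current step. The only differences are cosmetic (your tolerance is $1/n$ versus the paper's $2^{-n}$), and your remark that $C(t)$ depends on $s$ is a useful clarification the paper leaves implicit.
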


	\begin{proof}
		Choose some $C>0.$ We consider two ascending chains of real numbers
		
		$$C_1=C<C_2=C+1<C_3=C+2<\dots,$$
		
		$$t_1<t_2<t_3<\dots$$
		
		\noindent
		such that for all $i\geq1$ we have that for all $t>t_i$ 
		
		$$\sup_{\lambda, z \in Q(s,C_i/t)} \bigg|[A(t,z)C(t,\overline{\lambda})-C(t,z)A(t,\overline{\lambda})]-\frac{1}{w(s)}\sin[t(\overline{\lambda}-z)]\bigg|< 1/{2^i}.$$

		Define $C(t)=0$ for $t \in [0,t_1]$ and $C(t)=C_i$ for $t \in [t_i,t_{i+1}].$ Now let $\varepsilon>0.$  There is $i\geq 1$ such that $2^{-i}<\varepsilon.$ Thus, for all $t>t_i$ 
		
		$$\sup_{\lambda, z \in Q(s,C(t)/t)} \bigg|[A(t,z)C(t,\overline{\lambda})-C(t,z)A(t,\overline{\lambda})]-\frac{1}{w(s)}\sin[t(\overline{\lambda}-z)]\bigg|<\varepsilon.$$
	\end{proof}
	
	From this it follows that the Hermite-Biehler function $E(t,z)$ is close to $\sin z,$ whenever there is a zero in the box $Q(s,C/t).$ We will introduce some notation to keep track of when a zero enters the box. Let $C$ be a constant or slowly growing function and $s\in \R$. We define
	
	$$T_0(s,C)=\{t >0 \mid \text{ there is a zero } z(t) \text{ of } E(t,z) \text{ inside } Q(s,C/t)\},$$
	
	$$T_1(s,C)=\{t> 0 \mid \text{ all zeros } z(t) \text{ of } E(t,z) \text{ inside } Q(s,C/t) \text{ satisfy } \Im z(t) < -1/t\}.$$
	
	Recall that all zeros of $E(t,z)$ are in the lower half plane, since for all $z\in \C_+$
	
	$$\overline{|E(t,z)|}>|E(t,\overline{z})|.$$
		
	For $t>0$ we introduce the function
	
	$$\gamma(t)=\frac{\sqrt{2}}{\sqrt{\sinh[2t]}}.$$
	\begin{lemma}\label{Lemma_6}
		For almost all $s \in \R$ and for all $D>1$ such that $T_0(s,D)$ is unbounded, there exists an increasing function $C(t)>D$ for all $t>0,$ $C(t) \rightarrow \infty$ as $t\rightarrow \infty$ such that the following holds.
		
		Let $z(t)=x(t)-iy(t)$ be a continuous function in $Q(s,C/t)$ for $t\in T_0(s,C(t))$ with $E(t,z(t))=0.$ Then, there exists a unimodular function $\alpha(t),$ such that the following holds
		
	\begin{equation}\label{approximation_by_sine}	
		\sup_{z \in Q(s,C(t)/t)} \bigg| E(t,z) - \frac{\alpha(t)\gamma(ty(t))}{\sqrt{w(s)}} \sin[t(z-z(t))]\bigg|=o(1),
	\end{equation}

		\noindent
		as $t \rightarrow \infty,$ $t \in T_0(s,C(t))\cap T_1(s,C(t)).$ The conclusion holds as well, if we only consider $t\in T_0(s,C(t)),$ however in that case the error term in \eqref{approximation_by_sine} becomes $o(\gamma(ty(t))).$
	\end{lemma}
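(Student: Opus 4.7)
My plan is to reduce the claim to the reproducing-kernel approximation of Corollary \ref{corollary_2}, exploiting that a zero $z(t)$ of $E(t,\cdot)$ forces the relation $A(t,z(t)) = iC(t,z(t))$ (since $E = A - iC$).

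Pick an increasing $C(t) > D$ with $C(t) \to \infty$ (to be refined). Corollary \ref{corollary_2} then supplies
\[
A(t,z)\,C(t,\bar\lambda) - C(t,z)\,A(t,\bar\lambda) = \frac{\sin[t(\bar\lambda - z)]}{w(s)} + o(1)
\]
uniformly for $\lambda, z \in Q(s, C(t)/t)$. Setting $\lambda = \overline{z(t)}$, invoking $A(t,z(t)) = iC(t,z(t))$, and writing $C_0 := C(t,z(t))$, this collapses into
\[
C_0\,E(t,z) = -\frac{\sin[t(z-z(t))]}{w(s)} + o(1), \qquad z \in Q(s, C(t)/t).
\]

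Next I would compute $|C_0|$ via two algebraic identities. From Lemma \ref{determinant_1} at $z(t)$ combined with $A(t,z(t)) = iC_0$, a short calculation yields $C_0 = -1/\tilde E(t,z(t))$; combining with Corollary \ref{determinant_2} at $z(t)$, which reads $\tilde E(t,z(t))\,E^\sharp(t,z(t)) = -2i$, gives the elegant identity $|C_0| = |E(t,\overline{z(t)})|/2$. Evaluating the previous displayed approximation at $z = \overline{z(t)} \in Q(s, C(t)/t)$ and taking moduli then yields the key relation
\[
|C_0|^2 = \frac{\sinh[2t y(t)]}{2w(s)} + o(1).
\]

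Setting $\alpha(t) := -|C_0|/C_0$ (a unimodular function) so that $-1/C_0 = \alpha(t)/|C_0|$, substitution produces
\[
E(t,z) = \frac{\alpha(t)}{|C_0|\,w(s)}\sin[t(z-z(t))] + o(1/|C_0|).
\]
On $T_1(s, C(t))$, where $ty(t) > 1$, the quantity $\sinh[2ty(t)] \geq \sinh 2$ is bounded below and $\sqrt{\cdot}$ is Lipschitz there, so $|C_0|^{-1} = \sqrt{w(s)}\,\gamma(ty(t))\,(1 + o(1))$; this converts the previous display into the claimed form, with residual error of order $o(1)\cdot\gamma(ty(t))\,|\sin[t(z-z(t))]|$. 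Since $|\sin[t(z-z(t))]| \leq \cosh[2C(t)] = O(e^{2C(t)})$ across the box and $\gamma$ is bounded on $T_1$, the total error is $O(\varepsilon_t e^{2C(t)})$, where $\varepsilon_t$ is the rate in Corollary \ref{corollary_1}; reprising the diagonal construction of Corollary \ref{corollary_2} but with $C(t)$ growing slowly enough to ensure $\varepsilon_t e^{2C(t)} \to 0$, this becomes $o(1)$. For the weaker second statement on all of $T_0(s, C(t))$, $\sinh[2ty(t)]$ may vanish; but then $\gamma(ty(t))$ blows up correspondingly, and a brief case analysis (on whether $\sinh[2ty(t)]$ dominates $\varepsilon_t$ or not) shows the total error is $o(\gamma(ty(t)))$ under the same slow-growth choice of $C(t)$.

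The main obstacle I foresee is this balancing in the last step: $C(t)$ must tend to infinity (as the lemma demands) yet slowly enough that the exponential $e^{2C(t)}$, incurred because $|\sin[t(z-z(t))]|$ can grow exponentially across a box of diameter comparable to $C(t)/t$, does not swamp the decay $\varepsilon_t \to 0$. The regime $ty(t) \to 0$ is further delicate because $|C_0|$ itself may become arbitrarily small, forcing errors to be measured relative to the blowup of $\gamma(ty(t))$ rather than uniformly.
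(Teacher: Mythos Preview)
Your proposal is correct and follows essentially the same approach as the paper: both start from Corollary~\ref{corollary_2} together with $A(t,z(t))=iC(t,z(t))$ to obtain $\beta(t)E(t,z)\approx \sin[t(z-z(t))]/w(s)$, then determine $|\beta(t)|$ and substitute back, upgrading from a fixed box-size $L$ to a slowly growing $C(t)$ by the standard diagonal argument. The only minor variation is in how $|\beta(t)|$ is extracted: you use the exact pointwise identity $|C_0|=|E(t,\overline{z(t)})|/2$ (from Lemma~\ref{determinant_1} and Corollary~\ref{determinant_2}) and evaluate the approximation at the single point $\overline{z(t)}$, whereas the paper compares the $2\times2$ determinant $\det\bigl(\begin{smallmatrix}E&E\\E^\sharp&E^\sharp\end{smallmatrix}\bigr)$ at generic $(z,\lambda)$ against Corollary~\ref{corollary_1}; both routes yield $2|\beta(t)|^2 w(s)=\sinh[2ty(t)]+o(1)$.
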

	
	\begin{proof}
		Let $s \in \R$ and $C_1(t)$ be such that they satisfy Corollary $\ref{corollary_2}.$ By assumption $E(t,z(t))=A(t,z(t))-iC(t,z(t))=0.$ Hence,
		
		$$ A(t,z(t))=iC(t,z(t))=\beta(t).$$
		
		Note that by \eqref{determinant_1} we have $\beta(t) \neq 0.$ For all $\lambda \in \C$ we thus have 
		
		$$A(t,z(t))C(t,\overline{\lambda})-C(t,z(t))A(t,\overline{\lambda})=\beta(t) C(t,\overline{\lambda})+i\beta(t) A(t,\overline{\lambda})=i\beta E(t,\overline{\lambda}).$$
		
		Hence, by Corollary \ref{corollary_2} we have 
		
		\begin{equation}\label{misc8}
		\sup_{z \in Q(s,C_1(t)/t)} \bigg| i\beta(t) E(t,z)- \frac{1}{w(s)} \sin[t(z-z(t))]\bigg|=o(1).
		\end{equation}
		
		We are left to work out $\beta(t).$ It will suffice to show \eqref{approximation_by_sine} for any constant $L>2D.$ By the standard argument we then obtain the statement for an increasing function $C(t).$ The trick will be to cleverly calculate the determinant of the matrix 
		
		$$\begin{pmatrix}
		E(t,z) && E(t,\overline{\lambda}) \\ E^{\sharp}(t,z) && E^{\sharp}(t,\overline{\lambda})
		\end{pmatrix}$$
		
		\noindent
		for any $z,\lambda \in \C$ in two different ways. Once by using our approximation of $E(t,z)$ from above and the other way by using Corollary \ref{corollary_1} by noting that 
		
		$$A(t,z)C(t,\overline{\lambda})-C(t,z)A(t,\overline{\lambda})=\det\begin{pmatrix}
		A(t,z) && A(t,\overline{\lambda}) \\ C(t,z) && C(t,\overline{\lambda})
		\end{pmatrix}= \frac{1}{2i} \det\begin{pmatrix}
		E(t,z) && E(t,\overline{\lambda}) \\ E^{\sharp}(t,z) && E^{\sharp}(t,\overline{\lambda})
		\end{pmatrix}.$$
		
		For all $z,\lambda \in Q(s,L/t)$ we have by \eqref{misc8} 
		
		$$\det\begin{pmatrix}
		i \beta(t) E(t,z) && i \beta(t) E(t,\overline{\lambda}) \\ -i \overline{\beta}(t) E^{\sharp}(t,z) && -i \overline{\beta}(t) E^{\sharp}(t,\overline{\lambda})\end{pmatrix}$$
		$$= \frac{1}{w(s)^2}\det \begin{pmatrix}
		\sin[t(z-z(t))] && \sin[t(\overline{\lambda}-z(t))] \\ \sin[t(z-\overline{z}(t))] && \sin[t(\overline{\lambda}-\overline{z}(t))]
		\end{pmatrix}+o(1)\psi_1(t,z,\lambda).$$

		\noindent
		Multiplying out the determinant and using the formula
		
		$$\sin z = \frac{e^{iz}-e^{-iz}}{2i},$$
		
		\noindent
		the last line further simplifies to 
		
		$$=\frac{1}{w(s)^2}\sin[2ity(t)]\sin[t(\overline{\lambda}-z)]+o(1)\psi_1(t,z,\lambda) =\frac{i}{w(s)^2}\sinh[2ty(t)]\sin[t(\overline{\lambda}-z)]+o(1)\psi_1(t,z,\lambda).$$
		
		\noindent
		as $t \rightarrow \infty, t\in T_0(s,L)$ for some bounded function $\psi_1.$ On the other hand by Corollary \ref{corollary_1}
		
		$$\frac{2i}{w(s)} \sin[t(\overline{\lambda}-z)]=\det\begin{pmatrix}
		E(t,z) && E(t,\overline{\lambda}) \\ E^{\sharp}(t,z) && E^{\sharp}(t,\overline{\lambda})
		\end{pmatrix} + o(1)\psi_2(t,z,\lambda)$$
		
		\noindent
		as $t\rightarrow \infty, t\in T_0(s,L)$ for some bounded function $\psi_2.$	Combining both equations we arrive at
		
		$$2|\beta(t)|^2w(s)=(1+o(1))\sinh[2ty(t)].$$
		
		\noindent
		Hence, we now find an unimodular continuous function $\alpha$ such that
		
		$$\beta(t)=(1+o(1))\frac{\alpha(t)\sinh[2ty(t)]}{2w(s)}.$$
		
		Substituting this back into \eqref{misc8} we get
		
		$$\sup_{z \in Q(s,L/t)} \bigg| E(t,z) - \frac{\alpha(t)\sqrt{2}}{\sqrt{w(s)}\sqrt{\sinh[2ty(t)]}} \sin[t(z-z(t))]\bigg|=o\bigg(\frac{1}{\sqrt{\sinh[2ty(t)]}}\bigg)$$
		
		\noindent
		as $t\rightarrow \infty, t\in T_0(s,L).$ If additionally, $t\in T_1(s,L)$ the error term becomes $o(1),$ as desired.
	\end{proof}

	Let us now investigate what happens when there is no zero of $E(t,z)$ close to the real line. Suppose $z(t)=x(t)-iy(t)$ is a continuous path traced out by a zero in the lower half-plane. Assume that for some large $C>0$ the zero $z(t)$ is close to the bottom boundary of the box $Q(s,C/t)$ and \eqref{approximation_by_sine} holds for all $t>0$. Then, $y(t)$ is very large. We calculate
	
	$$\frac{\gamma(ty(t))}{\sqrt{w(s)}} \sin[t(z-z(t))]=\frac{\gamma(ty(t))}{\sqrt{w(s)}}\sin[t(z-x(t)+iy(t))]$$
	
	$$=\frac{\gamma(ty(t))}{\sqrt{w(s)}}\bigg(\sin[t(z-x(t))]\cos[ity(t)]+\sin[ity(t)]\cos[t(z-x(t))]\bigg).$$
	
	\noindent
	If $y(t) \rightarrow \infty$ we observe that 
	
	$$\frac{\sqrt{2}}{\sinh[2ty(t)]}\cos[ity(t)] \rightarrow 1, \quad \frac{\sqrt{2}}{\sqrt{\sinh[2ty(t)]}}\sin[ity(t)] \rightarrow i.$$
	
	\noindent
	Hence, 
	
	$$\bigg|\frac{\gamma(ty(t))}{\sqrt{w(s)}} \sin[t(z-z(t))]- \frac{\beta(t)}{\sqrt{w(s)}}e^{itz}\bigg|=o(1)$$
	
	\noindent
	for $\beta(t)=e^{-itx(t)}.$ Thus, we expect that $E(t,z)$ is close to $e^{itz},$ if the zeros are far away from the real line. The calculation above is rigorous only if there is a zero near the boundary of the box and \eqref{approximation_by_sine} holds. Thus, let us check what happens if there is no zero inside the box.

	Let $s\in \R$ and $C(t)$ be like in Corollary \ref{corollary_2}. We will consider only $t>0$ such that the box $Q(s,C(t)/t)$ does not contain any zeros of $E(t,z),$ i.e. $t \notin T_0(s,C(t)).$ We set $I_t=\R\cap Q(s,C(t)/t).$ For any $x,w \in I_t$ we can interpret Corollary \ref{corollary_2} as an approximation of a scalar product of two real vectors
	
	\begin{equation}\label{misc9}
	\bigg \langle  \begin{pmatrix}
	A(t,x) \\ C(t,x)
	\end{pmatrix}, \begin{pmatrix}
	C(t,w) \\ -A(t,w)
	\end{pmatrix} \bigg \rangle = \frac{1}{w(s)} \sin[t(w-x)]+o(1)\psi(t,x,w)
	\end{equation}
	
	\noindent
	as $t\rightarrow \infty$ for some bounded function $\psi.$ We will assume that $t$ is large enough, such that $o(1)\psi(t,x) \leq \frac{1}{w(s)}$ and $C(t)\geq 4\pi.$ Choose points $w_1 (t), w_2 (t) = w_1 (t) + \frac{\pi}{2t}.$ Since \eqref{misc9} holds for all $x \in I_t,$ we see that the vector 
	
	$$\begin{pmatrix}
	A(t,x) \\ C(t,x)
	\end{pmatrix}$$
	
	\noindent
	has modulus bounded away from zero and rotates around the origin. In particular, since $C(t)\geq 4\pi$ the vector makes at least one full rotation. Hence, there are $x_0(t)$ and $x_1(t)$ such that $E(t,x_0(t))$ is positive and $E(t,x_1(t))$ is positive imaginary, i.e. $C(t,x_0(t))=0,$ and $A(t,x_1(t))=0.$ If we set $x=x_0(t)$ and $w=x_1(t)$ in \eqref{misc9}, we see that $A(t,x_0(t))=c_1(t)$ and $C(t,x_1(t))=-c_2(t),$ where $c_1(t)$ and $c_2(t)$ are positive numbers satisfying $-c_1(t)c_2(t)=\frac{1}{w(s)}\sin[t(x_0(t)-x_1(t)]+ o(1).$
	
	\noindent
	We set $x_2(t)=x_1(t)-\frac{\pi}{2t}.$ By Corollary \ref{corollary_2} we have for $z \in Q(s,C(t)/t)$
	
	$$C(t,z)= \frac{1}{c_1(t)w(s)} \sin[t(z-x_0(t))] +o(1), \quad A(t,z)=-\frac{1}{c_2(t)w(s)}\cos[t(z-x_2(t))]+o(1).$$
	
	Hence, for all $z \in Q(s,C(t)/t)$ we find the approximation
	
	$$E(t,z)=-\frac{1}{c_2(t)w(s)}\cos[t(z-x_2(t))] -i\frac{1}{c_1(t)w(s)} \sin[t(z-x_0(t))] +o(1).$$
	
	This is almost $-e^{itz}.$ We have the following
	
	\begin{lemma}\label{approximation_by_exp}
		$$|t(x_2(t)-x_0(t))|=o(1) \mod 2\pi, \quad  |c_1(t)-c_2(t)|=o(1).$$
	\end{lemma}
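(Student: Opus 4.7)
The plan is to put the approximation of $E(t,z)$ from the preceding discussion into pure exponential form, analyze the zero locus of the resulting trigonometric combination, and then use the hypothesis $t\notin T_0(s,C(t))$ together with Hermite--Biehler to pin down the leading coefficients. Applying Euler's formula to the two--term decomposition rewrites it as
$$E(t,z)=-\frac{1}{2w(s)}\bigl(P(t)\,e^{itz}+Q(t)\,e^{-itz}\bigr)+o(1),\qquad z\in Q(s,C(t)/t),$$
with
$$P(t)=\frac{e^{-itx_2(t)}}{c_2(t)}+\frac{e^{-itx_0(t)}}{c_1(t)},\qquad Q(t)=\frac{e^{itx_2(t)}}{c_2(t)}-\frac{e^{itx_0(t)}}{c_1(t)}.$$
The zeros of the right hand side satisfy $e^{2itz}=-Q/P$ and therefore all lie on the single horizontal line $\Im z=-\frac{1}{2t}\log|Q/P|$, spaced $\pi/t$ apart.

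Next I would combine the hypothesis that $E(t,\cdot)$ has no zeros in $Q(s,C(t)/t)$ with the uniform approximation: by Rouch\'e, the approximating function has no zeros in the box either for $t$ large. Since $C(t)\to\infty$, the horizontal spacing $\pi/t$ is much smaller than the box width $2C(t)/t$, so some candidate zero always has its real part inside the horizontal extent of the box. The only way to banish every zero is then via the imaginary coordinate, i.e.\ $\bigl|\log|Q/P|\bigr|>2C(t)\to\infty$, which forces $|Q/P|\to\infty$ or $|Q/P|\to 0$. To discriminate, I use Hermite--Biehler: for $z\in Q(s,C(t)/t)\cap\C_+$ one has $|E(t,z)|>|E(t,\overline z)|$, and since $|E|$ is bounded on the compact box, transferring this through the uniform approximation gives
$$\frac{|Q(t)|^2-|P(t)|^2}{2w(s)^2}\sinh(2t\,\Im z)\ge -o(1).$$
Picking $z$ with $t\,\Im z\asymp 1$ makes the $\sinh$ factor bounded below, forcing $|Q|\ge|P|-o(1)$ and ruling out $|Q/P|\to 0$. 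Thus $|Q/P|\to\infty$, and after bootstrapping a uniform upper bound on $|Q|$ from the boundedness of $|E|$ on the box, we obtain $|P(t)|\to 0$.

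Finally, the algebraic identity
$$|P(t)|^2=\Bigl(\frac{1}{c_1(t)}-\frac{1}{c_2(t)}\Bigr)^{\!2}+\frac{2}{c_1(t)c_2(t)}\bigl(1+\cos[t(x_0(t)-x_2(t))]\bigr)$$
displays $|P|^2$ as a sum of two non-negative terms, each of which must tend to zero. The first summand yields $c_1(t)-c_2(t)=o(1)$ once uniform upper bounds on $c_1,c_2$ are in hand, and the vanishing of $1+\cos[t(x_0-x_2)]$ produces the claimed congruence for $t(x_2-x_0)$ modulo $2\pi$. The main obstacle is the boundedness bookkeeping: converting a reciprocal bound on $1/c_1-1/c_2$ into a bound on $c_1-c_2$, and $|P/Q|\to 0$ into $|P|\to 0$, both require uniform upper and lower bounds on $c_1(t),c_2(t)$ and on $|Q(t)|$. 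I would obtain these by combining the Wronskian identity $\det M(t,z)=1$ from Lemma \ref{determinant_1}, which prevents $c_1$ or $c_2$ from collapsing to zero, with the in-measure convergence of $|E(t,\cdot)|^2$ to $1/w$ from Lemma \ref{measure}, applied at interior points of the box to keep the amplitudes bounded above.
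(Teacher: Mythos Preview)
Your route is genuinely different from the paper's. The paper never writes $E$ in the form $Pe^{itz}+Qe^{-itz}$; instead it passes to the meromorphic inner function $\theta_E=E^\sharp/E$ and invokes Lemma~\ref{Lemma_1}: if $|\theta_E'(x)|/|\theta_E'(y)|>1+\varepsilon$ on an interval of length $\asymp 1/t$, then $\theta_E$ (hence $E$) has a zero within distance $\asymp 1/(\varepsilon t)\ll C(t)/t$. Since $|\theta_E'|=2\phi'$ with $\phi=\arg E$, and the trigonometric approximation gives $\phi'(x)$ proportional to the reciprocal of $\cos^2[t(x-x_2)]/c_2^2+\sin^2[t(x-x_0)]/c_1^2$, this denominator is constant in $x$ precisely when $c_1=c_2$ and the phases align; otherwise $\sup\phi'/\inf\phi'>1+\varepsilon$ and Lemma~\ref{Lemma_1} produces a forbidden zero. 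The virtue of the paper's argument is that it works with a \emph{ratio}, so the overall scale of $c_1,c_2$ cancels and no separate boundedness input is needed. Your argument is more elementary (no Blaschke machinery), at the price of the bookkeeping you flag.

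On that bookkeeping: your proposed tool, Lemma~\ref{measure}, does not do the job, because convergence in measure on $\R$ gives no pointwise control on the shrinking interval $I_t$. The correct input is already on the page: the relation $-c_1c_2=\tfrac{1}{w(s)}\sin[t(x_1-x_0)]+o(1)$ recorded just before the Lemma (a direct consequence of Corollary~\ref{corollary_1}) caps $c_1c_2$ by $1/w(s)+o(1)$. This single bound does most of what you need: it gives $|P|^2+|Q|^2=2(c_1^{-2}+c_2^{-2})\ge 4/(c_1c_2)\gtrsim w(s)$, so $\max(|P|,|Q|)$ is bounded below and Rouch\'e applies; and once $|P|\to 0$ is established, $1/c_1-1/c_2=o(1)$ together with the product bound yields $c_1-c_2=c_1c_2\,(1/c_2-1/c_1)=o(1)$, while the phase conclusion feeds back into the relation to give $c_1c_2\to 1/w(s)$ and hence $c_1,c_2\to 1/\sqrt{w(s)}$. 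The one place the product bound alone does not close is the Hermite--Biehler transfer, where you need $|P|+|Q|$ (equivalently $c_1,c_2$) bounded on the other side; this is exactly the step the paper's ratio argument sidesteps.

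Finally, note that your computation gives $1+\cos[t(x_0-x_2)]\to 0$, i.e.\ $t(x_2-x_0)\equiv\pi\pmod{2\pi}$, not $0$. This is in fact the correct congruence (check the free case $E(t,z)=e^{-itz}$, where $tx_0\in 2\pi\Z$ and $tx_2\in\pi+2\pi\Z$); the paper's statement contains a shift typo, and the subsequent conclusion $c_1,c_2\to 1/\sqrt{w(s)}$ is consistent with your version.
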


	 We will postpone the proof to section $3.4$ where we study the dynamics of the zeros. Thus, $c_1,c_2=\frac{1}{\sqrt{w(s)}}+o(1)$ and there is a unimodular function $\beta(t)$ and an increasing function $C_1(t),$ $C_1(t) \rightarrow \infty$ as $t \rightarrow \infty,$ such that for all $z\in Q(s,C_1(t)/t)$ we have
		
		$$\frac{\beta(t)}{\sqrt{w(s)}}e^{itz}.$$
	
	We summarize these results in the following 
	
	\begin{corollary}\label{corollary_4}
		For almost all $s \in \R$ there exists a function $C(t)>0, C(t) \rightarrow \infty$ as $t\rightarrow \infty$ and $z(t)=x(t)-iy(t) \in \C_-$ such that 
		
		\begin{equation}\label{approximation_by_sine2}	
		\sup_{z \in Q(s,C(t)/t)} \bigg| E(t,z) - \frac{\alpha(t)\gamma(ty(t))}{\sqrt{w(s)}} \sin[t(z-z(t))]\bigg|=o(1),
		\end{equation}
		
		\noindent
		for some $\alpha=\alpha(s,t), |\alpha|=1$ as $t\rightarrow \infty, t\in T_1(s,C(t)).$ If $t\in T_0(s,C(t)),$ $z(t)$ can be chosen as a zero of $E(t,z).$ If additionally $t \notin T_0(s,C(t))$ for all sufficiently large $t,$ then for all $D>0,$
		
		\begin{equation}\label{approximation_by_exp2}
			\sup_{z\in Q(s,D/t)} \bigg|E(t,z)- \frac{-i\alpha(s,t)}{\sqrt{w(s)}}e^{itz}\bigg|=o(1).
		\end{equation}
		
		One can drop the assumption $ty(t)>1,$ if one replaces the error term $o(1)$ by $o(\gamma(ty(t)))$ in \eqref{approximation_by_sine2}.
		
	\end{corollary}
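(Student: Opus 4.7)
The plan is to assemble the statement from pieces already available in the excerpt: Lemma \ref{Lemma_6} handles the regime where a zero of $E(t,\cdot)$ sits in the box $Q(s,C(t)/t)$, and the computation carried out in the paragraphs following Lemma \ref{Lemma_6} (invoking Lemma \ref{approximation_by_exp}) handles the regime where no zero is present. The only real work is to pick a single slowly growing function $C(t)$ compatible with both regimes, to select a continuous $z(t)$, and to check that \eqref{approximation_by_sine2} and \eqref{approximation_by_exp2} are mutually consistent in the overlap.

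First, fix $s$ outside the null set on which Corollary \ref{corollary_2}, Lemma \ref{Lemma_6} (for every integer $D>1$), and Lemma \ref{approximation_by_exp} fail. Let $C_0(t)$ be the function furnished by Corollary \ref{corollary_2}. Define $C(t)$ as the pointwise minimum of $C_0(t)$ and of the increasing function coming from Lemma \ref{Lemma_6} applied with $D=2$; it still tends to $\infty$.

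Case A: $t\in T_0(s,C(t))$. Because $E(t,z)$ is jointly continuous in $(t,z)$ and analytic in $z$, on each connected component of $T_0(s,C(t))$ one can select a continuous branch of zeros $z(t)=x(t)-iy(t)\in Q(s,C(t)/t)$, and these branches can be extended across the complement by an arbitrary continuous choice (the extension affects nothing since the assertion is only made for $t\in T_0$ or $T_1$). Lemma \ref{Lemma_6} then delivers \eqref{approximation_by_sine2} with error $o(1)$ for $t\in T_0(s,C(t))\cap T_1(s,C(t))$, and with error $o(\gamma(ty(t)))$ for $t\in T_0(s,C(t))$ without the $T_1$ restriction, which is exactly the last sentence of the statement.

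Case B: $t\notin T_0(s,C(t))$ for all sufficiently large $t$. The computation preceding the statement already produces real-valued $x_0(t), x_2(t), c_1(t), c_2(t)$ such that uniformly on $Q(s,C(t)/t)$,
\[
E(t,z)=-\frac{1}{c_2(t)w(s)}\cos[t(z-x_2(t))]-\frac{i}{c_1(t)w(s)}\sin[t(z-x_0(t))]+o(1).
\]
Lemma \ref{approximation_by_exp} supplies $c_1(t)-c_2(t)=o(1)$ and $t(x_2(t)-x_0(t))=o(1)\bmod 2\pi$, and the determinant identity via Corollary \ref{corollary_1} evaluated at the two real points forces $c_j(t)=1/\sqrt{w(s)}+o(1)$. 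Combining via Euler's formula yields \eqref{approximation_by_exp2} for every fixed $D>0$ with $\alpha(s,t)=-e^{-itx_0(t)}$. To embed this inside the unified sine formulation \eqref{approximation_by_sine2}, pick any $z(t)\in\C_-$ with $ty(t)\to\infty$; the $\sinh/\cosh$ expansion performed just after Lemma \ref{Lemma_6} shows that $\frac{\alpha(t)\gamma(ty(t))}{\sqrt{w(s)}}\sin[t(z-z(t))]$ differs from $\frac{-i\alpha(s,t)}{\sqrt{w(s)}}e^{itz}$ by $o(1)$ on $Q(s,D/t)$, so the two forms are compatible.

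The main obstacle is essentially bookkeeping: producing a continuous $z(t)$ in Case A on a possibly ragged set $T_0(s,C(t))$ and matching the normalizing constants and unimodular factors between the sine and exponential descriptions in the transition between the two cases. All genuine analytic content is packaged in Lemmas \ref{Lemma_6} and \ref{approximation_by_exp}, both of which are quoted.
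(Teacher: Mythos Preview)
Your proposal is correct and follows essentially the same approach as the paper: the paper presents Corollary \ref{corollary_4} as a summary of the preceding results (Lemma \ref{Lemma_6} for Case A and the computation with $c_1,c_2,x_0,x_2$ together with Lemma \ref{approximation_by_exp} for Case B), and you have faithfully reconstructed that assembly, including the compatibility check via the $\sinh/\cosh$ expansion.
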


	In conclusion, if the zeros of $E(t,z)$ are bounded away by at least $1/t$ from the real line, then $E(t,z)$ behaves like $e^{itz}.$ In that case, we can immediately conclude the convergence of the non-linear Fourier transform, see section $3.8.$ However, if a zero is close to the real line, $E(t,z)$ is close to $\sin z,$ which is not good enough to conclude convergence. Hence, from this point on we will need to show that all zeros of $E(t,z)$ are bounded locally at least by $1/t$ away from the real line. 
	
	If the zeros of $E(t,z)$ do not stay away from the box $Q(s,C(t)/t),$ i.e. $T_0(s,C)$ is unbounded, there are precisely two situations in which this can happen. Fix $s \in \R$ and $C>0.$
	
	\begin{itemize}
		\item [(i)] There is a time $T>0$ s.t. for all $t>T$ the box $Q(s,C/t)$ contains a zero of $E(t,z),$ i.e. $T_0(s,C)$ covers a half-line, so after time $T$ at least one zero stays in the box forever.
		
		\item[(ii)] For all times $T>0,$ such that $Q(s,C/T)$ does not contain a zero of $E(t,z),$ there exists $t>T$ such that the box $Q(s,C/t)$ contains a zero of $E(t,z),$ i.e. the zeros jump in and out of the box infinitely often.
	\end{itemize}

	While we can rule out (i) by the results proven already, we will have to put a lot more effort into showing that case (ii) can not occur.
	
	\subsection{Resonances can not stay close to the real line forever}
	In this section, we are going to show that for almost all $s\in \R$ and any $C>0$ the zeros of $E(t,z)$ can not stay in the box $Q(s,C/t)$ forever. Our strategy will be as follows. If at least one zero of $E(t,z)$ were in the box $Q(s,C/t)$ for all large enough $t,$ then by Lemma \ref{Lemma_6} $|E(t,x)|$ is close to $|\sin(tx)|$ near $s.$ Since $|\sin(tx)|$ does not converge in measure on $\R,$ the same holds for the function $|E(t,x)|.$ However, this contradicts Lemma \ref{measure}. We set 
	
	$$\Sigma=\{s \in \R \mid \text{There exists } T>0 \text{ such that for all } t>T \text{ the box } Q(s,C/t) \text{ contains a zero}\}.$$
	
	We can assume without loss of generality that $0<|\Sigma|<\infty.$  For two sets $A$ and $B$ we denote the symmetric difference by
	
	$$A\Delta B=(A\setminus B) \cup (B \setminus A).$$
	
	The following well-known result allows us to assume that $\Sigma$ is a finite union of intervals.
	
	\begin{lemma}
		Let $\Sigma$ be a measurable set with $|\Sigma| < \infty.$ Then, for all $\varepsilon > 0$ there are finitely many intervals $I_1, \dots I_n,$ such that 
		
		$$\bigg|\Sigma \Delta \bigcup_{k=1}^{n} I_k\bigg| < \varepsilon.$$
	\end{lemma}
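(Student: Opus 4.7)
The plan is to use outer regularity of Lebesgue measure together with the structure theorem for open subsets of $\R$. Since $\Sigma$ is measurable with $|\Sigma|<\infty$, for any $\varepsilon>0$ I can first find an open set $U \supseteq \Sigma$ with $|U\setminus \Sigma| < \varepsilon/2$. Every open subset of $\R$ is a countable disjoint union of open intervals, so write $U = \bigsqcup_{k\geq 1} J_k$. Because $|U| \leq |\Sigma| + \varepsilon/2 < \infty$, the series $\sum_k |J_k|$ converges, hence I can pick $N$ so large that $\sum_{k>N} |J_k| < \varepsilon/2$.

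Next I set $V = \bigcup_{k=1}^{N} J_k$, a finite union of (open) intervals, and split the symmetric difference as
$$|\Sigma \Delta V| = |\Sigma \setminus V| + |V \setminus \Sigma|.$$
The second piece satisfies $|V\setminus \Sigma|\leq |U\setminus \Sigma| < \varepsilon/2$ because $V\subseteq U$. For the first piece, $\Sigma \setminus V \subseteq U\setminus V = \bigsqcup_{k>N} J_k$, so $|\Sigma\setminus V| \leq \sum_{k>N} |J_k| < \varepsilon/2$. Adding gives $|\Sigma \Delta V| < \varepsilon$, which is exactly the conclusion with $I_k = J_k$ for $k=1,\dots,N$.

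The argument is entirely standard measure theory and presents no genuine obstacle; the only small point to be careful about is that outer regularity of Lebesgue measure is needed to produce the approximating open set $U$, and the finiteness $|\Sigma|<\infty$ is needed to ensure $\sum_k |J_k|<\infty$ so that only finitely many intervals are required. Note also that the intervals produced are open; if one wishes closed or half-open intervals the endpoints form a null set and the estimate is unaffected.
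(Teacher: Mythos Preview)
Your proof is correct and entirely standard. The paper itself does not prove this lemma but merely states it as a well-known result, so there is no approach to compare; your argument via outer regularity and the structure of open subsets of $\R$ is precisely the textbook proof one would expect.
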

	
	We will contradict convergence on each interval separately, so we might as well assume from the very beginning that $\Sigma$ is an interval.
	
	The following criterion allows us to show that a given sequence of functions does not converge in measure.
	
	\begin{lemma}\label{measure2}
		Let $\Sigma$ be a measurable set with $|\Sigma|>0$ and let $f_n(x)$ be a sequence of real functions. Suppose there exists $\delta>0$  and $N\geq1$ such that for all $x \in \Sigma$ and all $n,m\geq N$ we have
		
		$$||f_n(x)|-|f_m(x)||>\delta.$$
		
		Then, $|f_n|$ does not converge in measure on $\Sigma.$
	\end{lemma}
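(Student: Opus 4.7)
The plan is to argue by contradiction. Suppose $|f_n|$ did converge in measure on $\Sigma$ to some measurable function $g:\Sigma\to\R$. I would then exploit the uniform separation hypothesis $||f_n(x)|-|f_m(x)||>\delta$ together with the triangle inequality, which morally says that a Cauchy-type condition in measure is incompatible with a uniform gap of size $\delta$ between all pairs $|f_n|,|f_m|$.

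Concretely, fix any $\varepsilon\in(0,|\Sigma|/2)$ and apply the definition of convergence in measure with tolerance $\delta/2$. For all sufficiently large $n$, the bad set
$$A_n:=\{x\in\Sigma\mid ||f_n(x)|-g(x)|\geq \delta/2\}$$
satisfies $|A_n|<\varepsilon$. Choose any two indices $n,m\geq N$ large enough that both $|A_n|,|A_m|<\varepsilon$. On the set $\Sigma\setminus(A_n\cup A_m)$, which still has measure at least $|\Sigma|-2\varepsilon>0$, the triangle inequality gives
$$||f_n(x)|-|f_m(x)||\;\leq\;||f_n(x)|-g(x)|+|g(x)-|f_m(x)||\;\leq\;\delta/2+\delta/2=\delta.$$
This directly contradicts the hypothesis that $||f_n(x)|-|f_m(x)||>\delta$ for \emph{every} $x\in\Sigma$ (in particular for the points in the positive-measure set $\Sigma\setminus(A_n\cup A_m)$), and the proof is complete.

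There is essentially no obstacle here: the only place one needs to be slightly careful is in ensuring the candidate limit $g$ is a genuine (almost everywhere finite, measurable) function on $\Sigma$, which is automatic from the definition of convergence in measure, and in choosing $\varepsilon$ small enough that the exceptional set $A_n\cup A_m$ does not exhaust $\Sigma$. The structural lesson, which is what the next section of the paper will exploit, is that a uniform pointwise separation of size $\delta$ forces even convergence in measure to fail.
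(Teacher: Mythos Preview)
Your proof is correct and essentially identical to the paper's: both argue by contradiction, use the triangle inequality to show that convergence in measure to some limit $g$ would force $\big||f_n|-|f_m|\big|\leq\delta$ on a set of positive measure, contradicting the uniform separation hypothesis. Your choice of tolerance $\delta/2$ is the cleaner version of the same argument.
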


	\begin{proof}
		Suppose $|f_n|\rightarrow f$ as $n\rightarrow \infty$ for some function $f.$ Then, there exists $N\geq 0$ such that for all $n\geq N$
		
		$$\big|\{x \in \Sigma : ||f_n|-f|>\delta\}\big|< \frac{|\Sigma|}{2}.$$
		
		Thus, we immediately obtain a contradiction
		
		$$|\Sigma|= \big|\{x \in \Sigma :||f_n(x)|-|f_m(x)||>\delta \}\big|$$
		$$\leq \big|\{x \in \Sigma : ||f_n(x)|-f|+||f_m(x)|-f| > \delta \} \big|$$
		$$\leq \big|\{x \in \Sigma : ||f_n(x)|-f|> \delta \} \big| + \big|\{x \in \Sigma : ||f_m(x)|-f| > \delta \}\big|$$
		$$< \frac{|\Sigma|}{2}+ \frac{|\Sigma|}{2}=|\Sigma|.$$
	\end{proof}

	Furthermore, we will need the following technical inequality.
	
	\begin{lemma}\label{measure3}
		Let $\Sigma$ be a non-empty interval. There exists $\delta>0$ independent of $\Sigma$ and $N\geq1$ dependent on $\Sigma,$ such that for all $a,b>0$ and $z,w \in Q(s,C)\cap \C_-$ and all $n\geq N$ we have 
		
		\begin{equation}\label{misc10}
		\bigg| a\big|\sin[2^n(x-z)]\big|-b\big|\sin[2^{n+1}(x-w)]\big| \bigg| > \max(a,b)\delta
		\end{equation}
		
		\noindent
		on a subset of $\Sigma$ of measure at least $\frac{\delta}{24} |\Sigma|.$
	\end{lemma}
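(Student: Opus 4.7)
The plan is to rescale to a $\pi$-periodic picture and then invoke a compactness argument together with a nondegeneracy statement based on frequency mismatch.

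Writing $z = z_1 + iz_2$ and $w = w_1 + iw_2$ with $z_2, w_2 \leq 0$, using $|\sin(\alpha + i\beta)|^2 = \sin^2\alpha + \sinh^2\beta$, and substituting $y = 2^n x$, the quantity $a|\sin[2^n(x-z)]| - b|\sin[2^{n+1}(x-w)]|$ becomes $af_1(y) - bf_2(y)$, where
\[
f_1(y) := \sqrt{\sin^2(y - \Phi_1) + \sinh^2 \Gamma_1}, \qquad f_2(y) := \sqrt{\sin^2(2y - \Phi_2) + \sinh^2 \Gamma_2},
\]
with $\Phi_1 = 2^n z_1$, $\Phi_2 = 2^{n+1} w_1$, $\Gamma_1 = 2^n|z_2|$ and $\Gamma_2 = 2^{n+1}|w_2|$. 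In the intended application, $2^n|z_2|$ and $2^{n+1}|w_2|$ remain bounded (by a constant we continue to call $C$), so the $\Gamma_i$ lie in $[0, C]$. Both $f_1$ and $f_2$ are $\pi$-periodic in $y$, and as $x$ ranges over $\Sigma$, $y$ sweeps the interval $2^n\Sigma$ of length $2^n|\Sigma|$. If $N$ is chosen so that $2^N|\Sigma|$ comfortably contains many copies of $[0,\pi]$, a standard equidistribution estimate reduces the problem to showing that for some universal $\delta > 0$ (independent of all parameters),
\[
\mu\bigl\{y \in [0, \pi] : |af_1(y) - bf_2(y)| > \max(a, b)\,\delta\bigr\} \geq \tfrac{\pi \delta}{24}.
\]

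Assume without loss of generality $a \geq b > 0$, set $r := b/a \in (0,1]$, and divide through by $a$. The parameter space $P := [0,1] \times [0, \pi]^2 \times [0, C]^2$ is compact, and $M(p) := \sup_{y \in [0, \pi]} |f_1(y) - r f_2(y)|$ is continuous in $p \in P$. The crucial claim is $M(p) > 0$ on all of $P$: if $f_1 \equiv r f_2$, squaring and applying $\sin^2\theta = (1 - \cos 2\theta)/2$ produces
\[
\tfrac{1}{2} - \tfrac{1}{2}\cos(2y - 2\Phi_1) + \sinh^2 \Gamma_1 \equiv \tfrac{r^2}{2} - \tfrac{r^2}{2}\cos(4y - 2\Phi_2) + r^2 \sinh^2 \Gamma_2,
\]
an identity in $y$ whose left-hand frequency-$2$ coefficient $-\tfrac{1}{2}$ has no counterpart on the right, a contradiction. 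Hence by compactness $M_0 := \min_P M > 0$.

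Finally, set $\delta := M_0/2$. A direct computation, using $f_k \geq |\sin(\cdot)|$ to bound the denominator, yields $|f_1'| \leq 1$ and $|f_2'| \leq 2$, whence $|(f_1 - r f_2)'| \leq 3$. Thus around any $y_0$ realizing $M(p)$, we have $|f_1 - r f_2| \geq M_0/2 = \delta$ throughout $(y_0 - M_0/6,\, y_0 + M_0/6)$, an interval of length $M_0/3 = 2\delta/3$, so the good set has measure at least $2\delta/3$ per period of length $\pi$. Since $2/(3\pi) > 1/24$, the relative density $2\delta/(3\pi)$ strictly exceeds $\delta/24$; rescaling back to $\Sigma$ (and absorbing the $O(1/(2^n|\Sigma|))$ error from partial periods at the endpoints of $2^n\Sigma$) yields the claimed measure bound for all $n \geq N$. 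The principal obstacle is the uniform nondegeneracy $M_0 > 0$, which rests on the frequency mismatch between $f_1$ (period $\pi$) and $f_2$ (period $\pi/2$) — precisely what the factor of $2$ in the exponent $2^{n+1}$ provides.
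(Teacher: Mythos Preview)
Your argument follows the same overall plan as the paper---rescale to the periodic picture, extract a uniform lower bound by compactness, and upgrade it to a measure bound via a Lipschitz estimate---but handles the parameters $a,b$ differently. The paper fixes $a=b=1$, shows that $f_1-f_2$ attains both a uniformly positive maximum and a uniformly negative minimum over $z,w$, and then uses the positive set when $a\geq b$ and the negative set when $b\geq a$. You instead fold the ratio $r=b/a$ into the compact parameter space and prove the single nondegeneracy $\sup_y|f_1-rf_2|>0$ via an explicit frequency mismatch (the squared identity forces a nonzero frequency-$2$ Fourier coefficient on one side with none on the other). This is arguably cleaner: it does not require $f_1-f_2$ to change sign, a claim that can actually fail when $\Gamma_1$ and $\Gamma_2$ differ substantially. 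One caution: your ``without loss of generality $a\geq b$'' is not a genuine symmetry, since $f_1$ and $f_2$ have different frequencies; you should say explicitly that the case $b>a$ is handled by the parallel argument for $r'f_1-f_2$ with $r'=a/b\in(0,1]$, where now the frequency-$4$ coefficient furnishes the contradiction. Your remark that the rescaled imaginary parts $\Gamma_i$ must stay bounded is exactly what the application supplies and what the compactness needs.
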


	\begin{proof}
		Let us first prove the case where $a=b=1$ and $\Sigma=[0,\frac{\pi}{2^n}]$ for some $n\geq1.$ The function 
		
		$$f_{z,w,n}(x)=\big|\sin[2^n(x-z)]\big|-\big|\sin[2^{n+1}(x-w)]\big|$$
		
		\noindent
		is periodic with shortest period $\frac{\pi}{2^n}.$ The factor $2^n$ just squeezes the function from the interval $\Sigma=[0,\frac{\pi}{2}]$ onto the interval $[0,\frac{\pi}{2^n}]$ which covers exactly the shortest period of $f_{z,w,n}.$ Hence, it suffices to show the inequality for $f_{z,w}=f_{z,w,1}$ and $\Sigma=[0,\frac{\pi}{2}].$
		
		Observe that by continuity of $\sup$ and $\inf$ on compact sets, we have the following min-max inequalities
		
		$$\delta_1=\sup_{z,w \in Q(s,C)} \inf_{x \in [0,\frac{\pi}{2}]} \big|\sin[2(x-z)]\big|-\big|\sin[4(x-w)]\big|<0,$$
		
		$$\delta_2=\inf_{z,w \in Q(s,C)} \sup_{x \in [0, \frac{\pi}{2}]} \big|\sin[2(x-z)]\big| - \big|\sin[4(x-w)]\big|>0.$$
		
		Set $\delta=\min(|\delta_1|,|\delta_2|)>0.$ Furthermore, $f_{z,w}$ is Lipschitz continuous, since $f_{z,w}'$ is uniformly bounded for all $z,w\in Q(s,C)\cap\C_-.$ Indeed, by the same method as in the proof of Lemma \ref{derivative_formula} we find for $z=a+ib\in Q(s,C)\cap\C_-$ that

		$$\frac{d}{dx} \big|\sin[2(x-z)]\big| = \frac{d}{dx} \big|\sin[2(x-a)]\cosh(2b)+i\cos[2(x-a)]\sinh(2b)\big|$$
		$$=\frac{2\cos[2(x-a)]\sin[2(x-a)](\cosh^2(2b)-\sinh^2(2b))}{\big|\sin[2(x-z)]\big|}$$
		$$=2\frac{\cos[2(x-a)]\sin[2(x-a)]}{\big|\sin[2(x-z)]\big|}.$$
		
		Notice that for all $z\in Q(s,C)$ we have
		
		$$\frac{\cos[2(x-a)]\sin[2(x-a)]}{\big|\sin[2(x-z)]\big|} \leq 1.$$

		Hence we have for all $z,w\in Q(s,C)$ and for $x\in [0,\frac{\pi}{2}]$ 
		
		$$|f_{z,w}'(x)| \leq 6.$$
		
		Choose a point $x_0 \in [0,\frac{\pi}{2}]$ such that $f_{z,w}(x_0)>\delta.$ Then, if $|x_0-y|<\frac{\delta}{12},$ we have 
			
			$$|f_{z,w}(x_0)-f_{z,w}(y)|<\frac{\delta}{2}.$$
		
		Thus, on an interval $I_1$ of length $\frac{\delta}{6}$ the inequality \eqref{misc10} holds with $\frac{\delta}{2}.$ Similarly take a point $x_1 \in [0,\frac{\pi}{2}]$ such that $f_{z,w}(x_1)<-\delta.$ By the same argument we obtain an interval $I_2$ of length $\frac{\delta}{6}$ on which the inequality \eqref{misc10} holds with $\frac{\delta}{2}.$
		
		Let now $a,b>0$ and suppose that $a\geq b.$ Then for all $x \in I_1$
		
		$$a\big|\sin[2(x-z)]\big|-b\big|\sin[4(x-w)]\big| \geq  a\big(\big|\sin[2(x-z)]\big|-\big|\sin[4(x-w)]\big|\big)=\max(a,b)\frac{\delta}{2}.$$
		
		In the case $a\leq b$, a similar inequality holds on $I_2.$ By translation, this concludes the proof for the case $\Sigma=[k\frac{\pi}{2^n},(k+1)\frac{\pi}{2^n}]$ for any $n\geq1$ and $k\in \Z.$ Let now $\Sigma$ be any non-empty interval. Then, choose $N$ large enough such that there are finitely many scaled dyadic intervals $I_1 \dots I_m$ of the form $[k\frac{\pi}{2^N},(k+1)\frac{\pi}{2^N}],$ all contained in $\Sigma,$ satisfying
		
		$$\bigg|\Sigma \setminus \bigcup_{n=1}^m I_n\bigg|\leq\frac{|\Sigma|}{2}.$$
		
		Then, the inequality is satisfied for all $n\geq N$ on a subset of $\Sigma$ of measure at least 
		
		$$\frac{\delta}{12}\bigg| \bigcup_{n=1}^m I_n \bigg|=\frac{\delta}{12}\bigg(|\Sigma|- \bigg|\Sigma \setminus \bigcup_{n=1}^m I_n\bigg| \bigg) \geq \frac{\delta}{24}|\Sigma|.$$
	\end{proof}

	\begin{corollary}\label{corollary_3}
		$|\Sigma|=0.$
	\end{corollary}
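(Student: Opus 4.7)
The plan is to suppose $|\Sigma|>0$ for contradiction and, using the interval-approximation lemma stated just above, reduce to the case that $\Sigma$ is itself a non-degenerate interval. I will then contradict Lemma \ref{measure}: convergence in measure $|E(t,\cdot)|\to 1/\sqrt{w(\cdot)}$ on $\R$ forces the dyadic differences $||E(2^n,\cdot)| - |E(2^{n+1},\cdot)||$ to tend to $0$ in measure on $\Sigma$, and the goal is to show instead that these differences stay bounded below on a subset of $\Sigma$ whose measure is bounded away from zero uniformly in $n$.

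For almost every $s\in\Sigma$ Lemma \ref{Lemma_6} applies, and since $s\in\Sigma$ forces $t\in T_0(s,C)$ for all large $t$, its weaker form yields a continuous zero $z(s,t)=x(s,t)-iy(s,t)$ inside $Q(s,C(t)/t)$ together with the approximation
$$|E(t,x)| = \frac{\gamma(ty(s,t))}{\sqrt{w(s)}}\sqrt{\sin^2[t(x-x(s,t))] + \sinh^2[ty(s,t)]} + o\bigl(\gamma(ty(s,t))\bigr)$$
uniformly on $Q(s,C(t)/t)$; here I use the identity $|\sin[t(x-z)]|=\sqrt{\sin^2[t(x-\Re z)]+\sinh^2[t|\Im z|]}$, which is exactly the form appearing in Lemma \ref{measure3}. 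Since $ty(s,t)\in(0,C(t))$ is bounded, after passing to a subsequence I may assume it is confined to a fixed compact subinterval of $(0,\infty)$, so the amplitude $\gamma(ty(s,t))/\sqrt{w(s)}$ is pinched between two positive constants.

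At each large $n$ I now produce, by a Vitali-type disjoint covering, intervals $I_j^{(n)}=Q(s_j,C(2^{n+1})/2^{n+1})\cap\R$ with centers $s_j$ chosen from a positive-measure subset of $\Sigma$ on which the above approximation holds with a common $C(t)$ (obtained by Egorov's theorem applied to the family of approximation errors). Because $C$ is increasing, on each $I_j^{(n)}$ both approximations at times $2^n$ and $2^{n+1}$ apply simultaneously. Substituting, the quantity $||E(2^n,x)|-|E(2^{n+1},x)||$ on $I_j^{(n)}$ reduces, up to an $o(1)$ additive error, precisely to the left-hand side of Lemma \ref{measure3} with parameters $a,b$ equal to the two amplitudes and $z,w$ equal to the two zeros. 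Lemma \ref{measure3} then supplies a subset $B_j^{(n)}\subset I_j^{(n)}$ of measure at least $(\delta/24)|I_j^{(n)}|$ on which this difference exceeds a positive constant depending only on the uniform amplitude bound.

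Summing over $j$ and using that the Vitali cover exhausts a definite fraction of $\Sigma$, the set $B^{(n)}=\bigcup_j B_j^{(n)}\subset\Sigma$ has measure at least $c|\Sigma|$ for some $c>0$ independent of $n$, on which $||E(2^n,\cdot)|-|E(2^{n+1},\cdot)||$ is bounded below by a fixed positive constant. This contradicts convergence in measure on $\Sigma$, so $|\Sigma|=0$. The hard part will be arranging the Vitali cover and common $C(t)$ so that the approximation of Lemma \ref{Lemma_6} is genuinely uniform across the centers $s_j$ (using Egorov or a compact exhaustion of the full-measure good set on which Lemma \ref{Lemma_6} applies), while simultaneously keeping the amplitudes $\gamma(2^n y(s_j,2^n))/\sqrt{w(s_j)}$ in a regime where the lower bound furnished by Lemma \ref{measure3} is nontrivial.
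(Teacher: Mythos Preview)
Your plan is essentially the paper's: assume $|\Sigma|>0$, reduce to an interval, feed the Lemma~\ref{Lemma_6} approximation at consecutive dyadic times into Lemma~\ref{measure3}, and contradict the convergence in measure from Lemma~\ref{measure}. The paper uses Lusin's theorem (rather than Egorov) to make the threshold time $T(s)$ and the bound $w(s)<D$ uniform over $s\in\Sigma$, and then invokes Lemma~\ref{measure3} on $\Sigma$ directly; your Vitali cover by the boxes $I_j^{(n)}$ makes explicit a point the paper glosses over, namely that the approximation at a single center $s_j$ is only valid on the shrinking window $Q(s_j,C(t)/t)$, so one must tile $\Sigma$ by such windows before applying Lemma~\ref{measure3} with \emph{fixed} $z,w$.

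One step you should drop: you do not need to pass to a subsequence to confine $ty(s,t)$ to a compact interval. Since the zero lies in $Q(s,C/t)$ you already have $0<ty(s,t)<C$, hence $\gamma(ty)\ge\gamma(C)>0$, and Lemma~\ref{measure3} only needs a lower bound on the amplitudes (its output scales with $\max(a,b)$). Passing to a subsequence in $n$ is also risky here, because Lemma~\ref{measure3} is stated for the \emph{consecutive} pair $2^n,2^{n+1}$; if you thin the dyadic sequence you lose that structure. Simply take $n$ large enough that the Egorov/Lusin uniformity and the $N$ from Lemma~\ref{measure3} (applied to each $I_j^{(n)}$, whose length $\asymp C(2^{n+1})/2^{n+1}$ makes the required $N\lesssim n$) are both in force.
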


	\begin{proof}
		Suppose $|\Sigma|>0.$ For all $s \in \Sigma$ there is $T(s)>0$ such that the box $Q(s,C/t)$ contains a zero of $E(t,z)$ for all $t>T(s)$ and the left hand side of \eqref{approximation_by_sine} is smaller than $\varepsilon \gamma(ty(t)),$ where $\varepsilon>0$ is a small number, which we specify later. By Lusin's Theorem, we can assume that $T(s)$ is the same for all $s \in \Sigma.$ Using Lusin's Theorem again we can assume that $w(s)<D<\infty$ for all $s\in \Sigma.$ Let $n\geq 1$ such that $2^n>T.$ Observe that 
		
		$$\bigg|E(2^n,x)-\alpha(2^n)\frac{\gamma(2^ny(2^n))}{\sqrt{w(s)}} \sin[2^n(x-z(2^n))]\bigg|<\varepsilon \gamma(2^ny(2^n))$$
		
		\noindent
		implies
		
		$$|E(2^n,x)|\leq \varepsilon \gamma(2^ny(2^n))+ \frac{\gamma(2^ny(2^n))}{\sqrt{w(s)}}\big|\sin[2^n(x-z(2^n))]\big|.$$
		
		Note that by inverse triangle inequality
		
		$$|E(2^{n+1},x)|\geq  \frac{\gamma(2^{n+1}y(2^{n+1}))}{\sqrt{w(s)}} \big|\sin[2^{n+1}(x-z(2^{n+1}))]\big|$$
		$$ -\bigg|E(2^{n+1},x) - \alpha(2^{n+1})\frac{\gamma(2^{n+1}y(2^{n+1}))}{\sqrt{w(s)}} \sin[2^{n+1}(x-z(2^{n+1}))]\bigg|$$
		$$\geq \bigg|\frac{\gamma(2^{n+1}y(2^{n+1}))}{\sqrt{w(s)}} \sin[2^{n+1}(x-z(2^{n+1}))]\bigg| - \varepsilon \gamma(2^{n+1}y(2^{n+1})).$$
		
		Thus,
		
		$$\bigg| |E(2^n,x)|-|E(2^{n+1},x)|\bigg| \geq |E(2^n,x)|-|E(2^{n+1},x)|$$
		$$\geq  \frac{\gamma(2^{n+1}y(2^{n+1}))}{\sqrt{w(s)}}\big|\sin[2^{n+1}(x-z(2^{n+1}))]\big| 
		- \frac{\gamma(2^ny(2^n))}{\sqrt{w(s)}}\big|\sin[2^n(x-z(2^n))]\big|$$
		$$ - \varepsilon \gamma(2^{n+1}y(2^{n+1})) - \varepsilon \gamma(2^ny(2^n)).$$
		
		\noindent
		We do the same calculation after reversing the roles of $E(2^n,x)$ and $E(2^{n+1},x).$ Combining both inequalities gives
		
		$$\bigg| |E(2^n,x)|-|E(2^{n+1},x)|\bigg| \geq |E(2^n,x)|-|E(2^{n+1},x)|$$
		$$\geq  \bigg|\frac{\gamma(2^{n+1}y(2^{n+1}))}{\sqrt{w(s)}}\big|\sin[2^{n+1}(x-z(2^{n+1}))]\big| 
		- \frac{\gamma(2^ny(2^n))}{\sqrt{w(s)}}\big|\sin[2^n(x-z(2^n))]\big|\bigg|$$
		$$ - \varepsilon \gamma(2^{n+1}y(2^{n+1})) - \varepsilon \gamma(2^ny(2^n)).$$

		\noindent
		Note that since $z(t) \in Q(s,C/t)$ we have for all $t>0$
	
		$$\gamma(ty(t))=\frac{\sqrt{2}}{\sqrt{\sinh[2ty(t)]}}\geq \frac{\sqrt{2}}{\sqrt{\sinh[2C]}}=C_1>0.$$
		
		\noindent
		Recall that we assumed that $w(s)<D<\infty.$ Hence, we can apply Lemma \ref{measure3} with $\Sigma$ to obtain $\delta>0$ such that on a subset of $\Sigma$ of measure at least $\frac{\delta}{12}|\Sigma|$ we have 
	
		$$\bigg| |E(2^n,x)|-|E(2^{n+1},x)|\bigg| >\max\bigg(\gamma(2^{n+1}y(2^{n+1})),\gamma(2^{n+1}y(2^{n+1}))\bigg)\delta$$
		$$ - \varepsilon \gamma(2^{n+1}y(2^{n+1})) - \varepsilon \gamma(2^ny(2^n)). $$
	
		\noindent
		We choose now $\varepsilon=\frac{\delta}{4}.$ Then,
		
		$$\bigg| |E(2^n,x)|-|E(2^{n+1},x)|\bigg| > \max\bigg(\gamma(2^{n+1}y(2^{n+1})),\gamma(2^{n+1}y(2^{n+1}))\bigg) \frac{\delta}{2} \geq \frac{C_1}{2\sqrt{D}}\delta >0$$
		
		\noindent
		on a subset of $\Sigma$ of measure at least $\frac{\delta}{12}|\Sigma|,$ contradicting the convergence of $|E(t,x)|$ in measure on $\R.$
	\end{proof}
	
	We now showed that the zeros of $E(t,z)$ can not stay inside the box $Q(s,C/t)$ forever. For the remainder of this discussion, we will have to rule out that the zeros travel infinitely many times in and out of the box $Q(s,C/t).$ It is a natural question to ask, whether one can modify the methods we used slightly to get this stronger result. The key inequality which made everything work is Lemma \ref{measure3}. It is stated in particular such that we can use the approximation from Lemma \ref{Lemma_6} for the times $t_1=2^n$ and $t_2=2^{n+1}.$ This is only possible since we know by assumption that there is a zero inside the box $Q(s,C/t)$ for all large enough times. In the stronger case we want to prove, this does not hold anymore, i.e. one would have to replace $t_1=2^n,t_2=2^{n+1}$ by other values of $t.$ The problem arises since we can not give a lower bound on the time interval a zero has to stay inside the box $Q(s,C/t),$ i.e. the inequality \eqref{misc10} breaks down. More precisely, $|t_2-t_1|\rightarrow 0$ gives $\delta \rightarrow 0,$ so we can not hope anymore to contradict convergence in measure of $|E(t,x)|.$
	
	Thus, we will need to further analyze the dynamics of the zeros to have extra leverage for proving the stronger claim. It will turn out that the contradiction arises through a different observation. Namely, we will observe that each time the zero travels in and out of the box $Q(s,C/t),$ this will require a chunk of $L^2-$norm of $f.$ Hence, if a zero of $E(t,z)$ traveled infinitely often in and out of the box $Q(s,C/t),$ this would immediately contradict the fact that $f\in L^2(\R).$
	
	\subsection{Meromorphic inner functions and dynamics of the zeros}
	We are going to study the dynamics of the zeros of $E(t,z).$ The appropriate tool for doing so is the class of meromorphic inner functions, which can be assigned to any Hermite-Biehler function.
	
	Let $f:\C_+\rightarrow\C$ be an inner function. We say that $f$ is an meromorphic inner function, if $f$ admits a meromorphic continuation to the whole complex plane. Meromorphic inner functions enjoy the property, that they have a product representation in terms of their zeros and poles.
	
	\begin{lemma}\label{MIF}
		Let $f:\C \rightarrow \C$ be a meromorphic inner function. Then, there exists a sequence of points $\lambda_n \in \C_+$ with $|\lambda_n|\rightarrow \infty$ as $n \rightarrow \infty$ satisfying the Blaschke condition
		
		$$ \sum_{n=1}^{\infty} \frac{\Im \lambda_n}{1+|\lambda_n|^2}<\infty.$$ 
		
		Furthermore, there is $c\geq0$ and a unimodular constant $\alpha$ such that 
		
		$$f(z)=\alpha e^{icz}\prod_{n=1}^{\infty} \frac{\overline{\lambda}_n}{\lambda_n} \frac{z-\lambda_n}{z-\overline{\lambda}_n}.$$
	\end{lemma}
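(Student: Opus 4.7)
The plan is to invoke the Riesz--Nevanlinna canonical factorization of bounded analytic functions on $\C_+$ and then use the meromorphic continuation hypothesis to rule out any nontrivial singular inner factor. Since $f$ is inner on $\C_+$, it lies in $\Hardy^{\infty}(\C_+)$ with $\|f\|_{\infty}\leq 1$, and the zeros of $f$ in $\C_+$ form a discrete sequence $\{\lambda_n\}$. A standard Jensen-type estimate (see \cite{Garnett}) combined with $|f(x)|=1$ almost everywhere on $\R$ yields the Blaschke condition $\sum_n \Im\lambda_n/(1+|\lambda_n|^2)<\infty$, after which I form the half-plane Blaschke product $B(z)=\prod_n \frac{\overline{\lambda}_n}{\lambda_n}\frac{z-\lambda_n}{z-\overline{\lambda}_n}$. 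The Riesz--Nevanlinna theorem then factors
\[
f(z) = \alpha \cdot e^{icz} \cdot B(z) \cdot S(z),
\]
where $\alpha$ is a unimodular constant, $c\geq 0$, and $S$ is a singular inner function associated to a positive singular Borel measure $\mu$ on $\R$ through a Cauchy-type integral. The stated formula follows at once once I show that $\mu\equiv 0$.

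To eliminate $S$, I note that $B$ extends meromorphically to $\C$ with poles exactly at the conjugate points $\overline{\lambda}_n\in\C_-$, while $e^{icz}$ is entire. Since $f$ extends meromorphically to $\C$ by hypothesis, the quotient
\[
S(z) = \frac{f(z)}{\alpha\, e^{icz}\, B(z)}
\]
is likewise meromorphic on $\C$. On $\C_+$ the function $S$ is holomorphic and zero-free with $|S|\leq 1$, so its continuation has no poles or zeros there. Invoking Schwarz reflection across $\R$, which is legitimate because $|S|=1$ non-tangentially almost everywhere, shows that the continuation has no poles or zeros in $\C_-$ either; hence $S$ is entire and zero-free on all of $\C$.

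The main obstacle is now to exploit this unexpected regularity to force $\mu=0$. Suppose for contradiction that $\mu$ is nonzero. The defining integral representation of $S$ gives
\[
\log|S(x+iy)| = -\int_{\R}\frac{y}{(x-t)^2+y^2}\,\frac{d\mu(t)}{\pi},
\]
i.e.\ $\log|S|$ is the negative Poisson extension of $\mu$. By the Besicovitch differentiation theorem, the upper symmetric derivative of $\mu$ is $+\infty$ at $\mu$-almost every point of its support; at any such point $x_0$ the Poisson extension $P\mu(x_0+iy)$ tends to $+\infty$ non-tangentially, so $|S(x_0+iy)|\to 0$. But $S$ is entire, hence continuous at $x_0$, which forces $|S(x_0)|=0$ and contradicts the zero-freeness established in the previous paragraph. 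Therefore $\mu\equiv 0$, $S$ reduces to a unimodular constant that can be absorbed into $\alpha$, and reassembling the factors yields precisely the representation in the statement.
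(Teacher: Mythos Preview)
Your argument is essentially correct and follows the standard route to this result. The paper itself does not give a proof of this lemma at all; it simply cites \cite{Poltoratski2}. So there is no ``paper's approach'' to compare against, and what you have written is precisely the kind of proof one finds in the literature: canonical factorization followed by elimination of the singular inner factor via the meromorphic continuation hypothesis.

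One point deserves a sentence of care. You assert that $B$ extends meromorphically to all of $\C$, but this requires knowing that the zeros $\lambda_n$ do not accumulate at any finite point of $\overline{\C_+}$, i.e.\ that $|\lambda_n|\to\infty$. This is part of the conclusion of the lemma, and it follows immediately from the hypothesis that $f$ is meromorphic on $\C$ (zeros of a meromorphic function are isolated), but you should state it explicitly before forming $B$ and claiming its meromorphic extension. Without it, a general half-plane Blaschke product need not extend across $\R$.

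A second minor remark: your Schwarz-reflection step for $S$ is fine, but you can also argue more directly. Since $f$ is inner and meromorphic, the reflection $f(z)=1/\overline{f(\overline z)}$ shows that the only poles of $f$ in $\C_-$ are at the points $\overline{\lambda}_n$, with the same multiplicities as the zeros $\lambda_n$; these cancel exactly against the poles of $B$ in the quotient $f/(\alpha e^{icz}B)$, so $S$ is visibly entire and zero-free without a separate reflection argument. Either way, your Besicovitch/Fatou argument that a nontrivial singular part would force $|S|\to 0$ along some non-tangential approach, contradicting continuity, is the right way to finish.
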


	\begin{proof}
		A proof and many more results on meromorphic inner functions, as well as further applications can be found in \cite{Poltoratski2}.
	\end{proof}

	From this we immediately conclude that every meromorphic inner function is analytic in a neighborhood of the real line and hence all boundary values and derivatives are well-defined everywhere on $\R.$ Furthermore, the derivatives on $\R$ can be used to locate the zeros.
	
	\begin{lemma}\label{Lemma_1}
		Let $\theta$ be a meromorphic inner function and let $0<\varepsilon<\varepsilon_0$ for some small fixed constant $\varepsilon_0.$ Suppose there are $x,y \in \R$ such that 
		
		\begin{equation}\label{misc11}
		\frac{|\theta'(x)|}{|\theta'(y)|}>1+\varepsilon.
		\end{equation}
		
		Then, the ball $\{|z-x| < 4|y-x|/\varepsilon\}$ contains a zero of $\theta.$
	\end{lemma}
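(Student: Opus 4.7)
The plan is to exploit the explicit Blaschke factorization from Lemma \ref{MIF} to obtain a closed-form expression for $|\theta'|$ on the real line, and then compare the resulting series at $x$ and at $y$ termwise. Since $\theta$ is inner, $|\theta(x)|=1$ for $x\in\R$, so writing $\theta(x)=e^{i\phi(x)}$ gives $|\theta'(x)|=|\phi'(x)|$. Differentiating the factorization $\theta(z)=\alpha e^{icz}\prod_n \tfrac{\overline{\lambda}_n}{\lambda_n}\tfrac{z-\lambda_n}{z-\overline{\lambda}_n}$ on $\R$, each Blaschke factor contributes $2\,\mathrm{Im}\,\lambda_n/|x-\lambda_n|^2$ to $\phi'$ (compute the $x$-derivative of $\arg(x-\lambda_n)-\arg(x-\overline{\lambda}_n)$), yielding the Poisson-type formula
\[
|\theta'(x)|=c+\sum_n \frac{2\,\mathrm{Im}\,\lambda_n}{|x-\lambda_n|^2}.
\]
This representation is the whole engine of the proof; the rest is triangle inequality bookkeeping.

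I will argue by contraposition. Set $d=|x-y|$ and assume that no zero $\lambda_n$ of $\theta$ lies in the ball $\{|z-x|<4d/\varepsilon\}$. Then $|x-\lambda_n|\geq 4d/\varepsilon$ for every $n$, and the triangle inequality gives
\[
|y-\lambda_n|\leq |x-\lambda_n|+d \;\leq\; \Bigl(1+\tfrac{d}{|x-\lambda_n|}\Bigr)|x-\lambda_n|\;\leq\; \Bigl(1+\tfrac{\varepsilon}{4}\Bigr)|x-\lambda_n|.
\]
Squaring and inverting yields $|x-\lambda_n|^{-2}\leq (1+\varepsilon/4)^2\,|y-\lambda_n|^{-2}$ for every $n$. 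Multiplying by the positive weight $2\,\mathrm{Im}\,\lambda_n$ and summing, together with the trivial bound $c\leq (1+\varepsilon/4)^2 c$ on the exponential factor, produces
\[
|\theta'(x)|\;\leq\; \Bigl(1+\tfrac{\varepsilon}{4}\Bigr)^2 |\theta'(y)|.
\]
Expanding, $(1+\varepsilon/4)^2=1+\varepsilon/2+\varepsilon^2/16$, which is strictly less than $1+\varepsilon$ provided $\varepsilon<8$; so for any $\varepsilon_0\leq 8$ and $0<\varepsilon<\varepsilon_0$ we obtain $|\theta'(x)|/|\theta'(y)|\leq 1+\varepsilon$, directly contradicting the hypothesis \eqref{misc11}. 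Hence at least one zero of $\theta$ must lie in the stated ball.

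I expect no serious obstacle here: everything rests on the explicit Poisson-like formula for $|\theta'|$, and once that is in hand the estimate is a one-line triangle-inequality comparison applied summand-by-summand. The only item worth double-checking is that the series for $\phi'$ converges and may be differentiated termwise on $\R$, which is immediate since $\theta$ is meromorphic in a neighborhood of $\R$ (no poles on the real line by Lemma \ref{MIF}) and the Blaschke condition makes the sum locally absolutely convergent. The constant $4$ in the radius is essentially optimal for this argument: it is exactly what is needed to make $(1+\varepsilon/4)^2\leq 1+\varepsilon$ hold for all small $\varepsilon$.
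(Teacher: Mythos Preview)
Your proof is correct and follows essentially the same approach as the paper: both derive the Poisson-type formula $|\theta'(x)|=c+\sum_n 2\,\mathrm{Im}\,\lambda_n/|x-\lambda_n|^2$ from the Blaschke factorization and then compare termwise via the triangle inequality. The paper extracts a single index $n$ with $|y-\lambda_n|^2/|x-\lambda_n|^2>1+\varepsilon$ and bounds $|x-\lambda_n|$ using $\sqrt{1+\varepsilon}-1>\varepsilon/4$, whereas you run the contraposition directly on the whole sum; these are the same argument packaged slightly differently.
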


	\begin{proof}
		We use Lemma \ref{MIF} to write 
		
		$$\theta(z)=\alpha e^{icz}\prod_{n=1}^{\infty} \frac{\overline{\lambda}_n}{\lambda_n} \frac{z-\lambda_n}{z-\overline{\lambda}_n}$$
		
		\noindent
		with a unimodular constant $\alpha,$ a non-negative constant $c$ and a sequence of points $\lambda_n\in \C_+$ satisfying 
		
		$$ \sum_{n=1}^{\infty} \frac{\Im \lambda_n}{1+|\lambda_n|^2}<\infty.$$ 
		
		Assume without loss of generality that $\alpha=1.$ Since $|\theta(x)|=1$ for almost all $x\in \R,$ we can write $\theta(x)=e^{i\arg\theta(x)}.$ Together with the relation $\ln\theta(x)=i\arg\theta(x)$ we see that
		
		$$|\theta'(x)|=\frac{d}{dx}\arg\theta(x)=\frac{1}{i}\frac{d}{dx}\ln\theta(x).$$
		
		Using the product representation of $\theta(x)$ we calculate
		
		$$\frac{d}{dx} \ln\theta(x)=\frac{d}{dx}\ln\bigg(e^{icx}\prod_{n=1}^{\infty} \frac{\overline{\lambda}_n}{\lambda_n} \frac{x-\lambda_n}{x-\overline{\lambda}_n}\bigg)=ic+\sum_{n=1}^{\infty}\frac{d}{dx}\ln\bigg( \frac{\overline{\lambda}_n}{\lambda_n} \frac{x-\lambda_n}{x-\overline{\lambda}_n}\bigg).$$
		
		We will now calculate the derivative of a single Blaschke factor. Write $\lambda_n=x_n+iy_n.$
		
		$$\frac{d}{dx}\ln\bigg( \frac{\overline{\lambda}_n}{\lambda_n} \frac{x-\lambda_n}{x-\overline{\lambda}_n}\bigg)=\frac{d}{dx}\ln(x-\lambda_n)-\frac{d}{dx}\ln(x-\overline{\lambda}_n)$$
		$$=\frac{1}{(x-x_n)-iy_n}-\frac{1}{(x-x_n)+iy_n}=\frac{2iy_n}{(x-x_n)^2+y_n^2}.$$
		
		Thus, combining everything we see
		
		$$|\theta'(x)|=c+ \sum_{n=1}^{\infty} \frac{2y_n}{(x-x_n)^2+y_n^2}.$$
		
		By \eqref{misc11} there is an $n\geq 1$ such that 
		
		\begin{equation}\label{misc12}
		\frac{y_n}{(x-x_n)^2+y_n^2}\frac{(y-x_n)^2+y_n^2}{y_n}=\frac{|y-\lambda_n|^2}{|x-\lambda_n|^2}> 1+\varepsilon.
		\end{equation}
		
		\noindent
		Indeed, suppose the reverse inequality would hold for all $n\geq 1.$ Then,
		
		$$\frac{|\theta'(x)|}{|\theta'(y)|}=\frac{c+ \sum_{n=1}^{\infty} \frac{2y_n}{(x-x_n)^2+y_n^2}}{c+ \sum_{n=1}^{\infty} \frac{2y_n}{(y-x_n)^2+y_n^2}}$$
		
		$$\leq \frac{c+ (1+\varepsilon)\sum_{n=1}^{\infty} \frac{2y_n}{(y-x_n)^2+y_n^2}}{c+ \sum_{n=1}^{\infty} \frac{2y_n}{(y-x_n)^2+y_n^2}}= 1+ \varepsilon \frac{\sum_{n=1}^{\infty} \frac{2y_n}{(y-x_n)^2+y_n^2}}{c+ \sum_{n=1}^{\infty} \frac{2y_n}{(y-x_n)^2+y_n^2}}\leq 1+\varepsilon,$$
		
		\noindent
		contradicting \eqref{misc11}. Hence, pick $n\geq 1$ such that \eqref{misc12} holds. Then, $\lambda_n$ is contained in the ball $\{|z-x| < \frac{4}{3}|y-x|/\varepsilon\}.$ Indeed,
		from 
		
		$$\frac{|y-\lambda_n|}{|x-\lambda_n|}>\sqrt{1+\varepsilon}$$
		
		\noindent
		we find by triangle inequality that
		
		$$\sqrt{1+\varepsilon}|x-\lambda_n|<|x-y|+|x-\lambda_n| \implies |x-\lambda_n|< \frac{|x-y|}{\sqrt{1+\varepsilon}-1}.$$
		
		We use Taylor series expansion of $\sqrt{1+x}$
		
		$$\sqrt{1+x}=\sum_{k=0}^{\infty} \binom{1/2}{k}x^k$$
		
		\noindent
		and cut it off at the linear term to write $\sqrt{1+\varepsilon}=1+\frac{\varepsilon}{2}+o(\varepsilon).$ For $0<\varepsilon<\varepsilon_0$ small enough we have that $|o(\varepsilon)|<\frac{1}{4}\varepsilon.$ Thus, as claimed
		
		$$|x-\lambda_n|< \frac{|x-y|}{\sqrt{1+\varepsilon}-1}<\frac{|x-y|}{1+\frac{\varepsilon}{2}-\frac{\varepsilon}{4}-1}=4\frac{|x-y|}{\varepsilon}.$$
	\end{proof}

	If $E(z)$ is any Hermite-Biehler function we can associate a meromorphic inner function
	by
	
	$$\theta_{E}(z)=\frac{E^{\sharp}(z)}{E(z)}.$$

	\noindent
	In particular, we will denote for the remainder of the paper the associated meromorphic inner functions to the Hermite-Biehler functions $E(t,z)$ and $\tilde{E}(t,z)$ respectively by
	
	$$\theta_{E}(t,z)=\frac{E^{\sharp}(t,z)}{E(t,z)}, \quad \theta_{\tilde{E}}=\frac{\tilde{E}^{\sharp}(t,z)}{\tilde{E}(t,z)}.$$
	
	\noindent
	Note that the zeros of $E(t,z)$ are the precisely the poles of $\theta_{E}$ and the conjugates of the zeros of $E(t,z)$ are precisely the zeros of $\theta_E.$
	
	We now have the tools to give a proof for Lemma \ref{approximation_by_exp}.
	
	\begin{proof}[Proof of Lemma \ref{approximation_by_exp}]
	We set $\phi(x)=\arg E(t,x)$ and set $J_t(s-\frac{4\pi}{t},s+\frac{4\pi}{t}).$ Note that 
	
	$$|\theta_E'(x)|=\bigg|\frac{d}{dx} e^{-2i\arg E(t,x)}\bigg|=2|\phi'(x)|.$$

	If there now was $\delta>0$ such that $\limsup_{t\rightarrow \infty} |t(x_2(t)-x_0(t)|>\delta$ or $\limsup_{t\rightarrow \infty} |c_1(t)-c_2(t)|>\delta,$ then
	
	$$ \frac{\sup_{J_t}\phi'}{\inf_{J_t}\phi'}>1+\varepsilon,$$
	
	\noindent
	for some $\varepsilon>0$ depending on $\delta.$ However, since $C(t)\rightarrow \infty,$ Lemma \ref{Lemma_1} gives that there is a zero of $E(t,z)$ in $Q(s,C(t)/t)$ for large enough $t,$ contradicting our assumption $t\notin T_0(s,C(t)).$
	\end{proof}
	
	Let now $z(t)$ be a differentiable curve in $\C$ such that $\theta=\theta_{E}(t,z(t))=0.$ We want to study further $z'(t)$ and $\theta_z(t,z(t)).$ Suppose in the following that $z\in \C$ is such that $E(t,z)\neq 0.$ We have 
	
	$$\frac{d}{dt}\theta(t,z)=\frac{(\frac{d}{dt}A+i\frac{d}{dt}C)(A-iC)-(A+iC)(\frac{d}{dt}A-i\frac{d}{dt}C)}{(A-iC)^2}=2i\frac{A\frac{d}{dt}C-C\frac{d}{dt}A}{(A-iC)^2}$$
	
	$$=2i\frac{z(A^2+C^2)-2fAC}{(A-iC)^2}=2iz\frac{A+iC}{A-iC}-4if\frac{AC}{(A-iC)^2}$$
	
	$$=2iz\frac{A+iC}{A-iC}-f \frac{[(A+iC)-(A-iC)][(A+iC)+(A-iC)]}{(A-iC)^2}=2iz\theta(t,z)+f(1-\theta^2(t,z)).$$
	
	Thus, the meromorphic inner function $\theta$ satisfies the differential equation
	
	\begin{equation}\label{misc13}
	\frac{d}{dt}\theta=2iz\theta+f(1-\theta^2).
	\end{equation}
	
	Let now $t>0$ be such that $\theta_z(t,z(t))\neq 0.$ Thus,
	
	$$\frac{d}{dt}\theta(t,z(t))=0=\theta_t(t,z(t))+\theta_z(t,z(t))z'(t),$$
	
	\noindent
	which immediately yields using \eqref{misc13}
	
	\begin{equation}\label{derivative_zero}
	z'(t)=-\frac{f(t)}{\theta_z(t,z(t))}.
	\end{equation}
	
	Furthermore, it will be useful to know the change of the derivative of $\theta_z$ at the zero $z(t).$ Using \eqref{misc13} and \eqref{derivative_zero} we obtain
	
	\begin{equation}\label{riccati}
	\frac{d}{dt}\theta_z(t,z(t))=\theta_{zt}(t,z(t))+\theta_{zz}(t,z(t))z'(t)=2iz(t)\theta_z(t,z(t))-f(t)\frac{\theta_{zz}(t,z(t))}{\theta_z(t,z(t))}
	\end{equation}
	
	The natural question to ask is when \eqref{derivative_zero} is well-defined, i.e. when $\theta_z(t,z(t))\neq 0.$ It turns out that when there is a zero in the box $Q(s,C/t),$ then its is a simple zero for large enough $t.$ To see this we will prove a more general result that will turn out useful in many situations later in the proof. Essentially it is a quantitative version of the open mapping Theorem. The proof is taken from \cite{Stein}.
	
	\begin{lemma}\label{open_mapping_theorem}
		Let $\Omega \subset \C$ be a domain and $f: \Omega \rightarrow \C$ a holomorphic function. Let $z_0 \in \Omega$ and $w_0=f(z_0)$ and choose $\delta>0$ such that $\overline{B(z_0,\delta)} \subset \Omega$ and $f(z_0)\neq w_0$ on the circle $|z-z_0|=\delta.$
		
		If $\varepsilon>0$ is such that $|f(z)-w_0|\geq \varepsilon$ on the circle $|z-z_0|=\delta,$ then $f$ attains every value in $B(w_0,\varepsilon).$
	\end{lemma}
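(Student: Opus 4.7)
The plan is to prove this quantitative open mapping statement by a standard application of Rouché's theorem to the pair of functions $g(z)=f(z)-w$ and $h(z)=f(z)-w_0$, where $w \in B(w_0,\varepsilon)$ is an arbitrary target value.

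First I would fix an arbitrary $w \in B(w_0,\varepsilon)$ and observe that on the circle $|z-z_0|=\delta$ we have the hypothesis $|f(z)-w_0| \geq \varepsilon$, while by the choice of $w$ we have $|w-w_0| < \varepsilon$. Hence on this circle
\begin{equation*}
|g(z)-h(z)| \;=\; |w_0 - w| \;<\; \varepsilon \;\leq\; |f(z)-w_0| \;=\; |h(z)|.
\end{equation*}
This is exactly the inequality needed to invoke Rouché's theorem on the disk $B(z_0,\delta)$, using that both $g$ and $h$ are holomorphic on a neighbourhood of $\overline{B(z_0,\delta)}$ (which is contained in $\Omega$ by hypothesis) and that $h$ does not vanish on the boundary circle.

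Rouché's theorem then gives that $g$ and $h$ have the same number of zeros in $B(z_0,\delta)$, counted with multiplicity. But $h(z_0)=f(z_0)-w_0=0$, so $h$ has at least one zero inside $B(z_0,\delta)$; therefore $g$ also has a zero, i.e.\ there exists $z \in B(z_0,\delta)$ with $f(z)=w$. Since $w \in B(w_0,\varepsilon)$ was arbitrary, $f$ attains every value in $B(w_0,\varepsilon)$, which is the claim.

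There is no real obstacle here beyond making sure the hypotheses of Rouché's theorem are literally satisfied; the only subtle point is that the inequality $|g-h|<|h|$ on the boundary is strict (because $w$ lies in the open ball $B(w_0,\varepsilon)$ while $|h|\geq \varepsilon$ is a closed inequality), which is precisely what Rouché requires. No further estimate or compactness argument is needed.
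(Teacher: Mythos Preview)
Your proof is correct and follows essentially the same approach as the paper: both apply Rouché's theorem to compare $f(z)-w$ with $f(z)-w_0$ on the circle $|z-z_0|=\delta$, using that $|w-w_0|<\varepsilon\leq|f(z)-w_0|$ there. The paper's version is slightly more terse but the argument is identical.
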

	
	\begin{proof}
		Define $g(z)=f(z)-w$ and write $g(z)=(f(z)-w_0)+(w_0-w)=F(z)+G(z).$ If now $|w-w_0|<\varepsilon$ we have $|F(z)|>|G(z)|$ on the circle $|z-z_0|=\delta$ and thus by Rouché's Theorem we find that $g=F+G$ has a zero inside the circle since $F$ has one.
	\end{proof}

	\begin{lemma}\label{zeros_of_E_are_simple}
		For almost all $s\in \R$ and for all $C>4\pi$ such that the box $Q(s,C/t)$ contains a zero of $E(t,z),$ the zeros of $E(t,z)$ in $Q(s,C/t)$ are simple for all large enough $t.$
	\end{lemma}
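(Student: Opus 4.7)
The plan is to use the approximation of $E(t,z)$ by a translated sine from Corollary \ref{corollary_4} together with Cauchy's integral formula to give a lower bound on $|E'(t,\zeta)|$ at any zero $\zeta$ of $E(t,z)$ in the box.

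First, fix $s$ in the full-measure set for which Corollary \ref{corollary_4} applies, and fix $C>4\pi$. Restrict attention to those $t \in T_{0}(s,C)$, so by monotonicity $t \in T_{0}(s,C_{1}(t))$, and apply Corollary \ref{corollary_4}. This provides a zero $z(t)=x(t)-iy(t) \in Q(s,C/t)$, a unimodular function $\alpha(t)$, and an increasing $C_{1}(t) \to \infty$ with $C_{1}(t) \geq C+2$ for large $t$, such that, writing
$$g(t,z):=\frac{\alpha(t)\gamma(ty(t))}{\sqrt{w(s)}}\sin[t(z-z(t))],$$
we have $\sup_{z \in Q(s,C_{1}(t)/t)}|E(t,z)-g(t,z)|=o(\gamma(ty(t)))$. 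Note $ty(t) \leq C$, so $\gamma(ty(t)) \geq \gamma(C)>0$ is bounded below, while it may grow if $y(t) \to 0$ faster than $1/t$.

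Next, let $\zeta$ be any zero of $E(t,z)$ in $Q(s,C/t)$. Substituting $z=\zeta$ in the approximation gives $|g(t,\zeta)|=o(\gamma(ty(t)))$, and dividing by the factor $\gamma(ty(t))/\sqrt{w(s)}$ yields $|\sin[t(\zeta-z(t))]|=o(1)$. Since the algebraic identity $\sin^{2}w+\cos^{2}w=1$ holds on $\C$, we have $|\cos[t(\zeta-z(t))]|^{2}\geq 1-|\sin[t(\zeta-z(t))]|^{2}\geq \tfrac12$ for all sufficiently large $t$, so $|\cos[t(\zeta-z(t))]|\geq \tfrac{1}{\sqrt{2}}$.

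Now I apply the Cauchy integral formula on the disk $D=\{|z-\zeta|\leq 1/t\}$, which lies inside $Q(s,C_{1}(t)/t)$ for $t$ large enough (since $C_{1}(t) \geq C+2$). Applied to the holomorphic function $E(t,\cdot)-g(t,\cdot)$ at the center $\zeta$, this gives
$$|E'(t,\zeta)-g'(t,\zeta)| \;\leq\; \frac{1}{1/t}\,\sup_{\partial D}|E(t,\cdot)-g(t,\cdot)| \;=\; t\cdot o(\gamma(ty(t))).$$
On the other hand, direct differentiation yields
$$|g'(t,\zeta)| \;=\; \frac{\gamma(ty(t))}{\sqrt{w(s)}}\cdot t\cdot |\cos[t(\zeta-z(t))]| \;\geq\; \frac{\gamma(ty(t))\cdot t}{\sqrt{2\,w(s)}}.$$
Combining the two via the reverse triangle inequality,
$$|E'(t,\zeta)| \;\geq\; \gamma(ty(t))\cdot t\cdot \left(\frac{1}{\sqrt{2w(s)}}-o(1)\right) \;>\;0$$
for all sufficiently large $t\in T_{0}(s,C)$. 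Since the approximation is uniform on $Q(s,C_{1}(t)/t) \supset Q(s,C/t)$, the choice of $\zeta$ was arbitrary among zeros in $Q(s,C/t)$, so every such zero is simple.

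The main subtlety is the book-keeping when $y(t) \to 0$ faster than $1/t$: in that regime the error term $o(\gamma(ty(t)))$ and the lower bound on $|g'(t,\zeta)|$ both blow up like $\gamma(ty(t))$. The argument goes through because these factors of $\gamma(ty(t))$ cancel exactly, so the lower bound on $|E'(t,\zeta)|$ only uses that $\gamma(ty(t)) \geq \gamma(C)>0$. A secondary small point is the application of $\sin^{2}+\cos^{2}=1$ with complex argument, which works because the bound $|\sin w|^{2}\leq |\sin w|$ suffices to bound $|\cos w|$ away from $0$ once $|\sin w|=o(1)$.
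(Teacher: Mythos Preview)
Your argument is correct. Both your proof and the paper's rest entirely on the sine approximation from Lemma~\ref{Lemma_6}/Corollary~\ref{corollary_4}, but you extract simplicity via a Cauchy estimate on the derivative of $E-g$ together with a direct lower bound on $|g'(t,\zeta)|$, whereas the paper invokes the Rouch\'e-type Lemma~\ref{open_mapping_theorem} to argue that a double zero of $E$ would force a double zero of the approximating sine. These are two standard, interchangeable ways to pass from uniform approximation to control of zero multiplicity; your route is a bit more explicit and has the mild advantage of giving a quantitative lower bound $|E'(t,\zeta)|\gtrsim t\,\gamma(ty(t))$, while the paper's Rouch\'e argument is slightly shorter to state. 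One cosmetic point: your closing remark about ``$|\sin w|^2\le |\sin w|$'' is not quite the relevant inequality---what you actually use (and correctly wrote in the body) is $|\cos w|^2=|1-\sin^2 w|\ge 1-|\sin w|^2$, valid once $|\sin w|<1$.
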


	\begin{proof}
		This follows immediately from Lemma \ref{Lemma_6} combined with the fact that the zeros of $\sin(z)$ are simple and a double zero of $E(t,z)$ would imply a double zero of $\sin(z)$ by Lemma \ref{open_mapping_theorem}.
	\end{proof}
	
	Furthermore, by Lemma \ref{Lemma_6} we can find an approximation for $\theta(t,z)$ and $\theta_z(t,z(t)).$ We set $Q_{+}(s,C(t)/t)=Q(s,C(t)/t)\cap\C_+.$ Note that in the following Lemma $\overline{z}(t)$ will play the role of $z(t),$ since the conjugates of the zeros of $E(t,z)$ are the zeros of $\theta(t,z).$
	
	\begin{lemma}\label{formulas_mif}
		Let $s, C(t), \alpha(t), D$ and $z(t)$ be like in Lemma \ref{Lemma_6}.
		
		\begin{itemize}
		
		\item[(i)]\begin{equation}\label{approximation_mif}
		\sup_{z \in Q_{+}(s,C(t)/t)} \bigg|\theta(t,z)-\overline{\alpha}^2(t)\frac{\sin[t(z-\overline{z}(t))]}{\sin[t(z-z(t))]}\bigg|=o(1),
		\end{equation}
		
		\noindent
		as $t\rightarrow \infty, t\in T_1(s,C(t))=T_0(s,C(t))\cap\{ty(t)>1\}.$
		
		\item[(ii)]$$\alpha^2(t)\theta_z(t,\overline{z}(t))=(1+o(1))t\frac{1}{\sin[2ity(t)]},$$
		
		\item[(iii)]$$\alpha^2(t)\theta_{zz}(t,\overline{z}(t))=-(1+o(1))t^2\frac{2\cos[2ity(t)]}{\sin^2[2ity(t)]},$$
				
		\noindent
		as $t\rightarrow \infty, t\in T_1(s,D)=T_0(s,D)\cap\{ty(t)>1\}.$
	\end{itemize}
	\end{lemma}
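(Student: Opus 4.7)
The plan is to derive all three claims from the approximation of $E$ in Lemma \ref{Lemma_6}, using the companion approximation for $E^\sharp$ coming from Schwarz reflection, handling (i) by direct algebraic manipulation of the quotient $\theta = E^\sharp/E$, and handling (ii) and (iii) by transferring the sup-norm bound obtained in (i) to the derivatives via Cauchy's integral formula.

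First I would record, from Lemma \ref{Lemma_6},
\[
E(t,z) = \frac{\alpha(t)\gamma(ty(t))}{\sqrt{w(s)}} \sin[t(z-z(t))] + r_1(t,z), \qquad \sup_{Q(s,C(t)/t)} |r_1| = o(1),
\]
and apply the Schwarz transform, observing that $\gamma, w(s), t$ are real and the Taylor coefficients of $\sin$ are real, to obtain
\[
E^\sharp(t,z) = \frac{\overline{\alpha(t)}\,\gamma(ty(t))}{\sqrt{w(s)}} \sin[t(z-\overline{z}(t))] + r_2(t,z), \qquad \sup_{Q(s,C(t)/t)} |r_2| = o(1),
\]
using that $Q(s,C(t)/t)$ is symmetric about the real axis. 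Writing $S_1 := \sin[t(z-z(t))]$ and $S_2 := \sin[t(z-\overline{z}(t))]$, a short algebraic computation using $\alpha\overline{\alpha}=1$ (which cancels the leading $S_1 S_2$ cross term) yields
\[
\theta(t,z) - \overline{\alpha}^2(t)\,\frac{S_2}{S_1} = \frac{r_2\, S_1 - \overline{\alpha}^2\, r_1\, S_2}{E(t,z)\,S_1}.
\]
For (i) I would bound this pointwise on $Q_+(s,C(t)/t)$ using two observations: $|S_2|/|S_1| \leq 1$ there (from $|\sin(a+ib)|^2 = \sin^2 a + \sinh^2 b$ together with the inequality $\Im z + y(t) \geq |\Im z - y(t)|$ valid for $\Im z \geq 0$), and $|E(t,z)|$ is bounded below by a positive constant there (because $\gamma(ty(t))\sinh(ty(t)) = \sqrt{\tanh(ty(t))} \geq \sqrt{\tanh 1}$ whenever $ty(t)>1$, while $|r_1|=o(1)$). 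These give (i).

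For (ii) and (iii), I would let $R(t,z) = \theta(t,z) - \overline{\alpha}^2 S_2/S_1$ and note that $R$ is holomorphic on $Q_+(s,C(t)/t)$ with sup-norm $o(1)$. For $t \in T_1(s,D)$ and $t$ large, the closed disk $\overline{B(\overline{z}(t),\, y(t)/2)}$ sits inside $Q_+(s,C(t)/t)$, since $|x(t)-s|$ and $y(t)$ are at most $D/t$ while $C(t) \to \infty$. Cauchy's integral formula on this disk yields $|R_z(t,\overline{z}(t))| = o(1/y(t))$ and $|R_{zz}(t,\overline{z}(t))| = o(1/y(t)^2)$. The main terms come from differentiating $\overline{\alpha}^2 S_2/S_1$ directly at $z=\overline{z}(t)$: using $S_1(\overline{z}(t)) = \sin[2ity(t)]$, $S_1'(\overline{z}(t)) = t\cos[2ity(t)]$, $S_2(\overline{z}(t)) = 0$, $S_2'(\overline{z}(t)) = t$, and $S_2''(\overline{z}(t)) = 0$, quotient-rule computations give $(S_2/S_1)'(\overline{z}(t)) = t/\sin[2ity(t)]$ and $(S_2/S_1)''(\overline{z}(t)) = -2t^2\cos[2ity(t)]/\sin^2[2ity(t)]$. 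Multiplying through by $\alpha^2$ and using $\alpha^2\overline{\alpha}^2 = 1$ produces precisely the main terms of (ii) and (iii).

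The main obstacle will be showing that the Cauchy-derived remainders $o(1/y(t))$ and $o(1/y(t)^2)$ are lower order than the respective main terms in a \emph{relative} sense. This is exactly where the assumption $t \in T_1(s,D)$ with $D$ constant (rather than $C(t)$) is essential: it pins $ty(t)$ into the compact interval $[1,D]$, so $\sinh(2ty(t))/(ty(t))$ and $\sinh^2(2ty(t))/((ty(t))^2\cosh(2ty(t)))$ are both bounded, and dividing through by the main terms turns the Cauchy remainders into $o(1)$. Without this upper bound on $ty(t)$ the relative error could grow exponentially in $ty(t)$ and the comparison would break down.
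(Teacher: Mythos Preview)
Your proposal is correct and follows essentially the same route as the paper: part (i) is obtained by algebraically splitting the quotient $E^\sharp/E$ using the approximation from Lemma~\ref{Lemma_6} together with its Schwarz reflection and a lower bound on $|E|$ in $Q_+$, and parts (ii)--(iii) are obtained by applying Cauchy's estimates to the difference $R$ and differentiating $S_2/S_1$ explicitly at $\overline{z}(t)$. Your error bookkeeping for (ii)--(iii), isolating the role of the constraint $ty(t)\in[1,D]$ in making the Cauchy remainders relatively small, is slightly more explicit than the paper's, but the argument is the same.
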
	

	\begin{proof}
		Let $z\in Q_+(s,C(t)/t).$ We calculate
		
		$$\bigg|\theta(t,z)-\overline{\alpha}^2(t)\frac{\sin[t(z-\overline{z}(t))]}{\sin[t(z-z(t))]}\bigg|\leq \bigg|\frac{E^{\sharp}(t,z)-\frac{\overline{\alpha}(t)\gamma(ty(t))}{\sqrt{w(s)}} \sin[t(z-\overline{z}(t))]}{E(t,z)}\bigg|$$
		$$+\bigg|\frac{\frac{\overline{\alpha}(t)\gamma(ty(t))}{\sqrt{w(s)}} \sin[t(z-\overline{z}(t))]}{\frac{\overline{\alpha}(t)\gamma(ty(t))}{\sqrt{w(s)}} \sin[t(z-z(t))]E(t,z)}\bigg|\cdot \bigg|E(t,z)-\frac{\overline{\alpha}(t)\gamma(ty(t))}{\sqrt{w(s)}} \sin[t(z-z(t))] \bigg|.$$
		
		\noindent
		Now by \eqref{approximation_by_sine} there is a $T>0$ such that for all $t>T$ and $z\in Q_+(s,C(t)/t)$
		
		$$|E(t,z)|> \bigg|\frac{\frac{\overline{\alpha}(t)\gamma(ty(t))}{\sqrt{w(s)}} \sin[t(z-z(t))]}{2}\bigg|.$$
		
		\noindent
		Thus,
		
		$$\bigg|\theta(t,z)-\overline{\alpha}^2(t)\frac{\sin[t(z-\overline{z}(t))]}{\sin[t(z-z(t))]}\bigg|\leq \bigg|\frac{2}{\frac{\overline{\alpha}(t)\gamma(ty(t))}{\sqrt{w(s)}} \sin[t(z-z(t))]}\bigg|\bigg|E^{\sharp}(t,z)-\sin[t(z-\overline{z}(t))]\bigg|$$
		
		$$+2\bigg|\frac{\frac{\overline{\alpha}(t)\gamma(ty(t))}{\sqrt{w(s)}} \sin[t(z-\overline{z}(t))]}{\bigg(\frac{\overline{\alpha}(t)\gamma(ty(t))}{\sqrt{w(s)}} \sin[t(z-z(t))]\bigg)^2}\bigg|\bigg|E(t,z)-\sin[t(z-z(t))]\bigg|.$$
		
		\noindent
		Since $ty(t)>1$ there exists a constant $D>0$ such that 
		
		$$\bigg|\theta(t,z)-\overline{\alpha}^2(t)\frac{\sin[t(z-\overline{z}(t))]}{\sin[t(z-z(t))]}\bigg|\leq D\bigg[\big|E^{\sharp}(t,z)-\sin[t(z-\overline{z}(t))]\big|+ \big|E(t,z)-\sin[t(z-z(t))]\big|\bigg].$$
		
		\noindent
		Now again by Lemma \ref{Lemma_6}, the last term tends to zero as $t\rightarrow \infty$ uniformly over all $z\in Q_+(s,C(t)/t).$ This proves $(i).$
		
		The functions in \eqref{approximation_mif} are holomorphic on $Q_+(s,C(t)/t).$ Suppose that $C(t)>2D.$ Hence, by the Cauchy estimates for the derivative we find on the smaller box $Q_+(s,D/t)$
		
		$$\sup_{z\in Q_+(s,D/t)} \bigg|\theta_z(t,z)-\frac{d}{dz}\overline{\alpha}^2(t)\frac{\sin[t(z-\overline{z}(t))]}{\sin[t(z-z(t))]}\bigg|=o(t).$$
		
		On the other hand we compute
		
		$$\frac{d}{dz} \frac{\sin[t(z-\overline{z}(t))]}{\sin[t(z-z(t))]}\bigg|_{z=\overline{z}(t)}=\frac{t}{\sin[2ity(t)]}.$$
		
		Combining both formulas we obtain as desired
		
		$$\alpha^2(t)\theta_z(t,\overline{z}(t))=(1+o(1))t\frac{1}{\sin[2ity(t)]}.$$
		
		The proof for $(iii)$ is immediate by repeating the argument from the proof of $(ii).$
	\end{proof}

	The function $\theta_z(t,\overline{z}(t))$ recognizes the direction the zero moves in. By our previous approximations, we will be able to carry over this information to $\alpha(t).$
	
	\begin{lemma}\label{Lemma_7}
		Let $s, C(t), \alpha(t), D$ and $z(t)$ be like in Lemma \ref{Lemma_6}. Then there exists $T>0$ such that for all intervals $[t_1,t_2]\subset T_1(s,D)\cap[T,\infty)$ we can change $\alpha(t)$ slightly so that it satisfies Lemma \ref{Lemma_6} and we have the following relation
		
		\begin{equation}\label{angle}
			\frac{\pi}{2}+2\arg \alpha(s,t)=-\arg\theta_z(t,\overline{z}(t)).
		\end{equation}
		
		Furthermore, there is a real function $\psi$ on $[t_1,t_2]$ satisfying 
		
		$$\alpha(s,t)=e^{i[st+\psi(t)]}$$
		
		\noindent
		and for all $t_1\leq x_1\leq x_2 \leq t_2$ 
		
		\begin{equation}\label{misc14}
		|\psi(x_2)-\psi(x_1)|\leq \int_{x_1}^{x_2}\bigg(3\cosh(2D) \cdot |f(t)| + \frac{D}{t}\bigg) dt.
		\end{equation}
	\end{lemma}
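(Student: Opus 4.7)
My plan is to derive \eqref{angle} immediately from Lemma \ref{formulas_mif}(ii) and then obtain the bound on $\psi$ by differentiating \eqref{angle} and applying the Riccati-type identity \eqref{riccati}. For the first step, I would use $\sin[2ity(t)] = i\sinh[2ty(t)]$ to rewrite $\alpha^2(t)\theta_z(t,\overline{z}(t)) = -(1+o(1))\,it/\sinh[2ty(t)]$, whose argument is $-\pi/2 + o(1)$ modulo $2\pi$; this forces $2\arg\alpha(t) + \arg\theta_z(t,\overline{z}(t)) = -\pi/2 + o(1)$. Rotating $\alpha(t)$ by the small phase $o(1)/2$ enforces \eqref{angle} on the nose, and the resulting perturbation of the approximating function in \eqref{approximation_by_sine} is a unimodular factor $1+o(1)$, which preserves the $o(1)$ error on $Q(s,C(t)/t)$ so Lemma \ref{Lemma_6} still holds.

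To analyze $\psi$, I would differentiate \eqref{angle} and use \eqref{riccati} to compute
\[
2\frac{d}{dt}\arg\alpha(s,t) = -\frac{d}{dt}\arg\theta_z(t,\overline{z}(t)) = -2x(t) + f(t)\,\Im\!\left[\frac{\theta_{zz}(t,\overline{z}(t))}{\theta_z^2(t,\overline{z}(t))}\right],
\]
where $\Im[2i\overline{z}(t)] = 2x(t)$. Defining $\psi(t)$ by the prescription $\arg\alpha(s,t) = st + \psi(t)$ modulo $2\pi$ (the sign of the linear drift being matched to the paper's convention), $\psi'(t)$ reduces, up to that sign, to the two pieces $s - x(t)$ and $\tfrac{1}{2}f(t)\,\Im[\theta_{zz}/\theta_z^2]$. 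Because $z(t)\in Q(s,D/t)$, the first piece is bounded by $D/t$, which produces the $D/t$ contribution to the integrand in \eqref{misc14}.

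For the second piece I would combine parts (ii) and (iii) of Lemma \ref{formulas_mif}: dividing part (iii) by the square of part (ii) causes the factors $t^2$ and $\sin^2[2ity(t)]^{-1}$ to cancel, leaving
\[
\frac{\theta_{zz}(t,\overline{z}(t))}{\theta_z^2(t,\overline{z}(t))} = -2(1+o(1))\,\alpha^2(t)\cos[2ity(t)] = -2(1+o(1))\,\alpha^2(t)\cosh[2ty(t)].
\]
Since $|\alpha|=1$ and $ty(t)\le D$, this yields $|\Im(\theta_{zz}/\theta_z^2)| \le 2(1+o(1))\cosh[2D]$ and hence $|\tfrac{1}{2}f(t)\Im(\theta_{zz}/\theta_z^2)| \le (1+o(1))\cosh(2D)|f(t)|$. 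I would choose $T$ large enough that $(1+o(1)) < 3$ for $t\ge T$, so that integrating $|\psi'|$ over $[x_1,x_2]$ produces \eqref{misc14}. The main technical point is recognizing the cancellation between parts (ii) and (iii): the two approximations individually blow up as $ty(t)\to 0$, but their ratio $\theta_{zz}/\theta_z^2$ stays uniformly bounded, and this is precisely what lets $|\psi'|$ be controlled by the $L^1$-quantity $|f(t)|$ rather than by some singular weight in $y(t)$.
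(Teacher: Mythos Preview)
Your proposal is correct and essentially identical to the paper's proof: both obtain \eqref{angle} from Lemma \ref{formulas_mif}(ii) after a small rotation of $\alpha$, then differentiate using \eqref{riccati} and bound the two resulting pieces via $|x(t)-s|\le D/t$ and $|\theta_{zz}/\theta_z^2|\le (2+o(1))\cosh(2D)$. The only cosmetic difference is that you compute $\frac{d}{dt}\arg\theta_z$ as $\Im\bigl(\theta_z^{-1}\frac{d}{dt}\theta_z\bigr)$ while the paper phrases the same calculation as an orthogonal projection onto $i\theta_z$; your sign hedge on the $st$ drift mirrors a sign/factor looseness already present in the paper's own argument and does not affect the bound \eqref{misc14}.
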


	\begin{proof}
		By Lemma \ref{formulas_mif} $(ii),$ 
		
		$$\alpha^2(s,t)\theta_z(t,\overline{z}(t))=(1+o(1))t\frac{1}{\sin[2ity(t)]}.$$
		
		\noindent
		After rearranging the terms, using $\sin[2ity(t)]=i\sinh[2ty(t)]$ and using the fact that $1<ty(t)<D$ we arrive at
		
		$$\frac{\sinh[2ty(t)]}{t}\alpha^2(s,t)\theta_z(t,\overline{z}(t))-1=o(1).$$
		
		\noindent
		This immediately gives us
		
		$$\arg i \alpha^2(s,t)\theta_z(t,\overline{z}(t))=o(1).$$
		
		After changing $\alpha$ slightly we can eliminate the error term. Using the functional equation of the argument we arrive at \eqref{angle}
		
		$$ \frac{\pi}{2}+2\arg \alpha(s,t)=-\arg \theta_z(t,\overline{z}(t)).$$
		
		We can thus recover $\alpha(s,t)$ by studying 
		
		$$\frac{d}{dt} \arg \theta_z(t,\overline{z}(t)).$$
		
		For that, we will use orthogonal projections. In the following, we will identify $\C\cong\R^2$ and denote for two vectors $u,v\in \R^2, v\neq 0$ the orthogonal projection of $u$ onto $v$ by
		
		$$ \proj_v u = \frac{\langle u, v \rangle}{||v||^2}v.$$
		
		We compute for all $z$ such that $\theta_z(t,z)\neq 0$
		
		$$\proj_{i\theta_z(t,z)}\frac{d}{dt}\theta_z(t,z)=\proj_{i\theta_z(t,z)} |\theta_z(t,z)|e^{i\arg\theta_z(t,z)}$$
		
		$$=\proj_{i\theta_z(t,z)}\bigg(\frac{d}{dt} |\theta_z(t,z)|e^{i\arg\theta_z(t,z)} + i\frac{d}{dt}\arg\theta_z(t,z)|\theta_z(t,z)|e^{i\arg\theta_z(t,z)} \bigg)=\frac{d}{dt}\arg\theta_z(t,z)i\theta_{z}(t,z).$$
		
		\noindent
		Recall that since the zeros $E(t,z)$ are simple, $\theta_z(t,\overline{z}(t))\neq 0.$ Combining the above formula with \eqref{riccati} gives
		
		$$\frac{d}{dt}\arg\theta_z(t,\overline{z}(t))i\theta_z(t,\overline{z}(t))=\proj_{i\theta_z(t,\overline{z}(t))}\frac{d}{dt}\theta_z(t,\overline{z}(t))$$
		
		$$=\proj_{i\theta_z(t,\overline{z}(t))}2iz(t)\theta_z(t,\overline{z}(t))-f(t)\frac{\theta_{zz}(t,\overline{z}(t))}{\theta_z(t,\overline{z}(t))}$$
		
		$$=i \Re \overline{z}(t)\theta_z(t,\overline{z}(t))-\proj_{i\theta_z(t,\overline{z}(t))} f(t) \frac{\theta_{zz}(t,\overline{z}(t))}{\theta_z(t,\overline{z}(t))}$$
		
		$$=(s+\Re(\overline{z}(t)-s)-f(t)A(t))i\theta_z(t,\overline{z}(t)),$$
		
		\noindent
		for some real valued function $A(t).$ Using the definition of $\proj_v u$ together with Lemma \ref{formulas_mif} $(ii)$ and $(iii)$ we find for large enough $t$
		
		$$|A(t)|\leq\frac{\bigg|\frac{\theta_{zz}(t,\overline{z}(t))}{\theta_z(t,\overline{z}(t))}\bigg|}{|\theta_z(t,\overline{z}(t))|}\leq 2\cosh[2ty(t)]+o(1)\leq 3\cosh(2D).$$
		
		\noindent
		Moreover, since $\overline z(t) \in Q(s,D/t),$ we find $|\Re \overline{z}(t)-s|\leq D/t.$ We now define for $t_1\leq x \leq t_2$  $$\psi(x)=\int_{t_1}^{x}\Re(\overline{z}(t)-s)-f(t)A(t).$$
		
		\noindent
		Hence, we have for all $t_1\leq x_1\leq x_2 \leq t_2$
		
		$$|\psi(x_2)-\psi(x_1)|\leq \frac{1}{2}\int_{x_1}^{x_2} \bigg|\frac{d}{dt} \arg \theta_z(t,\overline{z}(t)) \bigg| dt \leq \int_{x_1}^{x_2} \bigg(3\cosh(2D) \cdot |f(t)| + D/t\bigg) dt.$$
		
		Finally, by using \eqref{angle} we find
		
		$$\alpha(s,t)=e^{i[st+\psi(t)]}.$$
	\end{proof}

	Hence by virtue of \eqref{angle}, $\alpha(t)$ carries information on the movement of the zeros. If $|\Im \alpha(t)|$ is small, the zero moves almost vertically and if $|\Im \alpha(t)|$ close to one, then the zero moves almost horizontally. We now have the tools at hand to continue our proof that $T_0(s,C)$ is bounded for almost all $s \in \R$ and for all $C>0.$
	
	For the remainder of this section let $C>1$ be a constant. We define

	$$S=\{s \in \R \mid T_0(s,C) \text{ is unbounded}\}.$$ 
	
	We will assume without loss of generality that $|S|<\infty.$ By Corollary \ref{corollary_3} the set $T_0(s,3C)$ can not cover a half-line for almost all $s\in \R.$ After changing $S$ by a set of measure zero, we can assume that for all $s \in S$ this is the case. Thus, the zeros will have to travel infinitely often from outside of $Q(s,3C/t)$ into $Q(s,C/t).$ We will show that every time this happens, this will require a chunk of the $L^2-$norm of $f.$ More precisely, for all $s\in S$ there will exist infinitely many intervals $\mathcal{L}_s(T_1, T_2)$ with the following properties arbitrarily far in time.
	
	\begin{itemize}
		\item [(i)] There exists a zero $z(s,t)$ of $E(t,z)$ in $Q(s,3C/t)$ for all $t\in \mathcal{L}_s; t \mapsto z(s,t)$ is a differentiable curve for $t \in \mathcal{L}_s$ and $\Im z(s,t) < -1/t$ for $t \in \mathcal{L}_s,$
		
		\item[(ii)] $z(s,T_1) \in \partial Q(s,3C/T_1),$
		
		\item [(iii)] If $2^n \leq T_1 < 2^{n+1},$ then $T_1<T_2\leq 2^{n+2}$ is such that $z(s,T_2) \in \partial Q(s,C/T_2).$
		
		\item[(iv)] For all $t\in (T_1, T_2)$ moves continuously inside $Q(s,3C/t)\setminus Q(s,C/t).$
		
	\end{itemize}	
		Indeed, we can get $(iii)$ by looking at the proof of Corollary\ref{corollary_3}. That we can achieve $(i)$ follows immediately by Lemma \ref{open_mapping_theorem}, since we can pick $z(t)$ as the first zero to reenter the box $Q(s,C/t)$ at $t=t_2.$ If there now was a point $t\in (t_1,t_2)$ such that $\Im z(s,t) \geq -\frac{1}{t},$ then immediately by Lemma \ref{Lemma_6} and Lemma \ref{open_mapping_theorem} we see that there were already other zeros in the box $Q(s,C/t)$ at time $t,$ contradicting our choice of $z(t).$ For each $s\in S$ we now have infinitely many intervals $\mathcal{L}_s=(T_1,T_2)$ with the above properties. If $2^n\leq T_1 \leq 2^{n+1},$ we will call that interval $\mathcal{L}_s^n.$ If there is more than one interval satisfying $(i)-(iv)$ and $2^n\leq T_1 < 2^{n+1},$ we will choose one of them. We will denote by $S^n$ the set of all $s\in S,$ such that there exists an interval $\mathcal{L}_s^n$ with the above properties. Thus, for any $k\geq 1,$
		
		$$S\subset \bigcup_{n=k}^{\infty} S^n.$$
		
		\begin{lemma}
			Let $s\in S$ and let $\mathcal{L}_s^n=(T_1,T_2)$ be one of the considered intervals. There exists a constant $\Delta>0,$ depending only on $C,$ such that 
			
			$$\int_{T_1}^{T_2} |f(t)|dt>\Delta.$$
		\end{lemma}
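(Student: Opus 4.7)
The plan is to translate the motion of the zero $z(s,t)$ during $\mathcal{L}_s^n=(T_1,T_2)$ into an $L^1$ lower bound on the potential $f$ via the Riccati-type differential equation \eqref{derivative_zero} for zeros of $\theta=\theta_E$. Writing $z(s,t)=x(s,t)-iy(s,t)$ with $y(s,t)>0$, the conjugate $\overline{z(s,t)}$ is a zero of $\theta(t,\cdot)$, so \eqref{derivative_zero} applied to this curve and passed through complex conjugation gives
\[
|z'(s,t)|\;=\;\frac{|f(t)|}{|\theta_{z}(t,\overline{z(s,t)})|}.
\]
Property (i) of $\mathcal{L}_s^n$ forces $ty(s,t)>1$, so Lemma \ref{formulas_mif}(ii) applies and yields $|\theta_z(t,\overline{z(s,t)})|=(1+o(1))\,t/\sinh[2ty(s,t)]$ as $t\to\infty$. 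Combined with the upper bound $ty(s,t)\leq 3C$ from $z(s,t)\in Q(s,3C/t)$ and the monotonicity of $\sinh$, I obtain
\[
|z'(s,t)|\;\leq\;(1+o(1))\,\frac{\sinh(6C)}{t}\,|f(t)|
\]
uniformly for $t\in\mathcal{L}_s^n$.

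Next comes the purely geometric lower bound on the displacement of the zero. Property (ii) places $z(s,T_1)$ on $\partial Q(s,3C/T_1)$, so one of the two coordinates of $z(s,T_1)-s$ has absolute value exactly $3C/T_1$. Property (iii), together with $T_2\leq 2^{n+2}\leq 4T_1$, places $z(s,T_2)$ inside $Q(s,C/T_2)\subseteq Q(s,C/T_1)$, so both coordinates of $z(s,T_2)-s$ are at most $C/T_1$ in absolute value. Comparing the distinguished coordinate via the reverse triangle inequality gives
\[
|z(s,T_2)-z(s,T_1)|\;\geq\;\frac{3C}{T_1}-\frac{C}{T_1}\;=\;\frac{2C}{T_1},
\]
and since the arc length of a continuous curve dominates its chord length,
\[
\int_{T_1}^{T_2}|z'(s,t)|\,dt\;\geq\;|z(s,T_2)-z(s,T_1)|\;\geq\;\frac{2C}{T_1}.
\]

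Combining the two estimates and using $t\geq T_1$ on $[T_1,T_2]$ to pull $1/t$ outside the integral,
\[
\frac{2C}{T_1}\;\leq\;(1+o(1))\,\frac{\sinh(6C)}{T_1}\int_{T_1}^{T_2}|f(t)|\,dt,
\]
so that $\int_{T_1}^{T_2}|f(t)|\,dt\geq\Delta$ holds with any $\Delta<2C/\sinh(6C)$ for all sufficiently large $n$, which is enough for the subsequent application where only arbitrarily late intervals are summed. There is no genuinely hard step: the analytic input (the velocity formula \eqref{derivative_zero} and the $\sinh$-shape of $\theta_z$) is already supplied by the earlier lemmas, and the only point requiring care is the bookkeeping of the three concentric boxes $Q(s,3C/T_1)$, $Q(s,3C/t)$, $Q(s,C/T_2)$, which is what produces the favourable factor $2C/T_1$ after cancellation with the $1/T_1$ obtained from the uniform bound $t\asymp T_1$.
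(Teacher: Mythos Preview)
Your proof is correct and follows essentially the same approach as the paper: both combine the velocity formula \eqref{derivative_zero}, the estimate $|\theta_z(t,\overline{z(s,t)})|\asymp t$ from Lemma~\ref{formulas_mif}(ii) (valid because property (i) gives $ty(s,t)>1$), and the geometric displacement $|z(s,T_1)-z(s,T_2)|\gtrsim 1/T_1$ forced by properties (ii)--(iii) together with $T_2\le 4T_1$. Your version is in fact more explicit about the constants and about why the geometric gap is $\ge 2C/T_1$, whereas the paper records the same chain with $\ll$ and $\asymp$; your caveat that the bound holds for sufficiently large $n$ (absorbing the $o(1)$) is exactly how it is used downstream.
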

		
		\begin{proof}
			By Lusin's Theorem, we can assume that all error terms in Lemma \ref{formulas_mif} are uniform over all $s \in S.$ Then, by Lemma \ref{formulas_mif} $(ii)$ and property $(i)$ of $\mathcal{L}_s^n$ we have $|\theta_z(t,z(s,t))| \asymp t.$ Furthermore, by properties $(ii)$ and $(iii)$ of $\mathcal{L}_s^n$ we get $1/t \ll |z(s,T_1)-z(s,T_2)|.$ Both implicit constants depend only on $C.$ Thus, by \eqref{derivative_zero}
			
			$$\frac{1}{T_2}\leq \frac{1}{t}\ll|z(s,T_1)-z(s,T_2)|\leq \int_{T_1}^{T_2} \frac{|f(t)|}{|\theta_z(t,\overline{z}(s,t))|} dt \ll \int_{T_1}^{T_2} \frac{|f(t)|}{t} dt\ll \frac{1}{T_1}\int_{T_1}^{T_2}|f(t)|dt.$$
		\end{proof}
	
		By making $\Delta>0$ smaller if necessary, we will from now on assume that 
		
		\begin{equation}\label{misc17}
		0<\Delta< \frac{1}{100\cosh 6C}.
		\end{equation}
		
		We will also need upper bounds. The following Lemma tells us that this is possible by passing to a subinterval $L_s^n\subset \mathcal{L}_s^n.$
		
		\begin{lemma}\label{upper_bounds}
			Let $s\in S^n$ and $\mathcal{L}_s^n$ the associated interval satisfying properties $(i)-(iv)$ from above. Then, there exists a subinterval $L_s^n=(\tau_1,\tau_2)\subset\mathcal{L}_s^n$ satisfying properties $(i)$ and $(iv)$ from above and we have the following upper bounds by making $\Delta$ a bit smaller if necessary
			
			 $$0<\Delta<\int_{\tau_1}^{\tau_2} |f(t)|dt< 2\Delta,$$
			 
			 $$\int_{\tau_1}^{\tau_2} \frac{3C}{t}dt < \frac{1}{100}.$$
		\end{lemma}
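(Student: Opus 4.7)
The plan is to start from the lower bound $\int_{T_1}^{T_2}|f(t)|\,dt > \Delta$ furnished by the previous lemma and excise from $\mathcal{L}_s^n$ a subinterval on which the $|f|$-mass is pinned in $(\Delta,2\Delta)$ while the logarithmic integral $\int 3C/t\,dt$ is forced to be small. The tension is that the lower bound on $\int|f|$ wants a long interval, whereas the bound on $\int 3C/t\,dt$ wants one that is multiplicatively short; the escape is the licence in the statement to shrink $\Delta$ to a smaller constant depending only on $C$, which keeps \eqref{misc17} intact.

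First I will fix $r = e^{1/(400C)}>1$, chosen so that $\int_a^b 3C/t\,dt = 3C\log(b/a) \le 3/400 < 1/100$ whenever $a\le b\le ra$. I will then partition $[T_1,T_2]$ into the consecutive pieces $[r^kT_1,r^{k+1}T_1]\cap[T_1,T_2]$. Since property (iii) of $\mathcal{L}_s^n$ gives $T_2/T_1\le 4$, the number of such pieces $N$ is bounded by $\lceil 400C\log 4\rceil$, a constant that depends only on $C$. Next I will replace $\Delta$ by the smaller constant $\Delta/(2N)$ (preserving \eqref{misc17}) and apply the pigeonhole principle to produce one piece $[a,b]$ whose $|f|$-mass is at least $2\Delta$ in the new normalisation.

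On this piece, the map $\tau\mapsto\int_a^\tau|f(t)|\,dt$ is absolutely continuous (since $f\in L^2\subset L^1_{\mathrm{loc}}$), vanishes at $\tau=a$, and reaches at least $2\Delta$ at $\tau=b$. I will take $\tau_1=a$ and let $\tau_2\in(a,b]$ be the first time this partial integral equals $3\Delta/2$. By construction,
$$\Delta < \int_{\tau_1}^{\tau_2}|f(t)|\,dt = \tfrac{3\Delta}{2} < 2\Delta,$$
and because $\tau_2\le b\le r\tau_1$, the choice of $r$ automatically yields $\int_{\tau_1}^{\tau_2} 3C/t\,dt < 1/100$. Properties (i) and (iv) for $L_s^n:=(\tau_1,\tau_2)$ are pointwise conditions on the zero $z(s,t)$, so they are inherited from $\mathcal{L}_s^n$ without modification.

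There is no substantive obstacle in this argument; the only point of care is bookkeeping that the final $\Delta$ depends only on $C$ and not on $s$ or $n$, so that the same $\Delta$ works uniformly across the family $\{L_s^n\}_{s\in S^n,\,n\ge k}$. This is automatic because $N$ depends only on $C$ and the implicit constant in the previous lemma was already uniform in $s$ via the Lusin-type reduction used there.
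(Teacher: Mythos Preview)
Your proof is correct and follows essentially the same strategy as the paper: partition $\mathcal{L}_s^n$ into a number of pieces bounded in terms of $C$ alone, apply pigeonhole to locate one piece carrying $|f|$-mass at least $\Delta/N$, redefine $\Delta$ accordingly, and then shrink the right endpoint via absolute continuity to pin the mass in $(\Delta,2\Delta)$. The only difference is cosmetic: you use a geometric partition $[r^kT_1,r^{k+1}T_1]$, which is the natural choice for controlling $\int 3C/t\,dt$ directly, whereas the paper uses an arithmetic partition into equal-length pieces and instead invokes the cruder bound $3C/t\le 3C/T_1$ together with $T_2-T_1\le 3\cdot 2^n$ and $T_1\ge 2^n$; both routes exploit property~(iii) to bound the number of pieces by a constant depending only on $C$.
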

	
		\begin{proof}
			Since $T_2-T_1\leq 3\cdot 2^n,$ we find 
			
			$$\int_{T_1}^{T_2} \frac{3C}{t}dt\leq \int_{T_1}^{T_2} \frac{3C}{T_1}dt \leq \frac{3\cdot 2^n \cdot 3C }{2^n}=9C.$$
			
			Dividing $\mathcal{L}_s^n$ into $k$ subintervals of equal size, where $k>900C,$ gives by the pigeon hole principle a subinterval $I=(\tau_1,\tau_2),$ where 
			
			$$0<\frac{\Delta}{k}<\int_{\tau_1}^{\tau_2} |f(t)|dt,$$
			
			$$\int_{\tau_1}^{\tau_2} \frac{3C}{t} dt < \frac{3\cdot 2^n \cdot 3C}{k\cdot2^n}<\frac{1}{100}.$$
			
			\noindent
			Redefining $\Delta$ as $\Delta/k$ and decreasing $\tau_2,$ we can achieve further
			
			$$0<\Delta<\int_{\tau_1}^{\tau_2} |f(t)|dt < 2\Delta.$$
			
			\noindent
			The desired interval is then $L_s^n=(\tau_1,\tau_2).$
			
		\end{proof}
		
		In the following, we will only consider the intervals $L_s^n.$	We will have to further subdivide $S^n$ into two parts, depending on whether the zero moves almost vertically or has a horizontal component, comparable to the total increment. Each case will require a different argument. To rule out a criterion for when the zero moves horizontally or vertically, we will use Lemma \ref{Lemma_7}. Let $\alpha(s,t)$ be the function from Lemma \ref{Lemma_7}. By \eqref{misc14}, $\alpha(s,t)$ is continuous. For fixed $s\in S$ let $I_s\subset T_0(s,3C).$ We say that 
		
		\begin{itemize}
			\item [(i)] $I_s$ is a $V_s-$interval if $|\Im \alpha^2(s,t)| < \frac{1}{100}$ for all $t\in I_s,$
			
			\item [(ii)] $I_s$ is a $H_s-$interval if $|\Im \alpha^2(s,t)| \geq \frac{1}{200}$ for all $t\in I_s.$ 
		\end{itemize}
		
		Note that an interval can be both an $V_s-$and $H_s-$interval. By continuity of $\alpha(s,t)$ we can partition $L_s^n,$ up to countably many points, into disjoint intervals, which are either $V_s-$ or $H_s-$ intervals or both.
		
		We can now split up the set $S^n$ into disjoint subsets
		
		$$S_V^n=\bigg\{s \in S^n \bigg| \int_{\bigcup_{I_s \in VL_s^n} I_s} |f(t)|dt \geq \frac{99}{100} \int_{L_s^n} |f(t)|dt\bigg\},$$
		
		$$S_H^n=\bigg\{s \in S^n \bigg| \int_{\bigcup_{I_s \in VL_s^n}I_s} |f(t)|dt < \frac{1}{100} \int_{L_s^n} |f(t)|dt\bigg\}.$$
		
		Note that since $L_s^n$ is partitioned, up to countably many points, into $V_s-$ and $H_s-$intervals, we find for all $s\in S_H^n$ 
		
		$$ \int_{\bigcup_{I_s \in HL_s^n}I_s} |f(t)| > \frac{1}{100} \int_{L_s^n} |f(t)|dt.$$
		
		Hence, we now have for all $k\geq 1$
		
		$$S\subset \bigcup_{n=k}^{\infty} S^n \subset \bigcup_{n=k}^{\infty} S_V^n \cup \bigcup_{n=k}^{\infty} S_H^n.$$
		
		In the next two sections we are going to prove that there exists a constant $D>0$ such that for all $n\geq 1$ 
		
		$$ D|S_V^n| < ||f||_{L^2([2^n,2^{n+2}])}^2, \quad D|S_H^n| < ||f||_{L^2([2^n,2^{n+2}])}^2.$$
		
		From this, we find quickly that $|S|>0$ contradicts $f\in L^2(\R),$ compare Theorem \ref{measure4}.
	
	\subsection{Vertical intervals}
	In this section, we are going to show that there exists a constant $D>0$ such that 
	
	$$ D|S_V^n| < ||f||_{L^2([2^n,2^{n+2}])}^2.$$

	Consider $s\in S_V^n \subset S^n.$ Then there exists an interval $(\tau_1, \tau_2)$ with $2^n\leq \tau_1 < 2^{n+1}.$ We will split $L_s^n$ into three disjoint intervals 
	
	$$L_s^n =T_{-}^n\cup T_s^n\cup T_{+}^n$$
	
	\noindent
	such that $T_{-}^n$ is to the left of $T_s^n$ and $T_{+}^n$ is to the right of $T_s^n$ and

	$$\int_{T_{-}^n} |f(t)|dt>\frac{\Delta}{3}, \quad \int_{T_s^n} |f(t)|dt=\frac{\Delta}{3}, \quad \int_{T_{+}^n} |f(t)|dt>\frac{\Delta}{3}.$$
	
	\noindent
	Furthermore, we choose $T_s^n$ to be open. We split $f$ into its positive and negative part $f=f_+-f_-.$ We have 
	
	$$\int_{T_s^n}f_+(t)dt \geq \frac{\Delta}{6}, \quad \text{ or }  \int_{T_s^n} f_-(t)dt\geq\frac{\Delta}{6},$$
	
	\noindent
	and we will assume that the inequality with $f_+$ holds for all $T_s^n.$ In the following we consider the collection
	
	$$W=\bigcup_{s\in S_V^n} \{T_s^n\}.$$
	
	Our next result tells us that we can pick a finite subcollection and proceed with that subcollection.
	
	\begin{lemma}
		There exists a finite subcollection 
		
		$$\mathcal{T}=\bigcup_{k=1}^{N} \{T_{s_k}^n\}$$
		
		\noindent
		with the following properties (abusing slightly the notation by identifying the collection with a union of intervals)
		
		\begin{itemize}
			\item [(i)] $$\int_{W\setminus \mathcal{T}} |f(t)|dt<\frac{\Delta}{100},$$
			\item[(ii)] Each $T_{s_k}^n \in \mathcal{T}$ intersects at most two other intervals from $\mathcal{T}$ and each point in $\mathcal{T}$ is contained at most in two intervals from $\mathcal{T}.$
		\end{itemize}
	\end{lemma}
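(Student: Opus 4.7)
The plan is a two-stage reduction: first use the Lindelöf property and an $L^1$-approximation to extract a finite subfamily of $W$ carrying almost all of the $|f|$-mass, then apply a one-dimensional greedy covering argument to thin this family to multiplicity at most two, yielding property (ii).

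Since every $T_s^n \subset [2^n, 2^{n+2}]$, the set $W$ is a bounded open subset of $\R$, so by Lindelöf it admits a countable subcover $\{T_{s_k}^n\}_{k \geq 1}$ with union $W$. Monotone convergence then lets me pick $N$ so large that
$$ \int_{W \setminus \bigcup_{k=1}^N T_{s_k}^n} |f(t)|\, dt < \frac{\Delta}{100}, $$
so the finite family $\{T_{s_1}^n, \dots, T_{s_N}^n\}$ already satisfies (i), but not necessarily (ii).

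Next, I would apply a greedy selection on each connected component $(a_j, b_j)$ of $\bigcup_{k=1}^N T_{s_k}^n$: iteratively build a chain $T^{(1)}, T^{(2)}, \dots$ by always picking the candidate with the largest right endpoint among intervals overlapping the currently covered region. The key claim is that $T^{(m)}$ and $T^{(m+2)}$ are disjoint. Indeed, if they overlapped then the left endpoint of $T^{(m+2)}$ would be less than the right endpoint of $T^{(m)}$, making $T^{(m+2)}$ a valid candidate at the step when $T^{(m+1)}$ was chosen; by the maximality rule the right endpoint of $T^{(m+1)}$ is then at least that of $T^{(m+2)}$, forcing $T^{(m+2)}$ to provide no further coverage and hence never to be selected --- a contradiction. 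Consequently each selected interval meets only its two immediate neighbors in the chain, and each point of $(a_j, b_j)$ lies in at most two selected intervals.

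Collecting these chains over all components produces the desired subcollection $\mathcal{T}$. It is finite (as a subfamily of a finite collection), covers $\bigcup_{k=1}^N T_{s_k}^n$ up to measure zero so that (i) is inherited, and (ii) holds by the paragraph above. The main technical step is the greedy-chain argument; it is a classical one-dimensional Vitali-type phenomenon that fails in higher dimensions, where one instead obtains only a Besicovitch-type dimensional constant.
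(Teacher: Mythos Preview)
Your proof is correct and follows the same two-stage outline as the paper --- first pass to a finite subfamily carrying almost all the $|f|$-mass, then thin to multiplicity two --- but the mechanisms in each stage differ. For the first stage the paper runs a greedy selection by $|f|$-mass: at step $n$ it picks an interval whose contribution on the not-yet-covered part exceeds half the supremum, so that the ``new'' pieces $\mathcal{I}^n$ are disjoint with summable $|f|$-mass, and then truncates. You instead invoke Lindel\"of and monotone convergence directly, which is shorter. For the second stage the paper observes that whenever three intervals meet at a point one of them is contained in the union of the other two, and simply discards such redundant intervals until none remain; the union is unchanged so (i) persists. You instead run the classical right-endpoint greedy chain on each component, which is the textbook route and makes the one-dimensional structure explicit. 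Both arguments are standard; yours is arguably the cleaner packaging, while the paper's mass-based selection has the minor conceptual advantage of working directly with the measure $|f|\,dt$ rather than passing through Lebesgue measure first.
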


	\begin{proof}
		First, we show that there exists a countable subcollection $W'$ of $W$ that already covers $W.$ Indeed enumerate by $\N$ all open balls $B_r(x),$ where $x,r\in \Q.$ For any $B_r(x)$ we choose if possible an interval $\mathcal{T}$ from $W$ such that $B_i \subset \mathcal{T}$ and add this interval to the collection $W'.$ By construction $W'$ is countable since its indexed by the balls $B_r(x),$ which are countably many. We now show that $W'$ covers $W.$ Indeed, assume that $x \in W.$ Then, there exists an interval $T_s^n$ with $x \in T_s^n.$ Since $T_s^n$ is open there is a ball $B_r(x)\subset T_s^n.$ But then by definiton of $W'$ there is an interval $\mathcal{T}$ (possibly different from $T_s^n$) with $x \in \mathcal{T}$ and $ \mathcal{T} \in W'.$
		
		Enumerate $W'=\{\mathcal{T}^1, \mathcal{T}^2, \dots\}$ by $\N.$ We now choose an interval $\mathcal{T}^1$ from $W'$ such that 
		
		$$\int_{\mathcal{T}^1} |f(t)| dt > \frac{1}{2} \sup_{n \in N} \int_{\mathcal{T}^n} |f(t)| dt, \quad \mathcal{I}^1=\mathcal{T}^1$$
		
		\noindent
		In the $n$-th step choose an interval $\mathcal{T}^n \in W'$ such that
		
		$$\int_{\mathcal{T}^n} |f(t)| dt > \frac{1}{2} \sup_{m \in \N} \int_{\big(W \setminus \bigcup_{k=1}^{n-1} \mathcal{T}^k \big) \cap \mathcal{T}^m} |f(t)| dt, \quad \mathcal{I}^n=\bigg(W \setminus \bigcup_{k=1}^{n-1} \mathcal{T}^k \bigg) \cap \mathcal{T}^n.$$
		
		\noindent
		By construction the sets $I^n$ are disjoint and cover $W'=W.$ Hence, we have 
		
		$$\int_{W} |f(t)| dt=\int_{W'} |f(t)| dt=\int_{\bigcup_{n=1}^\infty \mathcal{I}^n} |f(t)| dt= \sum_{n=1}^\infty \int_{\mathcal{I}^n} |f(t)| dt.$$
		
		\noindent
		Since the series converges there exists $N\in \N$ such that 
		
		$$\sum_{n=N+1}^{\infty} \int_{\mathcal{I}^n} |f(t)| dt < \frac{\Delta}{100}.$$
		
		\noindent
		Set $\mathcal{T}=\{\mathcal{T}^1,\dots \mathcal{T}^N\}.$ Since $\mathcal{I}^n \subset \mathcal{T}^n$ property $(i)$ holds. Indeed,
		
		$$\int_{W\setminus \mathcal{T}} |f(t)| dt = \int_{W \setminus \bigcup_{n=1}^N \mathcal{I}^n} |f(t)| dt = \sum_{n=N+1}^{\infty} \int_{\mathcal{I}^n} |f(t)| dt < \frac{\Delta}{100}.$$
		
		 We now need to reduce the collection. Note that if three intervals $I_1,I_2,I_3 \subset \R$ intersect in one point, then one of them is contained in the union of the others. We remove all those intervals which are contained in the union of two others intervals from the collection. But then at most two intervals intersect and each point in $\mathcal{T}$ is contained in at most two intervals from $\mathcal{T},$ which is precisely condition $(ii).$
	\end{proof}

	Then by property $(i)$,  for all intervals $T_s^n, s \in S_V^n,$ there is an interval $\mathcal{T}^m\in \mathcal{T}$ such that 
	
	\begin{equation}\label{misc15}
	\int_{T_s^n\cap \mathcal{T}^m} f_+(t)dt >0.
	\end{equation}
	
	\begin{lemma}
		Let $s\in S_V^n$ and $\mathcal{T}^m \in \mathcal{T}$ such that \eqref{misc15} holds. Then,
		
		$$\bigg|\int_{\mathcal{T}^m} f_+(t)e^{its}dt\bigg|>\frac{\Delta}{25}.$$
	\end{lemma}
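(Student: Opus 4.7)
The plan is to use Lemma \ref{Lemma_7} to rewrite $e^{its}=\alpha(s,t)e^{-i\psi(t)}$ on $L_s^n\supset T_s^n$, and then exploit the two structural facts available at $s\in S_V^n$: $\psi$ has small total variation, so $e^{-i\psi(t)}$ is essentially constant; and on $V_s$-intervals, $\alpha(s,t)$ is pinned near one of $\{\pm 1,\pm i\}$. The final integral is then compared to $\int_{\mathcal{T}^m}f_+\geq\Delta/6$.

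First I would combine \eqref{misc14} (applied with $D=3C$) with the bounds from Lemma \ref{upper_bounds}, namely $\int_{L_s^n}|f|\,dt<2\Delta$ and $\int_{L_s^n}\frac{3C}{t}\,dt<\frac{1}{100}$, together with the smallness condition \eqref{misc17}, to conclude
\[
|\psi(x_2)-\psi(x_1)|\leq 6\cosh(6C)\Delta+\tfrac{1}{100}<\tfrac{7}{100}\qquad\text{for all }x_1,x_2\in L_s^n.
\]
Hence $e^{-i\psi(t)}$ is within a small arc of a single unimodular constant $e^{-i\psi_0}$ on $L_s^n$. Next, from the definition of a $V_s$-interval together with $|\alpha|=1$ and continuity, $|\Im\alpha^2(s,t)|<1/100$ forces $\alpha(s,t)$ to lie in a small neighborhood of one of $\{\pm 1,\pm i\}$, and this value is constant on each connected $V_s$-subinterval.

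Then I split
\[
\int_{\mathcal{T}^m}f_+(t)e^{its}\,dt = \sum_j \int_{I_j}f_+(t)e^{its}\,dt + \int_{E}f_+(t)e^{its}\,dt,
\]
where the $I_j$ enumerate the maximal $V_s$-intervals intersected with $\mathcal{T}^m$ and $E$ is the leftover. The $E$-contribution is bounded by $\int_E|f|$, which is controlled by the $S_V^n$-condition (giving at most $\frac{1}{100}\int_{L_s^n}|f|<\frac{\Delta}{50}$ on the non-$V_s$ part of $L_s^n\cap\mathcal{T}^m$) and a separate estimate on $\mathcal{T}^m\setminus L_s^n$ using the compatibility of the two constructions on $[2^n,2^{n+2}]$. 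On each $I_j$ the two approximations reduce the integral to $e^{-i\psi_0}\varepsilon_j\int_{I_j}f_+(t)\,dt$ up to an error of size $\frac{7}{100}\int_{I_j}f_+$ plus the approximation error in $\alpha\approx\varepsilon_j$, with $\varepsilon_j\in\{\pm 1,\pm i\}$.

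The principal obstacle is destructive cancellation in $\sum_j\varepsilon_j\int_{I_j}f_+$, since as $t$ runs through successive $V_s$-intervals the value $\varepsilon_j$ cycles through $\{\pm 1,\pm i\}$. To rule this out I plan to use the length bound on $\mathcal{T}^m$ coming from $\int_{\mathcal{T}^m}\frac{3C}{t}\,dt<\frac{1}{100}$ in Lemma \ref{upper_bounds}, which limits how many $V_s$-intervals can fit into $\mathcal{T}^m$, together with the hypothesis $\int_{T_s^n\cap\mathcal{T}^m}f_+\,dt>0$ to localize the $f_+$-mass on a single favorable $V_s$-interval, or at most a few consecutive ones with consistent $\varepsilon_j$. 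Combining with $\int_{\mathcal{T}^m}f_+\geq\Delta/6$ and the error budget $\Delta/50+\tfrac{7}{100}\cdot\Delta/3+o(\Delta)$, this yields $|\int_{\mathcal{T}^m}f_+(t)e^{its}\,dt|>\Delta/25$.
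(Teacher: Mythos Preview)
Your overall strategy coincides with the paper's: use Lemma~\ref{Lemma_7} to write $\alpha(s,t)=e^{i(st+\psi(t))}$, bound the variation of $\psi$ on $L_s^n$ via \eqref{misc14}, Lemma~\ref{upper_bounds} and \eqref{misc17}, and exploit the $V_s$-condition. One simplification you are missing: $\mathcal{T}^m\setminus L_s^n=\emptyset$. Indeed each $\mathcal{T}^m$ is some $T_{s_m}^n$ with $\int_{\mathcal{T}^m}|f|=\Delta/3$; since $\mathcal{T}^m$ meets $T_s^n$ and the flanking intervals $T_-^n,T_+^n\subset L_s^n$ each carry $|f|$-mass $>\Delta/3$, the interval $\mathcal{T}^m$ cannot reach outside $L_s^n$. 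So no ``separate estimate on $\mathcal{T}^m\setminus L_s^n$'' is needed.

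The substantive gap is your treatment of the cancellation in $\sum_j\varepsilon_j\int_{I_j}f_+$. Neither of the two remedies you propose works. The bound $\int_{\mathcal{T}^m}\frac{3C}{t}\,dt<\frac{1}{100}$ controls only $|\mathcal{T}^m|/2^n$; since $\alpha(s,t)=e^{i(st+\psi(t))}$ rotates at angular speed $\approx|s|$, the number of $V_s$-intervals inside $\mathcal{T}^m$ is of order $|s|\cdot|\mathcal{T}^m|$, which is entirely uncontrolled in $s$. And \eqref{misc15} says only that $\int_{T_s^n\cap\mathcal{T}^m}f_+>0$; it gives no concentration of the $f_+$-mass on a single $V_s$-interval, nor any reason why ``a few consecutive ones'' should carry ``consistent $\varepsilon_j$''. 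Consequently your error budget does not close, and the final inequality is unproved.

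The paper avoids the decomposition into the $I_j$ altogether. It first asserts, directly from the $V_s$-condition on $\alpha$, that
\[
\int_{\mathcal{T}^m\cap V}f_+(t)\,dt\;\leq\;2\,\Bigl|\int_{\mathcal{T}^m\cap V}f_+(t)\,\alpha(s,t)\,dt\Bigr|,
\]
and only afterwards replaces $\alpha(s,t)$ globally by $\alpha(s,\tau_1)e^{is(t-\tau_1)}$ using the $\psi$-bound (incurring an error at most $\frac{1}{5}\int_{\mathcal{T}^m\cap V}f_+$), which yields the lower bound on $\bigl|\int_{\mathcal{T}^m}f_+e^{ist}\,dt\bigr|$. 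This ordering sidesteps the sign-tracking $\varepsilon_j$ entirely; you should restructure along these lines rather than trying to bound the number of sign changes.
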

	
	\begin{proof}
		Since $T_s^n \cap \mathcal{T}^m\neq \emptyset,$ we have $\mathcal{T}^m \subset L_s^n.$ By definition of $S_V^n$ and Lemma \ref{upper_bounds}
		
		$$\int_{\mathcal{T}^m\setminus \bigcup_{I_s \in VL_s^n} I_s} f_+(t)dt \leq \int_{L_s^n \setminus \bigcup_{I_s \in VL_s^n} I_s} |f(t)| dt = \int_{L_s^n} |f(t)|dt - \int_{\bigcup_{I_s \in VL_s^n}I_s} |f(t)|dt.$$
		
		$$\leq \int_{L_s^n} |f(t)| dt - \frac{99}{100} \int_{L_s^n} |f(t)|dt =\frac{1}{100} \int_{L_s^n} |f(t)|dt < \frac{\Delta}{50}$$
		
		\noindent
		Hence,
		
		\begin{equation}\label{misc16}
		\int_{\mathcal{T}^m\cap \bigcup_{I_s \in VL_s^n} I_s} f_+(t)dt= \int_{\mathcal{T}^m} f_+(t)dt-\int_{\mathcal{T}^m\setminus \bigcup_{I_s \in VL_s^n} I_s} f_+(t)dt\geq \frac{\Delta}{6}-\frac{\Delta}{50}>\frac{\Delta}{10}.
		\end{equation}
		
		The interval was partitioned, up to countably many points, into the collections of intervals $VL_s^n$ and $HL_s^n.$ By definiton an interval $I\in VL_s^n,$ if and only if $|\Im \alpha^2(s,t)|<\frac{1}{100}$ for all $t\in I.$ Since $|\alpha(s,t)|=1,$ this implies $|\Re \alpha(s,t)|\geq\frac{1}{2}.$ Thus,
		
		$$\int_{\mathcal{T}^m\cap \bigcup_{I_s \in VL_s^n} I_s} f_+(t)dt \leq 2 \bigg| \int_{\mathcal{T}^m\cap \bigcup_{I_s \in VL_s^n} I_s} f_+(t) \alpha(s,t) dt\bigg|.$$
		
		\noindent
		Since $\alpha(s,t)$ is like in Lemma \ref{Lemma_7} and using Lemma \ref{upper_bounds}, we find for any $\tau_1\leq t \leq \tau_2,$
		
		$$|\alpha(s,t)\overline{\alpha}(s,\tau_1)-e^{is(t-\tau_1)}|=|e^{i[\psi(t)-\psi(\tau_1)]}-1|=2\bigg| \sin\bigg(\frac{\psi(t)-\psi(\tau_1)}{2}\bigg)\bigg|\leq |\psi(t)-\psi(\tau_1)|$$
		
		$$\leq \int_{\tau_1}^{t} \bigg(3\cosh(6C) \cdot |f(t)| + D/t\bigg) dt \leq 6\cosh(6C)+\frac{1}{100} < \frac{1}{10}.$$
		
		\noindent
		Thus, we estimate further 
		
		$$2 \bigg| \int_{\mathcal{T}^m\cap \bigcup_{I_s \in VL_s^n} I_s} f_+(t) \alpha(s,t) dt\bigg|$$
		$$=2 \bigg| \int_{\mathcal{T}^m\cap \bigcup_{I_s \in VL_s^n} I_s} f_+(t)(\alpha(s,t)-\alpha(s,\tau_1)e^{is(t-\tau_1)} +\alpha(s,\tau_1)e^{is(t-\tau_1)}) dt\bigg|$$
		
		$$\leq 2\bigg| \int_{\mathcal{T}^m\cap \bigcup_{I_s \in VL_s^n} I_s} f_+(t)e^{ist} dt\bigg|+ 2\bigg| \int_{\mathcal{T}^m\cap \bigcup_{I_s \in VL_s^n} I_s} f_+(t)(\alpha(s,t)-\alpha(s,\tau_1)e^{is(t-\tau_1)})dt\bigg|$$
		
		$$< 2\bigg| \int_{\mathcal{T}^m\cap \bigcup_{I_s \in VL_s^n} I_s} f_+(t)e^{ist} dt\bigg|+ \frac{1}{5} \int_{\mathcal{T}^m\cap \bigcup_{I_s \in VL_s^n} I_s} f_+(t)dt.$$
		
		Combining this estimate with \eqref{misc16} gives
		
		$$\bigg|\int_{\mathcal{T}^m} f_+(t)e^{ist}dt\bigg|\geq \bigg| \int_{\mathcal{T}^m\cap \bigcup_{I_s \in VL_s^n} I_s} f_+(t)e^{ist} dt\bigg| > \frac{\Delta}{25}.$$
	\end{proof}

	Define $\mathcal{S}^m$ to be the set of all $s\in S_V^n$ such that 
	
	$$\int_{T_s^n\cap\mathcal{T}^m} f_+(t)dt \geq \frac{\Delta}{12}.$$
	
	\noindent
	Then, as we have shown above $\cup_{k=1}^{N}\mathcal{S}^k=S_V^n$ and by the previous Lemma we have for all $s \in \mathcal{S}^m,$
	
	$$\bigg|\int_{\mathcal{T}^m} f_+(t)e^{its}dt\bigg|>\frac{\Delta}{25}.$$
	
	Since $f\in L^2(\R)$ we have Plancherel's Theorem
	
	$$\int_{-\infty}^{\infty} |f(t)|^2dt=\int_{-\infty}^{\infty} |\hat{f}(t)|^2dt.$$
	
	\noindent
	This gives
	
	$$||f_+||_{L^2(\mathcal{T}^m)}^2=\int_{-\infty}^{\infty} |\widehat{f_+\chi_{\mathcal{T}^m}}(s)|^2ds=\int_{-\infty}^{\infty} \bigg|\int_{\mathcal{T}^m} f_+(t)e^{-2\pi ist} dt\bigg|^2 ds$$
	
	$$=\frac{1}{4\pi^2}\int_{-\infty}^{\infty} \bigg|\int_{\mathcal{T}^m} f_+(t)e^{ist} dt\bigg|^2 ds>\frac{1}{4\pi^2} \int_{S^m} \bigg|\int_{\mathcal{T}^m} f_+(t)e^{ist} dt\bigg|^2 ds > \frac{\Delta^2}{2500\pi^2} |S^m|.$$
	
	\noindent
	If we set $D=\frac{\Delta^2}{2500\pi^2}>0,$ we obtain
	
	$$\sum_{k=1}^{N} ||f_+||_{L^2(\mathcal{T}^m)}^2 \geq D\sum_{k=1}^{N}|S^m|\geq D|S_V^n|,$$
	
	 Thus, since each point in $\cup_{k=1}^{N} \mathcal{T}^k$ is covered by at most two intervals, we have as desired the following result.
	
	\begin{corollary}\label{claim_1}
		There exists a constant $D>0$ such that
		
		$$D|S_V^n|<||f||_{L^2([2^n,2^{n+2}])}^2.$$
	\end{corollary}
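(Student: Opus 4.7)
The plan is simply to sum the Plancherel-type estimates already derived in the preceding paragraphs and exploit the bounded-overlap property of the selected collection $\mathcal{T}$. From that discussion I have, for every $m\in\{1,\dots,N\}$,
$$||f_+||_{L^2(\mathcal{T}^m)}^2 \geq \frac{\Delta^2}{2500\pi^2}|\mathcal{S}^m|,$$
together with the covering relation $\bigcup_{m=1}^N \mathcal{S}^m = S_V^n$ (each $s\in S_V^n$ belongs to some $\mathcal{S}^m$ by definition of $\mathcal{S}^m$ combined with \eqref{misc15}). Summing over $m$ yields the lower bound
$$\sum_{m=1}^N ||f_+||_{L^2(\mathcal{T}^m)}^2 \geq \frac{\Delta^2}{2500\pi^2}|S_V^n|.$$

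For the matching upper bound I would invoke the two properties of the collection $\mathcal{T}$: first, each $\mathcal{T}^m$ is contained in some $L_s^n \subset \mathcal{L}_s^n \subset [2^n, 2^{n+2}]$; and second, by property $(ii)$ of the selection lemma every point of $\bigcup_m \mathcal{T}^m$ lies in at most two of the intervals. Writing the sum of squared norms as an integral of a sum of indicator functions and applying the second property gives
$$\sum_{m=1}^N ||f_+||_{L^2(\mathcal{T}^m)}^2 = \int_\R |f_+(t)|^2 \sum_{m=1}^N \chi_{\mathcal{T}^m}(t)\,dt \leq 2||f_+||_{L^2([2^n,2^{n+2}])}^2 \leq 2||f||_{L^2([2^n,2^{n+2}])}^2.$$
Setting $D = \Delta^2/(5000\pi^2)$ and chaining the two displayed inequalities delivers the claim.

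I expect no genuine obstacle at this stage: all of the substantive work — the selection argument producing $\mathcal{T}$ with bounded overlap, the translation of the $V_s$-interval structure into the lower bound $|\int_{\mathcal{T}^m} f_+(t)\,e^{ist}\,dt| > \Delta/25$ via Lemma \ref{Lemma_7}, and the application of Plancherel's identity — has already been carried out in the lemmas above. The only thing that could go wrong, namely failure of the bounded-overlap property, was precisely what the selection step was engineered to avoid, so the step I am describing amounts to collating the pieces already on the table.
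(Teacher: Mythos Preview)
Your proposal is correct and follows essentially the same route as the paper: sum the per-interval Plancherel lower bounds $\|f_+\|_{L^2(\mathcal{T}^m)}^2 \geq \frac{\Delta^2}{2500\pi^2}|\mathcal{S}^m|$ over $m$, use $\bigcup_m \mathcal{S}^m = S_V^n$ for the lower bound, and invoke the two-fold overlap of the $\mathcal{T}^m$ together with $\mathcal{T}^m\subset[2^n,2^{n+2}]$ for the upper bound. Your presentation is in fact slightly more explicit than the paper's, which absorbs the overlap factor silently into the constant.
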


	\subsection{Horizontal intervals}
	In this section, we are going to prove that there exists a constant $D>0$ such that 
	
	$$D|S_H^n|<||f||_{L^2([2^n,2^{n+2}])}.$$
	
	In contrast to the previous section, we will now need to work with the scattering data and conclude with the non-linear Parseval identity. More precisely, for an interval $(t_1,t_2) \subset \R$ we associate the scattering function $a_{t_1\rightarrow t_2}$ constructed from the solution of the Dirac system \eqref{dirac_system} with function $f\chi_{(t_1,t_2)}.$
	
	We can construct $a_{t_1\rightarrow t_2}$ from the solution of \eqref{dirac_system} with potential function $f$ as follows. If $M(t,z)$ is the matrix from \eqref{matrix}, then we define the transfer matrix from $t=t_1$ to $t=t_2$ as 
	
	$$M_{t_1\rightarrow t_2} (z) = \begin{pmatrix}
	A_{t_1\rightarrow t_2}(z) && B_{t_1 \rightarrow t_2}(z) \\
	C_{t_1\rightarrow t_2}(z) && D_{t_1 \rightarrow t_2}(z)
	\end{pmatrix}=M(t_2,z)M^{-1}(t_1,z).$$
	
	The associated Hermite-Biehler functions $E_{t_1\rightarrow t_2}(z)$ and $\tilde{E}_{t_1\rightarrow t_2}(z)$ are then defined as
	
	$$E_{t_1\rightarrow t_2}=A_{t_1 \rightarrow t_2}-iC_{t_1 \rightarrow t_2}, \quad \tilde{E}_{t_1 \rightarrow t_2}=B_{t_1 \rightarrow t_2} -i D_{t_1 \rightarrow t_2}.$$
	
	Hence, the scattering function $a_{t_1 \rightarrow t_2}$ can be written as
	
	$$a_{t_1 \rightarrow t_2}(z)=\frac{1}{2}e^{i(t_2-t_1)z}(E_{t_1\rightarrow t_2}(z)+i\tilde{E}_{t_1 \rightarrow t_2}(z)).$$
	
	In this setting the non-linear Parseval identity \eqref{non_linear_parseval} becomes
	
	\begin{equation}\label{non_linear_parseval_2}
	||\log|a_{t_1\rightarrow t_2}|||_{L^1(\R)}=||f||_{L^2((t_1,t_2))}^2.
	\end{equation}
	
	Therefore, the strategy will be to derive lower bounds on $$||\log|a_{t_1\rightarrow t_2}|||_{L^1(\R)}$$ for special intervals $(t_1,t_2) \subset \R.$ This program will require a stronger approximation of the functions $E(t,z)$ and $\tilde{E}(t,z),$ than the approximations, we have used so far. In particular, we wish to approximate both of the functions simultaneously in the imaginary component and show that the zeros of $E(t,z)$ and $\tilde{E}(t,z)$ move similar to the zeros of the approximating functions. We are going to formulate the precise result below and postpone the proof to the next section.
	
	First, we will need the notion of $\sigma-$interval for $f$ which is an interval $I\subset \R$ with the property that
	
	$$\bigg| \int_I f(t)dt \bigg| \geq (1-\sigma) \int_I |f(t)| dt,$$
	
	\noindent
	where $0<\sigma<1$ is a small number. On a $\sigma-$interval, the function $f$ has almost the same sign. We fix once and for all a number $0<\sigma<\frac{1}{100},$ whose exact value does not matter.
	
	\begin{lemma}\label{sigma-intervals}
		Let $f\in L_{loc}(\R).$ Then, for almost all $s\in \R$ with $f(s)\neq 0$ all sufficiently small intervals $I,$ which are centered around $s,$ are $\sigma-$intervals.
	\end{lemma}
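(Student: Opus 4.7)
The plan is to invoke the Lebesgue differentiation theorem simultaneously for $f$ and for $|f|$. Recall that for any $f \in L^1_{loc}(\R)$, almost every $s \in \R$ is a Lebesgue point of $f$; moreover, since $|f| \in L^1_{loc}(\R)$ as well, almost every $s$ is simultaneously a Lebesgue point of $f$ and of $|f|$. Let $\Sigma \subset \R$ be the set of such common Lebesgue points with $f(s) \neq 0$; by the above remarks $|\R \setminus \Sigma| = 0$ on $\{f \neq 0\}$, so it suffices to prove the statement for $s \in \Sigma$.

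Fix $s \in \Sigma$ and let $I = (s-r, s+r)$ be a small interval centered at $s$. By the Lebesgue differentiation theorem applied to $f$,
\begin{equation*}
\frac{1}{|I|} \int_I f(t)\,dt \longrightarrow f(s) \quad \text{as } r \searrow 0,
\end{equation*}
and applied to $|f|$,
\begin{equation*}
\frac{1}{|I|} \int_I |f(t)|\,dt \longrightarrow |f(s)| \quad \text{as } r \searrow 0.
\end{equation*}
Taking absolute values in the first line and using continuity of $|\cdot|$, we also get
\begin{equation*}
\frac{1}{|I|} \left| \int_I f(t)\,dt \right| \longrightarrow |f(s)|.
\end{equation*}

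Since $f(s) \neq 0$, the denominator $\frac{1}{|I|}\int_I |f(t)|\,dt$ is bounded away from zero for all sufficiently small $r$, so one may form the quotient and conclude
\begin{equation*}
\frac{\left| \int_I f(t)\,dt \right|}{\int_I |f(t)|\,dt} \longrightarrow \frac{|f(s)|}{|f(s)|} = 1 \quad \text{as } r \searrow 0.
\end{equation*}
In particular, for every $\sigma \in (0,1)$ there exists $r_0 = r_0(s,\sigma) > 0$ such that for every $r < r_0$ the interval $I = (s-r, s+r)$ satisfies $\left| \int_I f(t)\,dt \right| \geq (1-\sigma) \int_I |f(t)|\,dt$, which is exactly the $\sigma$-interval condition. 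There is no real obstacle here; the only point requiring attention is to apply the differentiation theorem to $f$ and $|f|$ on a common full-measure set and to use $f(s) \neq 0$ to guarantee that the limiting ratio is well-defined, which is precisely why the exceptional set where the conclusion may fail has measure zero inside $\{f \neq 0\}$.
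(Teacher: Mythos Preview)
Your proof is correct and follows essentially the same approach as the paper: both apply the Lebesgue differentiation theorem simultaneously to $f$ and $|f|$ at a common Lebesgue point $s$ with $f(s)\neq 0$, and then deduce the $\sigma$-interval condition from the fact that the two averages tend to the common nonzero limit $|f(s)|$. Your formulation via the quotient $\bigl|\int_I f\bigr|/\int_I|f|\to 1$ is in fact a bit cleaner than the paper's chain of inequalities, but the underlying idea is identical.
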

		
	\begin{proof}
		Let $I_n$ be any sequence of intervals $I$ centered at $s$ with $|I|=2^n.$ Almost all $s \in \R$ are Lebesgue-points of $f$ and $|f|.$ Thus, we can assume without loss of generality that $s$ is indeed a Lebesgue-point for both functions. Hence,
		
		$$\lim_{n\rightarrow \infty} \frac{1}{|I_n|} \bigg| \int_{I_n} f(t)dt \bigg| \rightarrow |f(s)|=\lim_{n\rightarrow \infty} \frac{1}{|I_n|}\int_{I_n} |f(t)|dt.$$
		
		\noindent
		Then, for large enough $n$ we find
		
		$$\frac{1}{|I_n|} \bigg| \int_{I_n} f(t)dt \bigg| \geq \bigg(1+\frac{\sigma}{2}\bigg)(1-\sigma) |f(s)| \geq \frac{(1+\frac{\sigma}{2})(1-\sigma)}{(1+\frac{\sigma}{2})} \frac{1}{|I_n|}\int_{I_n} |f(t)|dt.$$
	\end{proof}

	We are now ready to formulate the stronger approximation result.
	
	\begin{theorem}[Strong approximation]\label{Lemma_10}
		For almost all $s\in \R$ there exists a increasing functions $C(t)>0, C(t) \rightarrow \infty$ and a function $\psi(t)>0, \psi(t)\rightarrow 0$ as $t\rightarrow \infty$ such that the following holds.
		
		Let $(t_1,t_2) \subset T_1(s,C(t)), t_2-t_1 \leq \frac{1}{|s|+1}$ be a $\sigma-$interval such that 
		
		$$\int_{t_1}^{t_2} |f(t)|dt < \frac{1}{100\cosh[2A]},$$
		
		\noindent
		and $C(t)>10\pi$ for $t>t_1,$ where $A$ is a constant satisfying $10\pi< A < C(t)$ for all $t>t_1.$ Let $\xi_1$ a zero of $E(t_1,z)$ in $Q(s,A/t_1)$ which moves continuously inside $Q(s,A/t)$ to a zero $\xi_2$ of $E(t_2,z)$ as $t$ changes from $t_1$ to $t_2.$ Let $\tilde{\xi_1}, \tilde{\xi_2}$ be similar zeros of $\tilde{E}$ inside $Q(s,A/t).$ Since $(t_1,t_2)\subset T_1(s,C(t)/t),$ we have $t_k\Im \xi_k>2, t_k \Im \tilde{\xi_2}>2$ for $k=1,2.$
		
		Then, the zeros of $E(t,z)$ and $\tilde{E}(t,z)$ change in similar ways as $t$ changes from $t_1$ to $t_2,$ 
		
		$$|(\xi_2-\xi_1)-(\tilde{\xi_2}-\tilde{\xi_1})|\leq \psi(t_1)|\xi_2-\xi_1|,$$
		
		\noindent
		and there exist real continuous functions $y_{t}, x_{t}, \tilde{x}_{t}$ and unimodular continuous functions $\alpha_{t}$ on the interval $[t_1,t_2]$ such that $ty_{t}>1$ for all $t\in [t_1,t_2]$ and
		
		\begin{itemize}
			\item [(i)] At $t=t_1$ we have
			
			$$\sup_{z \in Q(s,3C(t_1)/t_1)} \bigg|E(t_1,z)-\frac{\alpha_{t_1}\gamma(t_1y_{t_1})}{\sqrt{w(s)}} \sin[t_1(z-(x_{t_1}-iy_{t_1}))]\bigg|<\psi(t_1),$$
			
			$$\sup_{z \in Q(s,3C(t_1)/t_1)} \bigg|\tilde{E}(t_1,z)-\frac{\alpha_{t_1}\gamma(t_1y_{t_1})}{\sqrt{\tilde{w}(s)}} \cos[t_1(z-(\tilde{x}_{t_1}-iy_{t_1}))]\bigg|<\psi(t_1).$$
			
			\item[(ii)] For all $t \in [t_1,t_2]$ we have 
			
			$$ -\frac{\pi}{2}<t(\tilde{x}_{t}-x_{t})<\frac{\pi}{2}, \quad \cos[t_k(\tilde{x}_{t}-x_{t})]=\sqrt{w(s)\tilde{w}(s)}.$$
			
			\item[(iii)] For $k=1,2$ and large enough $t_1$ we have 
			
			$$E(t_k,s)=\frac{\alpha_{t_k}\gamma(t_ky_{t_k})}{\sqrt{w(s)}} \sin[t_k(s-(x_{t_k}-iy_{t_k}))],$$
			
			$$\tilde{E}(t_k,s)=\frac{\alpha_{t_k}\gamma(t_ky_{t_k})}{\sqrt{\tilde{w}(s)}} \cos[t_k(s-(\tilde{x}_{t_k}-iy_{t_k}))].$$
			
			\item[(iv)] As $t$ changes from $t_1$ to $t_2$ the zeros of the approximating functions change similarly to the zeros of $E(t,z)$ and $\tilde{E}(t,z),$
			
			$$ |[(x_{t_2}-iy_{t_2})-(x_{t_1}-iy_{t_1})]-[\xi_2-\xi_1]|\leq \psi(t_1)|\xi_2-\xi_1|,$$
			
			$$|[(\tilde{x}_{t_2}-iy_{t_2})-(\tilde{x}_{t_1}-iy_{t_1})]-[\tilde{\xi_2}-\tilde{\xi_1}]| \leq \psi(t_1)|\xi_2-\xi_1|.$$
		\end{itemize}
	\end{theorem}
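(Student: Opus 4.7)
The plan is to apply Lemma \ref{Lemma_6} simultaneously to $E(t,\cdot)$ and $\tilde{E}(t,\cdot)$, couple the two approximations via the Wronskian identity of Corollary \ref{determinant_2}, pin down the free parameters by forcing equality at $z=s$, and then transport the structure from $t_1$ to $t_2$ using the zero-dynamics formula \eqref{derivative_zero}.

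At time $t_1$, Lemma \ref{Lemma_6} produces approximating zeros $z_{t_1}, \tilde{z}_{t_1} \in Q(s, A/t_1)$ and unimodular constants $\alpha_{t_1}, \tilde{\alpha}_{t_1}$ for $E$ and $\tilde{E}$ respectively; after rotating $\tilde{\alpha}_{t_1}$ by $\pi/2$, the approximation for $\tilde{E}$ becomes a cosine instead of a sine, reflecting the Dirichlet initial data. Substituting both approximations into $\det\begin{pmatrix} E & \tilde{E} \\ E^{\sharp} & \tilde{E}^{\sharp} \end{pmatrix}=2i$ and matching leading terms --- exactly as was done in the proof of Lemma \ref{Lemma_6} to extract $|\beta(t)|$ --- should force $\tilde{\alpha}_{t_1}=\alpha_{t_1}+o(1)$, equate the imaginary parts of the approximating zeros up to $o(1/t_1)$, and produce the constraint $\cos[t_1(\tilde{x}_{t_1}-x_{t_1})]=\sqrt{w(s)\tilde{w}(s)}$ of (ii) at the endpoint. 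The parameters $(x_{t_1}, y_{t_1}, \tilde{x}_{t_1}, \alpha_{t_1})$ then retain just enough real degrees of freedom to be perturbed by $o(1)$ so that (iii) holds at $k=1$: matching $E(t_1,s)$ and $\tilde{E}(t_1,s)$ amounts to four real conditions, which is the remaining freedom once (ii) is imposed. The approximation on all of $Q(s,3C(t_1)/t_1)$ asserted in (i) then follows from the approximation of Lemma \ref{Lemma_6} itself, since the boxes used there are already wide enough.

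To propagate to $t_2$, I would use \eqref{derivative_zero} together with Lemma \ref{formulas_mif}(ii) to write $\xi'(t)$ and $\tilde{\xi}'(t)$ as explicit functions of $f(t)$ and the parameters $\alpha, \tilde{\alpha}, y, \tilde{y}$. The $\sigma$-interval hypothesis rules out sign cancellations in $f$, while Lemma \ref{Lemma_7} controls the drift of $\arg\alpha(t)$ by $\int_{t_1}^{t_2}\bigl(3\cosh(2A)|f|+A/t\bigr)\,dt$, so the smallness hypothesis on $\int_{t_1}^{t_2}|f|\,dt$ keeps $\alpha$ and $\tilde{\alpha}$ within $o(1)$ of each other throughout the interval. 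Integrating the velocity formulas then yields the comparison $|(\xi_2-\xi_1)-(\tilde{\xi}_2-\tilde{\xi}_1)|\leq \psi(t_1)|\xi_2-\xi_1|$, and rerunning the parameter-matching procedure at $t=t_2$ delivers (iii) for $k=2$ and (i) at $t_2$. Continuous interpolation of $(x_t, y_t, \tilde{x}_t, \alpha_t)$ across $[t_1,t_2]$, controlled by the same velocity estimate applied at each intermediate time, produces (iv).

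The hard part is the quantitative tracking estimate in (iv), which demands that the approximating zero $x_t - iy_t$ shadow the true zero $\xi(t)$ with error at most $\psi(t_1)|\xi_2-\xi_1|$ uniformly on $[t_1,t_2]$. This requires a Gronwall-type comparison between \eqref{derivative_zero} and the corresponding differential equation for the zero of the approximating sine, using the Riccati-type formula \eqref{riccati} to control how $\theta_z$ evolves. The $\sigma$-interval hypothesis is essential here: it prevents lower-order oscillatory contributions to the integrated velocity from cancelling the leading term and thereby breaking the proportionality to $|\xi_2-\xi_1|$ on the right-hand side. A secondary subtlety is maintaining the constraint in (ii) across the interpolation, which I expect to follow automatically because the $2i$-determinant is a conserved quantity under the exact dynamics and the approximating dynamics agree with it to $o(1)$ at each instant.
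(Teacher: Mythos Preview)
Your outline is essentially the paper's approach --- coupling via the determinant (this is Lemma~\ref{Lemma_8}), perturbing to match exact values at $s$ (Lemma~\ref{Lemma_9}), and then transporting via the zero dynamics --- but there is a concrete gap in how you propose to carry out the transport step.

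You write that (iv) comes from ``a Gronwall-type comparison between \eqref{derivative_zero} and the corresponding differential equation for the zero of the approximating sine.'' The problem is that the approximating sine $\sin[t_1(z-(x_{t_1}-iy_{t_1}))]$ is a fixed function; its zero does not evolve, and the Riccati formula \eqref{riccati} does not apply to it. What the paper does --- and what your phrase implicitly gestures at without naming --- is to introduce an auxiliary Hermite--Biehler function $S(t,z)$ solving the \emph{same} Dirac equation $\partial_t S = -izS + f(t)S^\sharp$ on $[t_1,t_2]$ with initial condition $S(t_1,\cdot)$ equal to the approximating sine. Now $S$ has its own inner function $I=S^\sharp/S$, its zeros $\zeta_t$ obey \eqref{derivative_zero} with $I_z$ in place of $\theta_z$, and $I_z(t,\overline{\zeta}_t)$ satisfies the same Riccati equation \eqref{riccati}. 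The Gronwall comparison is then between $\theta_z(t,\overline{\xi}_t)$ and $I_z(t,\overline{\zeta}_t)$, starting $o(t_1)$ apart and driven by the same $f$; this yields $(\zeta_2-\zeta_1)-(\xi_2-\xi_1)=o(\xi_2-\xi_1)$. A separate bound, coming from the scattering function $\mathfrak{S}=e^{itz}S$ being outer and the maximum principle, shows that \emph{all} zeros of $S$ move coherently, so $S(t_2,\cdot)$ is again a sine with zero at $\zeta_1+(\xi_2-\xi_1)+o(\xi_2-\xi_1)$.

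The second issue is your phrase ``rerunning the parameter-matching procedure at $t=t_2$.'' An independent application of Lemma~\ref{Lemma_9} at $t_2$ would produce parameters with only an $o(1)$ relation to those at $t_1$, far too crude for the bound $\psi(t_1)|\xi_2-\xi_1|$ in (iv). The paper instead exploits that $S$ and $E$ agree at $z=s$ for all $t$ (same ODE, same initial value there), so $E(t_2,s)=S(t_2,s)$ exactly; the $t_2$ parameters are then built as $\zeta_1+(\xi_2-\xi_1)$ plus a correction $\Delta=o(\xi_2-\xi_1)$ coming from the open-mapping argument of Lemma~\ref{Lemma_9}. This is what ties $x_{t_2}-iy_{t_2}$ to $x_{t_1}-iy_{t_1}$ with the required precision.
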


	As already mentioned we will prove Theorem \ref{Lemma_10} in the next section. We are now going to use the stronger approximation to deduce asymptotics for $a_{t_1\rightarrow t_2}.$ For the remainder of this section we will assume that $s$ is such that Theroem \ref{Lemma_10} holds and we fix the notation for the time interval $(t_1,t_2),$ the size of the box $A,$ the zeros $\xi_1$ and $\tilde{\xi}_t$ of $E(t,z)$ and $\tilde{E}(t,z),$ respectively, as well as the approximating zeros $x_t-iy_t$ and $\tilde{x}_t-iy_t$.	We introduce two quantities $\varepsilon_1$ and $\varepsilon_2$ which measure the horizontal increment and vertical increment of the blown up approximating zeros as they travel from time $t_1$ to $t_2,$ respectively. More precisely, we set
	
	$$\varepsilon_1=t_2(x_{t_2}-s)-t_1(x_{t_1}-s), \quad \varepsilon_2=t_2y_2-t_1y_1.$$
	
	Directly from the definition one sees that $E(t,0)=1$ and $\tilde E(t,0)=-i$ for all $t>0.$ Hence, $a_{t_1\rightarrow t_2}(0)=1$ for all intervals $(t_1,t_2)$ and we denote by $\arg E$ and $\arg a_{t_1\rightarrow t_2}$ continuous branches of the argument on $\R$ and $\arg E(t,0)=\arg a_{t_1\rightarrow t_2}(0)=0.$ We now have the following asymptotics.
	
	\begin{lemma} 
		$$a_{t_1\rightarrow t_2}(s)=e^{i(t_2-t_1)s}\alpha_{t_2}\overline\alpha_{t_1}\bigg(1+i\varepsilon_1\coth[2t_1y_1]+O(\varepsilon_1^2+\varepsilon_2^2)\bigg),$$
		
		where the implicit constant depends only on $A.$
	\end{lemma}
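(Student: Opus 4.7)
The plan is to obtain a clean closed-form expression for $a_{t_1\rightarrow t_2}(s)$ in terms of boundary values of $E$ and $\tilde E$ at $t_1,t_2$, substitute the exact representation from Theorem \ref{Lemma_10}(iii), and Taylor expand in the small parameters $\varepsilon_1,\varepsilon_2$. The payoff is a striking cancellation that removes the first-order dependence on $\varepsilon_2$.

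First I would derive a clean formula for $a_{t_1\rightarrow t_2}(s)$. Starting from $M_{t_1\rightarrow t_2}=M(t_2)M(t_1)^{-1}$ and $\det M(t_1)=1$ (Lemma \ref{determinant_1}), expanding the product gives $E_{t_1\rightarrow t_2}=D(t_1,\cdot)E(t_2,\cdot)-C(t_1,\cdot)\tilde E(t_2,\cdot)$ and $\tilde E_{t_1\rightarrow t_2}=A(t_1,\cdot)\tilde E(t_2,\cdot)-B(t_1,\cdot)E(t_2,\cdot)$. On $\R$ we have $A_1=\tfrac12(E_1+E_1^{\sharp})$, $C_1=\tfrac{i}{2}(E_1-E_1^{\sharp})$ and analogous formulas for $B_1,D_1$; substituting these, the combination $E_{t_1\rightarrow t_2}(s)+i\tilde E_{t_1\rightarrow t_2}(s)$ simplifies to $-i\bigl(E(t_2,s)\tilde E^{\sharp}(t_1,s)-\tilde E(t_2,s)E^{\sharp}(t_1,s)\bigr)$, yielding
$$a_{t_1\rightarrow t_2}(s)=\frac{e^{i(t_2-t_1)s}}{2i}\Bigl[E(t_2,s)\tilde E^{\sharp}(t_1,s)-\tilde E(t_2,s)E^{\sharp}(t_1,s)\Bigr].$$

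Second, I would insert Theorem \ref{Lemma_10}(iii), using $E^{\sharp}(t,s)=\overline{E(t,s)}$, $\tilde E^{\sharp}(t,s)=\overline{\tilde E(t,s)}$ on $\R$ and $\overline{\alpha_{t_k}}=\alpha_{t_k}^{-1}$. Writing $u_k:=t_k(s-x_{t_k})$, $\tilde u_k:=t_k(s-\tilde x_{t_k})$, $v_k:=t_ky_{t_k}$, $\gamma_k:=\gamma(v_k)$ produces
$$a_{t_1\rightarrow t_2}(s)=\frac{e^{i(t_2-t_1)s}\alpha_{t_2}\overline{\alpha_{t_1}}\gamma_1\gamma_2}{2i\sqrt{w(s)\tilde w(s)}}\Bigl[\sin(u_2+iv_2)\cos(\tilde u_1-iv_1)-\cos(\tilde u_2+iv_2)\sin(u_1-iv_1)\Bigr].$$
Theorem \ref{Lemma_10}(ii), combined with continuity of $t\mapsto t(\tilde x_t-x_t)$ on $[t_1,t_2]$, forces $\delta:=t_k(\tilde x_{t_k}-x_{t_k})$ to be independent of $k\in\{1,2\}$, with $\cos\delta=\sqrt{w(s)\tilde w(s)}$. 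Hence $u_k-\tilde u_k=-\delta$ for both $k$, and setting $\varepsilon:=\varepsilon_1-i\varepsilon_2$ one checks $u_2+iv_2=(u_1+iv_1)-\varepsilon$ and $\tilde u_2+iv_2=(\tilde u_1+iv_1)-\varepsilon$.

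The third step is the Taylor expansion. The bracket above equals $\mathcal B_0-\varepsilon\mathcal B_1+O(|\varepsilon|^2)$, where product-to-sum identities (using $\cos(u_1-\tilde u_1)=\cos\delta=\sqrt{w(s)\tilde w(s)}$) collapse
$$\mathcal B_0=i\sqrt{w(s)\tilde w(s)}\sinh(2v_1),\qquad \mathcal B_1=\sqrt{w(s)\tilde w(s)}\cosh(2v_1).$$
Separately, since $v_2=v_1+\varepsilon_2$ and $\gamma_k=\sqrt{2/\sinh(2v_k)}$,
$$\frac{\gamma_1\gamma_2\sinh(2v_1)}{2}=\sqrt{\frac{\sinh(2v_1)}{\sinh(2v_2)}}=1-\varepsilon_2\coth(2v_1)+O(\varepsilon_2^2).$$
Combining these expansions produces
$$a_{t_1\rightarrow t_2}(s)=e^{i(t_2-t_1)s}\alpha_{t_2}\overline{\alpha_{t_1}}\bigl[1-\varepsilon_2\coth(2v_1)+O(\varepsilon_2^2)\bigr]\bigl[1+i\varepsilon\coth(2v_1)+O(|\varepsilon|^2)\bigr],$$
and since $i\varepsilon=i\varepsilon_1+\varepsilon_2$, the two $\pm\varepsilon_2\coth(2v_1)$ contributions cancel, leaving the asserted asymptotic. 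The a priori bounds $2<v_k\leq A$ and $|u_k|,|\tilde u_k|\leq 2A$ (from the box constraint and Theorem \ref{Lemma_10}(iv)) make every $O$-constant depend only on $A$.

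The main obstacle is ensuring the first-order $\varepsilon_2$-cancellation: it is what produces the asserted structure in which the first-order expansion is purely linear in $\varepsilon_1$, and it rests crucially on Theorem \ref{Lemma_10}(ii) enforcing $\delta$ to be the same at $t_1$ and $t_2$, which in turn is what allows $\mathcal B_0$ and $\mathcal B_1$ to factor cleanly through $\cos\delta=\sqrt{w(s)\tilde w(s)}$.
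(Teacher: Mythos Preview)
Your proof is correct and follows essentially the same route as the paper: both derive the identity
$a_{t_1\rightarrow t_2}(s)=\tfrac{e^{i(t_2-t_1)s}}{2i}\bigl[E(t_2,s)\tilde E^{\sharp}(t_1,s)-\tilde E(t_2,s)E^{\sharp}(t_1,s)\bigr]$,
substitute Theorem~\ref{Lemma_10}(iii), use (ii) to force $t_1(\tilde x_{t_1}-x_{t_1})=t_2(\tilde x_{t_2}-x_{t_2})=\delta$ with $\cos\delta=\sqrt{w(s)\tilde w(s)}$, and then Taylor expand. The only organizational difference is that the paper first collapses the bracket via sum-to-product identities to the exact closed form $-\cos\delta\,\sin(\varepsilon_1-2iv_1-i\varepsilon_2)$ and then expands that single sine, whereas you expand the bracket directly as $\mathcal B_0-\varepsilon\mathcal B_1+O(|\varepsilon|^2)$ in the complex variable $\varepsilon=\varepsilon_1-i\varepsilon_2$; your packaging makes the $\varepsilon_2$-cancellation particularly transparent. (One harmless slip: $u_k-\tilde u_k=t_k(\tilde x_{t_k}-x_{t_k})=+\delta$, not $-\delta$, but only $\cos\delta$ enters so nothing changes.)
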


	In the proof we are going to apply Theorem \ref{Lemma_10} to both functions $E(t,z)$ and $\tilde{E}(t,z)$ at the same time. This requires that both functions have a zero in the box $Q(s,A/t).$ In this section we supposed only that $E(t,z)$ has a zero, so Theorem\ref{Lemma_10} seems not applicable at a first glance. However, by Lemma \ref{Lemma_8}, which we will prove in the next section and is essential for the proof of Theorem \ref{Lemma_10}, $E(t,z)$ is close to $\sin z$ and $\tilde{E}(t,z)$ is close to $\cos z.$ Therefore, by Lemma \ref{open_mapping_theorem} if one of the functions has a zero in the box, the other one has a zero in the box $Q(s,(A+\varepsilon)/t)$ (we may have to consider a larger box, because the zero could be located at the boundary. If the zero is in the centre, we do not have to consider a bigger box). For larger $t$ we can choose $\varepsilon$ smaller. Hence, one should think of the zeros of $E(t,z)$ and $\tilde{E}(t,z)$ to be similar to the zeros of $\sin z$ and $\cos z,$ i.e. they arrange in a straight line the more $t$ increases. Since we consider a large box with $A> 8\pi$ it is really enough to look at times $t,$ where the function $E(t,z)$ has a zero in the box $Q(s,C/t)$

	\begin{proof}
		For $k=1,2$ we have by Theorem \ref{Lemma_10} for large enough $t_1$
		
		$$E(t_k,s)=\frac{\alpha_{t_k}\gamma(t_ky_{t_k})}{\sqrt{w(s)}} \sin[t_k(s-(x_{t_k}-iy_{t_k}))],$$
		
		$$\tilde{E}(t_k,s)=\frac{\alpha_{t_k}\gamma(t_ky_{t_k})}{\sqrt{\tilde{w}(s)}} \cos[t_k(s-(\tilde{x}_{t_k}-iy_{t_k}))].$$
		
		By definition of the transfer matrix $M_{t_1\rightarrow t_2}$ and Corollary \ref{determinant_2} we find
		
		$$M_{t_1\rightarrow t_2}(z)=M(t_2,1)M^{-1}(t_1,z)$$
		
		$$=\frac{1}{2} \begin{pmatrix}
		1 && 1 \\ i && -i
		\end{pmatrix} \begin{pmatrix}
		E(t_2,z) && \tilde{E}(t_2,z) \\ E^{\sharp}(t_2,z) && \tilde{E}^{\sharp}(t_2,z)
		\end{pmatrix} \begin{pmatrix}
		E(t_1,z) && \tilde{E}(t_1,z) \\ E^{\sharp}(t_1,z) && \tilde{E}^{\sharp}(t_1,z)
		\end{pmatrix}^{-1}2\begin{pmatrix}
		1 && -i \\ 1 && i
		\end{pmatrix}^{-1}.$$
		
		\noindent
		Again by Corollary \ref{determinant_2}
		
		$$\begin{pmatrix}
		E_{t_1\rightarrow t_2} && \tilde{E}_{t_1 \rightarrow t_2} \\ E_{t_1 \rightarrow t_2}^{\sharp} && \tilde{E}_{t_1 \rightarrow t_2}^{\sharp}
		\end{pmatrix} = 2 \begin{pmatrix}
		1 && 1  \\ i && -i
		\end{pmatrix}^{-1}
		M_{t_1\rightarrow t_2}(z).$$
		
		$$=\begin{pmatrix}
		E(t_2,z) && \tilde{E}(t_2,z) \\ E^{\sharp}(t_2,z) && \tilde{E}^{\sharp}(t_2,z)
		\end{pmatrix} \frac{1}{2i}
		\begin{pmatrix}
		\tilde{E}^{\sharp}(t_1,z) && -\tilde{E}(t_1,z) \\
		-E^{\sharp}(t_1,z) && E(t_1,z)
		\end{pmatrix} \begin{pmatrix}
		1 && -i \\ 1 && i
		\end{pmatrix}.$$
		
		By definition of $a_{t_1 \rightarrow t_2}$ we obtain 
		
		$$a_{t_1 \rightarrow t_2}(s)=\frac{e^{i(t_2-t_1)z}}{2}(E_{t_1\rightarrow t_2}(z)+i\tilde{E}_{t_1 \rightarrow t_2}(z))$$
		$$=\frac{e^{i(t_2-t_1)z}}{4i}\bigg(\big[(E(t_2,z)\tilde{E}^{\sharp}(t_1,z)-\tilde{E}(t_2,z)E^{\sharp}(t_1,z))+(-E(t_2,z)\tilde{E}(t_1,z)+\tilde{E}(t_2,z)E(t_1,z))\big]$$
		$$+i\big[-i(E(t_2,z)\tilde{E}^{\sharp}(t_1,z)-\tilde{E}(t_2,z)E^{\sharp}(t_1,z))+i(-E(t_2,z)\tilde{E}(t_1,z)+\tilde{E}(t_2,z)E(t_1,z))\big]\bigg)$$
		$$=\frac{e^{i(t_2-t_1)z}}{2i}(E(t_2,z)\tilde{E}^{\sharp}(t_1,z)-\tilde{E}(t_2,z)E^{\sharp}(t_1,z)).$$
		
		We will simplify notation by putting $x_k=x_{t_k}-s, \tilde{x}_k=\tilde{x}_{t_k}-s$ and $y_k=y_{t_k}$ for $k=1,2.$ Thus, $t_k(s-(x_{t_k}-iy_{t_k}))=-t_k(x_k-iy_k)$ and $t_k(s-(\tilde x_{t_k}-iy_{t_k}))=-t_k(\tilde x_k-iy_k)$.
		
		Hence, by using Theorem \ref{Lemma_10}
		
		$$a_{t_1 \rightarrow t_2}(s)=-\frac{e^{i(t_2-t_1)s}}{2i}\bigg(\frac{\alpha_{t_2}\gamma(t_2y_2)}{\sqrt{w(s)}} \sin[t_2(x_2-iy_2)] \frac{\overline{\alpha}_{t_1}\gamma(t_1y_1)}{\sqrt{\tilde{w}(s)}} \cos[t_1(\tilde{x}_1+iy_1)]$$
		$$-\frac{\alpha_{t_2}\gamma(t_2y_2)}{\sqrt{\tilde{w}(s)}} \cos[t_2(\tilde{x}_2-iy_2)] \frac{\overline{\alpha}_{t_1}\gamma(t_1y_1)}{\sqrt{w(s)}} \sin[t_1(x_1+iy_1)] \bigg)$$
		
		\begin{equation}\label{misc18}
		=-\frac{e^{i(t_2-t_1)s}}{2i}\frac{\gamma(t_1y_1)\gamma(t_2y_2)}{\sqrt{w(s)\tilde{w}(s)}}\big| \sin[t_2(x_2-iy_2)]\cos[t_1(\tilde{x}_1+iy_1)]-\cos[t_2(\tilde{x}_2-iy_2)]\sin[t_1(x_1+iy_1)]\big|.
		\end{equation}
		
		After using that 
		
		$$\cos z = \frac{e^{iz}+e^{-iz}}{2}, \quad \sin z=\frac{e^{iz}-e^{-iz}}{2i}$$
		
		\noindent
		and rearranging the terms, the expression inside the bracket in \eqref{misc18} is equal to

		$$\frac{1}{2}\bigg(\sin[t_2(x_2-iy_2)+t_1(\tilde{x}_1+iy_1)]+\sin[t_2(x_2-iy_2)-t_1(\tilde{x}_1+iy_1)]$$
		$$-\sin[t_1(x_1+iy_1)+t_2(\tilde{x}_2-iy_2)]-\sin[t_1(x_1+iy_1)-t_2(\tilde{x}_2-iy_2)]\bigg)$$
		
		Recall that for any two complex numbers $z,w\in \C,$ 
		
		$$\sin z - \sin w = 2 \sin \bigg(\frac{z-w}{2}\bigg)\cos\bigg(\frac{z+w}{2}\bigg).$$
		
		\noindent
		Hence, the last expression can be written as
		
		$$=\cos\bigg[\frac{1}{2}\big([t_2(x_2-iy_2)+t_1(\tilde{x}_1+iy_1)]+[t_1(x_1+iy_1)+t_2(\tilde{x}_2-iy_2)]\big)\bigg]$$
		$$\times \sin\bigg[\frac{1}{2}\big([t_2(x_2-iy_2)+t_1(\tilde{x}_1+iy_1)]-[t_1(x_1+iy_1)+t_2(\tilde{x}_2-iy_2)]\big)\bigg]$$
		$$+\cos\bigg[\frac{1}{2}\big([t_2(x_2-iy_2)-t_1(\tilde{x}_1+iy_1)]+[t_1(x_1+iy_1)-t_2(\tilde{x}_2-iy_2)]\big)\bigg]$$
		$$\times \sin\bigg[\frac{1}{2}\big([t_2(x_2-iy_2)-t_1(\tilde{x}_1+iy_1)]-[t_1(x_1+iy_1)-t_2(\tilde{x}_2-iy_2)]\big)\bigg]$$
		$$=\cos\bigg[\frac{1}{2}(t_2(x_2+\tilde{x}_2)+t_1(x_1+\tilde{x}_1))-i(t_2y_2-t_1y_1)\bigg]\times \sin\bigg[\frac{1}{2}(t_2(x_2-\tilde{x}_2)-t_1(x_1-\tilde{x}_1))\bigg]$$
		$$+\cos\bigg[\frac{1}{2}(t_2(x_2-\tilde{x}_2)+t_1(x_1-\tilde{x}_1))\bigg]\times \sin\bigg[\frac{1}{2}(t_2(x_2+\tilde{x}_2)-t_1(x_1+\tilde{x}_1))-i(t_2y_2+t_1y_1)\bigg].$$
		
		By Theorem \ref{Lemma_10} (ii) we have $\cos[t(\tilde{x}_{t}-x_{t})]=\sqrt{w(s)\tilde w(s)}$ for $t\in[t_1,t_2]$ and moreover since $x_{t_1}$ moves continuously to $x_{t_2}$ we find $t_2(x_2-\tilde{x}_2)=t_1(x_1-\tilde{x}_1).$ Thus, the last expression simplifies further to
		
		$$\sqrt{w(s)\tilde{w}(s)}\sin\bigg[\frac{1}{2}(t_2(x_2+\tilde{x}_2)-t_1(x_1+\tilde{x}_1))-i(t_2y_2+t_1y_1)\bigg].$$
		\noindent
		Plugging this back into \eqref{misc18} and using $\sin(ix)=i\sinh(x)$ we obtain
		
		$$a_{t_1 \rightarrow t_2}(s)=-\frac{e^{i(t_2-t_1)s}\alpha_{t_2}\overline{\alpha}_{t_1}}{i\sqrt{\sin[2it_1y_1]\sin[2it_2y_2]}}\big(\sin[(t_2x_2-t_1x_1)-i(t_2y_2+t_1y_1)]\big).$$
		
		\noindent
		We now substitute $t_1x_1=u, t_2x_2=u+\varepsilon_1, t_1y_1=v, t_2y_2=v+\varepsilon_2.$ Our last equation can then be written as 
		
		$$a_{t_1\rightarrow t_2}(s)=-\frac{e^{i(t_2-t_1)s}\alpha_{t_2}\overline\alpha_{t_1}}{i\sqrt{\sin[2iv]\sin[2i(v+\varepsilon_2)]}}\sin[\varepsilon_1-2iv\color{red}-\color{black}i\varepsilon_2].$$
		
		We are going to simply this further to
		
		$$a_{t_1\rightarrow t_2}(s)=e^{i(t_2-t_1)s}\alpha_{t_2}\overline\alpha_{t_1}\bigg[1+i\varepsilon_1\coth[2v]+O(\varepsilon_1^2+\varepsilon_2^2)\bigg],$$
		
		\noindent
		where the implicit constant depends only on $s$ and $A.$ Recall first that $v=t_1y_1$ and $t_2y_2$ satisfy the bounds $2<v<A,$ so the terms with $\sin[2iv]$ and $\sin[2i(v+\varepsilon_2)]$ are bounded away from zero. Further recall the identities
		
		\begin{equation}\label{misc41}
		\sin(ix)=i\sinh(x), \quad \cos(ix)=\cosh(x), \text{ for all } x \in \R,
		\end{equation}
		
		\begin{equation}\label{misc42}
		\sin(z \pm w)=\sin z \cos w \pm \cos z \sin w, \text{ for all } z,w \in \C,		
		\end{equation}
		
		\begin{equation}\label{misc43}
		\sinh(z \pm w)=\sinh z \cosh w \pm \sinh w \cosh z, \text{ for all } z,w \in \C,
		\end{equation}
		
		\begin{equation}\label{misc44}
		\cosh(z \pm w)= \cosh z \cosh w \pm \sinh z \sinh w, \text{ for all } z,w \in \C.
		\end{equation}
		
		We decompose using \eqref{misc42}
		
		$$a_{t_1 \rightarrow t_2}(s)=\frac{1}{i\sqrt{\sin[2iv]\sin[2i(v+\varepsilon_2)]}} \bigg[\sin \varepsilon_1 \cos(2iv+i\varepsilon_2)-\cos \varepsilon_1 \sin(2iv+i\varepsilon_2)\bigg].$$
		
		We are going to approximate both summands seperately. For the first one we decompose further using \eqref{misc41} and \eqref{misc44} to arrive at 
		
		$$\sin \varepsilon_1 \cosh(2v+\varepsilon_2)=\sin \varepsilon_1 \cosh 2v \cosh \varepsilon_2+ \sin \varepsilon_1 \sinh 2v \sinh \varepsilon_2.$$
		\noindent
		We thus have to approximate the following terms for the first summand.
		
		\begin{equation}\label{term1}
		\sqrt{\sinh 2v} \coth[2v] \sin \varepsilon_1 \frac{\cosh \varepsilon_2}{\sinh[2(v+\varepsilon_2)]},
		\end{equation}
		
		\begin{equation}\label{term2}
		\sqrt{\sinh 2v} \sin \varepsilon_1 \frac{\sinh \varepsilon_2}{\sqrt{\sinh[2(v+\varepsilon_2)]}}.
		\end{equation}
		\noindent
		By Taylor's Theorem we have that \eqref{term1} and \eqref{term2} are respectively
		
		$$\sqrt{\sinh 2v} \coth[2v] \sin \varepsilon_1 \frac{\cosh \varepsilon_2}{\sinh[2(v+\varepsilon_2)]}$$
		$$= \sqrt{\sinh 2v} \coth[2v](\varepsilon_1+O(\varepsilon_1^2))\bigg(\frac{1}{\sqrt{\sinh 2v}}+O(\varepsilon_2)\bigg)=\varepsilon_1\coth[2v]+O(\varepsilon_1^2+\varepsilon_2^2),$$
		
		$$\sqrt{\sinh 2v} \sin \varepsilon_1 \frac{\sinh \varepsilon_2}{\sqrt{\sinh[2(v+\varepsilon_2)]}}$$
		$$=\sqrt{\sinh 2v}(\varepsilon_1+O(\varepsilon_1^2))\bigg(\frac{\varepsilon_2}{\sqrt{\sinh 2v}}+O(\varepsilon_2^2)\bigg)=O(\varepsilon_1^2+\varepsilon_2^2).$$
		\noindent
		Thus, the first summand equals
		
		\begin{equation}\label{summand1}
		\varepsilon_1\coth[2v]+O(\varepsilon_1^2+\varepsilon_2^2).
		\end{equation}
		
		For the second summand we obtain using \eqref{misc41} and \eqref{misc43} 
		
		$$\cos \varepsilon_1 \sin(2iv+i\varepsilon_2)=i(\cos \varepsilon_1 \sinh 2v \cosh \varepsilon_2 + \cos \varepsilon_1 \sinh \varepsilon_2 \cosh 2v).$$
		
		\noindent
		The first term equals using Taylor's Theorem 
		
		$$\sqrt{\sinh 2v}\cos \varepsilon_1 \frac{\cosh \varepsilon_2}{\sqrt{\sinh[2(v+\varepsilon_2)]}}$$
		
		$$=\sqrt{\sinh 2v} (1+ O(\varepsilon_1^2))\bigg(\frac{1}{\sqrt{\sinh2v}}-\varepsilon_2 \frac{\cosh2v}{(\sinh2v)^{\frac{3}{2}}}+O(\varepsilon_2^2)\bigg)=1-\varepsilon_2\coth[2v]+O(\varepsilon_1^2+\varepsilon_2^2).$$
		
		\noindent
		By another application of Taylor's Theorem the second term equals
		
		$$\sqrt{\sinh 2v} \cos \varepsilon_1 \coth[2v] \frac{\sin \varepsilon_2}{\sqrt{\sinh[2(v+\varepsilon_2)]}}=$$
		$$\sqrt{\sinh 2v}\coth[2v] (1+O(\varepsilon_2^2))\bigg(\frac{\varepsilon_2}{\sqrt{\sinh 2v}}+O(\varepsilon_2^2)\bigg)=\varepsilon_2\coth[2v] + O(\varepsilon_1^2+\varepsilon_2^2).$$
		
		\noindent
		Hence, the second summand is
		
		\begin{equation}\label{summand2}
		i+O(\varepsilon_1^2+\varepsilon_2^2).
		\end{equation}
		
		Thus, by combining \eqref{summand1} and \eqref{summand2} we find as claimed
		
		$$a_{t_1\rightarrow t_2}(s)=-\frac{1}{i}e^{i(t_2-t_1)s}\alpha_{t_2}\overline\alpha_{t_1}\bigg[\varepsilon_1\coth[2v]-i+O(\varepsilon_1^2+\varepsilon_2^2)\bigg]$$
		$$=e^{i(t_2-t_1)s}\alpha_{t_2}\overline\alpha_{t_1}\bigg[1+i\varepsilon_1\coth[2v]+O(\varepsilon_1^2+\varepsilon_2^2)\bigg].$$
	\end{proof}

	One can show that on $H_s-$ and $\sigma-$ intervals we have the bound
	
	$$\int_{t_1}^{t_2} |f(t)|dt \ll \varepsilon_1.$$
	
	\noindent
	It was thus desirable that $\log a_{t_1\rightarrow t_2}$ was linear in $\varepsilon_1$ to otbain by linearity of the integral that 
	
	$$||\log a_{t_1 \rightarrow t_2}||_{L^1(\R)} \gg \varepsilon_1 \gg \int_{t_1}^{t_2} |f(t)|dt $$
	
	\noindent
	and deduce our result by a standard covering argument. However, this does not quite work as the asymptotics from the previous Lemma show. Indeed, because we have a non-trivial imaginary part the norm is giving us a quadratic contribution in $\varepsilon_2$ and thus we can not exploit linearity of the integral anymore. This was a mistake in older versions of \cite{Poltoratski} which was fixed by using the following idea. While it turns out that $|\log a_{t_1\rightarrow t_2}(s)|$ is quadratic in $\varepsilon_1$, we will show that $\arg a_{t_1 \rightarrow t_2}$ is barely enough linear on some subset in the base of the box, so we can show on the interval $(t_1,t_2)$ an estimate of the form
	
	$$|\arg a_{t_1\rightarrow t_2}(s)| \gg \int_{t_1}^{t_2} |f(t)|dt.$$

	We now note that $\arg a_{t_1 \rightarrow t_2}$ is a harmonic conjugate $\log|a_{t_1 \rightarrow t_2}|$ and thus may differ only by a constant from the Hilbert transform of $\log|a_{t_1 \rightarrow t_2}|.$ Since $\arg a_{t_1 \rightarrow t_2}(0)=0$ the constant is zero. Let $R$ denote the subset where the above inequality holds. Thus, we can estimate the weak $L^1-$norm of $\arg a_{t_1\rightarrow t_2}(s)$ from below by 
	
	$$||\arg a_{t_1\rightarrow t_2}(s)||_{L^{1,\infty}(\R)}>|R|\int_{t_1}^{t_2} |f(t)|dt.$$
	
	\noindent
	Since the Hilbert transform is of weak type $(1,1)$ we find that 
	
	\begin{equation}\label{misc55}
	||\log|a_{t_1 \rightarrow t_2}||_{L^1(\R)} \gg ||\arg a_{t_1\rightarrow t_2}(s)||_{L^{1,\infty}(\R)}>|R|\int_{t_1}^{t_2} |f(t)|dt.
	\end{equation}
	
	This saves the idea of our argument to exploit the linearity of the integral and the non-linear Parseval identity via a covering type argument. We are now going to make these ideas rigorous.

	\begin{lemma}\label{Lemma_11}
		Suppose that $t_ky_k> A/2$ for $k=1,2$ and that $(t_1,t_2)$ is a $H_s-$ and $\sigma-$interval. Then, on some subset $R$ of the interval $I_{t_2}=(s-\frac{10}{t_2},s+\frac{10}{t_2})$ of measure $|R|>D/t_2,$ we have the lower bound
		
		$$|\arg a_{t_1 \rightarrow t_2}| \geq D |\varepsilon_1| + o(|\varepsilon_1|+|\varepsilon_2|),$$
		
		\noindent
		where $D>0$ is an absolute constant depending only on $A.$
	\end{lemma}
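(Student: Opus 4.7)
My plan is to extend the asymptotic expansion of the previous lemma from the single point $s$ to the whole interval $I_{t_2}$, and then to pick $R$ as a subset on which the leading unimodular ``phase factor'' is close to $+1$, so that the small imaginary perturbation dominates the argument of $a_{t_1 \to t_2}$.

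First, I would redo the computation of the previous lemma at a variable point $x \in I_{t_2}$. The only place the specific point $s$ entered was through property~(iii) of Theorem~\ref{Lemma_10}, which gave exact equalities for $E(t_k,s)$ and $\tilde{E}(t_k,s)$. At $x \in I_{t_2} \subset Q(s,3C(t_k)/t_k)$ (the inclusion uses $A > 10\pi$) I instead apply property~(i), absorbing an additive error $\psi(t_1) = o(1)$. The sine/cosine product manipulations rely only on property~(ii) and on continuity of the zero paths, both of which carry over verbatim, so
\begin{equation*}
a_{t_1\to t_2}(x) = e^{i(t_2-t_1)x}\alpha_{t_2}\overline{\alpha}_{t_1}\bigl(1 + i\varepsilon_1(x)\coth[2t_1 y_1] + O(\varepsilon_1^2 + \varepsilon_2^2)\bigr) + o(1),
\end{equation*}
where $\varepsilon_1(x) := \varepsilon_1 + (t_2-t_1)(s-x)$ reduces to $\varepsilon_1$ at $x = s$.

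Second, by Lemma~\ref{Lemma_7} I write $\alpha_{t_k} = e^{i[st_k + \psi(t_k)]}$ with $|\psi(t_2) - \psi(t_1)| < 1/10$ (from Lemma~\ref{upper_bounds}), so the unimodular factor $e^{i(t_2-t_1)x}\alpha_{t_2}\overline{\alpha}_{t_1} = e^{i\phi(x)}$ has phase $\phi(x) = (t_2-t_1)(x+s) + [\psi(t_2) - \psi(t_1)]$, linear in $x$ with slope $t_2-t_1$. I would choose $R \subset I_{t_2}$ as the set on which both (a) $e^{i\phi(x)}$ lies within a small fixed angle $\eta < \pi/4$ of $+1$, and (b) $|x - s| < |\varepsilon_1|/(2(t_2-t_1))$, so that $|\varepsilon_1(x)| \geq |\varepsilon_1|/2$. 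A pigeonhole argument over the arcs arising in~(a), combined with the fact that the $H_s$- and $\sigma$-interval assumptions lower-bound $|\varepsilon_1|$ by a constant multiple of $\int_{t_1}^{t_2}|f|$, yields $|R| > D/t_2$ for some $D > 0$ depending only on $A$. On $R$ the phase factor is close to $+1$ and the perturbation $1 + i\varepsilon_1(x)\coth[2t_1 y_1]$ lies in the right half-plane with argument equal to $\varepsilon_1(x)\coth[2t_1 y_1] + O(\varepsilon_1^2)$, so that
\begin{equation*}
|\arg a_{t_1\to t_2}(x)| \geq D|\varepsilon_1| + o(|\varepsilon_1| + |\varepsilon_2|) \quad \text{on } R,
\end{equation*}
using $\coth[2t_1 y_1] \geq 1$.

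The principal obstacle will be matching the principal branch produced by the above calculation with the continuous branch implicit in the Hilbert-transform estimate~\eqref{misc55}. Since the two branches differ only by an integer multiple of $2\pi$, which can only enlarge $|\arg a_{t_1\to t_2}|$, the lower bound transfers to either convention and the conclusion of the lemma is unaffected.
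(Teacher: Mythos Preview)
Your pigeonhole step cannot work. The phase $\phi(x)=(t_2-t_1)(x+s)+[\psi(t_2)-\psi(t_1)]$ has slope $t_2-t_1$; over $I_{t_2}$, whose length is $20/t_2$, it varies by at most $20(t_2-t_1)/t_2$. Since $t_2-t_1\le 1/(|s|+1)$ while $t_2\to\infty$, this variation is $o(1)$: the phase is essentially constant on $I_{t_2}$, so there is no pigeonhole over arcs. If $e^{i\phi(s)}$ is not already within your angle $\eta$ of $+1$, the set satisfying condition~(a) is empty. A related issue is that the $\alpha_{t_k}$ in Theorem~\ref{Lemma_10} are \emph{not} the $\alpha(s,t)$ of Lemma~\ref{Lemma_7}; they have been modified via Lemma~\ref{Lemma_9} to force the exact equality~(iii), so your formula $\alpha_{t_k}=e^{i[st_k+\psi(t_k)]}$ and the resulting expression for $\phi$ are not justified. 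Finally, replacing property~(iii) by property~(i) at a generic $x$ costs an additive $o(1)$ in $a_{t_1\to t_2}(x)$, hence an $o(1)$ in its argument; there is no reason this is $o(|\varepsilon_1|+|\varepsilon_2|)$.

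The paper's argument is structurally different. It computes $\arg\bigl(e^{i(t_2-t_1)s}\alpha_{t_2}\overline{\alpha}_{t_1}\bigr)$ directly from~(iii) and the differential equation for $\arg E$, obtaining $-\varepsilon_1-\int_{t_1}^{t_2}f(t)\sin(2\phi(t,s))\,dt-\eta(s)$: the phase factor contributes a $-\varepsilon_1$ that partially cancels your $\varepsilon_1\coth[2v]$, leaving the leading term $(\coth[2v]-1)\varepsilon_1$ together with an oscillatory remainder of the same order. That remainder can cancel the leading term at a single point, and is controlled not by adjusting the phase but by an averaging argument: one re-applies Theorem~\ref{Lemma_10} at each $u\in I_{t_2}$ (same zeros $\xi_k,\tilde\xi_k$, new data $x_{t_k}^u,y_{t_k}^u$, hence no $o(1)$ loss), and then integrates the remainder over periods in $u$, using that $\sin(2\phi(t,\cdot))$ and $\eta(t_k,\cdot)$ are, up to $o(1)$, periodic with mean zero. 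This forces the remainder below $(\coth[2v]-1)\varepsilon_1^u/2$ on a set $R$ of measure $\asymp 1/t_2$, which is the mechanism you are missing.
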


	\begin{proof}
		By the last lemma we have the asymptotics
		
		$$a_{t_1\rightarrow t_2}(s)=e^{i(t_2-t_1)s}\alpha_{t_2}\overline\alpha_{t_1}\bigg(1+i\varepsilon_1\coth[2t_1y_1]+O(\varepsilon_1^2+\varepsilon_2^2)\bigg).$$ To estimate $\arg a_{t_1\rightarrow t_2}$ it is enough to work out the argument of each term seprerately by additivity. We have trivially that
		
		$$\arg e^{i(t_2-t_1)s}=(t_2-t_1)s.$$
		
		By Theorem \ref{Lemma_10} we have that 
		
		\begin{equation}\label{misc56}
		E(t_k,s)=\alpha_{t_k}\sin[t_k(iy_k-x_k)]=\alpha_{t_k}\sin[t_k(s-(x_{t_k}-iy_{t_k}))].
		\end{equation}
		
		\noindent
		We set $\phi(t,s)=\arg E(t,s).$ We know that $E(t,z)$ satisfies the differential equation 
		
		$$\frac{d}{dt} E(t,z)=-izE(t,z)+f(t)E^{\sharp}(t,z).$$
		
		\noindent
		Thus, we can use the same strategy as in the proof of Lemma \ref{Lemma_7} to calculate
		
		$$\phi(t,s)=-ts - \int_{0}^{t} f(t)\sin(2\phi(t,s)) dt.$$
		
		\noindent
		Hence, by \eqref{misc56}
		
		$$\arg \alpha_{t_k}= \phi(t_k,s)-\arg\sin[t_k(iy_k-x_k)]$$
		
		\noindent
		By direct computations one shows that
		
		$$\arg\sin[t_k(iy_k-x_k)]=\arg\sin[t_k(s-(x_{t_k}-iy_{t_k}))]=t_k(s-x_k)-\frac{\pi}{2}+\eta(t_k,s),$$
		
		\noindent
		where $\eta(t_k,\cdot)$ is a $\pi/t_k-$periodic function whose integral over each period vanishes. Since $(t_1,t_2)$ is an $H_s-$interval we have the bound following bound on $\R$
		for the difference function $\eta(s)=\eta(t_2,s)-\eta(t_1,s)$
		
		$$|\eta(s)|\ll |\varepsilon_1|+|\varepsilon_2| \ll |\varepsilon_1,$$
		
		\noindent
		where the implicit constant depends only on $A.$ Combining everything, we obtain
		
		$$\arg a_{t_1 \rightarrow t_2}(s)=(t_2-t_1)s+(\phi(t_2,s)-\phi(t_1,s))-\eta(s)-\varepsilon_1+\varepsilon_1\coth[2v]+O(\varepsilon_1^2+\varepsilon_2^2)$$
		$$=(\coth[2v]-1)\varepsilon_1-\int_{t_1}^{t_2} f(t) \sin(2\phi(t,s))dt-\eta(s) + O(\varepsilon_1^2+\varepsilon_2^2).$$ 
		
		We now need to make sure that the oscillating term does not cancel the linear term in $\varepsilon_1$ completely. Since by assumption $2v>A>0,$ we have $\coth[2v]-1>0.$ Suppose that $\varepsilon_1,\varepsilon_2$ and $\int_{t_1}^{t_2} f(t) dt$ are all positive. We can treat other cases similarly. Now as in the upcoming proof of Lemma \ref{claim_2}, using that $(t_1,t_2)$ is an $H_s-$interval and a $\sigma-$interval we can show that 
		
		\begin{equation}\label{misc58}
		\int_{t_1}^{t_2}|f(t)|dt \ll \int_{t_1}^{t_2} f(t)dt \ll \varepsilon_1.
		\end{equation}
		
		\noindent
		Together with the previously established bound $|\eta(s)|\ll \varepsilon_1$ we find that
		
		\begin{equation}\label{misc57}
		\bigg|\int_{t_1}^{t_2} f(t)\sin(2\phi(t,s))dt+\eta(s)\bigg| \ll \varepsilon_1.
		\end{equation}
		
		We note that Theorem \ref{Lemma_10} can be applied to any point $u$ in $I_{t_2}$ with the same zeros $\xi_k$ and $\tilde{\xi}_k$ of $E(t,z)$ and $\tilde{E}(t,z),$ respectively. We denote by $x_{t_k}^u, \tilde{x}_{t_k}^u$ and $y_{t_k}^u$ the corresponding approximating zeros from the Theorem at the point $u.$ Thus, by the very same calculations with the point $u$ instead of $s$ we find again that 
		
		$$\arg a_{t_1 \rightarrow t_2}(u)=(\coth[2v]-1)\varepsilon_1^u-\int_{t_1}^{t_2} f(t) \sin(2\phi(t,u))dt-\eta(u) + O((\varepsilon_1^u)^2+(\varepsilon_2^u)^2).$$ 
		
		Our goal is now to show that the left hand side of \eqref{misc57} is less than $(\coth[2v]-1)\varepsilon_1^u/2$ on a large subset of $I_{t_2}(s).$ Due to Corollary $\ref{corollary_5}$ (i) we have for each fixed $t\in[t_1,t_2]$ that 
		
		$$\sin(2\phi(t,u))=\sin(2\arg E(t,u))$$
		
		\noindent
		differs from a $\pi/t_k-$periodic function of $u$ with vanishing integral over each period by at most $o(1).$ Recall that the integral of $\eta(t_k,\cdot)$ vanishes as well over each period. Hence,
		
		$$\bigg|\int_{s-\frac{\pi}{t_2}}^{s+\frac{\pi}{t_2}}\int_{x-\frac{\pi}{t_1}}^{x+\frac{\pi}{t_1}}\bigg[\int_{t_1}^{t_2} f(t)\sin(2\phi(t,u))dt+\eta(u)\bigg]dudx\bigg|$$
		$$=\bigg|\int_{s-\frac{\pi}{t_2}}^{s+\frac{\pi}{t_2}}\int_{x-\frac{\pi}{t_1}}^{x+\frac{\pi}{t_1}}\bigg[\int_{t_1}^{t_2} f(t)\sin(2\phi(t,u))dt+\eta(t_2,u)-\eta(t_1,u)\bigg]dudx\bigg|$$
		$$=\bigg|\int_{s-\frac{\pi}{t_2}}^{s+\frac{\pi}{t_2}}\int_{x-\frac{\pi}{t_1}}^{x+\frac{\pi}{t_1}}\bigg[\int_{t_1}^{t_2} f(t)\sin(2\phi(t,u))dt\bigg]dudx\bigg|$$
		
		\noindent
		By Fubini's Theorem we can change the order of integration to arrive at
		
		$$=\bigg|\int_{s-\frac{\pi}{t_2}}^{s+\frac{\pi}{t_2}}\int_{t_1}^{t_2}f(t)\bigg[\int_{(x-\frac{\pi}{t},x+\frac{\pi}{t})}\sin(2\phi(t,u))du+\int_{(x-\frac{\pi}{t_1},x+\frac{\pi}{t_1})\setminus (x-\frac{\pi}{t},x+\frac{\pi}{t})}\sin(2\phi(t,u))du\bigg]dtdx\bigg|$$
		
		\noindent
		Since $\sin(2\phi(t,u))=\sin(2\arg E(t,u))$differs from a $\pi/t_k-$periodic function of $u$ with vanishing integral over each period by at most $o(1)$ and due to \eqref{misc58} we arrive at
		
		$$\ll \varepsilon_1 \frac{o(1)}{t_1t_2}+\varepsilon_1 \frac{t_2-t_1}{t_1t_2^2}.$$
		
		Thus, the first integral is for large enough $t_1$ less than $\varepsilon_1 (\coth[2v]-1)/t_2^2.$ Since by \eqref{misc57}, which is true for any point $u$ in the interval $I_{t_2}(s)$ we get
		
		$$\bigg|\int_{t_1}^{t_2} f(t)\sin(2\phi(t,u))dt+\eta(u)\bigg| \ll \varepsilon_1^u,$$
		
		\noindent
		combining both estimates shows that the left hand side must be less than $\varepsilon_1^u(\coth[2v]-1)/2$ on a set 
		
		$$R \subset \bigg(s-\frac{\pi(t_1+t_2)}{t_1t_2},s+\frac{\pi(t_1+t_2)}{t_1t_2}\bigg), \quad |R| \asymp \frac{1}{t_2}.$$
		
		\noindent
		Therefore, on $R$ we have the lower bound
		
		\begin{equation}\label{misc59}
		\arg a_{t_1 \rightarrow t_2}(u) \geq \frac{\coth[2v]-1}{2}\varepsilon_1^u+O((\varepsilon_1^u)^2+(\varepsilon_2^u)^2).
		\end{equation}
		
		In a last step we will have to make this lower bound uniform over $R,$ so the constant does only depend on $A$ as desired. To that end, note that for large enough $A$ the box $Q(s,A/t)$ contains multiple zeros of the approximating functions and, switching to the next zero if necessary we can assume with out loss of generality that $x_{t_k}^u-u>x_{t_k}-s.$ Indeed, this just boils down to the fact that $\sin(tz)$ whose zeros differ by exactly $k\pi/t,$ for $k\in \Z.$ If now $A$ is large, a lot of these zeros are in the box and you can switch to any zeros with out changing the approximating function. By Theorem \ref{Lemma_10}
		
		$$(x_{t_2}^u-iy_{t_2}^u)-(x_{t_1}^u-iy_{t_1}^u)=\xi_2-\xi_1+o(\xi_2-\xi_1),$$
		$$(\tilde{x}_{t_2}^u-iy_{t_2}^u)-(\tilde{x}_{t_1}^u-iy_{t_1}^u)=\tilde{\xi}_2-\tilde{\xi}_1+o(\xi_2-\xi_1),$$
		
		\noindent
		Again these relations do hold for all choices of the zeros of the approximating functions in the box $Q(s,A/t).$ This follows from the proof of Theorem \ref{Lemma_10}, more precisely \eqref{misc51}. Therefore, there exists $c>0$ such that 
		
		$$\varepsilon_1^u=t_2(x_{t_2}^u-u)-t_1(x_{t_1}^u-u)=$$
		$$t_2(x_{t_2}-s+c)-t_1{x_{t_1}-s+c}+o(\xi_2-\xi_1)>\varepsilon_1+o(|\varepsilon_1|+|\varepsilon_2|).$$
		
		Combining this estimate with \eqref{misc59} gives uniform on $R$
		
		$$|\arg a_{t_1\rightarrow t_2}(u)| > D\varepsilon_1+o(|\varepsilon_1|+|\varepsilon_2|),$$
		
		\noindent
		with a constant $D>0,$ depending only on $A.$
	\end{proof}

	We can further refine this bound by using Theorem \ref{Lemma_10} which tells us that the zeros of the approximating functions move like the zeros of $E(t,z).$ Indeed, if $\xi_{t_2}$ and $\xi_{t_1}$ are the zeros like before, then by \eqref{derivative_zero} we have

	$$\xi_2-\xi_1=\int_{t_1}^{t_2} \frac{f(t)}{\theta_z(t,\overline{\xi}(t))} dt.$$
	
	Moreover, by Lemma \ref{formulas_mif} (i) we know that on $(t_1,t_2),$ we have $|\theta_z(t,\overline{\xi}_t)|\asymp t.$ Hence, we expect a lower bound in terms of $||f||_{L^1(t_1,t_2)}.$
	
	\begin{lemma}\label{claim_2}
	 	In the situation of Lemma \ref{Lemma_11} set $J=(t_1,t_2)$ and suppose that $J$ is an $H_s-$interval. In particular, by assumption of Lemma \ref{Lemma_11} $J$ is a $\sigma-$interval. Then there exists a constant $D>0,$ depending only on $A,$ such that for large enough $t_1,$ we have on $R$ the estimate
	 	
	 	$$|\arg a_{t_1\rightarrow t_2}(s)|\geq D\int_{t_1}^{t_2} |f(t)|dt$$ 
	\end{lemma}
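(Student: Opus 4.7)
The plan is to combine Lemma \ref{Lemma_11} with precise estimates on $\varepsilon_1$ and $\varepsilon_2$ derived from the dynamics of the zero. Lemma \ref{Lemma_11} already gives, on $R\subset I_{t_2}$,
\begin{equation*}
|\arg a_{t_1\rightarrow t_2}(u)|\geq D|\varepsilon_1|+o(|\varepsilon_1|+|\varepsilon_2|)
\end{equation*}
with $D$ depending only on $A$. So it suffices to establish
\begin{equation*}
|\varepsilon_1|\geq c_A\int_{t_1}^{t_2}|f(t)|\,dt\qquad\text{and}\qquad |\varepsilon_2|\leq C_A\int_{t_1}^{t_2}|f(t)|\,dt,
\end{equation*}
with $c_A,C_A>0$ depending only on $A$; for $t_1$ large enough the $o$-term is then swallowed by the main term and the lemma follows with a smaller constant still depending only on $A$.

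For these estimates I read off the equation of motion of the zero $\xi(t)=x(t)-iy(t)$ of $E(t,\cdot)$. Combining \eqref{derivative_zero} applied at $\overline{\xi}(t)$ with the asymptotics of $\theta_z(t,\overline{\xi}(t))$ from Lemma \ref{formulas_mif}(ii) and the representation $\alpha(s,t)=e^{i(st+\psi(t))}$ coming from Lemma \ref{Lemma_7}, a direct computation yields
\begin{align*}
x'(t)&=(1+o(1))\,\frac{f(t)\sinh[2ty(t)]}{t}\,\sin[2(st+\psi(t))],\\
y'(t)&=-(1+o(1))\,\frac{f(t)\sinh[2ty(t)]}{t}\,\cos[2(st+\psi(t))].
\end{align*}
Since $(t_1,t_2)\subset T_1(s,C(t))$ and the zero stays in $Q(s,A/t)$, one has $1<ty(t)\leq A$ throughout, so $\sinh[2ty(t)]$ is pinched between $\sinh 2$ and $\sinh 2A$.

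The two interval hypotheses now enter. On the $H_s$-interval $J=(t_1,t_2)$, $|\sin[2(st+\psi(t))]|\geq 1/200$; by continuity of $\psi$ and the intermediate value theorem this sine keeps a constant sign on $J$. Since $J$ is also a $\sigma$-interval with $\sigma<1/100$, $f$ has a single sign on a subset of $L^1$-mass at least $(1-\sigma)\int_J|f|$. Combined with the $A$-uniform two-sided bounds on $\sinh[2ty(t)]$, the "wrong-sign" portion of the integrand is only an $O(\sigma)$ absolute fraction of the total, so
\begin{equation*}
\Big|\int_{t_1}^{t_2} tx'(t)\,dt\Big|=\Big|(1+o(1))\int_{t_1}^{t_2} f(t)\sinh[2ty(t)]\sin[2(st+\psi(t))]\,dt\Big|\geq c_A\int_{t_1}^{t_2}|f(t)|\,dt,
\end{equation*}
while the companion bound $|\int ty'(t)\,dt|\leq C_A\int|f|$ follows at once from $|\cos|\leq 1$.

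Finally I transfer these integral bounds to $\varepsilon_1$ and $\varepsilon_2$. Writing $\varepsilon_1=\int_{t_1}^{t_2}[(x(t)-s)+tx'(t)]\,dt$ plus the corrections comparing the approximating zeros $x_t-iy_t$ of Theorem \ref{Lemma_10} with the true zero $\xi(t)$, the estimate $|\xi_2-\xi_1|\lesssim_A \int|f|/t_1$ (which follows from \eqref{derivative_zero} and Lemma \ref{formulas_mif}(ii)) together with Theorem \ref{Lemma_10}(iv) makes those corrections $O(\psi(t_1)\int|f|/t_1)$, hence negligible compared with $c_A\int|f|$ as $t_1\to\infty$. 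The auxiliary term satisfies $|\int(x(t)-s)\,dt|\leq 3C\log(t_2/t_1)$, which also tends to $0$ as $t_1\to\infty$ because $t_2-t_1\leq 1/(|s|+1)$ is bounded. The main obstacle is exactly this: $\int(x-s)\,dt$ and the Theorem \ref{Lemma_10}(iv) corrections carry no controlled sign and could a priori cancel the main contribution, but the "for large enough $t_1$" clause forces both to be arbitrarily small compared with the fixed lower bound $c_A\int|f|$, yielding $|\varepsilon_1|\geq \tfrac12 c_A\int|f|$ and analogously $|\varepsilon_2|\leq 2C_A\int|f|$. Feeding these into Lemma \ref{Lemma_11} and absorbing the $o$-term completes the proof.
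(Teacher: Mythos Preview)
Your overall strategy matches the paper's: bound $|\varepsilon_1|$ from below and $|\varepsilon_2|$ from above by multiples of $\int_J|f|$, then feed into Lemma~\ref{Lemma_11}. Your derivation of $x'(t),y'(t)$ from \eqref{derivative_zero}, Lemma~\ref{formulas_mif}(ii), and Lemma~\ref{Lemma_7} is correct, and the use of the $H_s$-condition (constant sign of $\sin[2(st+\psi)]$) together with the $\sigma$-condition to get $\bigl|\int_J tx'(t)\,dt\bigr|\geq c_A\int_J|f|$ is essentially the paper's argument in different clothing.

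The gap is in the last step, where you pass from $\int_J tx'(t)\,dt$ to $\varepsilon_1$. You write $\varepsilon_1=\int_J tx'(t)\,dt+\int_J(x(t)-s)\,dt+\text{(corrections)}$ and then assert that ``the `for large enough $t_1$' clause forces [the auxiliary term] to be arbitrarily small compared with the fixed lower bound $c_A\int|f|$.'' But $\int_J|f|$ is \emph{not} a fixed lower bound: in the application (the intervals $\mathcal{T}_l$ of Lemma~\ref{collection_horizontal_intervals}) no positive lower bound on $\int_{\mathcal{T}_l}|f|$ is available, and nothing prevents $\int_J|f|$ from being much smaller than $(t_2-t_1)/t_1\asymp\bigl|\int_J(x-s)\,dt\bigr|$. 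In that regime your lower bound on $|\varepsilon_1|$ collapses.

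The paper closes this gap not by making the auxiliary term small, but by making it \emph{nonnegative}. Since $A>8\pi$, the box $Q(s,A/t)$ contains several zeros of the approximating $\sin$, differing by $\pi/t$; one is free to select the branch with $x_{t_2}>s+1/t_1$. Then
\[
\varepsilon_1=(t_2-t_1)(x_{t_2}-s)+t_1(x_{t_2}-x_{t_1})\geq \frac{t_2-t_1}{t_1}+D'\int_J|f|,
\]
while $|\varepsilon_2|\leq D_3\bigl(\tfrac{t_2-t_1}{t_1}+\int_J|f|\bigr)$. Now $|\varepsilon_1|+|\varepsilon_2|\asymp\varepsilon_1$, so the $o(|\varepsilon_1|+|\varepsilon_2|)$ in Lemma~\ref{Lemma_11} is absorbed by $D\varepsilon_1$ for large $t_1$, and $\varepsilon_1\geq D'\int_J|f|$ gives the conclusion. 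Without this choice of branch your argument does not go through.
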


	\begin{proof}
		Let $\xi_t$ be the zero $E(t,z)$ from Theorem \ref{Lemma_10}. By Lemma \ref{formulas_mif} we have $|\theta_z(t,\overline{\xi}_t)|\asymp t$ for all $t\in J.$ Furthermore, by Lemma \ref{Lemma_7}
		
		$$\frac{\pi}{2}+2\arg \alpha(s,t)=-\arg\theta_z(t,\overline{\xi}(t)).$$
		
		Moreover, since $J$ is a $H_s-$interval, $|\Im \alpha^2(s,t)|>\frac{1}{200}.$ Since $|\alpha(s,t)|=1$ for all $t\in J,$ we have $|\arg \alpha^2(s,t)|\in [\frac{1}{200},\pi-\frac{1}{200}].$ Hence, $|\Im e^{-i\arg \alpha^2(s,t)}|\geq \frac{1}{300}.$ Thus, for large enough $t_1,$
		
		$$|\Re \theta_z(t,\overline{\xi}_t)| = |\theta_z(t,\overline{\xi}_t)||\Re e^{i\arg\theta_z(t,\overline{\xi}_t)}|=|\theta_z(t,\overline{\xi}_t)||\Re(ie^{i\arg \alpha^2(s,t)})|$$
		
		$$=|\theta_z(t,\overline{\xi}_t)||\Im e^{i\arg \alpha^2(s,t)}| \geq \frac{|\theta_z(t,\overline{\xi}_t)|}{300}.$$
		
		By continuity of $\theta_z$ this implies that for all $t\in J$ either 
		
		\begin{equation}\label{misc46}
		\Re \theta_z(t,\overline{\xi}_t) \geq \frac{|\theta_z(t,\overline{\xi}_t)|}{300} \text{ or } \Re \theta_z(t,\overline{\xi}_t) \leq -\frac{|\theta_z(t,\overline{\xi}_t)|}{300}.
		\end{equation}
		
		Assume that 
		
		\begin{equation}\label{misc37}
		\Re (\xi_2-\xi_1)= \Re \int_{J} \frac{f(t)}{\theta_z(t,\overline{\xi}_t)} dt \geq 0.
		\end{equation}
		
		The case where $\Re (\xi_{t_2}-\xi_{t_1})<0$ can be proved similarly. Since $J$ is a $H_s-$interval and $\sigma-$interval we find that
		
		\begin{equation}\label{misc36}
		|\xi_2-\xi_1| \ll |\Re (\xi_2-\xi_1)|.
		\end{equation}
		
		\noindent
		Indeed, 
		
		$$|\xi_2-\xi_1|=\bigg|\int_{J} \frac{f(t)}{\theta_z(t,\overline{\xi}_t)} dt\bigg|=\bigg|\int_{J} \frac{f(t)\overline{\theta_{z}(t,\overline{\xi}_t)}}{|\theta_z(t,\overline{\xi}_t)|^2} dt\bigg|=\bigg|\int_{J} \frac{f(t) \Re \theta_{z}(t,\overline{\xi}_t)}{|\theta_{z}(t,\overline{\xi}_t)|}dt - i \int_{J} \frac{f(t) \Im \theta_{z}(t,\overline{\xi}_t)}{|\theta_{z}(t,\overline{\xi}_t)|^2}dt\bigg|$$
		
		$$=|\Re(\xi_2-\xi_1)-i \Im(\xi_2-\xi_1)|\leq |\Re(\xi_2-\xi_1)|+|\Im(\xi_2-\xi_1)|.$$
		
		\noindent
		We thus have to show that 
		
		$$|\Im(\xi_2-\xi_1)| \ll |\Re(\xi_2-\xi_1)|.$$
		
		\noindent
		To see this, recall the following facts. By \eqref{misc46} we have that $\Re \theta_z(t,\overline{\xi}_t)$ has constant sign and $|\Im \theta_z(t,\overline{\xi}_t)|\leq |\theta_z(t,\overline{\xi}_t)|\leq 300 |\Re \theta_z(t,\overline{\xi}_t)|$. Furthermore, by Lemma \ref{formulas_mif} we have that $|\theta_z(t,\overline{\xi}_t)|\asymp t$ on $J$ and from the assumption $t_2-t_1\leq 1,$ we see immediately that $\frac{t_2}{t_1}\leq 2.$ Thus,
		
		$$|\Im(\xi_2-\xi_1)|=\bigg|\int_{J} \frac{f(t) \Im \theta_z(t,\overline{\xi}_t)}{|\theta_z(t,\overline{\xi}_t)|^2}dt\bigg|\leq \int_{J} \frac{|f(t)||\Im \theta_z(t,\overline{\xi}_t)|}{|\theta_z(t,\overline{\xi}_t)|^2}dt\leq \int_{J} \frac{|f(t)|}{|\theta_z(t,\overline{\xi}_t)|}dt$$
		
		$$\ll \frac{1}{t_1} \int_{J} |f(t)| dt \ll \frac{2(1-\sigma)}{t_2} \bigg|\int_{J} f(t) dt\bigg|\ll 2(1-\sigma) \bigg|\int_{J} \frac{f(t)}{|\theta_z(t,\overline{\xi}_t)|} dt\bigg|$$
		
		$$= 2(1-\sigma) \bigg|\int_{J} \frac{f(t)|\theta_z(t,\overline{\xi}_t)|}{|\theta_z(t,\overline{\xi}_t)|^2} dt\bigg|=600(1-\sigma) \bigg|\int_{J} \frac{f(t) \Re \theta_z(t,\overline{\xi}_t)}{|\theta_z(t,\overline{\xi}_t)|} dt\bigg| \ll |\Re(\xi_2-\xi_1)|.$$
		
		\noindent
		Thus, as claimed		
		
		$$|\xi_2-\xi_1| \ll |\Re (\xi_2-\xi_1)|.$$
				
		By Theorem \ref{Lemma_10} (iv), \eqref{misc37} and \eqref{misc36}
		
		$$x_{t_2}-x_{t_1}=\Re(\xi_2-\xi_1)+(x_{t_2}-x_{t_1}-\Re(\xi_2-\xi_1))\leq |\Re(\xi_2-\xi_1)|-|(x_{t_2}-x_{t_1}-\Re(\xi_2-\xi_1))|$$
		\begin{equation}\label{misc19}
		\geq |\Re(\xi_2-\xi_1)|-\psi(t_1)|\xi_2-\xi_1|\geq |\Re(\xi_2-\xi_1)|-D\psi(t_1)|\Re(\xi_2-\xi_1)|\geq D_1 |\Re(\xi_2-\xi_1)|
		\end{equation}
		
		\noindent
		for some positive constants $D$ and $D_1.$
		
		Since in Theorem \ref{Lemma_10} we have chosen $A>8\pi$ the box $Q(s,A/t_1)$ contains at least one zero $x_{t_2}$ of the approximating function with $x_{t_2}>s+\frac{1}{t_1}.$ Recall aswell that $t_2-t_1\leq \frac{1}{|s|+1}\leq 1.$Thus, using that $\Re \theta_z(t,\overline{\xi}_t)$ has constant sign we find
		
		$$(t_2(x_{t_2}-s)-t_1(x_{t_1}-s))=(t_2-t_1)(x_{t_2}-s)+t_1(x_{t_2}-x_{t_1})$$
		$$\geq \frac{t_2-t_1}{t_1}+t_1(x_{t_2}-x_{t_1})\overset{\eqref{misc19}}{\geq} \frac{t_2-t_1}{t_1} D_1|\Re t_1(\xi_2-\xi_1)|$$
		$$=\frac{t_2-t_1}{t_1} + D_1t_1\bigg|\Re\int_{J} \frac{f(t)}{\theta_z(t,\overline{\xi}_t)}dt\bigg|\geq \frac{t_2-t_1}{t_1} + \frac{D_1t_1}{300} \bigg|\int_{J} \frac{f(t)}{|\theta_z(t,\overline{\xi}_t)|}dt\bigg|$$
		$$\geq \frac{t_2-t_1}{t_1} + \frac{D_1D_2t_1}{300t_2}\bigg|\int_{J} f(t)dt\bigg|\geq \frac{t_2-t_1}{t_1} + \frac{D_1D_2(1-\sigma)}{600}\int_{J}|f(t)|dt.$$
		
		Furthermore, we now show that there is an absolute constant $D_3>0$ such that
		
		\begin{equation}\label{misc49}
		|t_2y_{t_2}-t_1y_{t_1}| < D_3\bigg(\frac{t_2-t_1}{t_1} \int_J |f(t)| dt\bigg).
		\end{equation}

		\noindent
		We separate two cases. First assume $t_2y_{t_2}\leq t_1y_{t_1}.$ Then, since $t_2>t_1$
		
		$$|t_2y_{t_2}-t_1y_{t_1}|=t_1y_{t_1}-t_2y_{t_2} \leq t_2y_{t_1}-t_2y_{t_2}= t_2 \int_{t_1}^{t_2} \frac{f(t)}{\theta_{z}(t,\xi_t)} dt \leq D_3 \int_{J} |f(t)|dt.$$
		
		\noindent
		Now assume that $t_2y_{t_2}\geq t_1y_{t_1}.$ Then,
		
		$$|t_2y_{t_2}-t_1y_{t_1}|=t_2y_{t_2}-t_1y_{t_1}=t_2y_{t_2}-{t_2-(t_2-t_1)}y_{t_1}\leq D_3\bigg(\frac{t_2-t_1}{t_1}+\int_{J} |f(t)|dt\bigg).$$
		
		\noindent
		Combining everything, gives as claimed
		
		$$|\arg a_{t_1\rightarrow t_2}(s)|\geq D\int_{t_1}^{t_2} |f(t)|dt$$ 
		
		on $R$ for an absolute constant $D>0,$ depending only on $A.$

	\end{proof}

	Since $|S_H^n|\leq |S|<\infty$ we can consider a finite collection of intverals $I_1,\dots, I_N$ of size $|I_k|=4C2^{-n},$ centered at $s_1,\dots , s_N\in S_H^n$ such that each point is covered by at most two intervals and $I_1,\dots, I_N$ cover at least one half of $S_H^n.$ We consider the intervals $L_{s_k}^n$ for $k=1,\dots, N$ as defined before. 
	
	\begin{lemma}\label{collection_horizontal_intervals}
		There exists a finite collection of pairwise disjoint intervals $\mathcal{T}_1,\dots \mathcal{T}_M, \mathcal{T}_l=(\tau_1^l,\tau_2^l)$ satisfying the following properties.
		
		\begin{itemize}
			\item [(i)] Each $\mathcal{T}_l$ belongs to $HL_{s_k}$ for at least one $k=1,\dots, N.$
			\item [(ii)] All $\mathcal{T}_l$ are $\sigma-$intervals.
			\item [(iii)] For each $k=1,\dots,N$ 
			
			$$\int_{\bigcup_{\mathcal{T}_l \in HL_{s_k}} \mathcal{T}_l} |f(t)| dt > \frac{\Delta}{500}.$$
			
			\item[(iv)] For each $\mathcal{T}_l$ we have 
		
			$$\int_{\mathcal{T}_l} |f(t)| dt < \frac{1}{100}.$$
		\end{itemize}
	\end{lemma}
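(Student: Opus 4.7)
The plan proceeds in two stages: first, for each $k$, produce a disjoint family of small $\sigma$-subintervals of $HL_{s_k}$ carrying almost the full $|f|$-mass of $A_k:=\bigcup_{I_s\in HL_{s_k}} I_s$; then a one-dimensional Vitali extraction applied to the union of these families yields a single pairwise disjoint collection, and a bookkeeping argument verifies property (iii).

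\textbf{Stage 1 (per-$k$ fine cover).} Fix $k$. The definition of $S_H^n$ together with Lemma \ref{upper_bounds} gives
$$\int_{A_k}|f|\,dt > \frac{99}{100}\int_{L_{s_k}^n}|f|\,dt > \frac{99}{100}\Delta.$$
At almost every $t\in A_k$ with $f(t)\ne 0$, Lemma \ref{sigma-intervals} ensures that every sufficiently small interval centered at $t$ is a $\sigma$-interval; shrinking further, we may insist that it lies inside the unique $H_{s_k}$-interval through $t$, has $|f|$-mass less than $\tfrac{1}{100}$, and, by choosing $t$ to be a Lebesgue point of $|f|$ and avoiding the countable set of partition-boundary points of all the $L_{s_j}^n$, that $|f|\,dt$ on its triple differs from a multiple of Lebesgue measure by at most a factor $1+\varepsilon$. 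A standard Vitali selection inside $A_k$ then produces a finite disjoint family $\mathcal{F}_k\subset HL_{s_k}$ of such intervals with $\int_{\bigcup \mathcal{F}_k}|f|\,dt > \Delta/2$.

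\textbf{Stage 2 (global disjoint selection).} Pool $\mathcal{F}=\bigcup_k \mathcal{F}_k$ and apply the one-dimensional Vitali covering lemma with intervals ordered by length: extract a finite pairwise disjoint subfamily $\mathcal{T}=\{\mathcal{T}_1,\dots,\mathcal{T}_M\}\subset\mathcal{F}$ such that every $J\in\mathcal{F}$ is contained in $3\mathcal{T}_l$ for some $l$ with $|\mathcal{T}_l|\ge|J|$. Each $\mathcal{T}_l$ inherits from $\mathcal{F}$ the $\sigma$-interval property, the mass bound below $\tfrac{1}{100}$, and containment in some $H_{s_{k_l}}$-interval, immediately giving properties (ii), (iv), and part of (i).

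\textbf{Main obstacle (property (iii)).} Fix $k$. The chain $A_k\subset\bigcup_{J\in\mathcal{F}_k}J\subset\bigcup_l 3\mathcal{T}_l$ holds, but some $\mathcal{T}_l$ may sit outside $A_k$ while $3\mathcal{T}_l$ meets $A_k$, in which case $\mathcal{T}_l\notin HL_{s_k}$. The boundary-avoidance arranged in Stage 1 for all indices simultaneously rules this out for every $\mathcal{T}_l$ meeting $A_k$ in positive Lebesgue measure, so
$$A_k\subset\bigcup\{3\mathcal{T}_l:\mathcal{T}_l\in HL_{s_k}\}$$
up to a set of Lebesgue (and hence $|f|\,dt$) measure zero. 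The Lebesgue-point near-constancy arranged in Stage 1 gives the weighted Vitali estimate $\int_{3\mathcal{T}_l}|f|\,dt\le(3+\varepsilon)\int_{\mathcal{T}_l}|f|\,dt$, and summing over the relevant $\mathcal{T}_l$ yields
$$\int_{\bigcup\{\mathcal{T}_l\in HL_{s_k}\}}|f|\,dt \ge \frac{1}{3+\varepsilon}\int_{A_k}|f|\,dt > \frac{\Delta}{500},$$
with substantial room to spare. The delicate point is the Stage 1 preparation, which must secure the $\sigma$-interval property, the mass cap, simultaneous boundary avoidance for all $N$ indices, and Lebesgue-point quasi-constancy on the triple, all at once; once this is set up the rest is routine Vitali bookkeeping.
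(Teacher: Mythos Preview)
Your two-stage Vitali argument is correct and is a genuine fleshing-out of what the paper leaves as a two-sentence sketch. The paper's own proof merely invokes Lemma~\ref{sigma-intervals} (small intervals centred at Lebesgue points of $f$ are $\sigma$-intervals) together with the hereditary property ``a subinterval of an $H_s$-interval is an $H_s$-interval,'' and declares the result immediate; it does not explain how to obtain \emph{pairwise disjointness} across the $N$ different indices $k$ while keeping property~(iii) for every $k$. Your Vitali selection is exactly the missing mechanism.

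Two remarks on your write-up. First, the chain you wrote as $A_k\subset\bigcup_{J\in\mathcal F_k}J\subset\bigcup_l 3\mathcal T_l$ is reversed in the first inclusion: $\bigcup\mathcal F_k\subset A_k$, not the other way. This is harmless, since what you actually use is $\int_{\bigcup\mathcal F_k}|f|>\Delta/2$ together with $\bigcup\mathcal F_k\subset\bigcup\{3\mathcal T_l:\mathcal T_l\in HL_{s_k}\}$, and the weighted Vitali estimate then gives the bound with room to spare. Second, your ``boundary-avoidance'' step is legitimate but deserves one more sentence of justification: for each of the finitely many indices $j$ the partition boundary points of $L_{s_j}^n$ (together with the two endpoints of $L_{s_j}^n$) form a countable set, and since the partition pieces are open intervals, every $t$ outside this countable set lies in the interior of one piece for each $j$; intersecting these finitely many open neighbourhoods gives a positive radius at $t$, so a genuine Vitali cover by boundary-avoiding intervals exists. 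This is what makes your key implication ``$\mathcal T_l$ meets $A_k$ in positive measure $\Rightarrow$ $\mathcal T_l\subset$ some $H_{s_k}$-partition-piece $\Rightarrow$ $\mathcal T_l\in HL_{s_k}$'' go through, and it is precisely the point the paper's sketch suppresses.
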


\begin{proof}
	We can assume without loss of generality that $s_1,\dots,s_n$ are Lebesgue points of $f.$ Then, the claim follows immediately from the fact that all small enough intervals centered around $s_1,\dots, s_n$ are $\sigma-$intervals by Lemma \ref{sigma-intervals} and the fact that subintervals of $H_s-$intervals are $H_s-$intervals by definition.
\end{proof}
	
	Let $\mathcal{T}_1,\dots \mathcal{T}_M,\mathcal{T}_l=(\tau_1^l,\tau_2^l)$ be the finite collection of pairwise disjoint intervals from Lemma \ref{collection_horizontal_intervals}. Then,
	
	$$\sum_{l=1}^{M} ||\log |a_{\tau_1^l\rightarrow \tau_2^l}|||_{L^1(\R)} \overset{(1)}{\geq} \sum_{l=1}^{M} \frac{1}{2} \sum_{\substack{1\leq k \leq N :  \\ \mathcal{T}_l \in HL_{s_k}}} ||\log |a_{\tau_1^l\rightarrow \tau_2^l}|||_{L^1(I_{t_2}(s_k))}\overset{(2)}{\geq} \sum_{l=1}^{M} \frac{1}{2} \sum_{\substack{1\leq k \leq N :  \\ \mathcal{T}_l \in HL_{s_k}}} \frac{D^2}{\tau_2^l}\int_{\tau_1^l}^{\tau_2^l} |f(t)|dt.$$

	$$\overset{(3)}{\geq} \sum_{l=1}^{M} \sum_{\substack{1\leq k \leq N :  \\ \mathcal{T}_l \in HL_{s_k}}} \frac{D^2|I_k|}{32C} \int_{\mathcal{T}_l} |f(t)|dt\overset{(4)}{=} \sum_{k=1}^{N} \sum_{\substack{1\leq l \leq M :  \\ \mathcal{T}_l \in HL_{s_k}}} \frac{D^2|I_k|}{32C} \int_{\mathcal{T}_l} |f(t)|dt$$
	$$ \overset{(5)}{=}D^2\sum_{k=1}^{N} \frac{|I_k|}{32C} \int_{\bigcup_{\mathcal{T}_l \in HL_{s_k}} \mathcal{T}_l} |f(t)| dt \overset{(6)}{>} \frac{D^2\Delta}{16000C} \sum_{k=1}^{N} |I_k| \overset{(7)}{\geq} D^2\frac{\Delta}{32000C} |S_H^n|,$$
	
	\noindent
	where we used the following facts
	
	\begin{itemize}
		\item [(1)] By property $(i)$ of Lemma \ref{collection_horizontal_intervals} each $\mathcal{T}_l$ is a $H_{s_k}-$interval for at least one $1\leq k\leq N.$ Since $\mathcal{T}_l \subset L_s^n,$ and $|I_k|=4C2^{-n},$ we have $I_{t_2}(s_k)\subset I_k$ and since each point is covered by at most two of those intervals, this gives the factor $\frac{1}{2}.$ 
		
		\item[(2)] By property $(ii)$ of Lemma \ref{collection_horizontal_intervals} all $\mathcal{T}_l$ are $\sigma-$intervals. Hence, the conditions for Lemma \ref{claim_2} are satisfied and the inequality follows by applying that Lemma. Moreover, we used \eqref{misc51} and that $|R|\geq D'/\tau_2$ with $D'$ depending only on $A.$ Additionally, we assume without loss of generality that $D'>D,$ so we can write $DD'>D^2.$
		
		\item [(3)] Since by property $(i)$ of Lemma \ref{collection_horizontal_intervals} each $\mathcal{T}_l$ belongs to $HL_{s_k}$ for at least one $1\leq k \leq N.$ Thus, by definition $\mathcal{T}_l\subset L_{s_k}^n \subset [2^n,2^{n+2}].$ In particular, $\tau_2^l\leq2^{n+2}$ and thus
		
		$$\frac{D}{\tau_2^l} \geq \frac{D}{2^{n+2}}=\frac{D|I_k|}{16C}.$$
		
		\item [(4)] By changing the order of summation.
		
		\item [(5)] Since the intervals $\mathcal{T}_1 \dots, \mathcal{T}_M$ are pairwise disjoint.
		
		\item[(6)] By property (iii) of Lemma \ref{collection_horizontal_intervals}.
		
		\item [(7)] Since the intervals $I_1,\dots I_N$ cover at least one half of $S_H^n.$
		
	\end{itemize}

	Applying now the non-linear Parseval identity \eqref{non_linear_parseval_2} to $a_{\tau_1^l\rightarrow \tau_2^l}$ for all $l=1,\dots,M$ and using the property that the intervals $\mathcal{T}_l$ are pairwise disjoint and all contained in $[2^n,2^{n+2}]$ gives
	
	$$||f||_{L^2([2^n,2^{n+2}])}^2\geq \sum_{l=1}^{M} ||f||_{L^2(\mathcal{T}_l)}^2 = \sum_{l=1}^{M} ||\log|a_{\tau_1^l\rightarrow \tau_2^l}|||_{L^1(\R)}.$$
	
	Together with the last computation, we obtain the desired
	
	\begin{corollary}\label{claim_3}
		There exists a constant $D>0$ such that 
		
		$$D|S_H^n|<||f||_{L^2([2^n,2^{n+2}])}^2.$$
	\end{corollary}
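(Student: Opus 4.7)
The plan is to assemble the proof by combining three ingredients: the refined collection of intervals from Lemma \ref{collection_horizontal_intervals}, the pointwise lower bound on $|\arg a_{\tau_1 \to \tau_2}|$ from Lemma \ref{claim_2}, and the non-linear Parseval identity \eqref{non_linear_parseval_2}. The slogan is: each horizontal crossing of a zero through the box consumes a measurable chunk of $\|f\|_{L^2}^2$ whose size, up to multiplicative constants depending only on $A$, is comparable to the length of the corresponding base interval $I_k$ covering $S_H^n$.

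First I would apply Lemma \ref{collection_horizontal_intervals} to cover at least one half of $S_H^n$ by a finite family of intervals $I_1,\dots,I_N$ of length $4C 2^{-n}$ centered at points $s_k \in S_H^n$, with bounded overlap (at most two), and extract the pairwise disjoint $\sigma$-intervals $\mathcal{T}_1,\dots,\mathcal{T}_M$ that concentrate at least a $\Delta/500$-fraction of the $L^1$-mass of $f$ inside each $\bigcup_{\mathcal{T}_l \in HL_{s_k}} \mathcal{T}_l$. Then, for each $l$ and each $k$ with $\mathcal{T}_l \in HL_{s_k}$, I would invoke Lemma \ref{claim_2} at the point $s_k$: this gives a subset $R \subset I_{\tau_2^l}(s_k)$ of measure $\asymp 1/\tau_2^l$ on which $|\arg a_{\tau_1^l \to \tau_2^l}|$ dominates a constant multiple of $\int_{\mathcal{T}_l}|f|\,dt$. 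Since $\arg a_{\tau_1^l \to \tau_2^l}$ is the Hilbert transform of $\log|a_{\tau_1^l \to \tau_2^l}|$ (both vanishing at $0$), the weak-$(1,1)$ bound gives
\[
\|\log|a_{\tau_1^l \to \tau_2^l}|\|_{L^1(\R)} \gg \|\arg a_{\tau_1^l \to \tau_2^l}\|_{L^{1,\infty}(\R)} \gg \tfrac{1}{\tau_2^l}\int_{\mathcal{T}_l}|f(t)|\,dt,
\]
which is the local linear-in-$L^1$ lower bound we need.

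Next I would execute the counting step exactly as set up in the excerpt: use $\tau_2^l \le 2^{n+2}$ to replace $1/\tau_2^l$ by $|I_k|/(16C)$, swap the order of summation between $l$ and $k$, invoke the pairwise disjointness of the $\mathcal{T}_l$ to collapse the inner sum into a single integral over $\bigcup_{\mathcal{T}_l \in HL_{s_k}}\mathcal{T}_l$, and apply property (iii) of Lemma \ref{collection_horizontal_intervals} to bound this integral below by $\Delta/500$. Finally, summing $|I_k|$ and using that $I_1,\dots,I_N$ cover at least $\tfrac12 |S_H^n|$ (with overlap at most $2$) yields
\[
\sum_{l=1}^M \|\log|a_{\tau_1^l \to \tau_2^l}|\|_{L^1(\R)} \gg \Delta\, |S_H^n|,
\]
with an implicit constant depending only on $A$ and $C$. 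To conclude, apply the non-linear Parseval identity \eqref{non_linear_parseval_2} to each scattering function $a_{\tau_1^l \to \tau_2^l}$ and use pairwise disjointness together with $\mathcal{T}_l \subset [2^n, 2^{n+2}]$ to obtain $\sum_l \|\log|a_{\tau_1^l\to\tau_2^l}|\|_{L^1(\R)} = \sum_l \|f\|_{L^2(\mathcal{T}_l)}^2 \le \|f\|_{L^2([2^n,2^{n+2}])}^2$, which gives the claim.

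The main obstacle here is not conceptual but bookkeeping: ensuring that every constant produced along the chain (the weak-$(1,1)$ constant of the Hilbert transform, the $D$ from Lemma \ref{claim_2}, the overlap factor $\tfrac12$, the factor $1/(16C)$, and the factor $\Delta/500$ from property (iii)) depends only on $A$ and $C$ and not on $n$, $s$, or the particular choice of intervals. In particular, the step where we replace $1/\tau_2^l$ by $|I_k|/(16C)$ relies crucially on the dyadic grouping $2^n \le \tau_1^l < \tau_2^l \le 2^{n+2}$ built into the definitions of $L_{s_k}^n$ and $\mathcal{L}_{s_k}^n$, and one must verify that the Lebesgue-point and Lusin-type reductions made earlier in the section are compatible with uniform applicability of Lemma \ref{claim_2} at each $s_k$ simultaneously.
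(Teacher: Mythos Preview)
Your proposal is correct and follows essentially the same approach as the paper: assemble the covering $I_1,\dots,I_N$ and the refined collection $\mathcal{T}_1,\dots,\mathcal{T}_M$ from Lemma~\ref{collection_horizontal_intervals}, apply Lemma~\ref{claim_2} together with the weak-$(1,1)$ Hilbert transform bound \eqref{misc55} to get the local linear lower bound, run the seven-step chain of inequalities (replacing $1/\tau_2^l$ by $|I_k|/(16C)$, swapping sums, using disjointness and property~(iii)), and close with the non-linear Parseval identity \eqref{non_linear_parseval_2}. The only minor slip is that the covering by $I_1,\dots,I_N$ is set up in the paragraph \emph{preceding} Lemma~\ref{collection_horizontal_intervals} rather than by that lemma itself, but this does not affect the argument.
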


	\subsection{Stronger approximations of the Hermite-Biehler functions}
	In this section, we are going to prove Theorem \ref{Lemma_10}. Since the statement is quite rich, we are going to construct the stronger approximation in several steps. First, we will prove that we can approximate $E(t,z)$ and $\tilde{E}(t,z)$ with the same imaginary part $y(t)$ and have the real parts satisfy $\cos[t(\tilde{x}(t)-x(t))]=\sqrt{w(s)\tilde{w}(s)}.$ Since we now approximate $E(t,z)$ and $\tilde{E}(t,z)$ at the same time, we will have to introduce analogous notation as in the case of $E(t,z).$ We define for a positive function $C=C(t)$ the sets
	
	$$\tilde{T}_0(s,C)=\{t >0 \mid \text{ there is a zero } z(t) \text{ of } \tilde{E}(t,z) \text{ inside } Q(s,C/t)\},$$
	
	$$\tilde{T}_1(s,C)=\{t> 0 \mid \text{ all zeros } z(t) \text{ of } \tilde{E}(t,z) \text{ inside } Q(s,C/t), \text{ satisfy } \Im z(t) < -1/t\}.$$
	
	Let us once again mention that all results out of the previous section that hold for $E(t,z)$ hold aswell for $\tilde{E}(t,z)$ and recall that by definition if there is no zero of $E(t,z),$ respectively $\tilde{E}(t,z),$ inside $Q(s,C/t),$ then $t \in T_1(s,C),$ respectively $t \in \tilde{T}_1(s,C).$ In particular it will be convenient for us to state Lemma \ref{Lemma_6} for $\tilde{E}(t,z)$ with $\cos(z)$ as the approximating function instead of $\sin(z)$ as in the case of $E(t,z).$ This just corresponds to the translation $\tilde{z}(t) \mapsto \tilde{z}(t)+\frac{\pi}{2t}.$ 
	 
	\begin{lemma}\label{Lemma_8}
		For almost all $s \in \R$ there exists a increasing function $C(t)>0, C(t) \rightarrow \infty$ as $t\rightarrow \infty$ with the following properties.
		
		For all $t \in T_1(s,C)\cap \tilde{T}_1(s,C)$ there exist $z(t)=u(t)-ip(t), \tilde{z}(t)=\tilde{u}(t)-ip(t)$ and $\alpha(t)=\alpha(s,t)$ such that $p(t)>0,$ $|\alpha(t)|=1$ and 
		
		$$	\sup_{z \in Q(s,C(t)/t)} \bigg| E(t,z) - \frac{\alpha(t)\gamma(ty(t))}{\sqrt{w(s)}} \sin[t(z-z(t))]\bigg|=o(1),$$
		
		$$	\sup_{z \in Q(s,C(t)/t)} \bigg| \tilde{E}(t,z) - \frac{\alpha(t)\gamma(ty(t))}{\sqrt{w(s)}} \cos[t(z-\tilde{z}(t))]\bigg|=o(1),$$
		
		\noindent
		as $t \rightarrow \infty.$ Furthermore, $p(t)>C(t)$ for $t \notin T_0(s,C)$ and $p(t)\leq C(t)$ for $t \in T_0(s,C).$ The functions $u(t)$ and $\tilde{u}(t)$ satisfy
		
		$$\cos[t(\tilde{u}(t)-u(t))]=\sqrt{w(s)\tilde{w}(s)}.$$ 
	\end{lemma}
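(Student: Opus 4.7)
The plan is to apply Corollary \ref{corollary_4} separately to $E(t,z)$ and $\tilde E(t,z)$, and then couple the two resulting approximations through the determinant identity of Corollary \ref{determinant_2}. First I would fix $s$ and an increasing function $C(t)\to\infty$ for which the conclusions of Corollary \ref{corollary_4} hold for both $E$ and $\tilde E$, obtaining unimodular functions $\alpha(t),\tilde\alpha(t)$ and points $z_0(t)=x(t)-iy(t)$, $\tilde z_0(t)=\tilde x(t)-i\tilde y(t)$ with $y(t),\tilde y(t)>0$, such that
\[
E(t,z)=\frac{\alpha(t)\gamma(ty(t))}{\sqrt{w(s)}}\sin[t(z-z_0(t))]+o(1),
\]
\[
\tilde E(t,z)=\frac{\tilde\alpha(t)\gamma(t\tilde y(t))}{\sqrt{\tilde w(s)}}\cos[t(z-\tilde z_0(t))]+o(1)
\]
uniformly on $Q(s,C(t)/t)$. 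When $E$ has no zero in the box I would take $y(t)>C(t)$, so that the first line reduces to \eqref{approximation_by_exp2}, and similarly for $\tilde E$.

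Next I would substitute these into the identity
\[
E(t,z)\tilde E^\sharp(t,z)-\tilde E(t,z)E^\sharp(t,z)=2i
\]
from Corollary \ref{determinant_2}. Using $\sin^\sharp[t(z-w)]=\sin[t(z-\bar w)]$ and $\cos^\sharp[t(z-w)]=\cos[t(z-\bar w)]$, together with $\sin A\cos B=\tfrac12[\sin(A+B)+\sin(A-B)]$, the left-hand side expands into four sine terms, two of which depend on $z$. Since the right-hand side is the constant $2i$, the $z$-dependent part must cancel to $o(1)$, and a direct computation shows that this forces $y(t)=\tilde y(t)+o(1/t)$ as well as $\Im(\alpha(t)\overline{\tilde\alpha(t)})=o(1)$. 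Rotating $\tilde\alpha$ by $o(1)$ then lets me take $\alpha(t)=\tilde\alpha(t)$ at the cost of only an $o(1)$ perturbation of the $\tilde E$-approximation.

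With the $z$-dependent contributions eliminated, matching the remaining constant terms against $2i$ and using $\gamma(ty)^2=2/\sinh(2ty)$ simplifies to
\[
\cos[t(\tilde x(t)-x(t))]=\sqrt{w(s)\tilde w(s)}+o(1).
\]
I would then define $p(t)$ as the common value of $y(t)$ (absorbing the $o(1/t)$ discrepancy with $\tilde y(t)$ into $\tilde z_0$), set $u(t)=x(t)$, and shift $\tilde x(t)$ by $o(1/t)$ to obtain $\tilde u(t)$ making $\cos[t(\tilde u(t)-u(t))]=\sqrt{w(s)\tilde w(s)}$ hold exactly; each of these shifts changes the trigonometric approximants by only $o(1)$. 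The positivity $p(t)>0$ follows from zeros of Hermite-Biehler functions lying in $\overline{\C_-}$, and the dichotomy $p(t)>C(t)$ for $t\notin T_0(s,C)$ versus $p(t)\le C(t)$ for $t\in T_0(s,C)$ is inherited from the regime-dependent choice of $y(t)$ in Corollary \ref{corollary_4}.

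The main obstacle is the mixed case in which exactly one of $E(t,\cdot),\tilde E(t,\cdot)$ has a zero inside $Q(s,C(t)/t)$: one must verify that the determinant identity forces the ``virtual'' large imaginary part of the other function to match the actual one, so that a single $p(t)$ serves both approximations. This boils down to checking that in the product-to-sum expansion the dominant exponential terms cancel only when the two imaginary parts agree asymptotically, which in turn implies that both functions effectively sit in the same regime up to the slow growth of $C(t)$. Secondary care is needed to keep the $o(1)$ estimates uniform in $t\in T_1(s,C)\cap\tilde T_1(s,C)$ as the rotations of $\tilde\alpha$ and the shift of $\tilde x$ are applied.
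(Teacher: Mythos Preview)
Your proposal is correct and takes essentially the same route as the paper: apply Corollary~\ref{corollary_4} to both $E$ and $\tilde E$, then force the parameters to match via the determinant identity of Corollary~\ref{determinant_2}. The paper's execution differs only in bookkeeping: it first restricts to \emph{real} $z$, where the determinant becomes $2i\,\Im(\cdots)$ and the product-to-sum expansion leaves one constant sine and one sine whose argument sweeps through more than a full period on $I_t$; the contradiction with constancy then gives $ty(t)=t\tilde y(t)+o(1)$ directly. With the imaginary parts matched, the paper plugs in the two specific points $z=\xi(t)$ and $z=\bar\xi(t)$ (the zero of $E$ and its conjugate) rather than appealing to ``$z$-dependent cancellation'' in general, and from the resulting pair of scalar equations reads off both $\beta(t)=\delta(t)+o(1)$ and $\cos[t(\tilde x-x)]=\sqrt{w(s)\tilde w(s)}+o(1)$ in one stroke. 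One small point to tighten in your version: $\Im(\alpha\bar{\tilde\alpha})=o(1)$ alone only gives $\alpha=\pm\tilde\alpha+o(1)$; the sign is fixed by the constant-term matching (positivity of $\sqrt{w\tilde w}$), which the paper gets automatically from evaluating at $\xi(t)$. Your concern about the ``mixed case'' dissolves once $y=\tilde y+o(1/t)$ is established, since both functions then lie in the same regime; the paper does not treat it separately.
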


	\begin{proof}
		We apply Corollary \ref{corollary_4} to both $E$ and $\tilde{E}$ to obtain for any $D>4\pi$
		
		\begin{equation}\label{misc20}
		\sup_{z \in Q(s,D/t)} \bigg| E(t,z) - \frac{\beta(t)\gamma(ty(t))}{\sqrt{w(s)}} \sin[t(z-\xi(t))]\bigg|=o(\gamma(ty(t))),
		\end{equation}
		
		\begin{equation}\label{misc21}
		\sup_{z \in Q(s,D/t)} \bigg| E(t,z) - \frac{\delta(t)\gamma(t\tilde{y}(t))}{\sqrt{\tilde{w}(s)}} \sin[t(z-\tilde{\xi}(t))]\bigg|=o(\gamma(ty(t))),
		\end{equation}
		
		\noindent
		for some $\xi(t)=x(t)-iy(t),\tilde{\xi}(t)=\tilde{x}(t)-i\tilde{y}(t)$ and unimodular functions $\beta(t),\delta(t).$ Hence,
		
		$$\det\begin{pmatrix}
		E(t,z) && \tilde{E}(t,z) \\
		E^{\sharp}(t,z) && \tilde{E}^{\sharp}(t,z)
		\end{pmatrix}=\frac{\gamma(ty(t))\gamma(t\tilde{y}(t))}{\sqrt{w(s)\tilde{w}(s)}} \big(\beta(t)\overline{\delta}(t)\sin[t(z-\xi(t))]\cos[t(z-\overline{\tilde{\xi}}(t))]$$
		
		\begin{equation}\label{misc22}
		-\overline{\beta}(t)\delta(t)\sin[t(z-\overline{\xi}(t))]\cos[t(z-\tilde{\xi}(t))]\big)+o(\gamma(ty(t))\gamma(t\tilde{y}(t))).
		\end{equation}

	\noindent
	If we plug in $z=x \in \R,$ \eqref{misc22} simplifies to 
	
	$$=2i\Im\bigg(\frac{\gamma(ty(t))\gamma(t\tilde{y}(t))}{\sqrt{w(s)\tilde{w}(s)}}\beta(t)\overline{\delta(t)}\sin[t(x-\xi(t))]\cos[t(x-\overline{\tilde{\xi}}(t))]\bigg)$$
	
	\begin{equation}\label{misc23}
	=2i\Im\bigg(\frac{\gamma(ty(t))\gamma(t\tilde{y}(t))}{\sqrt{w(s)\tilde{w}(s)}} \beta(t)\overline{\delta}(t) \frac{1}{2}\bigg[\sin[t(\overline{\tilde{\xi}}(t)-\xi(t))]+\sin[t(2x-(\xi(t)+\overline{\tilde{\xi}}(t)))]\bigg]\bigg).
	\end{equation}
	
	We know that by Corollary \ref{determinant_2} for all $z\in \C$ and any $t\geq 0$
	
	$$\det\begin{pmatrix}
	E(t,z) && \tilde{E}(t,z) \\
	E^{\sharp}(t,z) && \tilde{E}^{\sharp}(t,z)
	\end{pmatrix}=2i.$$
	
	\noindent
	Hence, by \eqref{misc22} the expression in \eqref{misc23} must be within $o(\gamma(ty(t))\gamma(t\tilde{y}(t))$ from $2i$ on the interval $I_t=Q(s,D/t)\cap \R.$ Suppose that for $t\in T_1(s,C(t))\cap\tilde{T}_1(s,C(t))$ we had the lower bound $\limsup_{t \rightarrow \infty} t|\tilde{y}(t)-y(t)|>\Delta>0.$ In that case for any fixed $t$ the first sine in \eqref{misc23} is constant, while the second has absolute value $\geq \sinh \Delta$ and its argument grows by more than $2\pi$ on $I_t.$ This immediately contradicts that the expression in \eqref{misc23} is within $o(\gamma(ty(t))\gamma(t\tilde{y}(t))$ from $2i$ on $I_t.$
	
	\noindent
	Thus, $y(t)=\tilde{y}(t)+o(1/t)$ and we can change either of $y(t)$ and $\tilde{y}(t)$ into the other with \eqref{misc20} and \eqref{misc21} remaining still true, just with a different error term of the same order. So we assume from now on that $\tilde{y}(t)=y(t)$ and set $p(t)=y(t)=\tilde{y}(t).$ By choice of the approximation from Corollary \ref{corollary_4} we have $p(t)>C(t)$ for $t \notin T_0(s,C(t))$ and $p(t)\leq C(t)$ for $t\in T_0(s,C(t)).$ 
	
	Suppose now that $\xi(t)=x(t)-ip(t) \in Q(s,D/t)$ and $tp(t)>1.$ By plugging $z=\xi(t)$ and $z=\overline{\xi}(t)$ into \eqref{misc22} we obtain the equations
	
	$$\frac{2i\overline{\beta}(t)\delta(t)}{\sqrt{w(s)\tilde{w}(s)}}\cos[t(\tilde{x}(t)-x(t))]=2i+o(1),$$
	
	$$\frac{2i\beta(t)\overline{\delta}(t)}{\sqrt{w(s)\tilde{w}(s)}}\cos[t(\tilde{x}(t)-x(t))]=2i+o(1).$$
	
	\noindent
	We immediately conclude from both equations that $\arg\beta(t)=\arg\delta(t)=o(1)$ and since $|\beta(t)|=|\delta(t)|=1$ we find that $\beta(t)=\delta+o(1)$ and thus set $\alpha(t)=\beta(t)=\delta+o(1)$ for $t \in T_0(s,D)\subset T_0(s,C(t)).$ 
	
	Furthermore, we conclude from these equations that
	
	$$\cos[t(\tilde{x}(t)-x(t))]=\sqrt{w(s)\tilde{w}(s)}+o(1).$$ 
	
	\noindent
	Hence, $\tilde{x}(t)=\tilde{u}(t)+o(1/t)$ and $x(t)=u(t)+o(1/t),$ where $u(t)$ and $\tilde u(t)$ are chosen continuously to satisfy the equation 
	
	$$\cos[t(\tilde{u}(t)-u(t))]=\sqrt{w(s)\tilde{w}(s)}.$$ 
	
	\noindent
	Thus, by replacing the $x(t)$ with $u(t)$ and $\tilde{x}(t)$ with $\tilde{u}(t)$ the approximations \eqref{misc20} and \eqref{misc21} remain valid with a different error term of the same order.
	
	Finally, by our standard argument, we can improve from fixed $D>4\pi$ to $C(t)$ by making $C(t)$ grow a bit slower if necessary.
	\end{proof}

	In the next step, we are going to change the approximation, such that the approximating function and $E(t,z)$ have the same value at the point $s\in \R$ instead of sharing a common zero. The following simple lemma will play an important role in this process.
	
	\begin{lemma}\label{determinant_3}
		Let $a,b,c,d \in \C\setminus\{0\}$ be such that $\frac{a}{b}=\frac{c}{d}$ and
		
		$$\det\begin{pmatrix}
		a && b \\
		
		\overline{a} && \overline{b}
		\end{pmatrix}
		=
		\det\begin{pmatrix}
		c && d \\
		\overline{c} && \overline{d}
		\end{pmatrix}\neq 0.$$
		
		\noindent
		Then, $|a|=|c|$ and $|b|=|d|.$
	\end{lemma}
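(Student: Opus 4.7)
The plan is to reduce everything to the fact that the two pairs $(a,b)$ and $(c,d)$ are proportional, then read off what the determinant condition says about the scalar factor. From $a/b = c/d$ we get $ad = bc$, and hence there exists a unique nonzero scalar $\lambda \in \C^{\times}$ with $a = \lambda c$ and $b = \lambda d$. This is the only structural input; everything else will follow by direct substitution.

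Next, I would compute the determinant of the matrix with rows $(a, b)$ and $(\overline{a}, \overline{b})$ in terms of the corresponding determinant for $(c, d)$. Substituting $a = \lambda c$ and $b = \lambda d$ gives
\[
\det\begin{pmatrix} a & b \\ \overline{a} & \overline{b} \end{pmatrix} = a\overline{b} - b\overline{a} = \lambda c\,\overline{\lambda d} - \lambda d\,\overline{\lambda c} = |\lambda|^2\bigl(c\overline{d} - d\overline{c}\bigr) = |\lambda|^2 \det\begin{pmatrix} c & d \\ \overline{c} & \overline{d} \end{pmatrix}.
\]
By hypothesis the left-hand side equals $\det\bigl(\begin{smallmatrix} c & d \\ \overline{c} & \overline{d}\end{smallmatrix}\bigr)$ and this value is nonzero. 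Dividing by it yields $|\lambda|^2 = 1$, hence $|\lambda| = 1$. Therefore $|a| = |\lambda|\,|c| = |c|$ and $|b| = |\lambda|\,|d| = |d|$, which is the claim.

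There is no real obstacle here; the only thing to double-check is the direction of proportionality. The condition $a/b = c/d$ does not directly say $(a,b)$ is a multiple of $(c,d)$, so one has to rewrite it as $ad = bc$ and then use $c, d \neq 0$ to introduce the scalar $\lambda := a/c = b/d$. Once this is set up, the rest is a one-line computation, and the nonvanishing of the common determinant is exactly what is needed to cancel it and conclude $|\lambda| = 1$.
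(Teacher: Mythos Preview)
Your proof is correct and follows essentially the same approach as the paper: both exploit the proportionality $ad=bc$ to express one pair in terms of the other and then compare the two determinants. The paper substitutes $b=ad/c$ directly and reads off $|a|^2/|c|^2=1$, while you name the scalar $\lambda=a/c=b/d$ first; this is purely a notational difference.
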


	\begin{proof}
		We write  $b=\frac{ad}{c}$ and thus obtain 
		
		$$a \overline b - \overline a b = a \frac{\overline{ad}}{\overline c} - \overline a \frac{ad}{c}=|a|^2\bigg(\frac{c \overline d- d \overline c}{|c|^2}\bigg)=c \overline d -d \overline c.$$
		
		\noindent
		We immediately see $|a|^2=|c|^2.$ We get $|b|^2=|d|^2$ similarly by writing $a=\frac{bc}{d}.$
	\end{proof}
	
	Thus, our strategy will be to change the approximating functions slightly such that they satisfy the assumptions of the previous Lemma.
	
	\begin{lemma}\label{Lemma_9}
		For almost all $s\in \R$ and for all $C>1$ there exists $\varepsilon_0>0$ such that for all $\varepsilon<\varepsilon_0$ the following holds. If 
		
		$$\sup_{z \in Q(s,3C/t)} \bigg| E(t,z) - \frac{\alpha\gamma(ty)}{\sqrt{w(s)}} \sin[t(z-(x-iy))]\bigg|< \varepsilon,$$
		
		$$\sup_{z \in Q(s,3C/t)} \bigg| \tilde E(t,z) - \frac{\alpha\gamma(ty)}{\sqrt{\tilde w(s)}} \cos[t(z-(\tilde x-i\tilde y))]\bigg|< \varepsilon,$$
		
		\noindent
		for some $t>0,x,\tilde x, y \in \R, \alpha \in \C,$ satisfying $1<ty<2C, |\alpha|=1$ and
		
		\begin{equation}\label{misc24}
		-\frac{\pi}{2} < t(\tilde x-x)< \frac{\pi}{2}, \quad \cos[t(\tilde x - x)]=\sqrt{w(s)\tilde w(s)},
		\end{equation}
		
		\noindent
		then there exist $y',x',\tilde x' \in \R$ and $\alpha'\in \C, |\alpha'|=1,$ such that 
		
		$$|tx'-tx|+|t\tilde x'-t\tilde x|+|ty'-ty| + |\alpha'-\alpha|< D\varepsilon, \quad \tilde x-x=\tilde x'-x'$$
		
		\noindent
		for some constant $D=D(C,s)$ and furthermore 
		
		\begin{equation}\label{misc25}
		E(t,s)=\alpha'\frac{\gamma(ty)}{\sqrt{w(s)}}\sin[t(s-(x'-iy'))], \quad \tilde E(t,s)=\alpha'\frac{\gamma(ty)}{\sqrt{\tilde w(s)}}\cos[t(s-(\tilde x'-iy'))].
		\end{equation}
	\end{lemma}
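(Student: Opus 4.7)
The plan is to reduce \eqref{misc25} to a single complex equation in one complex unknown and solve it via the inverse function theorem, using Lemma \ref{determinant_3} as a final step to obtain the unimodular constant $\alpha'$ for free. Since we require $\tilde x' - x' = \tilde x - x$, set $\delta := t(\tilde x - x)$ and parametrize $\tilde x' = x' + \delta/t$. Let
\[
F(z) := \frac{\gamma(ty')}{\sqrt{w(s)}}\sin[t(z-x'+iy')], \qquad \tilde F(z) := \frac{\gamma(ty')}{\sqrt{\tilde w(s)}}\cos[t(z-\tilde x'+iy')].
\]
A direct product-to-sum calculation, using $\gamma^2(ty')=2/\sinh(2ty')$ and the condition $\cos\delta=\sqrt{w(s)\tilde w(s)}$ from \eqref{misc24}, yields the identity $F\tilde F^{\sharp}-\tilde F F^{\sharp}\equiv 2i$. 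Hence the pair $(F(s),\tilde F(s))$ satisfies at $z=s$ the same determinant identity as $(E(t,s),\tilde E(t,s))$ does by Corollary \ref{determinant_2}. So if we can match ratios, $E(t,s)/\tilde E(t,s)=F(s)/\tilde F(s)$, then Lemma \ref{determinant_3} immediately gives $|E(t,s)|=|F(s)|$ and $|\tilde E(t,s)|=|\tilde F(s)|$, and $\alpha':=E(t,s)/F(s)$ is unimodular with $\alpha'\tilde F(s)=\tilde E(t,s)$, establishing \eqref{misc25}.

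Substituting $w:=t(s-x')+ity'$, the ratio equation becomes
\[
\Psi(w):=\frac{\sin w}{\cos(w-\delta)}=R, \qquad R:=\frac{\sqrt{w(s)}\,E(t,s)}{\sqrt{\tilde w(s)}\,\tilde E(t,s)}.
\]
Let $w_0:=t(s-x)+ity$ correspond to the initial parameters. The two hypothesised approximations give $\alpha\gamma(ty)\sin w_0/\sqrt{w(s)}=E(t,s)+O(\varepsilon)$ and $\alpha\gamma(ty)\cos(w_0-\delta)/\sqrt{\tilde w(s)}=\tilde E(t,s)+O(\varepsilon)$. Since $|\cos(w_0-\delta)|\geq\sinh(ty)\geq\sinh 1$, and the analogous lower bound forces $|\tilde E(t,s)|$ to be bounded below once $\varepsilon<\varepsilon_0$ is small, division gives $|\Psi(w_0)-R|\leq D_1\varepsilon$ for a constant $D_1=D_1(C,s)$. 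A short calculation yields
\[
\Psi'(w)=\frac{\cos\delta}{\cos^2(w-\delta)}.
\]
By \eqref{misc24} we have $\cos\delta>0$, and since $1<\Im w<2C$ on a neighborhood of $w_0$, $|\cos^2(w-\delta)|$ is bounded both above and below there. Thus $\Psi'$ is bounded away from $0$ and $\infty$, and the quantitative inverse function theorem produces, for all $\varepsilon<\varepsilon_0$, a unique $w$ in that neighborhood with $\Psi(w)=R$ and $|w-w_0|\leq D_2\varepsilon$. Reading off $x':=s-\Re w/t$, $y':=\Im w/t$, $\tilde x':=x'+\delta/t$ gives $|tx'-tx|+|t\tilde x'-t\tilde x|+|ty'-ty|\leq 3D_2\varepsilon$.

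With these $(x',\tilde x',y')$ the first paragraph delivers $\alpha'$ with $|\alpha'|=1$ and \eqref{misc25}. The bound $|\alpha'-\alpha|\leq D_3\varepsilon$ follows from comparing $\alpha' F(s)=E(t,s)$ against $\alpha F(s)|_{(x,y)}=E(t,s)+O(\varepsilon)$, using the Lipschitz dependence of $F(s)$ on its parameters and the lower bound on $|F(s)|$. Setting $D:=3D_2+D_3$ concludes the proof. The main obstacle is really the opening algebraic identity $F\tilde F^{\sharp}-\tilde F F^{\sharp}\equiv 2i$: the fact that it holds \emph{exactly}, not merely up to $o(1)$, is the whole point of imposing $\cos[t(\tilde x-x)]=\sqrt{w(s)\tilde w(s)}$ in \eqref{misc24}, and is what reduces an apparent four-real-dimensional matching problem to a single complex equation on which the inverse function theorem acts cleanly.
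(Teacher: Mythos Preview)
Your proof is correct and follows essentially the same route as the paper: match the ratio $E/\tilde E$ to $\sin/\cos$ by a small complex perturbation of the parameters (you via the inverse function theorem on $\Psi$, the paper via the quantitative open mapping Lemma \ref{open_mapping_theorem} applied to the same quotient viewed as a function of $z$), then invoke the exact determinant identity together with Lemma \ref{determinant_3} to extract the unimodular $\alpha'$. Your use of $\gamma(ty')$ rather than $\gamma(ty)$ in $F,\tilde F$ is in fact what is needed for $F\tilde F^{\sharp}-\tilde F F^{\sharp}\equiv 2i$ to hold exactly, and is harmlessly interchangeable with the $\gamma(ty)$ appearing in \eqref{misc25} since $|ty'-ty|\le D\varepsilon$.
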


	\begin{proof} Recall that for almost all $s \in \R$ we have that $w(s)\neq0$ and $\tilde w(s)\neq 0.$ So assume without loss of generality that $w(s)\tilde w(s)\neq 0.$
		We set for all $z \in \C$ where it is well-defined
		
		$$f(z)=\sqrt{\frac{w(s)}{\tilde w(s)}} \cdot \frac{\sin[t(z-(x-iy))]}{\cos[t(z-(\tilde x -i y))]}.$$
		
		Denote by $J$ the middle-third of $I=Q(s,3C/t)\cap \R.$ and note that by the same calculation as in the proof of Lemma \ref{formulas_mif} $(i)$ we find a constant $D_1$ such that for all $s \in J,$ we have if $\varepsilon_0$ is small enough
		
		$$\bigg|\frac{E(t,s)}{\tilde{E}(t,s)}-f(s)\bigg|< D_1\varepsilon.$$
		
		\noindent
		Since $1<ty<2C,$ $f$ is holomorphic in a $\frac{1}{2t}-$neighborhood of $J.$ Hence, for small enough $\varepsilon$ there exists by Lemma \ref{open_mapping_theorem} a $D_2>0$ such that in the disk $B(s,D_2\varepsilon/t),$ f takes all values from $B(f(s),D_1\varepsilon).$ We choose $a \in B(s,D_2\varepsilon/t)$ such that 
		
		$$f(a)=\frac{E(t,s)}{\tilde E(t,s)}.$$
		
		If we now set $\tilde x'= \tilde x+ \Re(s-a), x'=x+ \Re(s-a), y'= y + \Im a,$ then 
		
		$$\frac{E(t,s)}{\tilde{E}(t,s)}= \sqrt{\frac{\tilde w(s)}{w(s)}} \cdot \frac{\sin[t(s-(x'-iy'))]}{\cos[t(s-(\tilde x'-iy'))]}.$$
		
		\noindent
		Using trigonometric identities and \eqref{misc24} one can check that
		
		$$\det \begin{pmatrix}
		\frac{\gamma(ty)}{w(s)} \sin[t(s-(x'-iy'))] && \frac{\gamma(ty)}{\tilde w(s)} \cos[t(s-(\tilde x'-iy'))] \\
		\frac{\gamma(ty)}{w(s)} \sin[t(s-(x'+iy'))] && \frac{\gamma(ty)}{\tilde w(s)} \cos[t(s-(\tilde x'+iy'))]
		\end{pmatrix}=2i.$$
		
		\noindent
		Together with Corollary \ref{determinant_2} and Lemma \ref{determinant_3} this implies

		$$|E(t,s)|=\bigg|\frac{\gamma(ty)}{w(s)} \sin[t(s-(x'-iy'))]\bigg|, \quad |\tilde E(t,s)|=\bigg|\frac{\gamma(ty)}{\tilde w(s)} \cos[t(s-(\tilde x'-iy'))]\bigg|.$$
		
		Since we have additionally 
		
		$$\arg \frac{E(t,s)}{\tilde E(t,s)} = \arg \frac{\sin[t(s-(x'-iy'))]}{\cos[t(s-(\tilde x'-iy'))]},$$
		
		\noindent
		we find $\alpha' \in \C, |\alpha'|=1$ such that $\eqref{misc25}$ holds. By the inequalities from the statement we have $|\alpha-\alpha‘|\ll \varepsilon.$ Indeed,
		
		$$|\alpha'-\alpha|\cdot \bigg|\frac{\alpha\gamma(ty)}{\sqrt{w(s)}} \sin[t(s-(x'-iy'))]\bigg|=\bigg|\alpha'\frac{\gamma(ty)}{\sqrt{w(s)}} \sin[t(s-(x'-iy'))]-\alpha\frac{\gamma(ty)}{\sqrt{w(s)}} \sin[t(z-(x'-iy'))]\bigg|$$
		
		$$=\bigg|E(t,s)-\frac{\alpha\gamma(ty)}{\sqrt{w(s)}} \sin[t(z-(x'-iy'))]\bigg|\leq \bigg|E(t,s)-\alpha\frac{\gamma(ty)}{\sqrt{w(s)}} \sin[t(z-(x-iy))]\bigg|$$
		$$+\bigg|\alpha\frac{\gamma(ty)}{\sqrt{w(s)}} \sin[t(z-(x-iy))]-\alpha\frac{\gamma(ty)}{\sqrt{w(s)}} \sin[t(z-(x'-iy'))]\bigg|$$
		
		\noindent
		By assumption we have
		
		$$\bigg|E(t,s)-\alpha\frac{\gamma(ty)}{\sqrt{w(s)}} \sin[t(z-(x-iy))]\bigg|< \varepsilon$$
		
		\noindent
		and by the mean value Theorem for complex functions, using that the box is convex and compact, as well as the fact that $\gamma(ty)$ is bounded due to $1<ty<2C$ we have 
		
		$$\bigg|\alpha\frac{\gamma(ty)}{\sqrt{w(s)}} \sin[t(z-(x-iy))]-\alpha\frac{\gamma(ty)}{\sqrt{w(s)}} \sin[t(z-(x'-iy'))]\bigg|\leq D_3|(tx'-tx)+(ty'-ty)|<D_3\varepsilon,$$
		
		\noindent
		where $D_3$ is a constant depending only on $s$ and $C.$ Thus, $|\alpha'-\alpha|\ll \varepsilon$ by using again that $\gamma(ty)$ is bounded from above and below since $1<ty<2C.$
		
	\end{proof}

	Let us briefly summarize what we have shown so far and restate Theorem \ref{Lemma_10} to see what is still left to prove.
	
	\begin{corollary}\label{corollary_5}
	For almost all $s \in \R$ there exist positive functions $C(t), C(t)\rightarrow \infty, \psi(t), \psi(t) \rightarrow 0$ as $t\rightarrow \infty, t \in T_1(s,C(t))\cap \tilde T_1(s,C(t)),$ real functions $x(t), \tilde x(t), y(t),$ with $ty(t)>1$ and a complex function $\alpha(t) \in \C, |\alpha(t)|=1$ such that for all $t \in T_1(s,C(t))\cap \tilde T_1(s,C(t))$ we have 
	
	\begin{itemize}
		\item [(i)]
		
		$$\sup_{z \in Q(s,3C(t)/t)} \bigg|E(t,z)-\frac{\alpha(t)\gamma(ty(t))}{\sqrt{w(s)}} \sin[t(z-(x(t)-iy(t)))]\bigg|<\psi(t),$$
		
		$$\sup_{z \in Q(s,3C(t)/t)} \bigg|\tilde{E}(t,z)-\frac{\alpha(t)\gamma(ty(t))}{\sqrt{\tilde{w}(s)}} \cos[t(z-(\tilde{x}(t)-iy(t)))]\bigg|<\psi(t).$$
		
		\item[(ii)]
		
		$$ -\frac{\pi}{2}<t(\tilde{x}(t)-x(t))<\frac{\pi}{2}, \quad \cos[t(\tilde{x}(t)-x(t))]=\sqrt{w(s)\tilde{w}(s)}.$$
		
		\item[(iii)] For large enough $t$ for which $1<ty(t)<2C(t),$
		
		$$E(t,s)=\frac{\alpha(t)\gamma(ty(t))}{\sqrt{w(s)}} \sin[t(s-(x(t)-iy(t)))],$$
		
		$$\tilde{E}(t,s)=\frac{\alpha(t)\gamma(ty(t))}{\sqrt{\tilde{w}(s)}} \cos[t(s-(\tilde{x}(t)-iy(t)))].$$
		
			\end{itemize}
	
 \end{corollary}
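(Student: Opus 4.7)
The plan is simply to patch together Lemma \ref{Lemma_8} and Lemma \ref{Lemma_9}, treating Lemma \ref{Lemma_8} as the source of (i)--(ii) at an approximate level and using Lemma \ref{Lemma_9} to upgrade the approximation at the single point $s$ into an exact identity, yielding (iii). Fix $s\in\R$ lying in the full-measure set on which both Lemma \ref{Lemma_8} and Lemma \ref{Lemma_9} apply, and in particular where $w(s),\tilde w(s)\neq 0$.

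First I would apply Lemma \ref{Lemma_8} to obtain an increasing function $C_0(t)\to\infty$, a common imaginary part $p(t)=y(t)$, real functions $u(t)$, $\tilde u(t)$ satisfying $\cos[t(\tilde u(t)-u(t))]=\sqrt{w(s)\tilde w(s)}$, and a unimodular $\alpha(t)$ such that the $\sin$/$\cos$ approximations hold on $Q(s,C_0(t)/t)$ with an error $\varepsilon(t)=o(1)$. Shrinking $C_0$ by a factor of $3$ (which still tends to $\infty$), I obtain the approximations on $Q(s,3C(t)/t)$ with $C(t):=C_0(t)/3$. This already gives (i) with $\psi(t)=\varepsilon(t)$, and (ii) follows directly from the conclusion of Lemma \ref{Lemma_8}; the normalization $-\pi/2<t(\tilde x-x)<\pi/2$ can be arranged by choosing the branch of $\tilde u-u$ at $t$ sufficiently large (where $|\tilde u(t)-u(t)|=o(1/t)$ by construction).

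Next, restricting to $t\in T_1(s,C(t))\cap\tilde T_1(s,C(t))$ large enough so that $1<ty(t)<2C(t)$ and $\varepsilon(t)<\varepsilon_0$ (with $\varepsilon_0$ the threshold from Lemma \ref{Lemma_9}), I apply Lemma \ref{Lemma_9} with the current approximating parameters $(x(t),\tilde x(t),y(t),\alpha(t))$. Lemma \ref{Lemma_9} delivers new parameters $(x'(t),\tilde x'(t),y'(t),\alpha'(t))$ with
\[ |tx'(t)-tx(t)|+|t\tilde x'(t)-t\tilde x(t)|+|ty'(t)-ty(t)|+|\alpha'(t)-\alpha(t)|<D\varepsilon(t), \]
with the property $\tilde x'(t)-x'(t)=\tilde x(t)-x(t)$, and with the pointwise identities \eqref{misc25} at $z=s$. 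Replacing $(x,\tilde x,y,\alpha)$ by $(x',\tilde x',y',\alpha')$, condition (iii) holds by construction, condition (ii) is preserved because the difference $\tilde x-x$ is unchanged, and the bound in (i) degrades only by an additive $O(\varepsilon(t))$ via the mean value estimate used in the proof of Lemma \ref{Lemma_9} (here one exploits that $\gamma(ty)$ is bounded above and below on $1<ty<2C(t)$). Redefining $\psi(t)$ to be the sum of the old error and this additive loss still gives a function tending to zero.

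The main obstacle is really bookkeeping: one must ensure that after replacing $(x,\tilde x,y,\alpha)$ by the perturbation from Lemma \ref{Lemma_9}, the enlarged box $Q(s,3C(t)/t)$ from (i) is still honored. Since the perturbation of the approximating function is $O(\varepsilon(t))$ uniformly on any fixed multiple of $1/t$, one can absorb this into $\psi$ at the cost of possibly slowing the growth of $C(t)$; concretely, one picks $C(t)$ growing slowly enough that the constant $D=D(C(t),s)$ produced by Lemma \ref{Lemma_9} satisfies $D\varepsilon(t)\to 0$. After this adjustment, (i), (ii), and (iii) hold simultaneously on the required set, completing the proof.
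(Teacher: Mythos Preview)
Your proposal is correct and follows exactly the approach the paper intends: Corollary \ref{corollary_5} is stated there as a summary of Lemma \ref{Lemma_8} and Lemma \ref{Lemma_9} combined, with no separate proof given. One small slip: from $\cos[t(\tilde u-u)]=\sqrt{w(s)\tilde w(s)}$ you get $t(\tilde u-u)$ equal to a fixed nonzero constant (mod $2\pi$), so $|\tilde u-u|=O(1/t)$ rather than $o(1/t)$; this does not affect the argument since the branch in $(-\pi/2,\pi/2)$ can still be chosen.
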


	Note in the following that we established all claims about the approximation itself. We are left to establish the claims about the movement of the zeros of $E, \tilde E$ and the approximating functions.
	
	\begin{theorem*}[Strong approximation]
		For almost all $s\in \R$ there exists a increasing functions $C(t)>0, C(t) \rightarrow \infty$ and a function $\psi(t)>0, \psi(t)\rightarrow 0$ as $t\rightarrow \infty$ such that the following holds.
		
		Let $(t_1,t_2) \subset T_0(s,C(t))\cap \tilde T_0(s,C(t)), t_2-t_1 \leq \frac{1}{|s|+1}$ be a $\sigma-$interval and $H_s-$interval such that 
		
		$$\int_{t_1}^{t_2} |f(t)|dt < \frac{1}{100\cosh[2A]},$$
		
		\noindent
		and $C(t)>10\pi$ for $t>t_1,$ where $A$ is a constant satisfying $10\pi< A < C(t)$ for all $t>t_1.$ Let $\xi_1$ a zero of $E(t_1,z)$ in $Q(s,A/t_1)$ which moves continuously inside $Q(s,A/t)$ to a zero $\xi_2$ of $E(t_2,z)$ as $t$ changes from $t_1$ to $t_2.$ Let $\tilde{\xi_1}, \tilde{\xi_2}$ be similar zeros of $\tilde{E}$ inside $Q(s,A/t).$ Suppose that $t_k\Im \xi_k>2, t_k \Im \tilde{\xi_2}>2$ for $k=1,2.$
		
		Then, the zeros of $E(t,z)$ and $\tilde{E}(t,z)$ change in similar ways as $t$ changes from $t_1$ to $t_2,$ 
		
		$$|(\xi_2-\xi_1)-(\tilde{\xi_2}-\tilde{\xi_1})|\leq \psi(t_1)|\xi_2-\xi_1|,$$
		
		\noindent
		and there exist real continuous functions $y_{t}, x_{t}, \tilde{x}_{t}$ and a unimodular continuous function $\alpha_{t}$ on the interval $[t_1,t_2]$ such that $ty_{t}>1$ for all $t\in [t_1,t_2]$ and
		
		\begin{itemize}
			\item [(i)] At $t=t_1$ we have
			
			$$\sup_{z \in Q(s,3C(t_1)/t_1)} \bigg|E(t_1,z)-\frac{\alpha_{t_1}\gamma(t_1y_{t_1})}{\sqrt{w(s)}} \sin[t_1(z-(x_{t_1}-iy_{t_1}))]\bigg|<\psi(t_1),$$
			
			$$\sup_{z \in Q(s,3C(t_1)/t_1)} \bigg|\tilde{E}(t_1,z)-\frac{\alpha_{t_1}\gamma(t_1y_{t_1})}{\sqrt{\tilde{w}(s)}} \cos[t_1(z-(\tilde{x}_{t_1}-iy_{t_1}))]\bigg|<\psi(t_1).$$
			
			\item[(ii)] For all $t \in [t_1,t_2]$ we have 
			
			$$ -\frac{\pi}{2}<t(\tilde{x}_{t}-x_{t})<\frac{\pi}{2}, \quad \cos[t(\tilde{x}_{t}-x_{t})]=\sqrt{w(s)\tilde{w}(s)}.$$
			
			\item[(iii)] For $k=1,2$ and large enough $t_1$ we have 
			
			$$E(t_k,s)=\frac{\alpha_{t_k}\gamma(t_ky_{t_k})}{\sqrt{w(s)}} \sin[t_k(s-(x_{t_k}-iy_{t_k}))],$$
			
			$$\tilde{E}(t_k,s)=\frac{\alpha_{t_k}\gamma(t_ky_{t_k})}{\sqrt{\tilde{w}(s)}} \cos[t_k(s-(\tilde{x}_{t_k}-iy_{t_k}))].$$
			
			\item[(iv)] As $t$ changes from $t_1$ to $t_2$ the zeros of the approximating functions change similarly to the zeros of $E(t,z)$ and $\tilde{E}(t,z),$
			
			$$ |[(x_{t_2}-iy_{t_2})-(x_{t_1}-iy_{t_1})]-[\xi_2-\xi_1]|\leq \psi(t_1)|\xi_2-\xi_1|,$$
			
			$$|[(\tilde{x}_{t_2}-iy_{t_2})-(\tilde{x}_{t_1}-iy_{t_1})]-[\tilde{\xi_2}-\tilde{\xi_1}]| \leq \psi(t_1)|\xi_2-\xi_1|.$$
		\end{itemize}
	\end{theorem*}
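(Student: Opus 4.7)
The plan is to produce the continuous data $\alpha_t, x_t, y_t, \tilde{x}_t$ from the pointwise output of Corollary \ref{corollary_5} and then verify item (iv), which is the only genuinely new assertion. At each $t \in [t_1, t_2]$, Corollary \ref{corollary_5} supplies values $(\alpha_t, x_t, y_t, \tilde{x}_t)$ satisfying (i), (ii), (iii); the ambiguity is that $x_t - iy_t$ is only determined modulo the $\pi/t$-lattice of zeros of the approximating sine. Applying the quantitative open mapping theorem (Lemma \ref{open_mapping_theorem}) to the $\psi(t)$-small difference in (i), each zero of $E(t,z)$ in $Q(s, 3C(t)/t)$ lies within $O(\psi(t)/t)$ of a unique zero of the approximating sine, and conversely. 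Selecting $x_t - iy_t$ to be the approximating zero tracking the continuous path $\xi(t)$ given in the hypothesis yields a continuous choice; $\alpha_t$ becomes continuous by (iii), and $\tilde{x}_t$ is then forced to be continuous by the constraint in (ii).

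The heart of the argument is (iv). The naive bound from the quantitative open mapping step gives only
\[
\big|(\xi_2 - \xi_1) - ((x_{t_2} - iy_{t_2}) - (x_{t_1} - iy_{t_1}))\big| = O(\psi(t_1)/t_1),
\]
which is too weak when the zero barely moves. Instead I compare the differential equations directly. By \eqref{derivative_zero} and Lemma \ref{formulas_mif}(ii),
\[
\xi'(t) = -\frac{f(t)}{\theta_{E,z}(t, \xi(t))} = -(1+o(1))\,\frac{f(t)\,\overline{\alpha_t^2}\,\sin[2ity_t]}{t}.
\]
An analogous ODE holds for $x_t - iy_t$: differentiating the identity (iii) in $t$ and substituting $\partial_t E(t,s)$ from Lemma \ref{differential_equation} expresses $(x_t - iy_t)'$ in terms of $f(t), \alpha_t, y_t$ with the same leading-order right-hand side. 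The discrepancy between the two right-hand sides is of size $\psi(t)|f(t)|/t$, because $E$ and its approximant agree up to $\psi(t)$ on the box $Q(s, 3C(t)/t)$, so by Cauchy estimates on a slightly smaller box the same conclusion holds for all $z$-derivatives used to identify the two coefficient matrices. Integrating,
\[
\big|(\xi_2 - \xi_1) - ((x_{t_2} - iy_{t_2}) - (x_{t_1} - iy_{t_1}))\big| \ll \psi(t_1) \int_{t_1}^{t_2} \frac{|f(t)|}{t}\,dt \asymp \psi(t_1)\,|\xi_2 - \xi_1|,
\]
giving the first estimate in (iv); the second (for $\tilde{x}_t - iy_t$ and $\tilde{\xi}$) is verbatim with $\tilde{E}$ and $\cos$ in place of $E$ and $\sin$. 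The remaining comparison $|(\xi_2 - \xi_1) - (\tilde{\xi}_2 - \tilde{\xi}_1)| \leq \psi(t_1)|\xi_2 - \xi_1|$ follows from the same machinery: Lemma \ref{Lemma_8} ensures $y(t) = \tilde{y}(t)$ and that the approximants to $E$ and $\tilde{E}$ differ only by a $\pi/(2t)$ phase shift, so $\theta_{E,z}$ at $\overline{\xi}(t)$ and $\theta_{\tilde{E},z}$ at $\overline{\tilde{\xi}}(t)$ agree to leading order, and the difference of the two ODEs integrates to a bound of the required form.

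The main obstacle is the \emph{relative} nature of the error in (iv): the bound must be proportional to $|\xi_2 - \xi_1|$, not merely small in absolute terms. This rules out any crude pointwise zero-localization argument (which loses a factor of $t$) and forces the comparison to be performed at the level of the differential equations, so that the \emph{leading} coefficients match exactly and only the relative approximation error $\psi(t)$ survives after integration. The technically delicate point is thus to show that the ODE effectively governing $x_t - iy_t$ has leading coefficient identical (up to a factor $1 + o(1)$) to the one governing $\xi'(t)$; this is where one uses both the simultaneous normalization of $E$ and $\tilde{E}$ from Lemma \ref{Lemma_8} and the exact equalities at $z = s$ provided by (iii), together with the Cauchy estimates on a subbox of $Q(s, 3C(t)/t)$ which are available precisely because $A$ is a fixed constant while $C(t) \to \infty$.
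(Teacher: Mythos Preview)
Your identification of (iv) as the crux is correct, as is your diagnosis that pointwise zero-localization yields only an absolute $O(\psi(t_1)/t_1)$ error. Your route to (iv), however, differs from the paper's and has a genuine gap.

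Differentiating the identity (iii) in $t$ at the single point $z=s$ does not directly isolate $(x_t-iy_t)'$: the resulting equation couples $\alpha_t'$, $x_t'$, and $y_t'$. Only after also differentiating the $\tilde E$-identity and invoking constraint (ii) does one get a solvable $2\times2$ linear system. Solving it explicitly (via (ii) and trigonometric identities) one finds, writing $z_t=x_t-iy_t$,
\[
tz_t' \;=\; (s-z_t) \;+\; if(t)\,\overline{\alpha_t}^{\,2}\sinh(2ty_t),
\]
whereas \eqref{derivative_zero} together with Lemma \ref{formulas_mif}(ii) gives $t\xi'(t)=(1+o(1))\,if(t)\,\overline{\alpha_t}^{\,2}\sinh(2ty_t)$. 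The second terms match to the required accuracy, but the drift term $(s-z_t)/t$, of size $\asymp A/t^2$, has no counterpart in $\xi'(t)$; integrated over $[t_1,t_2]$ it contributes $\asymp A(t_2-t_1)/t_1^2$, which is not bounded by $\psi(t_1)|\xi_2-\xi_1|$ without an additional lower bound on $\int_{t_1}^{t_2}|f|$ relative to $(t_2-t_1)/t_1$. The Cauchy-estimate justification you offer controls only $z$-derivatives of $E$ versus the approximant and cannot detect this $t$-rescaling drift; the approximating sine is simply not a Dirac solution, so its zero has no reason to obey \eqref{derivative_zero}.

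The paper takes a different route: it introduces an auxiliary Hermite--Biehler function $S(t,z)$ solving the \emph{same} Dirac equation $\partial_tS=-izS+fS^\sharp$ with initial value $S(t_1,\cdot)$ equal to the approximating sine exactly. Because $S$ is a genuine solution, its zero $\zeta_t$ satisfies $(\overline{\zeta_t})'=-f/I_z(t,\overline{\zeta_t})$ with $I=S^\sharp/S$, and the Riccati equation \eqref{riccati} permits a Gr\"onwall-type comparison of $I_z(t,\overline{\zeta_t})$ with $\theta_z(t,\overline{\xi_t})$, giving $(\zeta_2-\zeta_1)-(\xi_2-\xi_1)=o(\xi_2-\xi_1)$ with no spurious drift. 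One then shows that all nearby zeros of $S$ move coherently, so $S(t_2,\cdot)$ is again close to a sine, and a final Lemma \ref{Lemma_9}-type adjustment at $t_2$ produces the data satisfying (iii) and (iv). Your treatment of the $\xi$ versus $\tilde\xi$ comparison via $\theta_{E,z}\approx\theta_{\tilde E,z}$ is, by contrast, essentially the paper's argument for that part.
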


	\begin{proof}
		Assume $s \in \R$ is like in Corollary \ref{corollary_5}. Then, $y_{t_1},x_{t_1},\tilde x_{t_1}$ and $\alpha_{t_1}$ satisfying $(i)-(iii)$ do exist. We will have to find $y_{t_2},x_{t_2},\tilde x_{t_2}$ and $\alpha_{t_2}$ such that $(iv)$ holds and $(i)$ and $(iii)$ remain true as well, for $t=t_2.$
		
		Recall that rewriting the real Dirac system \eqref{dirac_system} gave us the differential equation
		
		\begin{equation}\label{misc26}
		\frac{\partial}{\partial t} E(t,z)= -izE(t,z)+f(t)E^{\sharp}(t,z)
		\end{equation}
		
		\noindent
		 for the Hermite-Biehler function $E(t,z)=A(t,z)-iC(t,z).$ Let $S(t,z)$ be the solution to \eqref{misc26} for $t\in [t_1,t_2]$ satisfying the initial condition 
	 
	$$S(t_1,z)=\alpha_{t_1}\frac{\gamma(t_1y_{t_1})}{\sqrt{w(s)}} \sin[t_1(z-(x_{t_1}-iy_{t_1}))].$$
	
	Note that Lemma \ref{differential_equation} and Lemma \ref{differential_equation_2} can be formualted aswell for the Hermite-Biehler function $S(t,z)$ and hence the associated scattering function $\mathfrak{S}(t,z)=e^{itz}S(t,z)$ satisfies the differential equation
	
	\begin{equation}\label{misc35}
	\frac{\partial}{\partial t} \mathfrak{S}(t,z)=iz\mathfrak{S}(t,z)+e^{izt}\frac{\partial}{\partial t} E(t,z)=f(t)e^{2izt}\mathfrak{S}^{\sharp}(t,z).
	\end{equation}
	
	Hence, we have for all $x \in \R$ 
	
	$$\frac{\partial}{\partial t} |S(t,x)| \leq |S(t,x)||f(t)|.$$
	
	\noindent
	Indeed a simple calculation recalling that $\mathfrak{S}(t,z)=e^{itz}S(t,z)$ yields 
	
	$$\frac{d}{dt}|S(t,x)|=\frac{d}{dt}|\mathfrak{S}(t,x)|=\Re\bigg(\frac{S(t,x)\overline{\frac{d}{dt}S(t,x)}}{|S(t,x)|}\bigg)\overset{\eqref{misc35}}{=}\frac{\Re(e^{ixt}S(t,x)f(t)e^{-ixt}S(t,x))}{|S(t,x)|}$$
	
	$$=\frac{\Re(S^2(t,x)f(t))}{|S(t,x)|}\leq |S(t,x)||f(t)|.$$
	
	\noindent
	Integrating this inequality yields the pointwise bound
	
	\begin{equation}\label{misc27}
	|S(t,x)|\leq |S(t_1,x)|e^{\int_{t_1}^{t} |f(u)|du}.
	\end{equation}
	
	By assumption we have that 
	
	$$\int_{t_1}^{t_2} |f(t)|dt < \frac{1}{100}.$$
	
	\noindent
	Using this together with the inequality $e^{x}\leq 4(1+x)$ for $0\leq x \leq \frac{1}{100}$ gives 
	
	$$e^{\int_{t_1}^{t} |f(u)|du} \leq 4\bigg(1+\int_{t_1}^{t} |f(u)|du\bigg)\leq 8.$$
	
	Moreover, we have chosen the initial condition such that 
	
	$$|S(t_1,x)|=\bigg|\alpha_{t_1}\frac{\gamma(t_1y_{t_1})}{\sqrt{w(s)}} \sin[t_1(x-(x_{t_1}-iy_{t_1}))]\bigg|\leq \bigg|\frac{1}{\sqrt{w(s)}} \frac{\sqrt{2}}{\sqrt{\sinh[2t_1y_{t_1}]}}\sin[t_1(x-(x_{t_1}-iy_{t_1}))]\bigg|$$
	
	$$\leq \bigg|\frac{\sqrt{2}}{\sqrt{w(s)}} \frac{\sin[t_1(x-x_{t_1})]\cosh(t_1y_{t_1})+i\cos[t_1(x-x_{t_1})]\sinh(t_1y_{t_1})}{\sqrt{\sinh(2t_1y_{t_1})}}\bigg|$$
	
	\noindent
 	Recall the following limits
 	
 	$$\lim_{t \rightarrow \infty} \frac{\cosh t}{\sqrt{\sinh t}}=\frac{1}{\sqrt{2}}, \quad \lim_{t \rightarrow \infty} \frac{\sinh t}{\sqrt{\sinh t}}=\frac{1}{\sqrt{2}}.$$
 	
 	\noindent
 	Inserting these limits in the last formula together with the fact that $t_1y_{t_1}^>1$ gives us for large $t_1$ that
 	
 	$$|S(t_1,x)|\leq \frac{\sqrt{2}}{\sqrt{w(s)}} \sqrt{2\bigg(\frac{1}{\sinh 2}\bigg)^2}< \frac{1}{\sqrt{w(s)}}.$$
 	
 	\noindent
 	Combining these estimates with \eqref{misc27} yields the bound
 	
 	$$|S(t,x)|\leq |S(t_1,x)|e^{\int_{t_1}^{t} |f(u)|du}\leq \frac{8}{\sqrt{w(s)}}.$$
 	
 	By Lemma \ref{differential_equation_2} we have for all $t\in (t_1,t_2)$ and $x\in \R$ that
 	
 	$$|\mathfrak{S}(t_1,x)-\mathfrak{S}(t,x)|=\bigg|e^{it_1x}S(t_1,x)-e^{itx}S(t,x)\bigg|=\int_{t_1}^{t}\frac{d}{dt}|\mathfrak{S}(t,x)|=\int_{t_1}^{t}\frac{d}{dt}|S(t,x)|$$
 	$$\leq \int_{t_1}^{t} |S(t,x)||f(t)|\leq \frac{8}{\sqrt{w(s)}}\int_{t_1}^{t} |f(u)|du.$$
 	
 	Since $\mathfrak{S}(t,z)$ is outer and together with the maximum principle for the upper half plane this implies that for all $z\in \C_+$ we have
 	
 	$$|\mathfrak{S}(t_1,z)-\mathfrak{S}(t,z)|\leq \frac{8}{\sqrt{w(s)}}\int_{t_1}^{t} |f(u)|du.$$
 	
 	\noindent
 	We are going to use this bound, to bound the distance of two zeros of $S(t,z).$ We denote by $\zeta_{t_1}=\zeta_1$ the zero of $S(t_1,z)$ inside $Q(s,A/t_1)$ that is closest to $\xi_1.$ If there are more than one at the same distance, we pick one of them. Denote by $\xi_t$ and $\zeta_t$ the zeros of $E(t,z)$ and $S(t,z)$ evolving from $\xi_1$ and $\zeta_1,$ respectively. 
 	
 	From the previous bound we find
 	
 	$$|\mathfrak{S}(t_1,\zeta_t)-\mathfrak{S}(t,\zeta_t)|=|\mathfrak{S}(t_1,\zeta_t)|\leq \frac{8}{\sqrt{w(s)}}\int_{t_1}^{t} |f(u)|du.$$
 	
 	Hence, we find a constant $C_1,$ depending only on $s$ and $A$ such that
 	
 	\begin{equation}\label{misc28}
 	|\zeta_1-\zeta_t|\leq \frac{C_1}{t_1} \int_{t_1}^{t_2} |f(u)|du.
 	\end{equation}
 	
 	\noindent
 	This inequality holds in fact for any zero $\nu_1$ of $S(t_1,z)$ and its evolution $\nu_t$ as the same calculation shows. Let now $z_{t_1}$ be any zero of $E(t_1,z)$ in $Q(s,A/t_1)$ and $z_t$ its evolution. This includes the choice $z_t=\xi_t.$ Using \eqref{derivative_zero} and Lemma \ref{formulas_mif} we have that $|\theta_z(t,z_t)|\asymp t$ for $t\in [t_1,t_2]$ and 
 	
 	\begin{equation}\label{misc29}
 	|z_1-z_t|=\bigg|\int_{t_1}^{t} \frac{f(t)}{\theta_z(t,z_t)}\bigg|\leq \frac{C_2}{t_1} \int_{t_1}^{t_2} |f(u)|du
 	\end{equation}
 	
 	\noindent
 	for some constant $C_2,$ depending only on $s$ and $A.$ Furthermore, we can relate $\xi_1$ and $\zeta_1$ with the help of $(i).$ From
 	
 	$$\sup_{z \in Q(s,3C(t_1)/t_1)} \bigg|E(t_1,z)-\frac{\alpha_{t_1}\gamma(t_1y_{t_1})}{\sqrt{w(s)}} \sin[t_1(z-(x_{t_1}-iy_{t_1}))]\bigg|<\psi(t_1),$$
 	
 	\noindent
 	we can first of all assume without loss of generality that $\zeta_1=x_{t_1}-iy_{t_1},$ if not we can just change to $\zeta_1,$ since all zeros of the approximating function have the same imaginary part and the real parts differ by $2\pi.$ By periodicity of $\sin(z)$ this does not change the approximation. Then, by plugging in $z=\xi_1$ we find
 	
 	$$\bigg|E(t_1,\xi_1)-\frac{\alpha_{t_1}\gamma(t_1y_{t_1})}{\sqrt{w(s)}} \sin[t_1(\xi_1-(x_{t_1}-iy_{t_1}))]\bigg|=\bigg|\frac{\gamma(t_1y_{t_1})}{\sqrt{w(s)}} \sin[t_1(\xi_1-\zeta_1)]\bigg|<\psi(t_1)$$
 	
 	\noindent
	Since $\psi(t_1) \rightarrow 0,$ as $t_1\rightarrow \infty,$ we must have because of $1<t_1y_{t_1}<A$ and the fact that $\zeta_1$ was chosen as the zero of $S(t_1,z)$ closest to $\xi_1$ that
	
	\begin{equation}\label{misc30}
	\xi_1=\zeta_1+o(1/t_1).
	\end{equation}
	
	Now the equations \eqref{misc28}, \eqref{misc29}, and \eqref{misc30} imply immediately 
	
	\begin{equation}\label{misc60}
	|\xi_t-\zeta_t|< \frac{C_3}{t_1} \int_{t_1}^{t_2} |f(u)| du + o(1/t_1)
	\end{equation}
	
	\noindent
	for some constant $C_3$ depending only on $s$ and $A.$
	
	Recall that we can associate to any Hermite-Biehler function $E$ a meromorphic inner function by 
	
	$$\theta(t,z)=\frac{E^{\sharp}(t,z)}{E(t,z)}.$$
	
	\noindent
	We will denote by $I(t,z)$ the meromorphic inner function associated to $S(t,z).$ We have he estimate
	
	$$|\theta_z(t,\overline{\xi}_1)-I_z(t_1,\overline{\zeta}_1)|=o(t_1).$$
	
	\noindent
	Indeed, looking at the initial condition of $S(t_1,z)$ this is nothing, but Lemma \ref{formulas_mif} $(i).$
	
	Furthermore, since $E$ satisfies $(i)$ at $t=t_1$ we can apply Lemma \ref{formulas_mif} and get
	
	\begin{equation}\label{misc61}
	|\theta_z(t,\overline{\xi}_t)| \asymp t, \quad \bigg|\frac{\theta_{zz}(t,\overline{\xi}_t)}{\theta_z(t,\overline{\xi}_t)}\bigg|\ll t.
	\end{equation}
	
	By \eqref{misc60} the zeros of $E(t,z)$ and $S(t,z)$ are close and thus one can show by looking at the product representation of $\theta(t,z)$ and $I(t,z)$ that
	
	\begin{equation}\label{misc31}
	|I_z(t,\overline{\zeta}_t)| \asymp t, \quad \bigg|\frac{I_{zz}(t,\overline{\zeta}_t)}{I_z(t,\overline{\zeta}_t)}\bigg|\ll t.
	\end{equation}
	
	We now find by \eqref{riccati} that for all $t\in (t_1,t_2)$
	
	\begin{equation}\label{misc39}
	|I_z(t,\overline{\zeta}_t)-\theta_z(t,\overline{\xi}_t)|\ll t\bigg(\int_{t_1}^{t_2} |f(u)|du +o(1)\bigg).
	\end{equation}
	
	Indeed, by equation \eqref{riccati}
	
	$$\frac{d}{dt}\theta_z(t,\overline{\xi}_t)=g(t)\theta_z(t,\overline{\xi}_t),$$
	
	where
	
	$$g(t)=2i\overline{\xi}_t-f(t)\frac{\theta_{zz}(t,\overline{\xi}_t)}{\theta_z^2(t,\overline{\xi}_t)}.$$
	
	We then write 
	
	$$\theta_z(t,\overline{\xi}_t)=\theta_z(t_1,\overline{\xi}_1)e^{\int_{t_1}^{t_2}g(t) dt}.$$
	
	Similarly,
	
	$$I_z(t,\overline{\zeta}_t)=I_z(t_1,\overline{\zeta}_1)e^{\int_{t_1}^{t_2} h(t) dt}, \quad h(t)=2i\overline{\zeta}_t+f(t) \frac{I_{zz}(t,\overline{\zeta}_t)}{I_z^2(t,\overline{\zeta}_t)}.$$
	
	Therefore, using the above established bounds \eqref{misc60}, \eqref{misc61}, and \eqref{misc31} we find
	
	$$|I_z(t,\overline{\zeta}_t)-\theta_z(t,\overline{\xi}_t)|\leq |\theta_z(t_1,\overline{\xi}_1)-I_z(t_1,\overline{\zeta}_1)|e^{\int_{t_1}^{t}|g| du}+ 
		|I_z(t_1,\overline{\zeta}_1)|\big|e^{\int_{t_1}^{t}g(u)du}-e^{\int_{t_1}^{t}h(u)du}\big|$$
		
	$$\ll o(1)t_1+ t \int_{t_1}^{t} |g-h|du \ll t \bigg( \int_{t_1}^{t_2} |f(t)| dt +o(1) \bigg).$$
	
	\noindent
	This estimate combined with \eqref{derivative_zero} immediately gives a bound on the difference of the velocities.
	
	\begin{equation}\label{misc47}
	|(\zeta_t)'(t)-(\xi_t)'(t)|=\bigg|\frac{f(t)}{I_z(t,\overline{\zeta}_t)}-\frac{f(t)}{\theta_z(t,\overline{\xi}_t)}\bigg|=|f(t)|\bigg|\frac{\theta_z(t,\overline{\xi}_t)-I_z(t,\overline{\zeta}_t)}{\theta_z(t,\overline{\xi}_t)I_z(t,\overline{\zeta}_t)}\bigg|$$
	$$\ll t\bigg(\int_{t_1}^{t_2}|f(u)| du +o(1)\bigg) \frac{|f(t)|}{|\theta_z(t,\overline{\xi}_t)||I_z(t,\overline{\zeta}_t)|}\ll \bigg(\int_{t_1}^{t_2}|f(u)| du +o(1)\bigg) |(\xi_t)'(t)|.
	\end{equation}
	
	From \eqref{riccati} we infer
	
	\begin{equation}\label{misc45}
	|\arg \theta_z(t,\overline{\xi}_t)-\arg \theta_z(t_1,\overline{\xi}_{t_1})| < \frac{\pi}{2}.
	\end{equation}
	
	Indeed, by the same trick as in the proof of Lemma \ref{Lemma_7}
	$$|\arg \theta_z(t,\overline{\xi}_t)-\arg \theta_z(t_1,\overline{\xi}_{t_1})|\leq\bigg|\int_{t_1}^{t_2} \frac{d}{dt} \arg \theta_{z}(t,\overline{\xi}_t) dt\bigg|\leq \bigg|\int_{t_1}^{t_2} s+(\Re\xi_t-s)+f(t)A(t) dt\bigg|$$
	
	$$\leq (t_2-t_1)|s|+A/t_1+\int_{t_1}^{t_2} |f(t)||A(t)|dt\leq1+o(1/t)+\frac{3\cosh[2A]}{100\cdot3\cosh[2A]}<\frac{\pi}{2}.$$
	
	Since $(t_1,t_2)$ is a $\sigma-$interval we deduce
	
	\begin{equation}\label{misc40}
	(\zeta_2-\zeta_1)-(\xi_2-\xi_1)=o(\xi_2-\xi_2).
	\end{equation}
 
 	Let now $\nu_{t_1}$ be any zero of $S(t_1,z)$ and $\nu_t$ its evolution. Then, by \eqref{misc28} and the property that $(t_1,t_2)$ is a $\sigma-$interval and $H_s-$interval, and $t_2-t_1\leq 1$ we find 
 	
 	\begin{equation}\label{misc32}
 	|\nu_{t_2}-\nu_{t_1}|\leq \frac{C_1}{t_1} \int_{t_1}^{t_2} |f(u)|du \leq \frac{2C_1}{t_2} \int_{t_1}^{t_2} |f(u)|du \ll \bigg| \int_{t_1}^{t_2} \frac{f(u)}{|\theta_z(u,\tilde{\xi}_u)|} du \bigg|\leq|\xi_2-\xi_1|.
 	\end{equation}
 	
 	Without loss of generality we assume $C(t)<\sqrt{t}.$ Then, all zeros of 
 	
 	$$I(t_1,z)=\overline{\alpha_{t_1}}^2\frac{\sin[t_1(z-(x_{t_1}+iy_{t_1}))]}{\sin[t_1(z-(x_{t_1}-iy_{t_1}))]}$$
 	
 	\noindent
 	have the same imaginary part and their real parts differ by at most $C(t_1)/t_1<1/\sqrt{t_1}.$ Hence, we find that their velocities differ at most by 
 	
 	\begin{equation}\label{misc50}
 	\ll t\bigg(\frac{1}{\sqrt{t}}+\int_{t_1}^{t_2} |f(u)|du\bigg) =o(|(\zeta_t)'(t)|).
 	\end{equation}

	Again, we find that their arguments $\arg[(\nu_t)'(t)]$ change by less than $\frac{\pi}{2}$  and thus we get 
	
	\begin{equation}\label{misc51}
	(\nu_{t_2}-\nu_{t_1})-(\xi_2-\xi_1)=o(\xi_2-\xi_1).
	\end{equation}
	
	This estimate together with \eqref{misc32} implies that there is a $D_1\in \C$ such that for all $z\in Q(s,A/t)$ we have 
	
	\begin{equation}\label{misc52}
	|S(t_2,z)-D_1\sin[t_2(z-(\zeta_1+(\xi_2-\xi_1)))]=o(t_2(\xi_2-\xi_1)).
	\end{equation}
	
	\noindent
	Indeed, one can show this by a standard argument comparing the two product representations and using that the zeros are close to each other. By the usual trick we can improve this to 
	
	$$\sup_{z \in Q(s,C(t)/t)} |S(t_2,z)-D_1\sin[t_2(z-(\zeta_1+(\xi_2-\xi_1)))]=o(t_2(\xi_2-\xi_1))$$
	
	\noindent
	by making $C(t)$ grow slower if necessary. We repeat the same argument for $\tilde{E}$ by defining $\tilde S(t,z)$ to be the solution of \eqref{misc26} with initial condition
	
	$$\tilde S(t_1,z)= \alpha_1 \frac{\gamma(t_1y_{t_1})}{\tilde{w}(s)} \cos[t_1(z-(\tilde x_{t_1}-iy_{t_1}))].$$
	
	Once again, we define $\tilde \zeta_1$ to be the zero of $\tilde E(t_1,z)$ closest to $\tilde x_1.$ By the exact same argument we obtain, making $C(t)$ grow slower if necessary that
	
	$$\sup_{z \in Q(s,C(t)/t)} |\tilde S(t_2,z)-D_2\cos[t_2(z-(\tilde \zeta_1+(\tilde\xi_2-\tilde\xi_1)))]=o(t_2(\tilde\xi_2-\tilde\xi_1)).$$

	Since $E$ and $\tilde E$ satisfy $(i)$ at $t=t_1$ we find as above with $\theta_z(t_1,\xi_t)$ and $I_z(t_1,\zeta_1)$ that
	
	$$|\theta_z(t_1,\overline{\xi}_1)-\tilde \theta_z(t_1,\overline{\tilde\xi}_1)|=o(t).$$
	
	Since $|\xi_1-\tilde\xi_1|\ll \frac{1}{t}$ and $\theta_z(t,\xi_t), \tilde \theta_z(t,\tilde\xi_t)$ satisfy \eqref{riccati} we find by using the same argument we used to show \eqref{misc39}that for all $t\in[t_1,t_2]$
	
	$$|\theta_z(t,\overline{\xi}_1)-\tilde \theta_z(t,\overline{\tilde\xi}_1)|=o(t).$$
	
	Hence, using \eqref{derivative_zero} one more time we have by the same calculation we used to prove \eqref{misc40} that
	
	$$(\xi_2-\xi_1)-(\tilde \xi_2-\tilde \xi_1)=o(\xi_2-\xi_1).$$
	
	Thus, we can replace $(\tilde \xi_2-\tilde \xi_1)$ by $(\xi_2-\xi_1)$ in the approximation above to obtain
	
	$$\sup_{z \in Q(s,C(t)/t)} |\tilde S(t_2,z)-D_2\cos[t_2(z-(\tilde \zeta_1+(\xi_2-\xi_1)))]=o(t_2(\xi_2-\xi_1)).$$
	
	The solution to \eqref{misc26} is unique and thus
	
	$$S(t_2,s)=E(t_2,s), \quad \tilde S(t_2,s)=\tilde E(t_2,s).$$
	
	Combining this with earlier estimates gives by the same calculation as in the proof of Lemma \ref{formulas_mif} $(i)$ that
	
	$$\frac{E(t_2,s)}{\tilde E(t_2,s)}=\frac{\sin[t_2(s-(\zeta_1+(\xi_2-\xi_1)))]}{\cos[t_2(s-(\tilde \zeta_1+(\xi_2-\xi_1)))]}+o(t_2(\xi_2-\xi_1)).$$

	\noindent
	Note that in particular we have at least $o(t_2(\xi_2-\xi_1))=o(1).$ Hoewever, generally $o(t_2(\xi_2-\xi_1))$ might be much smaller than $o(1).$ By the same way as in the proof of Lemma \ref{Lemma_9}  we find a constant $\Delta=o(1)(\xi_2-\xi_1)$ such that 
	 
	 \begin{equation}\label{misc33}
	 \frac{E(t_2,s)}{\tilde E(t_2,s)}=\frac{\sin[t_2(s-(\zeta_1+(\xi_2-\xi_1)+\Delta))]}{\cos[t_2(s-(\tilde \zeta_1+(\xi_2-\xi_1)+\Delta))]}.
	 \end{equation}
	 
	 Note that we still have
	 
	 $$\sup_{z \in Q(s,A/t)} |S(t_2,z)-D_1\sin[t_2(z-(\zeta_1+(\xi_2-\xi_1)+\Delta))]=o(t_2(\xi_2-\xi_1)),$$
	 
	 $$\sup_{z \in Q(s,A/t)} |\tilde S(t_2,z)-D_2\cos[t_2(z-(\tilde \zeta_1+(\xi_2-\xi_1)+\Delta))]=o(t_2(\xi_2-\xi_1)).$$
	 
	 We set
	 
	 $$x_{t_2}=\Re[\zeta_1+(\xi_2-\xi_1)+\Delta], \quad \tilde x_{t_2}=\Re[\tilde \zeta_1+(\xi_2-\xi_1)+\Delta]$$
	 
	 $$y_2=-\Im[\zeta_1+(\xi_2-\xi_1)+\Delta]=-\Im[\tilde \zeta_1+(\xi_2-\xi_1)+\Delta].$$
	 
	 We still need to show that $(ii)$ and $(iii)$ hold. Property $(ii)$ follows from the construction. We will now prove $(iii).$ Note that since $S$ and $\tilde S$ must satisfy a version of Corollary \ref{determinant_2} we can choose $D_1$ and $D_2$ so that
	 
	 \begin{equation}\label{misc34}
	 D_1\overline{D}_2>0, \quad |D_1D_2|=\frac{\gamma^2(t_2y_2)}{\sqrt{w(s)\tilde w(s)}}.
	 \end{equation}
 
	Indeed, if we want to have that $E(t_2,s)=S(t_2,s)$ and $\tilde E(t_2,s)=\tilde S(t_2,s)$ we must have
	
	$$\det \begin{pmatrix}
	D_1\sin[t_2(z-(\zeta_1+(\xi_2-\xi_1)+\Delta))] && D_2 \cos[t_2(z-(\tilde \zeta_1+(\xi_2-\xi_1)+\Delta))] \\
	\overline D_1 \sin[t_2(z-(\overline\zeta_1+(\overline\xi_2-\overline\xi_1)+\Delta))] && \overline D_2 \cos[t_2(z-(\overline{\tilde \zeta}_1+(\overline\xi_2-\overline\xi_1)+\Delta))]
	\end{pmatrix}=2i.$$

	\noindent
	Let $z,w \in \C.$ Then, we have 
	
	$$\sin(z)\cos(\overline{w})-\sin(\overline{z})\cos(w)=\frac{\sin(z-\overline{w})}{2}+\frac{\sin(z+\overline{w})}{2}-\frac{\sin(\overline{z}+w)}{2}-\frac{\sin(\overline{z}-w)}{2}$$
	
	$$=i(\Im \sin(z-\overline{w})+\Im \sin(z+\overline{w}))$$
	
	\noindent
	Now we set
	
	$$z=t_2(z-(\zeta_1+(\xi_2-\xi_1)+\Delta)), \quad w=t_2(z-(\tilde \zeta_1+(\xi_2-\xi_1)+\Delta)).$$
	
	\noindent
	Then,
	
	$$\Im \sin (z-\overline{w})=\Im\sin(t_2(\overline{\tilde{ \zeta_1}}-\zeta_1-2i\Im(\xi_2-\xi_1)+2i\Im \Delta))$$
	$$=\cos(t_2\Re(\tilde \zeta_1-\zeta_1))\sinh(2y_1-2\Im(\xi_2-\xi_1)+2\Im \Delta)$$
	$$=\sqrt{w(s)\tilde w(s)}\sinh[2t_2y_2],$$
	
	\noindent
	where we used property $(ii)$ and the definition of $y_2$ in the last step. Furthermore
	
	$$\Im(\sin(z+\overline{w}))=\Im \sin(t_2(2s-(\xi_1+\tilde \xi_1)-2\Re(\xi_2-\xi_1))-2\Re \Delta)=0.$$
	
	\noindent
	Hence, 	
	
		$$\det \begin{pmatrix}
	D_1\sin[t_2(z-(\zeta_1+(\xi_2-\xi_1)+\Delta))] && D_2 \cos[t_2(z-(\tilde \zeta_1+(\xi_2-\xi_1)+\Delta))] \\
	\overline D_1 \sin[t_2(z-(\overline\zeta_1+(\overline\xi_2-\overline\xi_1)+\Delta))] && \overline D_2 \cos[t_2(z-(\overline{\tilde \zeta}_1+(\overline\xi_2-\overline\xi_1)+\Delta))]
	\end{pmatrix}$$
	$$=i\sqrt{w(s)\tilde w(s)}\sinh[2t_2y_2].$$
	
	Thus, $D_1$ and $D_2$ have to satisfy as claimed
	
	$$ D_1\overline{D}_2>0, \quad |D_1D_2|=\frac{\gamma^2(t_2y_2)}{\sqrt{w(s)\tilde w(s)}}.$$
	 
	 If the point $s_1$ is satisifies $t_1s_1=t_1(s+(\tilde x_1-x_1))-\frac{\pi}{2}$ we get that 
	 
	 \begin{equation}\label{misc38}
	 \tilde S(t_1,s_1)=\sqrt{\frac{w(s)}{\tilde{w}(s)}}S(t_1,s).
	 \end{equation}
	 
	 By property $(ii)$ we have $|s-s_1|\leq |\tilde{x}_1-x_1|+\frac{\pi}{2t_1}<\frac{\pi}{t_1}\leq \frac{2\pi}{t_2}$ and for all $t\in (t_1,t_2)$
	 
	 $$\arg \frac{\tilde S(t,s_1)}{S(t,s)}<|s-s_1|(t_2-t_1)+2\int_{t_1}^{t_2}|f(u)|du< \frac{2\pi}{t_2}+2\int_{t_1}^{t_2} |f(u)|du.$$
	 
	 Indeed, as in the proof of Lemma \ref{Lemma_7} we find that 
	 
	 $$\frac{d}{dt} \arg\bigg(\frac{\tilde S(t,s_1)}{S(t,s)}\bigg)$$
	 
	 \noindent
	 can be calculated using $\proj_{iS(t,s)} \frac{d}{dt} S(t,s).$ By Lemma \ref{differential_equation} we find that 
	 
	 $$\proj_{iS(t,s)} \frac{d}{dt} S(t,s)=-\proj_{iS(t,s)} isS(t,s) +f(t) \proj_{iS(t,s)} \overline{S(t,s)}$$
	 $$=\bigg(-s+\frac{2A(t,s)C(t,s)}{|S(t,s)|^2}\bigg)iS(t,s).$$
	 
	 \noindent
	 Now by the same argument as in the proof of Lemma \ref{Lemma_7} we have
	 
	 $$\frac{d}{dt} \arg S(t,s)=-s+\frac{2F(t,s)G(t,s)}{|S(t,s)|^2},$$
	 
	 \noindent
	 where we wrote $S(t,z)=F(t,z)-iG(t,z).$ Note as well that $2F(t,s)G(t,s)\leq 4(F(t,s)^2+G(t,s)^2)=4|S(t,s)|^2.$
	 
	 By \eqref{misc38} we find $\arg(\tilde S(t,_1s_1))=\arg(S(t,s))$ and thus, as desired 
	 
	 $$\bigg|\arg \frac{\tilde S(t,s_1)}{S(t,s)}\bigg|=\bigg|\int_{t_1}^{t_2} \frac{d}{dt} \arg\bigg(\frac{\tilde S(t,s_1)}{S(t,s)}\bigg) dt \bigg| \leq |s-s_1|(t_2-t_1)+4\int_{t_1}^{t_2} |f(t)|dt.$$
	 
	 For the absolute values, taking into account the initial condition 
	 
	 $$|\tilde S(t_1,s_1)|=\sqrt{\frac{\tilde{w}(s)}{w(s)}}|S(t_1,s)|=D,$$
	 
	 \noindent
	 we have by applying twice the mean value Theorem for all $t\in[t_1,t_2]$
	 
	 $$\bigg||\tilde{S}(t,s_1)|-\sqrt{\frac{\tilde w(s)}{w(s)}}|S(t,s)|\bigg|=D\bigg|e^{\int_{t_1}^{t} f(u)\cos[2\arg \tilde S(u,s_1)]du}-e^{\int_{t_1}^{t} f(u)\cos[2\arg S(u,s)] du}\bigg|$$
	 
	 $$\ll \int_{t_1}^{t} |f(u)| \cdot \big|\cos[2\arg \tilde S(u,s_1)]-\cos[2\arg S(u,s)]\big| du$$
	 $$\ll \int_{t_1}^{t} |f(u)|\cdot|\arg(\tilde S(u,s_1))-\arg(S(u,s))|du$$
	 $$\ll \bigg(\frac{1}{t_2}+\int_{t_1}^{t_2}|f(u)|du\bigg)\int_{t_1}^{t_2}|f(u)|du.$$
	 
	 Note that we used in the first application of the mean value Theorem that 
	 
	 $$\int_{t_1}^{t_2} |f(t)|dt < \frac{1}{100\cosh[2A]},$$
	 
	 \noindent
	 so that the bound is not dependent on the interval $(t_1,t_2).$ In particular we showed
	 
	 \begin{equation}\label{misc54}
	 \bigg||\tilde{S}(t_2,s_1)|-\sqrt{\frac{\tilde w(s)}{w(s)}}|S(t_2,s)|\bigg| \ll \bigg(\frac{1}{t_2}+\int_{t_1}^{t_2}|f(u)|du\bigg)\int_{t_1}^{t_2}|f(u)|du.
	 \end{equation}
	 
	 Combining this estimate with \eqref{misc34} now implies that the constants $D_1,D_2$ can be chosen so that 
	 
	 \begin{equation}\label{misc53}
	 D_1=\alpha\frac{\gamma(t_2y_2)}{\sqrt{w(s)}}, \quad D_2=\alpha\frac{\gamma(t_2y_2)}{\tilde{w}(s)}
	 \end{equation}
	  
	 \noindent
	 for some unimodular constant $\alpha.$ Using trigonometric identities we find as above
	 
	 $$\det \begin{pmatrix}
	 \alpha\frac{\gamma(t_2y_2)}{\sqrt{w(s)}}\sin[t_2(z-(\zeta_1+(\xi_2-\xi_1)+\Delta))] && \alpha\frac{\gamma(t_2y_2)}{\tilde{w}(s)} \cos[t_2(z-(\tilde \zeta_1+(\xi_2-\xi_1)+\Delta))] \\
	\alpha\frac{\gamma(t_2y_2)}{\sqrt{w(s)}} \sin[t_2(z-(\overline\zeta_1+(\overline\xi_2-\overline\xi_1)+\Delta))] && \alpha\frac{\gamma(t_2y_2)}{\tilde{w}(s)} \cos[t_2(z-(\overline{\tilde \zeta}_1+(\overline\xi_2-\overline\xi_1)+\Delta))]
	 \end{pmatrix}=2i.$$
	 
	 Combining Corollary \ref{determinant_2} and Lemma \ref{determinant_3} we find
	 
	 $$|E(t_2,s)|=\bigg|\alpha\frac{\gamma(t_2y_2)}{\sqrt{w(s)}}\sin[t_2(z-(\zeta_1+(\xi_2-\xi_1)+\Delta))]\bigg|,$$
	 
	 $$|\tilde E(t_2,s)|=\bigg|\alpha\frac{\gamma(t_2y_2)}{\sqrt{w(s)}} \sin[t_2(z-(\overline\zeta_1+(\overline\xi_2-\overline\xi_1)+\Delta))]\bigg|.$$
	 
	 We can now modify $\alpha$ such that $(iii)$ holds and define it to be $\alpha_{t_2}.$
	\end{proof}

	\subsection{Pointwise convergence of the non-linear Fourier transform}
	In this section, we will finally prove Theorem \ref{non-linear carleson}. We begin with the following result.
	\begin{theorem}\label{measure4}
		$|S|=0$
	\end{theorem}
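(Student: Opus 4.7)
The plan is to combine Corollaries \ref{claim_1} and \ref{claim_3} with the decomposition of $S$ established at the end of Section 3.4 and then use the fact that $f \in L^2(\R_+)$ to send a tail to zero. Recall that for every $k \geq 1$ we have the inclusion
$$S \subset \bigcup_{n=k}^{\infty} S_V^n \cup \bigcup_{n=k}^{\infty} S_H^n,$$
so monotonicity and countable subadditivity of Lebesgue measure give
$$|S| \leq \sum_{n=k}^{\infty} |S_V^n| + \sum_{n=k}^{\infty} |S_H^n|.$$

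Applying Corollary \ref{claim_1} to each $|S_V^n|$ and Corollary \ref{claim_3} to each $|S_H^n|$ (with the same constant $D > 0$, after shrinking $D$ if necessary so that both bounds hold simultaneously), I obtain
$$|S| \leq \frac{2}{D}\sum_{n=k}^{\infty} \|f\|_{L^2([2^n,2^{n+2}])}^2.$$
The intervals $[2^n,2^{n+2}]$ overlap pairwise but each point of $\R_+$ lies in at most two of them, so that the sum on the right is bounded by $\frac{4}{D}\|f\|_{L^2([2^k,\infty))}^2$, giving
$$|S| \leq \frac{4}{D}\|f\|_{L^2([2^k,\infty))}^2.$$

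Since $f \in L^2(\R_+)$, the tail norm $\|f\|_{L^2([2^k,\infty))}^2 \to 0$ as $k \to \infty$ by Dominated Convergence. As the left hand side is independent of $k$, this forces $|S|=0$. No real obstacle should arise here, since all the heavy lifting has already been done in the preceding sections; the only points requiring a little care are the double-counting constant coming from the overlap of the dyadic intervals and the earlier reduction (made without loss of generality in Section 3.3 and at the start of Section 3.4) that let us assume $|S|<\infty$ to begin with, so that the subadditivity estimate and the finiteness of the right hand side are meaningful.
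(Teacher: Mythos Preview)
Your proof is correct and follows essentially the same approach as the paper: both use the inclusion $S \subset \bigcup_{n\geq k} S_V^n \cup \bigcup_{n\geq k} S_H^n$, apply Corollaries~\ref{claim_1} and~\ref{claim_3}, and then let $k\to\infty$ to kill the tail of the resulting convergent series. Your version is arguably cleaner in that you bound directly by $\|f\|_{L^2([2^k,\infty))}^2$ and handle the constants in the right direction, whereas the paper phrases the last step as ``the tail of a convergent series tends to zero.''
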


	\begin{proof} By definition we have for all $k\geq 1$ that
		$$S\subset \bigcup_{n=k}^{\infty} S^n \subset \bigcup_{n=k}^{\infty} S_V^n \cup \bigcup_{n=k}^{\infty} S_H^n.$$
		
		\noindent
		Combining this with Corollary \ref{claim_1} and Corollary \ref{claim_3} we find
		
		$$|S|\leq\sum_{n=k}^{\infty} |S^n|=\sum_{n=k}^{\infty} |S_V^n| + \sum_{n=k}^{\infty} |S_H^n| \leq 2D \sum_{n=1}^{\infty} ||f||_{L^2([2^n,2^{n+2}])}^2=2D||f||_{L^2(\R)}^2<\infty.$$
		
		\noindent
		Since each $s \in S$ belongs to infinitely many $|S^n|,$ we can estimate $|S|$ against the tail of the first series, which tends to zero since it converges.
	\end{proof}
	
	We state once again the non-linear version of Carleson's Theorem.
	
	\begin{theorem*}[Non-linear Carleson's theorem]
		Let $f$ be in $L^2(\R_+).$ Then, for almost all $s\in \R$ 
		
		$$\lim_{T\rightarrow \infty} f_{T}^{\dagger}(s) = f^{\dagger}(s).$$  
	\end{theorem*}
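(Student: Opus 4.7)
The plan is to reduce the existence of $\lim_{T\to\infty}f_T^\dagger(s)$ to the exponential-type asymptotics of Section 3.2, now that Theorem \ref{measure4} has eliminated the bad set on which zeros of $E(t,z)$ remain close to the real line.

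First I would apply Theorem \ref{measure4} both to $E(t,z)$ and, by the symmetric argument, to $\tilde E(t,z)$, and intersect the resulting conull sets with the full-measure set on which $w(s)\tilde w(s)\neq 0$ (Theorem \ref{spectral_measure}) and the approximations of Corollary \ref{corollary_5} are valid. For any such $s$, I can select an increasing $C(t)\to\infty$ slowly enough that for all sufficiently large $t$ the box $Q(s,C(t)/t)$ contains no zero of either $E(t,z)$ or $\tilde E(t,z)$. Thus $t\in T_1(s,C(t))\cap\tilde T_1(s,C(t))$ with $ty(t)\to\infty$, and Corollary \ref{corollary_5} together with Lemma \ref{Lemma_8} supplies a common unimodular $\alpha(t)$ and real functions $x(t),\tilde x(t),y(t)$ such that
\[
E(t,s) \;=\; \frac{\alpha(t)\gamma(ty(t))}{\sqrt{w(s)}}\sin[t(s-(x(t)-iy(t)))]+o(1),
\]
\[
\tilde E(t,s) \;=\; \frac{\alpha(t)\gamma(ty(t))}{\sqrt{\tilde w(s)}}\cos[t(s-(\tilde x(t)-iy(t)))]+o(1).
\]

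The computational heart of the argument is then to let $ty(t)\to\infty$ and use the asymptotics $\gamma(ty)\sinh(ty),\gamma(ty)\cosh(ty)\to 1/\sqrt 2$ to collapse the $\sin$ and $\cos$ into exponentials. After multiplying by $e^{its}$, the scattering values $\mathcal{E}(t,s)$ and $\tilde{\mathcal{E}}(t,s)$ reduce to bounded unimodular quantities of the form $\tfrac{i\alpha(t)e^{itx(t)}}{\sqrt{w(s)}}+o(1)$ and $\tfrac{\alpha(t)e^{it\tilde x(t)}}{\sqrt{\tilde w(s)}}+o(1)$, respectively. Substituting into $2a=\mathcal{E}+i\tilde{\mathcal{E}}$ and $2b=\mathcal{E}-i\tilde{\mathcal{E}}$, the common factor $i\alpha(t)$ cancels in the quotient and a short rearrangement gives
\[
\frac{b(t,s)}{a(t,s)} \;=\; \frac{\sqrt{\tilde w(s)}\,e^{-i\phi(t,s)/2}-\sqrt{w(s)}\,e^{i\phi(t,s)/2}}{\sqrt{\tilde w(s)}\,e^{-i\phi(t,s)/2}+\sqrt{w(s)}\,e^{i\phi(t,s)/2}}+o(1),
\]
with $\phi(t,s)=t(\tilde x(t)-x(t))$. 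By Corollary \ref{corollary_5}(ii), $\cos\phi(t,s)=\sqrt{w(s)\tilde w(s)}$ is independent of $t$, so by continuity $\phi(t,s)$ equals a fixed constant $\phi(s)\in(-\pi/2,\pi/2)$; the right-hand side therefore converges to an explicit function of $w(s),\tilde w(s)$, which then defines $f^\dagger(s)$.

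The step I expect to require most care is the bookkeeping of the unimodular factor $\alpha(t)$: the cancellation above succeeds precisely because Lemma \ref{Lemma_8} and Corollary \ref{corollary_5} have been arranged to produce the \emph{same} $\alpha(t)$ in both approximations, and invoking this common $\alpha$ is what lets the quotient collapse. A subsidiary point is ensuring that $|a(t,s)|$ stays bounded away from zero so that the $o(1)$ errors do not overwhelm the denominator; this is immediate, since a direct computation with the explicit limit above yields $|2a(t,s)|^{2}\to w(s)^{-1}+\tilde w(s)^{-1}+2$ after substituting $\cos\phi(s)=\sqrt{w(s)\tilde w(s)}$, which is strictly positive on the full-measure set we have fixed.
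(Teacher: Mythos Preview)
Your proposal is correct and follows essentially the same route as the paper. The paper invokes Corollary~\ref{corollary_4} to get the exponential approximations for $E$ and $\tilde E$ directly and then uses the determinant identity (Corollary~\ref{determinant_2}) to pin down the relative phase $\beta=e^{-i\varphi}$ with $\sin\varphi=\sqrt{w(s)\tilde w(s)}$; you instead start from the joint $\sin/\cos$ approximation of Lemma~\ref{Lemma_8}/Corollary~\ref{corollary_5}, let $ty(t)\to\infty$ to collapse to exponentials, and read off the relative phase from the relation $\cos[t(\tilde x-x)]=\sqrt{w(s)\tilde w(s)}$ already built into those statements. These are two packagings of the same computation, and the cancellation of the common unimodular factor $\alpha(t)$ in $b/a$ is exactly the mechanism the paper uses as well. (Minor remark: $\gamma(v)\sinh v$ and $\gamma(v)\cosh v$ tend to $1$, not $1/\sqrt2$, but this constant cancels in the quotient and does not affect your argument.)
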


	\begin{proof}
		By Theorem \ref{measure4} and Corollary \ref{corollary_4} we find that for almost all $s \in \R$ and all $D>0$ there is a unimodular function $\alpha(s,t)$ and a constant $\beta \in \C$ such that 
		
		$$\sup_{z \in Q(s,D/t)} \bigg| E(t,z) - \frac{-i\alpha(s,t)}{\sqrt{w(s)}} e^{itz}\bigg|=o(1),$$
		
		$$\sup_{z \in Q(s,D/t)} \bigg| \tilde E(t,z) - \frac{-i\beta\alpha(s,t)}{\sqrt{\tilde w(s)}} e^{itz}\bigg|=o(1),$$
		
		\noindent
		as $t\rightarrow \infty.$ By Corollary \ref{determinant_2} we can choose $\beta=e^{-i\varphi}$ with $\varphi=\arcsin\sqrt{w(s)\tilde w(s)}.$ Indeed, initially we have by Corollary \ref{corollary_4} that there exists a function $\alpha'(s,t)$ such that 
		
		$$\sup_{z \in Q(s,D/t)} \bigg| \tilde E(t,z) - \frac{-i\alpha'(s,t)}{\sqrt{\tilde w(s)}} e^{itz}\bigg|=o(1).$$
		
		We now define $\beta(t), |\beta(t)|=1$ such that $\beta(t)\alpha(s,t)=\alpha'(s,t).$ Plugging in this approximation in the determinant equation from Corollary \ref{determinant_2} we find
		
		$$\det\begin{pmatrix}
		\frac{-i\alpha(s,t)}{\sqrt{w(s)}} e^{its} && \frac{-i \beta(t)\alpha(s,t)}{\sqrt{\tilde w(s)}} e^{its} \\
		\frac{i\overline{\alpha}(s,t)}{\sqrt{w(s)}} e^{-its} && \frac{i \overline\beta(t)\overline\alpha(s,t)}{\sqrt{\tilde w(s)}} e^{-its}
		\end{pmatrix}=\frac{1}{\sqrt{w(s)\tilde{w}(s)}}\big(\overline{\beta}(t)-\beta(t)\big)=2i+o(1).$$
		
		\noindent
		Hence, in the limit
		
		$$\lim_{t\rightarrow \infty} \Im \beta(t)= \lim_{t\rightarrow \infty} \frac{\beta(t)-\overline\beta (t)}{2i}=-\sqrt{w(s)\tilde w(s)}.$$
		
		\noindent
		Together with the fact that $|\beta(t)|=1$ it follows that we can choose $\beta=e^{-i\varphi}$ with $\varphi=\arcsin\sqrt{w(s)\tilde w(s)}$ in the approximation above.
		
		By definiton we have
		
		$$|a(t,s)|=\bigg|\frac{e^{its}E(t,s)+ie^{its}\tilde E(t,s)}{2}\bigg|=\frac{1}{2}|E(t,s)+i\tilde E(t,s)|.$$
		
		\noindent
		Combining the above estimates gives for almost all $s\in \R$ that 
		
		$$\bigg|E(t,s)+i\tilde E(t,s)-\bigg(\frac{-i\alpha(s,t)}{\sqrt{w(s)}} + i\frac{-i\beta\alpha(s,t)}{\sqrt{\tilde w(s)}} \bigg)e^{its}\bigg|=o(1).$$
		
		\noindent
		Doing the same calculations for $b(t,s)$ and applying the inverse triangle inequality now yields that 
		
		$$|a(t,s)|=	\frac{1}{2}|E(t,s)+i\tilde E(t,s)|\overset{t\rightarrow\infty}{\longrightarrow}	\frac{1}{2}\bigg|\frac{1}{\sqrt{w(s)}}+\frac{i\beta}{\sqrt{\tilde{w}(s)}}\bigg|$$
		
		$$|b(t,s)|=\frac{1}{2}|E(t,s)-i\tilde E(t,s)|\overset{t\rightarrow\infty}{\longrightarrow}	\frac{1}{2}\bigg|\frac{1}{\sqrt{w(s)}}-\frac{i\beta}{\sqrt{\tilde{w}(s)}}\bigg|.$$
		
		Furthermore, we compute
		
		$$\arg\bigg(\frac{b}{a}\bigg)=\arg b(t,s)- \arg a(t,s)=\arg\big[E(t,s)-i\tilde E(t,s)\big]-\arg\big[E(t,s)+i\tilde E(t,s)\big]$$
		
		$$=\arg(-i\alpha(s,t))+\arg\bigg(\frac{1}{\sqrt{w(s)}}-\frac{i\beta}{\sqrt{\tilde{w}(s)}}\bigg)+tx-\arg(-i\alpha(s,t))-\arg\bigg(\frac{1}{\sqrt{w(s)}}+\frac{i\beta}{\sqrt{\tilde{w}(s)}}\bigg)-tx+o(1).$$
		
		$$=\arg\bigg(\frac{\frac{1}{\sqrt{w(s)}}-\frac{i\beta}{\sqrt{\tilde{w}(s)}}}{\frac{1}{\sqrt{w(s)}}+\frac{i\beta}{\sqrt{\tilde{w}(s)}}}\bigg)=o(1).$$
		
		Thus, for almost all $s\in \R$ we have shown that
		
		$$\bigg|\frac{b}{a}\bigg|\overset{t\rightarrow \infty}{\longrightarrow} \bigg|\frac{\frac{1}{\sqrt{w(s)}}-\frac{i\beta}{\sqrt{\tilde{w}(s)}}}{\frac{1}{\sqrt{w(s)}}+\frac{i\beta}{\sqrt{\tilde{w}(s)}}}\bigg|, \quad \arg\bigg(\frac{b}{a}\bigg)\overset{t\rightarrow \infty}{\longrightarrow} \arg \bigg(\frac{\frac{1}{\sqrt{w(s)}}-\frac{i\beta}{\sqrt{\tilde{w}(s)}}}{\frac{1}{\sqrt{w(s)}}+\frac{i\beta}{\sqrt{\tilde{w}(s)}}}\bigg).$$
		
		Finally, by combining the convergence of the absolute value and the argument we find that for almost all $s\in \R$
		
		$$\lim_{T\rightarrow \infty} f_{T}^{\dagger}(s)=\lim_{T\rightarrow \infty} \frac{b(T,s)}{a(T,s)}=\frac{\frac{1}{\sqrt{w(s)}}-\frac{i\beta}{\sqrt{\tilde{w}(s)}}}{\frac{1}{\sqrt{w(s)}}+\frac{i\beta}{\sqrt{\tilde{w}(s)}}}=f^{\dagger}(s).$$
		
		\noindent
		proving that the non-linear Fourier transform converges pointwise almost everywhere.
	
	\end{proof}

	\section{Further results}
	In the last chapter we have shown that $|a(t,s)|$ and $|b(t,s)|$ converge for almost all $s\in \R$ as $t \rightarrow \infty.$ It is natural to ask whether $a(t,s)$ and $b(t,s)$ converge too. We are going to investigate this problem by tracing it back to the question of convergence of an ordinary Fourier integral. We will frequently abuse the notation by using the letter $t$ in the integrand as well as in the domain of integration.
	
	\begin{lemma}
		$$\frac{d}{dt} |a(t,s)|=2f(t)\frac{|E(t,s)|^2\cos[2\arg E(t,s)]+|\tilde E(t,s)|^2\cos[2\arg \tilde{E}(t,s)]}{|a(t,s)|^2}.$$
	\end{lemma}
	
	\begin{proof}
		We have the equation 
		
		$$\frac{d}{dt} |a(t,s)|=\frac{\Re(a(t,s)\overline{\frac{d}{dt}a(t,s)})}{|a(t,s)|^2}.$$
		
		To make the next calculations more clear, we will supress from time to time the arguments of the appearing functions. Since $|e^{its}|=1$ we can ignore the exponential term and it is sufficient to calculate  with the same formula as above the derivative
		
		$$\frac{d}{dt} |E(t,s)+i\tilde{E}(t,s)|.$$
		
		By Lemma \ref{differential_equation} we have that 
		
		$$\frac{d}{dt} E(t,s)+i\tilde E(t,s) = -isE(t,s)+f(t)E^{\sharp}(t,s)+i[-is\tilde{E}(t,s)+f(t)\tilde{E}^{\sharp}(t,s)]$$
		$$=-isE(t,s)+s\tilde{E}(t,s)+f(t)E^{\sharp}(t,s)+if(t)\tilde{E}^{\sharp}(t,s).$$
		
		Thus, setting $\tilde a(t,s)=E(t,s)+iE(t,s)$ we find that 
		
		$$\Re \bigg(\tilde a(t,s)\overline{\frac{d}{dt}\tilde{a}(t,s)}\bigg)=(E+i\tilde{E})(isE^{\sharp}+s\tilde{E}^{\sharp}+f(t)E-if(t)\tilde{E})$$
		
		$$=s(AB+CD)+f(t)(A^2-C^2)+ f(t)(B^2-D^2)-s(AB+CD)=f(t)[A^2-C^2]+f(t)[B^2-D^2].$$
		
		$$=f(t)|E(t,s)|^2\cos[2\arg E(t,s)]+f(t)|\tilde{E}(t,s)|\cos[2\arg \tilde E(t,s)].$$
		
		Hence,
		
		$$\frac{d}{dt}|a(t,s)|=2\frac{f(t)|E(t,s)|^2\cos[2\arg E(t,s)]+f(t)|\tilde{E}(t,s)|^2\cos[2\arg \tilde{E}(t,s)]}{|a(t,s)|^2}$$
	\end{proof}
	
	We proceed by estimating $\arg a.$ First note that 
	
	$$\arg a(t,s)=\arg\bigg(\frac{e^{its}}{2}[E(t,s)+i\tilde E(t,s)]\bigg)=ts+ \arg \tilde{a}.$$
	
	Thus, 
	
	$$|a(t,s)|=1+ 2\int_{0}^{t} f(t) \frac{|E(t,s)|^2\cos[2\arg E(t,s)]+|\tilde E(t,s)|^2\cos[2\arg \tilde{E}(t,s)]}{|a(t,s)|^2} dt.$$
	
	\begin{lemma}
		$$\frac{d}{dt} \tilde{a}(t,s) = -s -2f(t)\bigg[|E(t,s)|^2\sin[2\arg E(t,s)]+|\tilde{E}(t,s)|^2\sin[2\arg\tilde E(t,s)]\bigg].$$
	\end{lemma}
	
	\begin{proof}
		$$\frac{d}{dt} \arg \tilde{a} = \proj_{i(E(t,s)+i\tilde{E}(t,s))} \frac{d}{dt} E(t,s)+ i \tilde{E}(t,s)$$
		
		$$=\proj_{-\tilde{E}(t,s)+iE(t,s)} -isE(t,s)+s\tilde{E}(t,s) +f(t) E^{\sharp}(t,s)+if(t)\tilde{E}^{\sharp}(t,s)$$
		
		$$=\proj_{-\tilde{E}(t,s)+iE(t,s)} -isE(t,s)+s\tilde{E}(t,s) +f(t)\proj_{-\tilde{E}(t,s)+iE(t,s)} E^{\sharp}(t,s)+i\tilde{E}^{\sharp}(t,s).$$
		
		$$=-s +f(t) \frac{\bigg\langle \begin{pmatrix}
			A-D \\
			B+C
			\end{pmatrix}, \begin{pmatrix}
			-B+C \\ A+D
			\end{pmatrix} \bigg\rangle}{|a(t,s)|^2}=-s+\frac{2[AC+BD]}{|a(t,s)|^2}.$$
		
		$$=-s -2 \frac{|E(t,s)|^2\sin[2\arg E(t,s)]+|\tilde{E}(t,s)|^2\sin[2\arg \tilde{E}(t,s)]}{|a(t,s)|^2}.$$
	\end{proof}
	
	Thus, 
	
	$$\arg a(t,s)= -2\int_{0}^{t} f(t) \frac{|E(t,s)|^2\sin[2\arg E(t,s)]+|\tilde{E}(t,s)|^2\sin[2\arg \tilde{E}(t,s)]}{|a(t,s)|^2} dt.$$\begin{lemma}
		$$\frac{d}{dt} |a(t,s)|=2f(t)\frac{|E(t,s)|^2\cos[2\arg E(t,s)]+|\tilde E(t,s)|^2\cos[2\arg \tilde{E}(t,s)]}{|a(t,s)|^2}.$$
	\end{lemma}
	
	\begin{proof}
		We have the equation 
		
		$$\frac{d}{dt} |a(t,s)|=\frac{\Re(a(t,s)\overline{\frac{d}{dt}a(t,s)})}{|a(t,s)|^2}.$$
		
		To make the next calculations more clear, we will supress from time to time the arguments of the appearing functions. Since $|e^{its}|=1$ we can supress the exponential term and it is sufficient to calculate with the same formula as above
		
		$$\frac{d}{dt} |E(t,s)+i\tilde{E}(t,s)|.$$
		
		\noindent
		By Lemma \ref{differential_equation} we have that 
		
		$$\frac{d}{dt} E(t,s)+i\tilde E(t,s) = -isE(t,s)+f(t)E^{\sharp}(t,s)+i[-is\tilde{E}(t,s)+f(t)\tilde{E}^{\sharp}(t,s)]$$
		$$=-isE(t,s)+s\tilde{E}(t,s)+f(t)E^{\sharp}(t,s)+if(t)\tilde{E}^{\sharp}(t,s).$$
		
		\noindent
		Thus, setting $\tilde a(t,s)=E(t,s)+iE(t,s)$ we find that 
		
		$$\Re \bigg(\tilde a(t,s)\overline{\frac{d}{dt}\tilde{a}(t,s)}\bigg)=(E+i\tilde{E})(isE^{\sharp}+s\tilde{E}^{\sharp}+f(t)E-if(t)\tilde{E})$$
		
		$$=s(AB+CD)+f(t)(A^2-C^2)+ f(t)(B^2-D^2)-s(AB+CD)=f(t)[A^2-C^2]+f(t)[B^2-D^2].$$
		
		$$=f(t)|E(t,s)|^2\cos[2\arg E(t,s)]+f(t)|\tilde{E}(t,s)|\cos[2\arg \tilde E(t,s)].$$
		
		\noindent
		Hence,
		
		$$\frac{d}{dt}|a(t,s)|=2\frac{f(t)|E(t,s)|^2\cos[2\arg E(t,s)]+f(t)|\tilde{E}(t,s)|^2\cos[2\arg \tilde{E}(t,s)]}{|a(t,s)|^2}$$
	\end{proof}

	\noindent
	Thus, we can write
	
	$$|a(t,s)|=1+ 2\int_{0}^{t} f(t) \frac{|E(t,s)|^2\cos[2\arg E(t,s)]+|\tilde E(t,s)|^2\cos[2\arg \tilde{E}(t,s)]}{|a(t,s)|^2} dt.$$

	We proceed by estimating $\arg a.$ First note that 
	
	$$\arg a(t,s)=\arg\bigg(\frac{e^{its}}{2}[E(t,s)+i\tilde E(t,s)]\bigg)=ts+ \arg \tilde{a}.$$

	\begin{lemma}
		$$\frac{d}{dt} \tilde{a}(t,s) = -s -2f(t)\bigg[|E(t,s)|^2\sin[2\arg E(t,s)]+|\tilde{E}(t,s)|^2\sin[2\arg\tilde E(t,s)]\bigg].$$
	\end{lemma}
	
	\begin{proof}
		$$\frac{d}{dt} \arg \tilde{a} = \proj_{i(E(t,s)+i\tilde{E}(t,s))} \frac{d}{dt} E(t,s)+ i \tilde{E}(t,s)$$
		
		$$=\proj_{-\tilde{E}(t,s)+iE(t,s)} -isE(t,s)+s\tilde{E}(t,s) +f(t) E^{\sharp}(t,s)+if(t)\tilde{E}^{\sharp}(t,s)$$
		
		$$=\proj_{-\tilde{E}(t,s)+iE(t,s)} -isE(t,s)+s\tilde{E}(t,s) +f(t)\proj_{-\tilde{E}(t,s)+iE(t,s)} E^{\sharp}(t,s)+i\tilde{E}^{\sharp}(t,s).$$
		
		$$=-s +f(t) \frac{\bigg\langle \begin{pmatrix}
			A-D \\
			B+C
			\end{pmatrix}, \begin{pmatrix}
			-B+C \\ A+D
			\end{pmatrix} \bigg\rangle}{|a(t,s)|^2}=-s+\frac{2[AC+BD]}{|a(t,s)|^2}.$$
		
		$$=-s -2 \frac{|E(t,s)|^2\sin[2\arg E(t,s)]+|\tilde{E}(t,s)|^2\sin[2\arg \tilde{E}(t,s)]}{|a(t,s)|^2}.$$
	\end{proof}
	
	Thus, we have established 
	
	$$\arg a(t,s)= -2\int_{0}^{t} f(t) \frac{|E(t,s)|^2\sin[2\arg E(t,s)]+|\tilde{E}(t,s)|^2\sin[2\arg \tilde{E}(t,s)]}{|a(t,s)|^2} dt.$$

	\begin{theorem}
		Let $s\in \R.$ We have the following equivalence. As $t \rightarrow \infty$
		
		$$ a(t,s) \text{ converges if and only if the Fourier integral} \int_{0}^{t} f(t)\frac{b(t,s)}{a^\sharp(t,s)}e^{-2its}dt \text{ converges}.$$
		
	\end{theorem}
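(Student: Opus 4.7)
The plan is to show that the Fourier integral in the statement is exactly a continuous branch of $\log a^{\sharp}(t,s)$. After that, the equivalence reduces to the fact that $a(t,s)$ (equivalently $a^{\sharp}(t,s)$) stays bounded away from zero on $[0,\infty)$.

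First I would compute $\frac{d}{dt} a(t,z)$ using Lemma~\ref{differential_equation_2}. Since $a(t,z)=\tfrac12(\mathcal{E}(t,z)+i\tilde{\mathcal{E}}(t,z))$, that lemma applied to both scattering functions gives
\[
\frac{d}{dt} a(t,z) \;=\; \frac{f(t) e^{2itz}}{2}\bigl(\mathcal{E}^{\sharp}(t,z)+i\tilde{\mathcal{E}}^{\sharp}(t,z)\bigr).
\]
Using $\mathcal{E}^{\sharp}(t,z)=e^{-itz}E^{\sharp}(t,z)$ and similarly for $\tilde{\mathcal{E}}^{\sharp}$, the bracket is precisely $2b^{\sharp}(t,z)$ (directly from the definition $b=\tfrac12(\mathcal{E}-i\tilde{\mathcal{E}})$ and Schwarz reflection). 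Thus $\frac{d}{dt} a(t,z) = f(t) e^{2itz} b^{\sharp}(t,z)$; Schwarz reflecting in $z$ and restricting to real $s$ yields the clean formula
\[
\frac{d}{dt} a^{\sharp}(t,s) \;=\; f(t)\, b(t,s)\, e^{-2its}.
\]

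Next I would verify that $a^{\sharp}(t,s)$ never vanishes. Corollary~\ref{determinant_2} at real $z=s$ reads $E(t,s)\overline{\tilde{E}(t,s)}-\tilde{E}(t,s)\overline{E(t,s)}=2i$; expanding $|a(t,s)|^2$ and $|b(t,s)|^2$ from their defining formulas gives the non-linear Pythagorean identity
\[
|a(t,s)|^2 - |b(t,s)|^2 = 1, \qquad t\geq 0.
\]
In particular $|a(t,s)|\geq 1$, so $a^{\sharp}(t,s)\neq 0$ on all of $[0,\infty)$. Since $a(0,s)=\tfrac12(1+i(-i))=1$, there is a unique continuous branch $L(t):=\log a^{\sharp}(t,s)$ on $[0,\infty)$ with $L(0)=0$. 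Dividing the derivative formula by $a^{\sharp}(t,s)$ and integrating produces the key identity
\[
L(t) \;=\; \log a^{\sharp}(t,s) \;=\; \int_{0}^{t} f(u)\,\frac{b(u,s)}{a^{\sharp}(u,s)}\,e^{-2ius}\, du.
\]

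The equivalence now follows. If the right-hand side converges to some $\ell\in\C$, then $a^{\sharp}(t,s)=e^{L(t)}\to e^{\ell}$, and hence $a(t,s)=\overline{a^{\sharp}(t,s)}\to e^{\bar\ell}$. Conversely, if $a(t,s)\to a_\infty$, then $|a_\infty|\geq 1$ and $a^{\sharp}(t,s)\to\overline{a_\infty}\neq 0$. The only mildly subtle step is to conclude that $L(t)$ itself converges, and this is the main obstacle of the argument: one has the trivial bound $\Re L(t)=\log|a^{\sharp}(t,s)|\to\log|a_\infty|$, while $e^{i\Im L(t)}\to \overline{a_\infty}/|a_\infty|$ only pins down $\Im L(t)$ modulo $2\pi$. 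Here the continuity of the branch saves the day: since $\Im L(t)$ is a continuous real function whose exponential converges, for $t$ large it must remain inside a single small neighbourhood of some representative in $\arg\overline{a_\infty}+2\pi\Z$, and therefore converges. Once $L(t)$ converges, so does the integral, completing the equivalence.
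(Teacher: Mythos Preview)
Your proof is correct and takes a cleaner route than the paper. The paper computes $\frac{d}{dt}|a(t,s)|$ and $\frac{d}{dt}\arg a(t,s)$ separately through rather laborious manipulations with the real components $A,B,C,D$, and then recombines them into the identity
\[
|a(t,s)|-1-i\arg a(t,s) \;=\; c\int_0^t f(u)\,\frac{b(u,s)}{\overline{a(u,s)}}\,e^{-2ius}\,du
\]
for some explicit constant $c$; the equivalence is then read off from this identity (the paper does not spell out the argument-branch issue). You instead derive the single ODE $\tfrac{d}{dt}a^\sharp(t,s)=f(t)\,b(t,s)\,e^{-2its}$ directly from Lemma~\ref{differential_equation_2}, use the Pythagorean identity $|a|^2-|b|^2=1$ (a consequence of Corollary~\ref{determinant_2}) to ensure $a^\sharp\neq 0$, and recognise the Fourier integral as the continuous branch of $\log a^\sharp(t,s)$. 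This avoids the component-by-component computations entirely and yields the equivalence via $a^\sharp=e^{L}$; you also make explicit the small continuity argument needed to pass from convergence of $e^{i\Im L(t)}$ to convergence of $\Im L(t)$, which the paper leaves implicit. The two approaches are of course closely related (on the real line $\overline{a}=a^\sharp$, and the paper's left-hand side is the linearisation of your $\log a^\sharp$ near $1$), but your packaging is more transparent.
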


	\begin{proof}
		$$|a(t,s)|-1-i\arg a(t,s) = 2 \int_{0}^{t} f(t)\frac{\bigg[|E(t,s)|^2e^{2i\arg E(t,s)}+|\tilde E(t,s)|^2e^{2i\arg \tilde E(t,s)}\bigg]}{|a(t,s)|^2} dt$$
		$$ =2\int_{0}^{t} f(t) \frac{E(t,s)^2+\tilde E(t,s)^2}{|a(t,s)|^2} dt= 2\int_{0}^{t} f(t) \frac{(E(t,s)+i\tilde E(t,s))(E(t,s)-i\tilde E(t,s))}{|a(t,s)|^2} dt$$
		$$ = 8 \int_{0}^{t} f(t) \frac{a(t,s)b(t,s)}{|a(t,s)|^2} e^{-2its} dt=8\int_{0}^{t}f(t) \frac{b(t,s)}{\overline{a(t,s)}} e^{-2its} dt.$$
	\end{proof}

	Notice that $b(t,s)/a^\sharp(t,s)\leq 1$ for all $s\in \R$ and thus we obtain that if $f(t) \in L^p(\R)$ for $1\leq p < \infty,$ then as well for all $s\in \R$
	
	$$f(t)\frac{b(t,s)}{a^\sharp(t,s)} \in L^p(\R).$$
	
	We obtain immediately the following 
	
	\begin{corollary}
		If $f(t) \in L^1(\R),$ then $a(t,s)$ converges for all $s \in \R.$
	\end{corollary}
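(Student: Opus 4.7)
The plan is to invoke the equivalence established in the preceding theorem in a completely routine way. That result tells us that, for any fixed $s\in\R$, the limit $\lim_{t\to\infty} a(t,s)$ exists if and only if the improper integral
$$\int_0^\infty f(u)\,\frac{b(u,s)}{a^\sharp(u,s)}\,e^{-2ius}\,du$$
converges in the usual sense (as the limit of $\int_0^t \cdots\,du$ as $t\to\infty$). So it suffices to prove convergence of this single scalar-valued Fourier-type integral.

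First I would record the pointwise bound $|b(u,s)/a^\sharp(u,s)|\le 1$ for every $u>0$ and every $s\in\R$. For real $s$ one has $a^\sharp(u,s)=\overline{a(u,s)}$, and the identity $\det M(u,z)=1$ from Lemma \ref{determinant_1}, combined with the definitions of $a$ and $b$ in terms of $E$ and $\tilde E$, yields $|a(u,s)|^2-|b(u,s)|^2=1$ on the real line; in particular $|b|\le|a|=|a^\sharp|$. Given this bound, the integrand is dominated pointwise (in $u$, uniformly in $s$) by $|f(u)|$.

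Now I would conclude by dominated convergence, or equivalently by the Cauchy criterion: since $f\in L^1(\R_+)$, for any $\varepsilon>0$ there is $T$ with $\int_T^\infty |f(u)|\,du<\varepsilon$, hence for all $t_2>t_1>T$
$$\left|\int_{t_1}^{t_2} f(u)\,\frac{b(u,s)}{a^\sharp(u,s)}\,e^{-2ius}\,du\right|\le \int_{t_1}^{t_2}|f(u)|\,du<\varepsilon,$$
so the tail integrals go to zero and the improper integral converges absolutely. Applying the previous theorem, $a(t,s)$ converges as $t\to\infty$ for every $s\in\R$.

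There is essentially no obstacle here: the only nontrivial input is the uniform bound $|b/a^\sharp|\le 1$, and everything else is a one-line domination argument made possible by the $L^1$-hypothesis. The whole point of the preceding theorem was to reduce the convergence of $a(t,s)$ to convergence of an ordinary (linear) Fourier-type integral, and under the stronger assumption $f\in L^1$ that linear integral is absolutely convergent for trivial reasons.
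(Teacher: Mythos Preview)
Your proof is correct and follows exactly the same approach as the paper: the paper notes just before the corollary that $|b(t,s)/a^\sharp(t,s)|\le 1$, so $f\cdot b/a^\sharp\in L^1(\R)$ whenever $f\in L^1(\R)$, making the Fourier-type integral absolutely convergent and the corollary immediate from the preceding equivalence theorem. You have simply spelled out the domination argument in full detail, including the derivation of $|a|^2-|b|^2=1$ from the determinant identity.
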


	Unfortunately we can not extend this with out furhter work to the case $1<p<\infty.$ To that end we introduce for $(s,y) \in \R^2$ and $t>0$ the function 
	
	$$I(t,s,y)=\int_{0}^{t} \int_{0}^{t} f(t)\frac{b(t,s)}{a^\sharp(t,s)}e^{-2ity}dt.$$
	
	Then, by the linear Carleson Theorem we find that for almost all $(s,y) \in \R^2$ the following limit exists
	
	$$\lim_{t\rightarrow \infty} I(t,s,y).$$
	
	The statement that $a(t,s)$ converges almost everywhere on $\R$ is equivalent to the statement that the above integral convergeges almost everyhwere on the main diagonal in $\R^2.$ Therefore, we can think about the question, whether $a(t,s)$ converges almost everywhere, being as difficult as proving that the following Fourier integral exists 
	
	$$\int_{0}^{\infty} f(t) f_{t}^{\dagger}(s)e^{-its} dt.$$

\end{document}